\numberwithin{equation}{section}
\newtheorem{theorem}{Theorem}[section]
\newtheorem{proposition}{Proposition}[section]
\newtheorem{lemma}{Lemma}[section]
\newtheorem{remark}{Remark}[section]
\newtheorem{definition}{Definition}[section]
\renewcommand{\epsilon}{\varepsilon}
\newcommand{\abs}[1]{\left\vert #1\right\vert}
\newcommand{\1}[1]{{\mathbf 1}{\{#1\}}}
\newcommand{\R}{\mathbb{R}}
\newcommand{\N}{\mathbb{N}}
\newcommand{\Z}{\mathbb{Z}}
\newcommand{\PR}{\mathbb{P}}
\newcommand{\ES}{\mathbb{E}}
\newcommand{\Hc}{\mathcal{H}}
\newcommand{\lv}{\vec{\ell}}
\begin{document}


\begin{frontmatter}
\title{Scaling limits for sub-ballistic biased random walks in random conductances}
\runtitle{Biased random walks in random conductances}

\begin{aug}
\author{\fnms{Alexander} \snm{Fribergh}\ead[label=e1]{fribergh@dms.umontreal.ca}}
\and
\author{\fnms{Daniel} \snm{Kious}\ead[label=e2]{daniel.kious@nyu.edu}}
\runauthor{A.~Fribergh and D.~Kious}

\affiliation{Universit\'e de Montr\'eal and New York University Shanghai}
\address{Universit\'e de Montr\'eal, DMS\\
Pavillon Andr\'e-Aisenstadt\\     2920, chemin de la Tour Montréal,\\  H3T 1J4 (Qu\'ebec), Canada\\
\printead{e1}}
\address{New York University Shanghai,\\ 1555 Century Avenue,\\
Pudong, Shanghai\\
200122, China\\
\printead{e2}}
\end{aug}

\begin{abstract}
We consider biased random walks in positive random conductances on the d-dimensional lattice in the zero-speed regime and study their scaling limits. We obtain a functional Law of Large Numbers for the position of the walker, properly rescaled. Moreover, we state a functional Central Limit Theorem where an atypical process, related to the Fractional Kinetics, appears in the limit.
\end{abstract}

\begin{keyword}[class=MSC]
\kwd[Primary ]{60K37}
\kwd[; secondary ]{82D30}
\end{keyword}

\begin{keyword}
\kwd{Random walks in random environments}
\kwd{random conductances}
\kwd{scaling limit}
\kwd{trap model}
\kwd{zero-speed}
\end{keyword}

\end{frontmatter}



\section{Introduction}\label{sect_intro}

Random walks in random environments (RWRE) have been the subject of intense research for over fifteen years. We refer the reader to~\cite{Zeitouni}, \cite{S1}, \cite{Topics}, \cite{kumagai} and~\cite{benarous_fribergh} for different surveys of the field.

 One aspect that has attracted a lot of attention is the phenomenon of trapping. Trapping appears in several physical systems and it motivated  the introduction of an idealized model known as the Bouchaud trap model (BTM). The study of the BTM led to the discovery of several interesting anomalous limiting processes including the FIN diffusion (see~\cite{fin02}) and the Fractional Kinetics (FK) process (see \cite{mula}). Moreover, the BTM is a natural setting to witness \lq\lq aging\rq\rq, which is the phenomenon where the time it takes to witness a significant change in the system is of
the order of the \lq\lq age\rq\rq of the system. This behavior is common among dynamics in random media such as dynamics on spin glasses~\cite{BABC}  (see~\cite{Bouchaud}  for a physical overview of spin glasses) as well as in the random energy model under Glauber dynamics (see~\cite{bbgtwo}) or in parabolic Anderson model (see~\cite{MOS}). For an overview of the BTM we refer the reader to~\cite{BenarousCerny}.

Although the BTM was initially  used to study random walks which are symmetric in the sense that, up to a time-change, behaves like a brownian motion (see~\cite{BC} and~\cite{Mourrat}) it was subsequently used to study the behaviour of directionally transient RWREs which experience trapping. The initial series of works in this direction~\cite{ESZ1}, \cite{ESZ2} and~\cite{ESZ3} by Enriquez, Sabot and Zindy  were concerned with the one dimensional RWRE. The authors managed to obtain  the scaling limits in the zero-speed regime (a result already obtained in~\cite{KKS}). Furthermore, they also proved that this model experiences \lq\lq aging\rq\rq. This work also provided a robust method for studying directionally transient RWRE with trapping which served as an inspiration for future works on more complex graphs.

Following the study of one dimensional RWREs, several works were initiated to understand trapping for directionally transient RWREs on trees, see~\cite{FG}, \cite{BH}, \cite{H} and~\cite{Aidekon1}. These works confirmed that the picture provided by BTM was relevant for RWRE in general environment (up to some complication due to lattice effects, see~\cite{FG}).

 It is a central question to prove that directionally transient RWREs in $\Z^d$ can also be analyzed via the BTM analogy and to identify the limiting behaviors such models may have. So far the methods provided by the BTM have only been used to identify scaling exponents for biased random walks in positive random conductances~\cite{Fri11}, a model first studied in~\cite{Shen}, and on supercritical percolation clusters~\cite{FH}, which was first studied by~\cite{BGP} and~\cite{SznPerco}.
 
In this paper, we carry on the study of the zero-speed regime for biased random walks in positive random conductances in $\Z^d$ initiated in~\cite{Fri11}. Our main result is to find the limiting scaling processes appearing in those models. One of the scaling limits we identify is related to the Fractional Kinetics  in $\Z^d$. This constitutes the first scaling limit result that is rigorously proved for anisotropic random walks in random environments on $\Z^d$.

\subsection{Definition of the model}

We introduce $ {\bf P}[\,\cdot\,]=P_*^{\otimes E(\Z^d)} $, where $P_*$ is the law of a positive random variable $c_*\in (0,\infty)$. This measure gives a random environment usually denoted $\omega$.

In order to define the random walk, we introduce a bias $\ell=\lambda \vec \ell$ of strength $\lambda >0$ and  direction $\vec \ell$ which is in the unit sphere with respect to the Euclidian metric of $\R^d$. In an environment $\omega$, we consider the Markov chain of law $P_x^{\omega}$ on $\Z^d$ with $X_0=x$ $P_x^{\omega}$-a.s.~and  transition probabilities $p^{\omega}(x,y)$ for $x,y\in \Z^d$ defined by
\begin{align}\label{deftransition}
p^{\omega}(x,y) =\frac{c^{\omega}(x,y)}{\sum_{z \sim x} c^{\omega}(x,z)},
\end{align}
where $x\sim y$ means that $x$ and $y$ are adjacent in $\Z^d$ and also we set
\begin{equation}\label{def_conduct}
\text{for all $x\sim y \in \Z^d$,} \qquad \ c^{\omega}(x,y)=c_*^{\omega}([x,y])e^{(y+x)\cdot\ell}.
                                            \end{equation}

This Markov chain is reversible with invariant measure given by 
\begin{align}\label{defpi}
\pi^{\omega}(x)=\sum_{y \sim x} c^{\omega}(x,y).
\end{align}

The random variable $c^{\omega}(x,y)$ is called the conductance between $x$ and $y$ in the configuration $\omega$. This comes from the links existing between reversible Markov chains and electrical networks. We refer the reader to~\cite{DoyleSnell} and~\cite{LP} for a further background on this relation, which we will use extensively. Moreover for an edge $e=[x,y]\in E(\Z^d)$, we denote $c^{\omega}(e)= c^{\omega}(x,y)$.

Finally the annealed law of the biased random walk will be the semi-direct product $\PR = {\bf P}[\,\cdot\,] \times  P_0^{\omega}[\,\cdot\,]$.

In the case where $c_*\in( 1/K, K)$ for some $K<\infty$, the walk is {\it uniformly elliptic} and this model is the one previously studied in~\cite{Shen}. Later on, this work was generalized in \cite{Fri11}. Results of both papers can be stated in the following manner.
\begin{theorem}[\cite{Shen},\cite{Fri11}]
\label{the_theorem}
For $d\geq2$, we have 
\[
\lim \frac{X_n} n =v , \qquad \PR\text{-a.s.},
\]
where
\begin{enumerate}
\item if $E_*[c_*]<\infty $, then $v\cdot \vec{\ell} >0$,
\item if $E_*[c_*]=\infty$, then $v=\vec 0$.
\end{enumerate}

Moreover, if $\lim \frac{\ln P_*[c_*>n]}{\ln n}=-\gamma$ with $\gamma<1$ then
\[
\lim \frac{\ln X_n \cdot \vec{\ell}}{\ln n} =\gamma, \qquad \PR\text{-a.s.}.
\]
\end{theorem}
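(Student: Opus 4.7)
My plan would be to combine a regeneration-time decomposition in the spirit of Sznitman and Zerner with a quantitative trap-by-trap analysis of the time spent near strong edges.

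First, I would establish directional transience: $X_n \cdot \vec{\ell} \to \infty$ $\PR$-almost surely. Because the conductances carry the factor $e^{(x+y)\cdot \ell}$ in (\ref{def_conduct}), a direct electrical-network comparison shows that the effective resistance from $0$ to a half-space $\{x\cdot \vec{\ell} \le -L\}$ grows exponentially in $L$, so the walk backtracks a distance $L$ with probability at most $e^{-cL}$. With directional transience in hand, I define regeneration times $\tau_k$ at which the walk reaches a new record in direction $\vec{\ell}$ and never returns behind it. Using an exponential tightness estimate for the backtrack of a fresh excursion, one shows there are infinitely many $\tau_k$, and that the annealed increments $(X_{\tau_{k+1}}-X_{\tau_k},\tau_{k+1}-\tau_k)$, $k\ge 1$, form an i.i.d.\ sequence. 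Then Birkhoff's theorem gives
\[
\frac{X_n}{n}\ \longrightarrow\ \frac{\ES[\,X_{\tau_2}-X_{\tau_1}\,]}{\ES[\,\tau_2-\tau_1\,]},
\]
with the convention $1/\infty=0$. The numerator is always finite with strictly positive projection on $\vec{\ell}$, so $v\neq \vec 0$ if and only if $\ES[\tau_2-\tau_1]<\infty$.

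The heart of the argument is therefore the criterion $\ES[\tau_2-\tau_1]<\infty\iff E_*[c_*]<\infty$. For the upper bound I would use reversibility: the expected number of visits to a vertex $x$ between $\tau_1$ and $\tau_2$ is comparable to $\pi^{\omega}(x)$ (up to hitting-probability factors controlled by the effective resistance from $x$ to the forward half-space), so summing over the regeneration slab and taking $\bf P$-expectation yields a bound proportional to $E_*[c_*]$ thanks to the exponential tightness of the slab width. For the lower bound, an edge $e$ with $c_*(e)=K$ is a trap: a back-and-forth estimate shows that, conditional on reaching an endpoint of $e$, the walker expects to traverse $e$ a number of times of order $K$ before regenerating, so if $E_*[c_*]=\infty$ the expected regeneration time is infinite.

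For the polynomial rate $\ln(X_n\cdot\vec{\ell})/\ln n\to\gamma$, I would refine the same trap-by-trap analysis to show $\PR[\tau_2-\tau_1>t]=t^{-\gamma+o(1)}$: the upper tail comes from the contribution of the largest conductance encountered in the regeneration slab, and the matching lower bound from the trap estimate above applied to an edge with $c_*\approx t$. Since $\gamma<1$, the stable domain-of-attraction estimate gives $\tau_n=n^{1/\gamma+o(1)}$ almost surely; inverting and using that $X_{\tau_n}\cdot\vec{\ell}$ grows linearly in $n$ yields $X_n\cdot\vec{\ell}=n^{\gamma+o(1)}$.

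The main obstacle I expect is the sharp two-sided control of $\tau_2-\tau_1$ in the previous paragraph. The upper bound demands that the regeneration slab has exponentially tight width in direction $\vec{\ell}$ \emph{uniformly} in the random environment, which is delicate because occasional strong edges can cause large transverse excursions; this requires anisotropic electrical estimates that separate the role of the bias from that of a single atypical conductance. The lower bound similarly requires showing that the bias does not prematurely sweep the walker away from a trap — i.e., that the trapping time genuinely scales linearly in $c_*(e)$ and is not cut off by the drift.
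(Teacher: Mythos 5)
The paper does not prove Theorem~\ref{the_theorem}; it imports it from~\cite{Shen} and~\cite{Fri11}, and your sketch does follow the broad blueprint of those references (regeneration structure, reversibility/resistance bounds for the ballistic criterion, trap estimate for the lower bound, heavy-tail control of $\tau_2-\tau_1$ for the polynomial rate). Two substantive issues, though, stand between your outline and an actual proof. First, you define $\tau_k$ as fresh records in direction $\vec{\ell}$ that the walk never revisits, and you take for granted that such records occur with positive annealed escape probability. Without uniform ellipticity this is false in general: a fresh record may sit inside a region of abnormally small conductances from which the exponential-backtracking estimate gives no uniform lower bound on escape. The fix, used in~\cite{Fri11} and replicated in this paper (see the role of $\mathcal{M}^{(K)}$ and Lemma~\ref{posescape}), is to require the candidate regeneration point to be $K$-open, together with the percolation estimate (Lemma~\ref{BLsizeclosedbox}) that bad clusters have exponentially small width. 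You should state this explicitly, since otherwise the very existence of infinitely many $\tau_k$ is not established. Second, your claim that the annealed increments $(X_{\tau_{k+1}}-X_{\tau_k},\tau_{k+1}-\tau_k)$ are i.i.d.\ is an overstatement for a reversible environment: the conductances incident to the regeneration point are seen by both the past and the future, so the classical construction produces a stationary, 1-dependent (or merely ergodic) sequence, not an independent one. This is harmless for the law of large numbers (the ergodic theorem still gives $X_n/n\to\ES[X_{\tau_2}-X_{\tau_1}]/\ES[\tau_2-\tau_1]$), but it is exactly the defect that this paper's enhanced walk and modified $D$ in Section~\ref{subsecregen} are designed to remove, and you should not silently assert independence.

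On the finer points: your upper-bound heuristic ``expected visits to $x$ are comparable to $\pi^{\omega}(x)$'' needs qualification. For a transient reversible chain, $E^{\omega}_x[\text{number of visits to }x]=\pi^{\omega}(x)/C^{\omega}(x\leftrightarrow\infty)$, and it is the effective conductance to infinity that must be controlled; at a $K$-good vertex this gives a uniform bound (cf.\ Lemma~\ref{backbone}), while at a bad vertex one must route through the boundary of the bad cluster and pay a factor involving the sum of conductances inside, tempered by the exponential tail on the cluster's width (as in Lemma~\ref{timetrap}). Summing this over the regeneration slab and integrating against $\mathbf{P}$ is what produces the $E_*[c_*]$ criterion; the version you wrote would not close without this decomposition. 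Your trap lower bound and the $t^{-\gamma+o(1)}$ tail leading to $\tau_n=n^{1/\gamma+o(1)}$ and hence $X_n\cdot\vec{\ell}=n^{\gamma+o(1)}$ are sound in outline, provided you pass from the tail estimate to almost-sure statements via a Borel--Cantelli argument along a geometric subsequence, which you should mention since the increments are not independent.
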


From this result, we see that a natural trapping regime occurs when $\gamma<1$.\\

In this paper, we are interested in this sub-ballistic regime. For the rest of the paper, we will naturally assume that
\begin{align}\label{assProba}
{\bf P}[c_*\ge t]=L(t)t^{-\gamma}\text{, for any }t\ge 0,
\end{align}
with $\gamma\in(0,1)$ and where $L$ is a slowly-varying function. We choose such a form for the tail of $c_*$ in order to, on one hand, be in the sub-ballistic regime (provided by $\gamma\in(0,1)$) and, on the other hand, in order to have some regularity, which is ensured by the slowly-varying function. If we do not assume this kind of regularity, this would not be possible to obtain full asymptotic results but we could prove some convergence along some sub-sequences of time. See for example \cite{FG} where the authors treat this kind of difficulties which arise when the distribution of $c_*$ is lattice.

%

\subsection{Main results}\label{sect_questions}

Our main results are a functional Law of Large Numbers and a functional Central Limit Theorem for the position of the walker.\\
For any time $T>0$, we denote $D^d([0,T])$ the space of c\`adl\`ag functions from $[0,T]$ to $\R^d$. The following results of convergence hold on the space $D^d([0,T])$ equipped with the uniform topology or the Skorokhod's $J_1$-topology, see \cite{Whitt} for details.
\begin{theorem}\label{mainth}
Consider the biased random walk among random conductances on $\mathbb{Z}^d$, $d\ge2$, with law given by \eqref{assProba}. There exist a deterministic unit vector $v_0\in\mathbb{S}^{d-1}$ with $v_0\cdot \vec\ell>0$ and a constant $C>0$ such that, for any $T\in\mathbb{R}_+$, under the annealed law $\PR$,
\[
\left(\frac{X_{\lfloor nt\rfloor}}{n^{\gamma}/L(n)}\right)_{t\in [0,T]} \xrightarrow{(d)}{} \left(C \mathcal{S}_\gamma^{-1}(t)v_0\right)_{t\in [0,T]},
\]
on $D^d([0,T])$ in the uniform topology, where  $S^{-1}_{\gamma}(\cdot)$ is the inverse of a stable subordinator with index $\gamma$. Moreover, there exists a deterministic $d\times d$ matrix $M_d$ of rank $d-1$ such that, for any $T\in\mathbb{R}_+$,
\[
\left(\frac{X_{\lfloor nt\rfloor}-\left(X_{\lfloor nt\rfloor}\cdot v_0\right)v_0}{\sqrt{n^{\gamma}/L(n)}}\right)_{t\in [0,T]} \xrightarrow{(d)}{} \left(M_dB_{\mathcal{S}_\gamma^{-1}(t)}\right)_{t\in [0,T]}.
\]
on $D^d([0,T])$ in the $J_1$-topology, where   $B_\cdot$ is a standard $d$-dimensional Brownian motion, independent of $S^{-1}_{\gamma}(\cdot)$.
\end{theorem}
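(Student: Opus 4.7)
The plan is to set up a regeneration structure in the spirit of Sznitman-Zerner, adapted to the sub-ballistic trapping setting as in Enriquez-Sabot-Zindy and Fribergh-Hammond. Using the directional transience that follows from Theorem 1.1 above, one builds regeneration times $(\tau_k)_{k\ge 0}$ at which the walk crosses a hyperplane orthogonal to some deterministic direction for the last time. By construction, the increments $(\tau_{k+1}-\tau_k,\, X_{\tau_{k+1}}-X_{\tau_k})_{k\ge 1}$ are i.i.d.\ under $\PR$. The vector $v_0$ is then the normalized annealed mean of $X_{\tau_2}-X_{\tau_1}$, and $C$ is determined by the normalizing constant of the stable law appearing below. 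Since the walk is sub-ballistic, $\ES[\tau_2-\tau_1]=\infty$, and the scaling of $X_n$ is driven entirely by the heavy tail of the block duration.

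The key quantitative input is the tail estimate
\[
\PR[\tau_2-\tau_1 > t]\; \sim\; K\, t^{-\gamma}\, L(t),\qquad t\to\infty,
\]
together with the fact that $X_{\tau_2}-X_{\tau_1}$ has finite second moment. This tail is inherited from \eqref{assProba} via the trapping mechanism analyzed in Fribergh 2011: the dominant contribution to long regeneration times comes from the walker encountering a single edge $e$ of very large conductance $c^\omega(e)$, on which it bounces back and forth for a time of order $c^\omega(e)$ before escaping. One identifies the right localized trap geometry, proves that the probability of meeting a trap of depth at least $t$ during a regeneration block decays like $t^{-\gamma}L(t)$, and shows that, conditional on meeting such a trap, the escape time is asymptotically exponential with mean proportional to $t$. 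This yields the above tail.

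Granted this input, the rest is a classical trap-model argument. By the stable functional limit theorem, with $a_n = n^\gamma/L(n)$, the process $(\tau_{\lfloor a_n s\rfloor}/n)_{s\ge 0}$ converges in the $J_1$-topology to a multiple of a $\gamma$-stable subordinator $\mathcal{S}_\gamma$, and its right-continuous inverse $(N(\lfloor nt\rfloor)/a_n)_{t\ge 0}$ converges uniformly to $C\,\mathcal{S}_\gamma^{-1}(t)$, where $N(\cdot)$ is the renewal counting process. Combined with the strong law $X_{\tau_k}\approx k\,\ES[X_{\tau_2}-X_{\tau_1}]$ and the fact that the oscillation of $X$ between consecutive regenerations is $o(a_n)$, this proves the first convergence in the uniform topology. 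For the transverse part, the centered i.i.d.\ sequence $\chi_k := (X_{\tau_k}-X_{\tau_{k-1}}) - ((X_{\tau_k}-X_{\tau_{k-1}})\cdot v_0)v_0$ satisfies Donsker's invariance principle at scale $\sqrt{a_n}$, and composing with the time change $N(\lfloor nt\rfloor)/a_n$, which is continuous for such functions in $J_1$ (as recorded in Whitt), produces the limit $M_d B_{\mathcal{S}_\gamma^{-1}(t)}$. The independence of $B$ and $\mathcal{S}_\gamma^{-1}$ reflects an asymptotic decoupling: trap depth is carried by a single edge, whereas transverse drift accumulates over the entire regeneration block.

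The main obstacle is the tail estimate on $\tau_2-\tau_1$ together with the above asymptotic independence. Compared with Fribergh 2011, where the scaling exponent suffices, one now needs (i) the precise $\gamma$-stable asymptotics with an explicit prefactor $K$, (ii) a proof that alternative mechanisms for long blocks (long backtracking against the bias, simultaneous visits to several traps, atypical environments near the trap) are negligible at the tail level, and (iii) enough quantitative independence between the trap-depth random variable and the rest of the block to guarantee that the limiting Brownian motion is independent of the subordinator. These three points are essentially a refinement of the trap-analysis already present in Fribergh 2011, lifted from the level of exponents to the level of full limit laws; the passage from step (iii) to the joint $J_1$-convergence then reduces to continuous-mapping arguments in the Skorokhod space.
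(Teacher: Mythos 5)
Your high-level architecture matches the paper's: regeneration times yielding a renewal structure, a $\gamma$-stable tail for the block durations driven by a single large conductance, a stable FLT for the clock, an inverse-map/continuous-mapping argument for the first limit, and Donsker plus time-change for the transverse fluctuations. You also correctly identify where the technical weight sits. However, there is a concrete gap at the very first step that the paper spends an entire section fixing and which your proposal glosses over.

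You write that ``by construction, the increments $(\tau_{k+1}-\tau_k, X_{\tau_{k+1}}-X_{\tau_k})_{k\ge 1}$ are i.i.d.\ under $\PR$.'' This is false for the classical Sznitman--Zerner regeneration structure in a reversible random conductance model: the conductances immediately to the right of a regeneration point are shared between the ``past'' and the ``future'' of the walk, so regeneration slabs are stationary and ergodic but not independent. Fribergh (2011) worked with this weaker structure because a law of large numbers and a polynomial scaling exponent survive ergodicity; but a stable functional limit theorem does not come for free without a mixing condition, which you never verify. The paper circumvents this by introducing an \emph{enhanced walk} $(X_n, Z_n)$ (Section 5.1), redefining $D$ to enforce that, on $\{D=\infty\}$, the walk's law does not depend on the conductances adjacent to $X_0$ (Proposition \ref{prop_idconf}), and only then proving genuine independence of regeneration blocks (Theorem \ref{thindep}). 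Without some version of this construction (or an explicit mixing argument), your stable FLT step has no foundation.

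A second, smaller gap concerns the independence of $B$ and $\mathcal{S}_\gamma^{-1}$ in the second limit. You offer the correct intuition -- trap time is carried by a single block-local random variable while transverse drift accumulates over many blocks -- but this has to be turned into a proof. The paper does this (Lemmas \ref{approx_displ} and \ref{onnaitonvitonmeurt}) by showing the displacement $X_{\tau_n}$ is asymptotically unaffected by discarding all blocks containing a large trap, and by a conditioning argument on a binomial number of trap-containing blocks that produces an exact distributional identity between the coupled pair $(Z_n, S_n)$ and an explicitly independent pair of sums. This is more delicate than ``composing with a time change'': you need joint finite-dimensional convergence of the pair before the time-change step, and a straightforward application of the continuous mapping theorem does not by itself deliver independence of the two limits. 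If you attempt the argument via a general ``two sums of disjoint block functionals are asymptotically independent'' heuristic without this decomposition, the claim is not substantiated.

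Finally, on your points (i)--(iii): they are correctly identified as the crux, but they constitute the bulk of the paper (Sections 6--9), not a ``refinement'' of Fribergh (2011). In particular, the precise tail $\PR[\tau_2-\tau_1 > t]\sim K\,t^{-\gamma}L(t)$ requires (a) showing the time not spent on the largest conductance in a block is negligible (Lemma \ref{decaytau}), (b) ruling out two medium traps in one block (Proposition \ref{not_2_traps}), (c) a quantitative coupling describing the environment seen from a deep trap via a collapsed-edge process $Y^e$ (Section \ref{timberlake}), and (d) a Breiman-type argument to get the tail of $c_*^{\max} W_\infty$ (Lemma \ref{final_tail}). Your outline names the ingredients but does not show how to carry out (c), which is highly model-specific and the paper flags as the genuinely hard, non-portable step.
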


\begin{remark}
Our main theorem does not give any information on the fluctuation in the direction $v_0$. In the course of this paper we will obtain a result on those fluctuations which appears in Theorem \ref{seinfeld}. The drawback of this result is that the re-centering is random, depending on regeneration times, and that, to state this result, we need to introduce a significant amount of notations. This is why we choose not to state it here.\\
Besides, the matrix $M_d$ is defined in \eqref{water} where we prove that it has rank $d-1$. We also point that $P_{v_0}M_d$ is the null matrix, where $P_{v_0}$ is the projection matrix on $v_0$.\\
Finally, note that we cannot extend the second result to the uniform topology, due to measurability issues, as explained in Section 11.5.3 of \cite{Whitt}.
\end{remark}

\begin{remark}
Previous scaling-limit results were obtained in the isotropic case, for example an annealed~\cite{DM} and quenched CLT was obtained for the simple random walk on the supercritical percolation cluster (\cite{BB}, \cite{MP} and \cite{SS}), the variable speed random walk in random conductances (\cite{SA}) and the random walk in bounded conductances (\cite{M} and~\cite{BP}). The only other limiting process that had appeared was the Fractional Kinetics in the case of the random walk in unbounded random conductances in~\cite{BC}.

Our main result is the first scaling limit result for anisotropic random walks and we believe that this type of result will prove to be universal for random walks in random environments with directional transience that experience trapping. In particular, we expect this type of result to appear for the biased random walk on supercritical percolation cluster. Several steps in our proof will be easily transferable to other models. However one key step (the description of the environment seen by the particle around a large trap) is extremely model dependent and will require substantial work in any other model that will be analyzed in the future.
\end{remark}

\begin{remark}
We believe that the estimates proved in this paper along with techniques from~\cite{ESZ3} should be sufficient to prove aging results in this model.
\end{remark}

\begin{remark}
If we were to consider the {\it variable speed random walk} defined in \cite{BC}, we believe a similar theorem would hold with $n^{\gamma}/L(n)$ replaced by $n$ and $\mathcal{S}_\gamma^{-1}(t)$ replaced by $C_0 t$, where $C_0$ is some constant.
\end{remark}


The process $B_{\mathcal{S}_\gamma^{-1}(\cdot)}$ is known as the Fractional Kinetics. This kind of process has already been found to be the scaling limit of symmetric processes with trapping (see \cite{BenarousCerny,BenarousCerny2,BC,Mourrat}).\\



\subsection{Sketch of proof}\label{sketch}

The proof of the main result is rather long and involved. For this reason, we start the paper by giving a sketch of proof which highlights the structure and the main estimates of the paper. \\

It is known, see~\cite{Fri11}, that the walk is slowed down by the presence of small trapping areas in the environment. The sub-ballisticity condition exhibited in this paper is equivalent to the fact that the annealed exit time of an edge  is infinite. \\
This leads one to believe that the most efficient trapping mechanism is to have one edge with large conductance surrounded by regular edges, and indeed this intuition will turn out to be correct. 

After having identified the geometry of efficient traps, we are going to implement a strategy for the analysis of anisotropic trapping models which was first developed in the works of Enriquez, Sabot and Zindy~\cite{ESZ1,ESZ2,ESZ3} on $\Z$ and later extended to trees in~\cite{FG}, \cite{BH}, \cite{H}.

Apart from identifying the geometry of efficient traps, one of the most difficult tasks is to analyze the tail of the time spent in large traps. One particularly difficult aspect is to describe the environment seen from the particle close to a deep trap.

\subsubsection{Regeneration times and independent regeneration blocks}
The standard approach to  study directionally transient walks is to use the {\it regeneration times} $\tau_1,\ldots,\tau_n$ associated to the walk, which is a particular increasing sequence of random times (see Section \ref{subsecregen}). Our main problem is then to study the scaling limit of a sum of i.i.d.~random variables, namely $\sum_{i=1}^n (\tau_{i+1}-\tau_i)$ where each term corresponds to one {\it regeneration period}.

The two key elements to prove are the following:
\begin{enumerate}
\item the only regeneration periods that matter are those where edges with large conductances are met, here large means above a certain specific cut-off $f(n)$. Furthermore, in those blocks, the time spent outside of the largest conductance does not matter (see Proposition \ref{justin}). This means that $T_i$, the time spent on the largest edge of the $i$-th regeneration block, is a good approximation for $\tau_{i+1}-\tau_i$ when the latter time matters.
\item the time spent during the $i$-th regeneration period on the edge with the largest conductance $c_{*,(i)}^{\text{max}}$ (conditionally on the event that this conductance is large) can asymptotically be written as $c_{*,(i)}^{\text{max}} W^{(i)}_{\infty}$ (see Proposition \ref{final_coupling}) where $W^{(i)}_{\infty}$ is an independent random variable with high enough moments (see Lemma \ref{moment_winf}). This allows us to compute the tail of $c_{*,(i)}^{\text{max}}W^{(i)}_{\infty}$ (see Lemma \ref{final_tail}) and thus, in some sense, the tail of the time spent on the edge with the largest conductance.
\end{enumerate}

This procedure is summed up as follows
\begin{align*}
\sum_{i=0}^{n-1} (\tau_{i+1}-\tau_i) \approx \sum_{i=0}^{n-1} T_i \1{c_{*,(i)}^{\text{max}}\geq f(n)} & \approx \sum_{i=0}^{n-1} W_{\infty}^{(i)}c_{*,(i)}^{\text{max}} \1{c_{*,(i)}^{\text{max}}\geq f(n)} \\ & \approx \sum_{i=0}^{n-1} W_{\infty}^{(i)}c_{*,(i)}^{\text{max}},
           \end{align*}
           the last line being a standard estimate on sums of heavy tailed random variables (see \cite{Durrett}). 
           
           Hence $\tau_n$  behaves like a sum of i.i.d.~random variables whose tails can be computed  and as such we can obtain scaling limit results (see Proposition \ref{justin2}).

Of the two key elements we need as an input, the first one can be proved using techniques similar to \cite{Fri11}, this is done in Section~\ref{pfff17}. The second point is the more difficult estimate, let us now discuss the difficulties arising to obtain that estimate.

\subsubsection{Analysis of the time spent in one edge with large conductance}
This is the most important step.
The key aspects to understand the time spent in an edge with large conductance are the following
\begin{itemize}
\item how likely are we to hit this edge?
\item after having hit it, how much time does it take to come out of it?
\item how likely are we to come back?
\end{itemize}

Let us start by addressing the second question. In a typical situation, once we have entered an edge with large conductance, we will perform a lot of back and forth crossings of that edge. The number of such crossings is roughly geometric (see Lemma \ref{coupl_exp}) and the exit probability of the large edge is almost proportional to the conductances of the adjacent edges (see Lemma \ref{exit_pro}).

The first and third questions are related to the asymptotic environment seen from the particle around a large edge at late time. It turns out that the analysis of this object can be done in a very intuitive manner. Indeed, seen from outside the trap, located at an edge $e$ say, the environment looks like the usual environment where $e$ has been collapsed into a vertex $x_e$, see Figure \ref{collapsing}. Hence the environment seen from the particle should be a weighted average of such environments, to factor in the likelihood of hitting such a vertex. This reweighting is done rigorously using regeneration times see Lemma \ref{big_trap_cond}.


\begin{figure}[h]   
\includegraphics{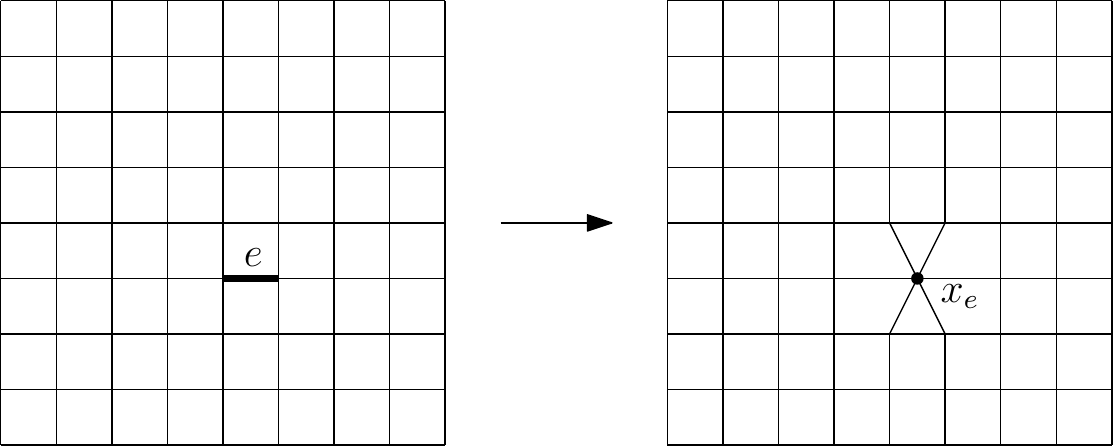}
   \caption{\label{collapsing} Collapsing and edge $e$ into a vertex $x_e$.}
\end{figure}

In fine, we will show that the time $T$ spent by the walker on some edge $e$, when $c_*(e)$ is large, will asymptotically behave like
\begin{equation}\label{linkin}
c_*(e)\times \left(\frac{2}{\overline{\pi}_\infty}\sum_{i=1}^{V_\infty}{\bf e_i}\right),
\end{equation}
where $\overline{\pi}_\infty$ is a random variable linked to the conductances surrounding $x_e$ (or $e$ equivalently) in the asymptotic environment, $V_\infty$ is the number of visits to $x_e$ and $({\bf e_i})$ is an independent sequence of i.i.d.~exponential random variables with mean $1$ (related to the large geometric number of back and forth crossings). The conductance $c_*(e)$ is independent of all these quantities. As we know the tail of $c_*(e)$ and because the sum in \eqref{linkin} behaves nicely, we are able to compute easily the tail of $T$ (see Lemma \ref{final_tail}).

\subsubsection{Conclusion}
We now have the two main ingredients to conclude. First, using the i.i.d.~structure on the trajectory of the walk, we easily obtain limit theorems at regeneration times. Second, we are able to analyze the time spent during one regeneration period. The conclusion will then follow using a classical inversion argument and theorems on the limits of stochastic processes (see Theorem \ref{seinfeld}).

\section{Notations}\label{sect_notations}
In this section, we define most of the necessary notations.\\
Let us denote $\{e_1,...,e_d\}$ an orthonormal basis of $\mathbb{Z}^d$ such that $e_1\cdot\lv\ge e_2\cdot\lv\ge...\ge e_d\cdot\lv\ge0$, and define, for any $i\in\{1,...,d\}$, $e_{i+d}:=-e_i$. The set $\{\pm e_1,\ldots,\pm e_d\}$ will be denoted
by $\nu$. In particular, we have that $e_1\cdot\lv\ge 1/\sqrt{d}$. Set $f_1:=\lv$ and complete it into an orthonormal basis $(f_i)_{1\le i \le d}$ of $\Z^d$.\\
We set, for any $z\in\R$,
\begin{equation}\label{johnlennonstar}
\Hc^+(z):=\left\{x\in\Z^d: x\cdot \lv>z\right\} \text{ and } \Hc^-(z):=\left\{x\in\Z^d: x\cdot \lv\le z\right\}.
\end{equation}
We also define the shorthand notations
\[
\Hc^+_x:=\Hc^+(x\cdot \lv) \text{ and } \Hc^-_x:=\Hc^-(x\cdot \lv).
\]

Two vertices $x,y\in \Z^d$ are called neighbours, or adjacent, denoted $x\sim y$, if $||x-y||=1$, where $||\cdot||$ is the euclidean distance. Besides, for a vertex $y$ and an edge $e$, we write $y\sim e$ if $y\sim e^+$ or $y\sim e^-$ and $y\notin \{e^+,e^-\}$. Two edges $e$ and $e'$ are said to be adjacent or neighbours, denoted $e\sim e'$, if they share exactly one endpoint. We use the notation $y\in e$ if $y\in\{e^+,e^-\}$. If $y$ is a neighbour of $e$ we denote $e_y$ the unique endpoint of $e$ which is adjacent to $y$.

For any pair of neighbouring vertices $x,y\in\Z^d$, we denote $[x,y]$ the unit non-oriented edge linking them. For a vertex $x\in\Z^d$ we denote $||x||_\infty$ the usual uniform norm and, for an edge $[x,y]$ of $\Z^d$, we denote
\[
||[x,y]||_\infty=||x||_\infty\vee||y||_\infty.
\]
Given a set $V$ of vertices of $\Z^d$, we denote by $|V|$ its cardinality, by $E(V):=\left\{[x,y] \text{ s.t.~} x,y\in V\right\}$; we also define its \emph{vertex-boundary}
\[
\partial V:= \left\{x\notin V:\exists y\in V, y\sim x\right\}
\]
as well as its \emph{edge-boundary}
\[
\partial_E V := \left\{[x,y]\in E\left(\Z^d\right): x\in V, \ y\notin V\right\}.
\]
Given an edge $e\in E(\Z^d)$ we denote by $e^+$ and $e^-$ its endpoints, in an arbitrary order if not precised otherwise. Given a set $E$ of edges in $\Z^d$, we denote by $V(E):=\left\{x\in \Z^d: \exists e\in E\text{ s.t.~}  x\in \{e^+,e^-\}\right\}$ its vertices.\\
For any subset
$A$ of vertices or edges of $\Z^d$, we define the {\it width} of $A$ to be
\begin{align}\label{defwith}
W(A)=\max_{1 \leq i \leq d} \Bigl(\max_{y\in A} y\cdot
e_i - \min_{y\in
A} y\cdot e_i \Bigr).
\end{align}
For any $L, L'\ge0$ and $y\in\Z$, define the tilted box
\[
B_y(L,L'):=\left\{x\in\Z^d: |(x-y)\cdot\lv|\le L\text{ and } |(x-y)\cdot f_i|\le L'\text{, for all } i\in\{2,...,d\}\right\},
\]
and its \emph{positive boundary}
\[
\partial^+B_y(L,L'):=\left\{x\in \partial B_y(L,L'): |(x-y)\cdot\lv|>L\right\}.
\]
Besides, we denote $B(L,L')=B_0(L,L')$. Also, we define the ball in the uniform norm $B_\infty(y,r)$ center at $y$ and of radius $r$.\\
For any graph on which a random walk $(X_n)_n$ is defined, let $A$ be some subset of its vertices and define its hitting times
\[
T_A:=\inf\left\{n\ge0:X_n\in A\right\}\text{ and } T^+_A:=\inf\left\{n\ge1:X_n\in A\right\},
\]
and its exit time
\[
T^{\text{ex}}_A:=\inf\left\{n\ge0: X_n\notin A\right\}.
\]
We will use the abuse of notation $T_x$ when $A$ is the singleton $\{x\}$.
We denote $\theta_n$ the time shift by $n$ units of times.\\
Besides, for any real numbers $a$ and $b$, we denote
\[
a\vee b:=\max\{a,b\}\text{ and } a\wedge b:=\min\{a,b\}.
\]
Throughout the paper, the letters $c$ and $C$ denote constants in $(0,\infty)$ that may depend on the dimension $d$, the strength of the bias $\lambda$ and the law $P_*$. Moreover their value may change from line to line.\\

Finally, we will define, later on,  several probability measures that we cannot define properly yet. Nevertheless, let us point out where they are defined and explain roughly their purpose. Firstly, we will define $\mathbb{P}_0^K$, in Definition \ref{defP0K}, which is an annealed measure in a special environment  which is key for understanding the environment such that $0$ is conditioned to be a regeneration time. Secondly, we will introduce the shorthand notation $\overline{\mathbb{P}}$, defined in \eqref{romeo}, which is the law of a regeneration block. Thirdly, we will define a probability measure $\mathbb{P}_n$ at \eqref{defPn} which provides the law of a regeneration block conditioned on meeting a large trap.

\section{Some previous results: estimates on the backtracking event}
As explained in the introduction, this work is the sequel of the work \cite{Fri11} done by the first author. Hence, we use several results from this paper. Let us state two of them and very briefly explain their content.\\

\begin{theorem}[Theorem $5.1$ of \cite{Fri11}]
\label{BL}
For $\alpha>d+3$
\[
\PR[T_{\partial B(L,L^{\alpha})}\neq T_{ \partial^+ B(L,L^{\alpha})}]
\leq Ce^{-cL}.
\]
\end{theorem}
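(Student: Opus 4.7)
The plan is to decompose the exit event according to which face of $B(L,L^\alpha)$ is hit first. Let $\partial^- B := \{x \in \partial B(L, L^\alpha) : x \cdot \lv \leq -L\}$ denote the back face and $\partial^s B := \partial B(L, L^\alpha) \setminus (\partial^+ B \cup \partial^- B)$ the lateral faces. Then
\[
\{T_{\partial B(L,L^\alpha)} \neq T_{\partial^+ B(L,L^\alpha)}\} \subseteq \{T_{\partial^- B} < T_{\partial^+ B}\} \cup \{T_{\partial^s B} < T_{\partial^+ B}\},
\]
and I would bound each contribution by $C e^{-cL}$, in both cases exploiting the exponential bias factor $e^{(x+y)\cdot\ell}$ in the definition of the conductances.

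For the back-exit probability, I would invoke the standard effective-conductance identity
\[
P_0^\omega[T_{\partial^- B} < T_0^+] \leq \frac{\mathcal{C}_{\mathrm{eff}}^\omega(0 \leftrightarrow \partial^- B)}{\pi^\omega(0)}.
\]
Each vertex $y \in \partial^- B$ satisfies $y \cdot \lv \leq -L$, so every edge incident to $y$ has conductance at most $c_*(e)\, e^{-2\lambda L/\sqrt d}$ (using $e_1 \cdot \lv \geq 1/\sqrt d$). Summing this bound over the $O(L^{\alpha(d-1)})$ vertices in $\partial^- B$ and then taking annealed expectation, together with a Markov-type tail bound on $\pi^\omega(0)$ and on the local $c_*$'s, yields the desired exponential estimate.

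For the lateral-exit probability I would first condition on not backtracking past the hyperplane $\{x \cdot \lv \leq -L/(2d)\}$, which is exponentially unlikely by the argument above. On the complementary event the walk is confined to a slab of width $O(L)$ in the direction $\lv$, and one has to show that within this slab the walk exits through $\partial^+ B$ before accumulating a transverse displacement of order $L^\alpha$. This is the delicate step: because the transverse conductances carry no exponential prefactor, a per-lateral-vertex effective-conductance bound does not decay in $L^\alpha$, and one must instead argue via some form of mesoscopic renormalization (partitioning the slab along $\lv$ into blocks inside which the walk makes controlled lateral progress), or via a regeneration-like coupling with a driftless lateral walk, so as to turn the zero-drift property of the transverse projections $X_n \cdot f_i$ for $i \geq 2$ into an effective concentration estimate. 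The threshold $\alpha > d+3$ is then tailored so that the polynomial union-bound penalty from the $O(L^{1+\alpha(d-1)})$ lateral vertices is absorbed by the exponential decay.

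The main obstacle is this lateral-exit bound. Without uniform ellipticity one cannot simply control per-edge transition probabilities deterministically, and one must separate a high-${\bf P}$-probability ``good'' environment (on which an effective deterministic estimate holds) from a ``bad'' environment (handled by ${\bf P}$-tail bounds on $c_*$); making this separation balance precisely at the exponent $\alpha > d+3$ is the core technical input of the argument.
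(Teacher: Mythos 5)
This theorem is quoted verbatim from [Fri11] and the current paper offers no proof of its own, so the relevant comparison is with the argument of Theorem 5.1 in Fribergh's 2013 paper.

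Your overall decomposition (back-exit versus lateral-exit) is the right first move, but the proof has two genuine gaps. First, the back-exit bound: the escape-probability formula $P_0^\omega[T_{\partial^- B} < T_0^+] = \mathcal{C}_{\mathrm{eff}}^\omega(0 \leftrightarrow \partial^- B)/\pi^\omega(0)$ controls the probability of reaching $\partial^- B$ \emph{before returning to} $0$, which is a different event from $\{T_{\partial^- B} < T_{\partial^+ B}\}$ --- the walk may return to $0$ many times and still eventually back-exit. A supermartingale of the form $e^{-\mu X_n\cdot\ell}$ is the usual way to repair this, but that supermartingale property fails pointwise without uniform ellipticity: if the backward $c_*$ is much larger than the forward one at some vertex, the conditional one-step inequality breaks, so one cannot run optional stopping naively. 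Second, and more seriously, the lateral-exit bound, which you yourself flag as the delicate step, relies on the ``zero-drift'' of the transverse projections $X_n\cdot f_i$, $i\geq 2$. These are \emph{not} quenched martingales (the $c_*$'s are asymmetric in each transverse direction), and they are only ``driftless'' in the annealed sense via the symmetry of the environment law, which is not enough for a direct Azuma-type concentration bound. Your appeals to mesoscopic renormalization or a driftless-coupling are precisely where a complete argument is missing.

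What the actual proof does, and what your proposal never invokes, is the good/bad decomposition that is reproduced in Section~\ref{sect_badareas} of this paper (and is Section~5 of~[Fri11]). The key input is Lemma~\ref{BLsizeclosedbox}: bad components have width with exponentially decaying tail, with a rate $\xi_1(K)\to\infty$ as $K\to\infty$. On the high-probability event that every bad component met inside $B(L,L^\alpha)$ has width $O(\log L)$ say, the walk restricted to the good skeleton enjoys the \emph{fake} uniform ellipticity of Remark~\ref{fakeUE}, and one can run a Shen-style (2002) argument: exponential change-of-measure for progress in the $\vec\ell$-direction, concentration for transverse displacements, and a comparison of exit times, all at the level of the good backbone. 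The time spent in bad pockets is controlled by the electrical-network bound of the flavour of Lemma~\ref{timetrap}. The threshold $\alpha > d+3$ then arises from balancing the union-bound entropy of the box $O(L^{1+\alpha(d-1)})$ against the various polynomial costs of the renormalization and the exponential gain. So the two-regime decomposition you gesture at is correct in spirit, but the concrete mechanism --- small bad clusters plus fake uniform ellipticity on the good skeleton --- is the crucial piece you do not supply.
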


\begin{lemma}[Lemma $7.3$ of \cite{Fri11}]
\label{thisisboring}
We have for any $n$,
\[
\PR[T_{\mathcal{H}^-(-n)}<\infty]\leq C\exp(-cn).
\]
\end{lemma}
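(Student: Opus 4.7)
The plan is to iterate Theorem~\ref{BL}. Applying it with $L=n$ and some $\alpha > d+3$, we have that, with $\PR$-probability at least $1-Ce^{-cn}$, the walker exits the tilted box $B(n,n^\alpha)$ through its positive end-cap $\partial^+ B(n,n^\alpha)$. Since $\Hc^-(-n)$ lies outside the interior of $B(n,n^\alpha)$, the walker does not visit $\Hc^-(-n)$ prior to this exit. On this good event, the exit point $Y_1 := X_{T_{\partial B(n,n^\alpha)}}$ satisfies $Y_1\cdot\lv > n$.

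To handle subsequent excursions, we iterate. Set $Y_0 = 0$ and, on the accumulated good event through step $k$, let $L_k := Y_k\cdot\lv + n$ and consider the translated tilted box $B_{Y_k}(L_k,L_k^\alpha)$, which is designed so that its negative end-cap lies precisely at height $\{x\cdot\lv = -n\}$. Conditionally on the past (at the exit time $\tau_k$ of the previous box), we wish to apply Theorem~\ref{BL} via strong Markov to deduce that, with probability at least $1 - Ce^{-cL_k}$, the walker exits $B_{Y_k}(L_k,L_k^\alpha)$ through its positive face, hence avoids $\Hc^-(-n)$ during this excursion, reaching $Y_{k+1}$ with $Y_{k+1}\cdot\lv \geq Y_k\cdot\lv + L_k = 2Y_k\cdot\lv + n$. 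By induction one then has $Y_k\cdot\lv \geq (2^k-1)n$ and $L_k\geq 2^k n$, so the failure probability at step $k+1$ is at most $Ce^{-c 2^k n}$, and summing the resulting geometric series yields a total bound $\sum_{k\geq 0} Ce^{-c 2^k n}\leq Ce^{-cn}$.

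The main obstacle is that Theorem~\ref{BL} is an annealed statement starting from the origin, whereas $Y_k$ is correlated with the conductances already sampled along the trajectory; its direct reapplication from $Y_k$ therefore requires care. The standard remedy is to condition on $\tau_k$, on the walker's trajectory up to $\tau_k$ and on the conductances of the already-visited edges. Since ${\bf P}$ is a product measure, the conditional law of the remaining conductances is still i.i.d.\ with law $P_*$ and so translation invariant; combined with a union bound over the polynomially (in $L_{k-1}^{\alpha d}$) many possible values of $Y_k$ on $\partial^+ B_{Y_{k-1}}(L_{k-1},L_{k-1}^\alpha)$, this reduces the step-$(k+1)$ estimate to Theorem~\ref{BL} at the shifted origin $Y_k$, at the cost of a polynomial-in-$n$ factor which is harmlessly absorbed by the exponential decay in the preceding paragraph.
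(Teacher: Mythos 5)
The paper does not prove this lemma; it is quoted verbatim from~\cite{Fri11}, so there is no in-text argument to compare against. Assessing your proposal on its own merits: the iteration strategy (escape successively larger tilted boxes through the positive face, using Theorem~\ref{BL} at each scale, with the failure probabilities decaying at least geometrically) is indeed the right idea, and the growth $L_k\geq 2^k n$ and the absorption of the polynomial union-bound factors into the exponential are both fine. Two points need repair. First, a geometric one: $\Hc^-(-n)$ does \emph{not} lie outside $B(n,n^{\alpha})$ — the slab $\{x\cdot\lv=-n\}$ is still inside the box — so you must shrink to $B(n-1,(n-1)^{\alpha})$ (and analogously choose $L_k<Y_k\cdot\lv+n$) so that the box is contained in $\Hc^+(-n)$ and reaching $\Hc^-(-n)$ truly forces an exit through a non-positive face.

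The second point is the real gap. Conditioning on the trajectory up to $\tau_k$ \emph{and on the conductances of the already-visited edges} does not reduce the $(k{+}1)$-st step to Theorem~\ref{BL}. After such conditioning, the environment seen from $Y_k$ is not distributed as $\mathbf{P}$: the new box $B_{Y_k}(L_k,L_k^\alpha)$ overlaps substantially with the region the walker has already explored, and the conductances there are now fixed (and biased by the conditioning). Theorem~\ref{BL} is an annealed estimate over an entirely fresh $\mathbf{P}$-environment and provides no control over such partially fixed configurations. The correct repair — which your mention of the union bound already half-suggests — is to \emph{not} condition on the environment at all. Decompose over the possible exit points $y\in\partial^+ B_{Y_{k-1}}(L_{k-1},L_{k-1}^\alpha)$; for each fixed $y$, apply the quenched strong Markov property at $T_y$, bound the reaching factor $P_0^\omega[T_y<\infty]\le 1$, and then use translation invariance of $\mathbf{P}$ to write
\[
\PR_0\bigl[T_y<\infty,\ T_{\partial B_y(L,L^\alpha)}\circ\theta_{T_y}\neq T_{\partial^+B_y(L,L^\alpha)}\circ\theta_{T_y}\bigr]\le \PR_0\bigl[T_{\partial B(L,L^\alpha)}\neq T_{\partial^+B(L,L^\alpha)}\bigr],
\]
which is exactly Theorem~\ref{BL}. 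Summing over $y$ then yields the polynomial factor you already accounted for. With the conditioning step replaced by this Markov-plus-translation-invariance argument, the proof goes through.
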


These two results look alike. The first one states that the walk, started at zero, will  exit a large tilted box through the positive boundary (towards $\lv$) with large probability.\\
The second result states that the probability of ever backtracking at distance $n$ is exponentially small in $n$.

\section{Good/Bad areas decomposition}\label{sect_badareas}
Here, we will follow the idea already used in \cite{Fri11}, which consists in partitioning the space into \emph{good parts} where the walk is well-behaved and \emph{bad parts} where we have much less control.\\

Recall that, in the model we consider, we have no uniform ellipticity, but it will be, at some places, convenient to consider only edges which are \emph{typical}, in the sense that their conductances are neither too small nor too large. This will enable us to obtain several estimates.\\
For this purpose, we will define some vocabulary. These definitions depend on some real number $K\ge1$, which we will choose later to be large. 
\begin{definition}\label{defnormaledge} Fix a constant $K\ge1$.
We say that an edge $e\in E(\Z^d)$ is $K$-normal if $c_*(e)\in[1/K,K]$, otherwise the edge $e$ is said to be abnormal. 
\end{definition}
Note that we can choose the probability for an edge to be abnormal as small as we need by taking $K$ large enough: indeed, for any edge $e\in E(\Z^d)$, $c_*(e)\in(0,+\infty)$ almost surely, hence $P_*\left[e \text{ is abnormal}\right]=P_*\left[c_*(e)\notin[1/K,K]\right]$ goes to $0$ as $K$ goes to infinity.
\begin{definition}\label{defopenvertex} Fix a constant $K\ge1$.
A vertex $x\in\Z^d$ is $K$-open if, for any neighbouring site $y$, the edge $[x,y]$ is K-normal. A vertex that is not $K$-open is said to be $K$-closed.
\end{definition}
As before, the probability for a vertex to be $K$-closed can be made arbitrarily small be taking $K$ large enough.

\begin{remark}
\label{fakeUE}
We may notice that for any \emph{open} vertex $x$, using \eqref{def_conduct} and \eqref{defpi}, we have
\[
\frac{c}Ke^{2\lambda x\cdot\vec{\ell}} \leq c^{\omega}\bigl([x,y]\bigr)\leq
CKe^{2\lambda x\cdot\vec{\ell}},
\]
for any vertex $y$ adjacent to $x$ in $\Z^d$, and thus
\[
\frac{c}Ke^{2\lambda x\cdot\vec{\ell}}\leq\pi^{\omega}(x)\leq
CKe^{2\lambda x\cdot\vec{\ell}}.
\]
\end{remark}

\begin{definition}\label{defgoodvertex} Fix a constant $K\ge1$.
A vertex $x\in\Z^d$ is $K$-good if there exists an \emph{infinite directed $K$-open path} starting at $x$, that is a path $\{x_0,x_1,x_2,...\}$ with $x_0=x$ and such that, for all $i\ge0$,
\begin{itemize}
\item[(1)] we have $x_{2i+1} -x_{2i} = e_1$ and $x_{2i+2}-x_{2i+1} \in
\{e_1,\ldots,e_d\}$;
\item[(2)] $x_i$ is $K$-open.
\end{itemize}
If a vertex is not $K$-good, it is said to be $K$-bad. 
\end{definition}

\begin{remark}\label{remopenkey}
The key property
of a good point will be that there exists a open path $(x_i)_{i\geq0}$
such that, for all $i\ge0$, $x_{i}\cdot\lv\le x_{i+1}\cdot\lv$, and 
$(x_i-x_0)\cdot\vec{\ell} \geq c(d) i$.
\end{remark}

\begin{remark}
Note that $d\geq 2$ is crucial in the decomposition between good and bad points. Indeed in dimension $1$ there exists only one directed path. This means that $K$-good points exists only if the environment is uniformly elliptic which is not the context we are considering.
\end{remark}

For notational simplicity, we will often call edges and vertices open, closed, normal, etc, forgetting the dependence in $K$.\\

Let us state some results proved in \cite{Fri11}. 
For any environment $\omega$, let us denote $\mathrm{BAD}^\omega_K(x)$ the connected component of $K$-bad
vertices containing~$x$, in case $x$ is good then $\mathrm
{BAD}^\omega_K(x)=\emptyset$. We will often forget to indicate the dependence in $K$ or $\omega$ when it is clear from the context. Also, recall the definition \eqref{defwith} of $W(\cdot)$.

\begin{lemma}[Lemma $5.1$ of \cite{Fri11}]
\label{BLsizeclosedbox}
There exists $K_0<\infty$ such that, for any $K\geq K_0$ and for any $x\in\Z
^d$, we have that the cluster $\mathrm{BAD}_K(x)$ is finite ${\mathbf
P}$-a.s. and
\[
{\mathbf P}\bigl[ W\bigl(\mathrm{BAD}_K(x)\bigr) \geq n
\bigr] \leq C\exp\bigl(-\xi_1(K)n\bigr),
\]
where $\xi_1(K)\to\infty$ as $K$ tends to infinity. In particular, this implies that ${\bf P}\left[0\text{ is good}\right]>0$.
\end{lemma}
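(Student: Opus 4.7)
The plan is to deduce the lemma by comparison with a supercritical oriented Bernoulli site percolation. First, the probability that a single vertex is $K$-closed is bounded by
$$
{\bf P}[x \text{ is } K\text{-closed}] \leq 2d\, P_*[c_* \notin [1/K, K]],
$$
which tends to $0$ as $K\to\infty$ by the almost-sure finiteness and positivity of $c_*$. Moreover, the field $\{{\bf 1}_{\{x\text{ is }K\text{-closed}\}}\}_{x\in\Z^d}$ is $1$-dependent, since vertices $x,y$ with $\|x-y\|_1\geq 2$ depend on disjoint sets of edges. By the Liggett-Schonmann-Stacey stochastic-domination theorem, for $K$ large enough this field is dominated by an i.i.d.\ Bernoulli site percolation with closed density $\tilde q(K)$ that still vanishes as $K\to\infty$. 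I may therefore work inside the dominating i.i.d.\ model.

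Next, by Definition \ref{defgoodvertex}, $x$ is $K$-good precisely when it belongs to the infinite oriented cluster of open sites for the oriented percolation on $\Z^d$ whose allowed moves are $e_1$ at odd steps and any of $\{e_1,\ldots,e_d\}$ at even steps. For $1-\tilde q(K)$ close to $1$, a projection onto the plane generated by $e_1$ and any $e_j$ ($j\geq 2$) dominates the standard north-east oriented site percolation on $\Z^2$ deep in its supercritical regime. An infinite oriented open cluster therefore exists almost surely, which simultaneously yields ${\bf P}[0\text{ is good}]>0$ and shows that every $K$-bad vertex belongs to a finite component of the complement of that infinite cluster; in particular $\mathrm{BAD}_K(x)$ is a.s.\ finite.

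For the exponential tail on the width, I would invoke the classical quantitative result for deeply supercritical oriented percolation: the finite connected components of the complement of the infinite open cluster have diameters with an exponentially decaying tail, with rate tending to infinity as the closed density tends to $0$. This is obtained either through a Peierls-type argument on $*$-connected cuts of closed sites separating a finite component from infinity, or through the block construction of Grimmett-Marstrand. Since $\mathrm{BAD}_K(x)$ sits inside such a finite component, transporting the bound back via the LSS coupling gives
$$
{\bf P}[W(\mathrm{BAD}_K(x))\geq n] \leq C\exp(-\xi_1(K) n),
$$
with $\xi_1(K)\to\infty$ as $K\to\infty$.

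The main obstacle is precisely this last quantitative step. Badness is highly non-local: whether $x$ is bad depends on the entire forward cone $\Hc^+_x$, so one cannot naively union-bound over the $C^n$ connected sets of size $n$ containing $x$ with a $q(K)^n$ bound, since the badness events are not independent and each has marginal probability far larger than $q(K)$. The cleanest way around this is to work with the \emph{closed} sites instead---which are locally defined---and exhibit, for each finite bad component, a $*$-connected separating surface of closed sites, whose probability is then genuinely exponentially small. This reduction to a local event is the core content of the Peierls/block argument and is what allows the exponential tail on the width of $\mathrm{BAD}_K(x)$ to emerge with a rate $\xi_1(K)$ that blows up with $K$.
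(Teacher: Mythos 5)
The overall strategy — reduce the $1$-dependent field of $K$-closed sites to i.i.d.\ site percolation with small closed density via Liggett--Schonmann--Stacey, then compare the directed open paths of Definition~\ref{defgoodvertex} to supercritical oriented percolation — is the standard one for this type of estimate, and the first two steps of your sketch are sound. You are also right that the real difficulty is the non-locality of the event $\{x\text{ is bad}\}$, which rules out a naive union bound.

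The specific resolution you propose, however, does not work. You want to exhibit, for each finite bad component, a $*$-connected separating surface of \emph{closed} sites, and then Peierls on that. But the vertex boundary of $\mathrm{BAD}_K(x)$ consists entirely of \emph{good} vertices (this is exactly \eqref{disjoint}: $\partial\mathrm{BAD}_K(x)\subset\mathrm{GOOD}_K$), and good vertices are by definition $K$-open. So the bad component is hemmed in by open sites, not closed ones, and no $*$-connected closed cut wrapping around $\mathrm{BAD}_K(x)$ can exist; the closed sites that cause the badness sit scattered inside and \emph{forward} of the component, not around it. The Grimmett--Marstrand reference is also slightly off target (that is an undirected, slab-percolation result), though the underlying intuition — go through a renormalized block construction — is the right one. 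What is actually needed is a quantitative, oriented-percolation block argument: tile $\Z^d$ by boxes, declare a box good when a suitable local crossing event holds (e.g.\ every vertex in the box possesses a directed open path reaching a prescribed ``seed'' on the far face), check that good boxes form a finite-range dependent field of density tending to $1$ as $K\to\infty$ (so that a second LSS domination applies at box scale), and observe that a bad box-cluster must contain $\mathrm{BAD}_K(x)$ so that its width controls $W(\mathrm{BAD}_K(x))$. That produces the $C\exp(-\xi_1(K)n)$ bound with $\xi_1(K)\to\infty$; the naive Peierls on closed sites cannot.
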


For $x \in\Z^d$, we define $\mathrm{BAD}^{\mathrm{s}}_x(K,\omega)=\{x\}\cup\bigcup_{y\sim x} \mathrm{BAD}_K^\omega(y)$ the union of all
bad areas adjacent to $x$. Notice that when $x\in\mathrm{BAD}_K(\omega)$, we have $\mathrm{BAD}^{\mathrm{s}}_x(K,\omega)=\mathrm{BAD}_K^\omega(x)$. Again, we will often drop the dependency on $\omega$ or on $K$ in the notation of  $\mathrm{BAD}^{\mathrm{s}}_x(K,\omega)$.\\
As a direct consequence of Lemma \ref{BLsizeclosedbox}, we have:
%
\begin{lemma}[Lemma 8.2 of \cite{Fri11}]\label{tgb}
There exists $K_0<\infty$ such that, for any $K\ge K_0$, 
$\mathrm{BAD}^{\mathrm{s}}_x(K)$ is finite ${\mathbf P}$-a.s.~and
\[
{\mathbf P}\bigl[ W\bigl(\mathrm{BAD}^{\mathrm{s}}_x(K)
\bigr) \geq n\bigr] \leq C\exp\bigl(-\xi_1(K)n\bigr),
\]
where $\xi_1(K)\to\infty$ as $K$ tends to infinity.
\end{lemma}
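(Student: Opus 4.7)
The plan is to reduce the statement to Lemma \ref{BLsizeclosedbox} via a simple geometric decomposition combined with a union bound, so that no new probabilistic input is needed.

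First, since $\mathrm{BAD}^{\mathrm{s}}_x(K) = \{x\}\cup\bigcup_{y\sim x}\mathrm{BAD}_K(y)$ is the union of at most $2d+1$ sets, each of them finite $\mathbf{P}$-a.s.\ by Lemma \ref{BLsizeclosedbox}, the almost sure finiteness of $\mathrm{BAD}^{\mathrm{s}}_x(K)$ is immediate.

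The main step is the deterministic inequality
\[
W\bigl(\mathrm{BAD}^{\mathrm{s}}_x(K)\bigr)\leq 2+2\max_{y\sim x}W\bigl(\mathrm{BAD}_K(y)\bigr).
\]
To see this, fix a coordinate $i\in\{1,\ldots,d\}$ and pick any $z$ in $\mathrm{BAD}^{\mathrm{s}}_x(K)$. Either $z=x$, or $z\in\mathrm{BAD}_K(y)$ for some neighbour $y$ of $x$; in the latter case $y$ itself lies in $\mathrm{BAD}_K(y)$, so that
\[
|z\cdot e_i - x\cdot e_i|\leq |z\cdot e_i - y\cdot e_i| + |y\cdot e_i - x\cdot e_i|\leq W\bigl(\mathrm{BAD}_K(y)\bigr)+1.
\]
Taking the maximum over two such points $z, z'$ in the union and applying the definition \eqref{defwith} of the width yields the claimed bound.

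Consequently, on the event $\{W(\mathrm{BAD}^{\mathrm{s}}_x(K))\geq n\}$, at least one of the $2d$ neighbouring clusters must satisfy $W(\mathrm{BAD}_K(y))\geq (n-2)/2$. A union bound over the $2d$ neighbours, together with Lemma \ref{BLsizeclosedbox}, gives
\[
{\mathbf P}\bigl[W(\mathrm{BAD}^{\mathrm{s}}_x(K))\geq n\bigr]\leq 2d\cdot C\exp\bigl(-\xi_1(K)(n-2)/2\bigr),
\]
which is of the announced form after absorbing the fixed factor $\exp(\xi_1(K))$ into $C$ and relabelling $\xi_1(K)/2$ as a new $\xi_1(K)$; this new function still tends to infinity with $K$. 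No serious obstacle is anticipated: all the substantive probabilistic content is already contained in Lemma \ref{BLsizeclosedbox}, and translation invariance of ${\mathbf P}$ makes the choice of $x$ irrelevant.
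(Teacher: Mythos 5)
Your argument is correct and is precisely what the paper has in mind when it calls Lemma \ref{tgb} a ``direct consequence'' of Lemma \ref{BLsizeclosedbox}: a deterministic bound on the width of the union in terms of the widths of the $2d$ neighbouring bad clusters, followed by a union bound. The only cosmetic point is that you may wish to handle separately the trivial case $\mathrm{BAD}^{\mathrm{s}}_x(K)=\{x\}$ (so that the $\mathrm{BAD}_K(y)$ appearing in your triangle inequality is nonempty and contains $y$), but this does not affect the substance.
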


Let us define $\mathrm{BAD}_K=\bigcup_{x\in\Z^d} \mathrm{BAD}_K(x)$ which
is a
union of finite sets. Also we set $\mathrm{GOOD}_K=\Z^d \setminus
\mathrm{BAD}_K$. We may notice that
%
\begin{equation}
\label{disjoint} \mbox{for any $x\in\mbox{BAD}_K$,}\qquad\partial
\mathrm{BAD}_K(x) \subset\mathrm{GOOD}_K,
\end{equation}
since $\mathrm{BAD}_K(x)$ is a connected component of bad points.

In the sequel, $K$ will always be large enough so that $\mathrm{BAD}_K(x)$ is finite for any $x\in\Z^d$.

\begin{lemma}[Lemma 8.1 of \cite{Fri11}]
\label{backbone}Fix an environment $\omega$.
For any $x\in\mathrm{GOOD}_K(\omega)$, we have
\[
E^{\omega}_x \Biggl[\sum_{i=0}^{\infty}
{\mathbf1} {\{X_i=x\}} \Biggr]\leq C(K)<\infty.
\]
\end{lemma}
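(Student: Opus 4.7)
The natural strategy is to translate the expected number of visits into an effective conductance estimate and then exploit the infinite open path guaranteed by goodness. The plan is as follows.

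First, I would use the standard electrical network identity for reversible Markov chains: if $\mathcal{C}^{\text{eff}}_\omega(x \leftrightarrow \infty)$ denotes the effective conductance from $x$ to infinity in the network $(\Z^d,c^\omega)$, then
\[
E^\omega_x\Biggl[\sum_{i=0}^\infty \1{X_i = x}\Biggr] = \frac{1}{P^\omega_x[T_x^+ = \infty]} = \frac{\pi^\omega(x)}{\mathcal{C}^{\text{eff}}_\omega(x \leftrightarrow \infty)},
\]
so the lemma reduces to showing $\mathcal{C}^{\text{eff}}_\omega(x\leftrightarrow\infty) \geq c(K)\,\pi^\omega(x)$ uniformly in $\omega$ and in $x\in\mathrm{GOOD}_K(\omega)$.

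Second, since $x$ is $K$-good, Definition \ref{defgoodvertex} produces an infinite directed $K$-open path $(x_0 = x, x_1, x_2, \dots)$. By Remark \ref{remopenkey}, $x_i \cdot \vec\ell \geq x\cdot\vec\ell + c(d) i$. Each $x_i$ is $K$-open, hence every edge on the path is $K$-normal. Using \eqref{def_conduct}, the conductance of the edge $e_i = [x_i, x_{i+1}]$ satisfies
\[
c^\omega(e_i) \geq \frac{1}{K} e^{(x_i + x_{i+1})\cdot \ell} \geq \frac{1}{K} e^{2\lambda x \cdot\vec\ell} \, e^{2\lambda c(d) i}.
\]
By Rayleigh's monotonicity, $\mathcal{C}^{\text{eff}}_\omega(x\leftrightarrow \infty)$ is bounded below by the conductance of this single series path, namely $\bigl(\sum_{i\geq 0} 1/c^\omega(e_i)\bigr)^{-1}$. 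Summing the geometric series yields
\[
\mathcal{C}^{\text{eff}}_\omega(x\leftrightarrow\infty) \geq \Biggl(\sum_{i\geq 0} K\,e^{-2\lambda x\cdot\vec\ell}e^{-2\lambda c(d) i}\Biggr)^{-1} \geq \frac{c}{K}\, e^{2\lambda x\cdot\vec\ell}.
\]

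Third, since $x$ is good (hence open), Remark \ref{fakeUE} gives $\pi^\omega(x) \leq CK\,e^{2\lambda x\cdot\vec\ell}$. Combining with the previous bound, $\mathcal{C}^{\text{eff}}_\omega(x\leftrightarrow\infty) \geq c/(CK^2)\,\pi^\omega(x)$, and plugging this back into the identity from the first step yields
\[
E^\omega_x\Biggl[\sum_{i=0}^\infty \1{X_i = x}\Biggr] \leq CK^2 =: C(K) < \infty,
\]
which is the claim. There is no real obstacle here once one notices that goodness supplies both the correct bound on $\pi^\omega(x)$ (via openness of $x$) and an explicit series path whose conductances grow geometrically fast enough to dominate the exponential weight of $\pi^\omega(x)$; the mild point to keep in mind is simply that the sum of inverse conductances along the escape path converges, which is exactly where the directedness assumption (1) in Definition \ref{defgoodvertex} is essential.
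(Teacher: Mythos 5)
Your proof is correct and matches the paper's approach exactly: the paper's own proof of Lemma \ref{backboneY} (which it notes is "identical to the proof of Lemma 8.1 in \cite{Fri11}") uses precisely the same three ingredients — the Green's-function-to-effective-conductance identity, the upper bound $\pi^\omega(x)\le C(K)e^{2\lambda x\cdot\vec\ell}$ from Remark \ref{fakeUE}, and Rayleigh's monotonicity applied to the series of geometrically growing conductances along the directed open path from Definition \ref{defgoodvertex} and Remark \ref{remopenkey}.
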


For any set of edges $A\subset E(\Z^d)$ and any $V\subset \Z^d$, let us define
\begin{align}\label{kurt}
T^+_{V,A}:= \sum_{e\in A} \left| \left\{k\in[1,T^+_{V}]:[X_{k-1},X_k]=e\right\}\right|,
\end{align}
and notice that $T^+_{V}=T^+_{V,A}+T^+_{V,A^c}$. Besides, for any $t\in\R_+\cup\{+\infty\}$, define
\begin{equation}\label{def_Et}
E_{< t}:=\left\{e\in E(\Z^d): c_*(e)< t\right\}.
\end{equation}

The statement and the proof of the following result are close to those of Lemma 8.3 in \cite{Fri11}. It will be used to prove that the time spent on edges with a conductance that is not too large is asymptotically negligible.
%
\begin{lemma}
\label{timetrap}
Fix an environment $\omega$, for any $x\in\Z^d$ which is open, and any $t\in\R_+\cup\{+\infty\}$ we have that
\[
E_x^{\omega}\bigl[T^+_{\mathrm{GOOD}(\omega)\cup\{x\},E_{< t}}\bigr]\leq C(K) \exp
\bigl(3\lambda W\left(\mathrm{BAD}^{\mathrm{s}}_x(
\omega)\right)\bigr) \biggl(1+\sum_{e\in
E(\mathrm{BAD}^{\mathrm{s}}_x)\cap E_{< t}}
c_*^{\omega}(e) \biggr).
\]
\end{lemma}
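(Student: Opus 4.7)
The plan is to use reversibility on the chain killed at $A := \mathrm{GOOD}(\omega)\cup\{x\}$ to bound each edge-traversal count by a suitable conductance ratio, and to exploit the openness of $x$ to supply a low-resistance single-edge path from $\mathrm{BAD}^{\mathrm{s}}_x$ back to $A$. This is essentially the argument of Lemma 8.3 of \cite{Fri11}, now tracking only those edges with $c_* < t$.

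First I would observe that, since $x$ is open and every bad cluster $\mathrm{BAD}(y)$ with $y\sim x$ has its vertex-boundary contained in $\mathrm{GOOD}$, before time $T^+_A$ the walker only traverses edges incident to $x$ or edges in $E(\mathrm{BAD}^{\mathrm{s}}_x)$. Each of the at most $2d$ edges incident to $x$ is moreover traversed at most once during $[1,T^+_A]$ (any later traversal would step onto $x$ or a good vertex, terminating the excursion), which absorbs the ``$1$'' on the right-hand side. It therefore suffices to bound, for every $e=[u,v]\in E(\mathrm{BAD}^{\mathrm{s}}_x)\cap E_{<t}$ with $x\notin e$, the expected number $N_e$ of traversals of $e$ before $T^+_A$.

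Conditioning on the first step $X_1=y\notin A$ and applying the standard reversibility identity for the Green's function $G_A$ of the killed chain gives
\[
E_y^\omega[N_e\text{ before }T_A] \;=\; \frac{c^\omega(e)}{\pi^\omega(y)}\bigl(G_A(u,y)+G_A(v,y)\bigr) \;\le\; \frac{2\,c^\omega(e)}{\pi^\omega(y)}\,G_A(y,y),
\]
the inequality following from the maximum principle $G_A(\cdot,y)\le G_A(y,y)$. The electrical identity $G_A(y,y)=\pi^\omega(y)\,R_{\mathrm{eff}}(y\leftrightarrow A)$ together with the Rayleigh upper bound $R_{\mathrm{eff}}(y\leftrightarrow A)\le 1/c^\omega(x,y)$ (obtained from the single-edge ``path'' $y\to x\in A$) makes the $\pi^\omega(y)$ factors cancel, yielding $E_y^\omega[N_e\text{ before }T_A]\le 2\,c^\omega(e)/c^\omega(x,y)$. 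Since $x$ is open we have $c_*([x,y])\ge 1/K$, and since the four vertices $e^+,e^-,x,y$ all lie in (a one-neighbourhood of) $\mathrm{BAD}^{\mathrm{s}}_x$,
\[
\frac{c^\omega(e)}{c^\omega(x,y)} \;=\; \frac{c_*(e)}{c_*([x,y])}\,e^{(e^++e^--x-y)\cdot\ell} \;\le\; K\,c_*(e)\,e^{2\lambda W(\mathrm{BAD}^{\mathrm{s}}_x)+\lambda}.
\]
Averaging over $y\sim x$ with weights $p^\omega(x,y)\le 1$ and summing over $e\in E(\mathrm{BAD}^{\mathrm{s}}_x)\cap E_{<t}$ then yields the claim (in fact with the slightly better exponent $2\lambda W$ in place of $3\lambda W$).

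The main technical point, and the reason for using the one-edge Rayleigh bound, is that any resistance estimate based on paths through the interior of $\mathrm{BAD}^{\mathrm{s}}_x$ would be useless: a single very small $c_*$ inside the bad cluster could make it blow up arbitrarily. The one-edge path $y\to x$ avoids this by relying only on $c^\omega(x,y)$, which is controlled precisely because $x$ is open.
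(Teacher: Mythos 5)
Your proof is correct and rests on exactly the same mechanism as the paper's: the Green-function/voltage reversibility identity, the maximum principle to reduce to $G_A(y,y)$, and the single-edge Rayleigh bound through the open vertex $x$. The paper packages these steps into the abstract Lemma \ref{psyche} applied to the graph with $\partial\mathrm{BAD}^{\mathrm{ss}}_x$ collapsed to a point $\delta$, whereas you carry them out directly on the killed chain in $\Z^d$ and handle the $O(1)$ contribution of the edges incident to $x$ separately; this is a presentational rather than a substantive difference.
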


\begin{proof}
The first remark to be made is that since $x$ is open, then , for all $y\sim x $,
$c_*([x,y])\in[1/K,K]$. Besides, if $\mathrm{BAD}^{\mathrm{s}}_x\setminus \{x\}=\emptyset$, the result is obvious, thus we assume from now on that $\mathrm{BAD}^{\mathrm{s}}_x\setminus\{x\}\neq\emptyset$.\\
Let us introduce the notation $\mathrm{BAD}^{\mathrm{ss}}_x(K)=
\mathrm{BAD}^{\mathrm{s}}_x(K)\setminus\{x\}$.

Now, let us consider the finite network obtained by taking $
\mathrm{BAD}^{\mathrm{ss}}_x(\omega)\cup\partial\mathrm{BAD}^{\mathrm{ss}}_x(\omega)$ and merging all points of $ \partial
\mathrm{BAD}^{\mathrm{ss}}_x(\omega)$ (which contains $x$) to one point $\delta$ and removing all the ensuing loops. We denote
$\omega_{\delta}$ the resulting graph which is finite by
Lemma \ref{tgb} and Lemma \ref{BLsizeclosedbox}, and it is also connected, since the different connected components
$\mathrm{BAD}_K(y)$, $x\sim y$, are connected through $x$.\\
By Lemma \ref{psyche} from the Appendix, we have that, for any $y\in \mathrm{BAD}^{\mathrm{ss}}_x(K)$, $y\sim x$,
\begin{eqnarray*}
E^{\omega_\delta}_{y}[T_{\delta,E_{<t}}^+] 
&\le& \frac{2}{c^\omega(y,x)}  \sum_{e\in
E(\mathrm{BAD}^{\mathrm{ss}}_x)\cap E_{< t}} c^\omega(e),
\end{eqnarray*}
where we used that $[x,y]$ is an edge linking $y$ and $\delta$ in $\omega_\delta$.

Besides, using the fact the transition probabilities of the random walk in $\omega_{\delta}$ at
any point different from $\delta$ are the same as that of the walk in
$\omega$, Markov's property yields
\begin{align}\nonumber
E_x^{\omega}\bigl[T^+_{\mathrm{GOOD}(\omega)\cup\{x\},E_{< t}}\bigr]&\le1+\sum_{\substack{y\in\mathrm{BAD}^{\mathrm{ss}}_x(K):\\ y\sim x}} \frac{c^\omega(x,y)}{\pi^\omega(x)}E_y^{\omega_\delta}[T_{\delta,E_{<t}}^+]\\ \label{funkybaby}
&\le 1+\frac{c(d)}{\pi^\omega(x)} \sum_{e\in
E(\mathrm{BAD}^{\mathrm{ss}}_x)\cap E_{< t}} c^\omega(e).
\end{align}
Now, by \eqref{def_conduct}, and using the fact that $x$ is open and $x\in \partial\mathrm{BAD}^{\mathrm{ss}}_x$, we have that
\[
\pi^\omega(x)\ge C(K)\exp\Bigl(2 \lambda\min_{y\in\partial\mathrm{BAD}^{\mathrm{ss}}_x}y \cdot\vec{\ell} \Bigr)
\]
and, for $e\in E(\mathrm{BAD}^{\mathrm{ss}}_x)\cap E_{< t}$,
\[
c^\omega(e)\le c^\omega_*(e)\exp\Bigl(2 \lambda\max_{y\in\partial\mathrm{BAD}^{\mathrm{ss}}_x}y \cdot\vec{\ell} \Bigr).
\]
Moreover, recalling definition \eqref{defwith} of $W(\cdot)$ and since $\mathrm{BAD}^{\mathrm{ss}}_x\cup\partial\mathrm{BAD}^{\mathrm
{ss}}_x$ is connected, we have
\[
\max_{y\in\partial\mathrm{BAD}^{\mathrm{ss}}_x} y \cdot\vec{\ell
}-\min_{y\in\partial\mathrm{BAD}^{\mathrm{ss}}_x}
y \cdot\vec{\ell}\leq W\left(\mathrm{BAD}^{\mathrm{ss}}_x(
\omega)\right)+2\le W\left(\mathrm{BAD}^{\mathrm{s}}_x(
\omega)\right)+3.
\]
Therefore, we conclude from \eqref{funkybaby} that
\begin{align*}
E_x^{\omega}\bigl[T^+_{\mathrm{GOOD}(\omega)\cup\{x\},E_{< t}}\bigr]\le &C(K) \exp
\bigl(3\lambda W\left(\mathrm{BAD}^{\mathrm{s}}_x(
\omega)\right)\bigr) \biggl(1+\sum_{e\in
E(\mathrm{BAD}^{\mathrm{s}}_x)\cap E_{< t}}
c_*^{\omega}(e) \biggr).
\end{align*}
\end{proof}


\section{Regeneration times}\label{subsecregen}

A classical tool for analyzing directionally transient RWREs is to use a
regeneration structure, see \cite{SznZer}. We call ladder-point a new maximum of
the random walk in the direction $\vec{\ell}$. 

The standard way of
constructing regeneration times is to consider successive ladder points
and argue that there is a positive probability of never backtracking
again, i.e.~there exists a ladder-point $X_{n_0}$ such that $X_{n_0+k}\cdot\lv> X_{n_0}\cdot\lv$ for any $k\ge 1$. Such a ladder-point creates a separation between the past and
the future of the random walk leading to interesting independence
properties. We call this point a regeneration time.

There are two major issues in our case. Firstly, we do not have any type of uniform
ellipticity:
this has been addressed in \cite{Fri11} by considering \emph{open ladder-points}, so we will follow this strategy. Secondly, for the reversible model that we consider, the classical construction of regeneration times yields regeneration slabs (and quantities defined on them) that are ergodic but not independent. As we want to get limit theorems, it will be much more convenient to recover some independence. For this purpose, we will introduce an alternative construction of the model and define a slightly different version of regeneration times in order to obtain some independence properties (see Theorem \ref{thindep}). After the completion of the paper, we were made aware that a similar argument for creating independence of regeneration blocks had already been developped in~\cite{GMP}.


\subsection{Construction of an enhanced random walk}\label{sect_altcons}

In this section, we will  define an \emph{enhanced} walk $(\widetilde{X})_n=(X_n,Z_n)_n$, which will be such that, first, the marginal law of $(X_n)$ is the law of the original anisotropic walk that we study and, second, some extra information is encapsulated in the variables $Z_n$ concerning the last step of walk. We need to construct this enhanced walk in order to be able to define some regeneration times with nice independence properties.\\
As the classical anisotropic walk, the process $(\widetilde{X}_n)$ has two levels of randomness, one given by the environment, and one corresponding to the evolution of this enhanced walk in this environment. The definition of the environment is the same as before, that is a collection of random conductances with law ${\bf P}$.\\
Let us now explicit the law of the process $(\widetilde{X}_n)$. For this purpose, fix an environment $\omega$ and recall from \eqref{deftransition}, \eqref{def_conduct} and \eqref{defpi} that, for all $x\in\Z^d$ and $j\in\{1,...,2d\}$,
\begin{eqnarray*}
p^\omega(x,x+e_j)&=&\frac{c_*(x,x+e_j)e^{e_j\cdot\ell}}{\sum_{i=1}^{2d} c_*(x,x+e_i)e^{e_i\cdot\ell}}=\frac{c(x,x+e_j)}{\pi^\omega(x)},
\end{eqnarray*}
and define
\begin{eqnarray}
p^\omega_K(x,x+e_j)&:=&\frac{\left(c_*(x,x+e_j)\wedge K^{-1}\right)e^{e_j\cdot\ell}}{\sum_{i=1}^{2d} \left(c_*(x,x+e_i)\vee K\right)e^{e_i\cdot\ell}} \le p^\omega(x,x+e_j).\label{defpjK}
\end{eqnarray}
Moreover $p^\omega(x,y)=p^\omega_K(x,y)=0$ if $y$ is not a neighbour of $x$ in $\Z^d$.\\
In the environment $\omega$, we define, for any starting state $(x,z)\in\Z^d\times\{0,1\}$, the Markov chain $(\widetilde{X}_n)$ with law $\widetilde{P}^\omega_{(x,z)}$ on $\Z^d\times \{0,1\}$ and transition probabilities $\tilde{p}^\omega((y_1,z_1),(y_2,z_2))$ for $y_1,y_2\in\Z^d$ and $z_1,z_2\in\{0,1\}$ defined by:
\begin{enumerate}
\item $\widetilde{X}_0=(x,z),  \widetilde{P}^\omega_{(x,z)}$-a.s.,
\item $\tilde{p}^\omega((y_1,z_1),(y_2,1))=p^\omega_K(y_1,y_2)$,
\item $\tilde{p}^\omega((y_1,z_1),(y_2,0))=p^\omega(y_1,y_2)-p^\omega_K(y_1,y_2)$.
\end{enumerate}

\begin{remark}\label{rem_pkopen}
If $x$ is open and if $y\sim x$, then $\tilde{p}^\omega((x,z_1),(y,1))\ge \kappa$, where $\kappa>0$ is a constant depending only on $K$, $\ell$ and $d$.
\end{remark}

As before, we write $\widetilde{\PR}_{(x,z)}$ for the annealed law of the enhanced walk starting at $(x,z)$. Now, let us emphasize two facts
\begin{enumerate}
\item the evolution of the process starting at $(x,z)$ does not depend on $z$ (except $Z_0$),
\item it is easy to see that the marginal laws of the first coordinate of $(\widetilde{X}_n)$ match the laws $P^\omega_x$ and $\PR_x$ of the original biased random walk in random conductances.
\end{enumerate}

\begin{remark} We will often drop the $z$ in subscript and the tilde, simply writing $P^\omega_x$ and $\PR_x$ for $\widetilde{P}^\omega_{(x,z)}$ and $\widetilde{\PR}_{(x,z)}$ when the quantity observed does not depend on $z$,  voluntarily making the confusion with the original walk. For example, it is clear from the definition that the trajectories $(X_n)_{n\ge0}$ and $(Z_n)_{n\ge1}$, under $\widetilde{P}^\omega_{(x,z)}$ and $\widetilde{\PR}_{(x,z)}$, do not depend on $z$.
\end{remark}

From now on, when we write $X_n=x$ without specifying the second coordinate, we mean that $\widetilde{X}_n\in\{(x,0),(x,1)\}$.



\subsection{Definition of the regeneration times}\label{sect_defregen}


In this section, we will define the regeneration times in a slightly different way than the classical one in order to obtain  independence properties, see Theorem \ref{thindep}. In particular, we will use the \emph{enhanced random walk} defined in Section \ref{sect_altcons}.

Let us now introduce a variation on classical regeneration times where we ask for the regeneration point to be $K$-open and for a specific behaviour of the information encapsulated in the variables $Z_n$. One advantage of this construction is that it will allow us to obtain independence properties.

Let us now define the quantities we need.
First, define the random variable
\begin{eqnarray}
\mathcal{M}^{(K)}&:=&\inf\{i\geq2: X_{i} \mbox{ is $K$-open, } X_j\cdot\vec{\ell} < X_{i-2} \cdot\vec{
\ell}\text{ for any }j< i-2\nonumber
\\\label{def_Mcal}
&&\qquad \mbox{ and }X_i=X_{i-1}+e_1=X_{i-2}+2e_1
\}
\end{eqnarray}
%

Roughly, this corresponds to the time when we have reached a new maximum towards the direction $\vec{\ell}$, with two extra conditions. Let us comment these conditions.\\
%
%
%
%
%
%
%
%
%
%
Consider the time when the walker reaches a new maximum $x$, then the environment \emph{behind} him and the environment \emph{in front} of him have some common edges: we want the walker to jump twice in the direction $e_1$ (following \cite{Shen}) in order to have only a bounded number of common edges (namely those incident to $x$) and thus reduce correlations. Besides, we want $x$ to be open which will have two main advantages: this will firstly make it easier to escape to infinity and, secondly, this will enable us to   define regeneration times verifying independence properties.\\
We will state some results from~\cite{Fri11} in the Appendix about this random variable  where  the  same variable, exactly, is defined.\\

We define a random variable $D$ which is essentially the time it takes for the walk to go back beyond its starting point, with respect to the scalar product with $\lv$. Its definition is more complicated but we will explain the intuition below. In this particular definition, we will need the enhanced walk of Section \ref{sect_altcons}.  Define
\begin{eqnarray}
D&:=& \inf\left\{\left\{n>0:X_n\cdot \lv\le X_0\cdot\lv\right\} \cup\mathcal{I}_0\right.\label{defD}\\
&&\left. \cup\bigcup_{j=1}^{d}\Big\{ n>0: X_{n-1} =X_0+ e_j\text{ and }Z_{n}=0  \Big\}        \right\},\nonumber
\end{eqnarray}
where
\[
\mathcal{I}_0:=\left\{
    \begin{split}
   & \{1\} \text{ if } Z_1=0 ;\\ 
    & \emptyset \text{ otherwise.}
    \end{split}
  \right.
\]
We will be interested in the event $\{D=\infty\}$. The classic definition of $D$ is such that, on $\{D=\infty\}$, the walker never backtracks, i.e.~$X_n\cdot \lv> X_0\cdot\lv$ for all $n>0$. Here, we additionally impose that $Z_1=1$ and, if the walker is on a neighbour of $X_0$ at time $n-1$, then $Z_n=1$. This will again  reduce correlations, see Remark \ref{remindep}.
\begin{remark}\label{rem_measD}
Note that $D$ is measurable with respect to $\sigma\left(X_0,(\widetilde{X}_n)_{n\ge1}\right)$.
\end{remark}
\begin{remark}\label{remindep}
We will prove that $P^\omega_{x_0}[D=\infty]$ does not depend on the \emph{values} of the conductances of the edges adjacent to $x_0$, as long as $x_0$ is open. In fact, the whole future of the walk on the event $\{D=\infty\}$ does not depend on these conductances. See Proposition \ref{prop_idconf}.
\end{remark}

Also, we introduce the maximum (in the direction $\vec{\ell}$) of the
trajectory before $D$
\begin{equation}\label{defM}
M:=\sup_{n\leq D} X_n\cdot\vec{\ell}.
\end{equation}

We define the configuration dependent stopping times $S_k$, $k\geq0$
and the levels $M_k$, $k\geq0$,
%
\begin{eqnarray}
\label{defS} &\displaystyle S_0=0,\qquad M_0=X_0
\cdot\vec{\ell}\quad\mbox{and}&\nonumber\\[-8pt]\\[-8pt]
&\displaystyle \mbox{for $k\geq0$}\qquad
S_{k+1}:=\mathcal{M}^{(K)}\circ\theta_{T_{\mathcal
{H}^+(M_k)}}+T_{\mathcal{H}^+(M_k)},&\nonumber
\end{eqnarray}
where
%
\begin{equation}
\label{defrealM} M_k:=\sup\{X_m \cdot\vec{
\ell} \mbox{ with }0\leq m \leq R_k\}
\end{equation}
with
\[
R_{k} := D\circ\theta_{S_k}+S_k.
\]

These definitions imply that if $S_{i+1}<\infty$, then
%
\begin{equation}
\label{rightdir} X_{S_{i+1}}\cdot\vec{\ell} -
X_{S_i} \cdot\vec\ell\geq2 e_1\cdot\vec{\ell} \geq
\frac{2}{\sqrt d}.
\end{equation}

Finally we define the basic regeneration time
%
\begin{equation}
\label{deftau1} \tau_1^{(K)}=S_N\qquad \mbox{with } N:=\inf\{k \geq1\mbox{ with } S_k<\infty\mbox{ and }
M_k=\infty\}.
\end{equation}


\begin{remark}\label{remKdeptau}
As $\tau_1^{(K)}$ depends on $\mathcal{M}^{(K)}$, it also depends on the value of the constant $K$. However, to lighten notations we will drop the dependence in $K$, since this constant will be fixed later at a certain large value.
\end{remark}

\subsection{Uniformly bounded chance of never backtracking at open points}

The next result is natural. Starting from a good vertex and by following a directed open path from there, we
can bring the random walk far in the direction of the bias with a
positive probability, uniformly in the environment, and after this
point it will be unlikely by Lemma \ref{thisisboring} to backtrack
past your starting point. This means that there is always a positive
escape probability from a good point:  we will then be allowed to study events conditioned on $\{D=\infty\}$.

\begin{lemma}
\label{posescape}
Recall the Definition \ref{defopenvertex} of a $K$-good vertex. There exists $K_0<\infty$ such that, for any $K\ge K_0$, we have
\[
{\mathbf E} \Bigl[  P^{\omega
}_0[D<\infty] \mid0\text{ is }K\text{-good} \Bigr] <1-c(K).
\]
\end{lemma}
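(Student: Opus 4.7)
The plan is to combine three ingredients: the existence of an infinite directed $K$-open path provided by goodness (Definition~\ref{defgoodvertex}); the uniform lower bound on enhanced single-step transitions out of open vertices (Remark~\ref{rem_pkopen}); and the annealed no-backtracking estimate of Lemma~\ref{thisisboring}. Unwinding \eqref{defD}, the event $\{D=\infty\}$ is the intersection of three conditions: $X_n\cdot\vec\ell>0$ for all $n\ge 1$; $Z_1=1$; and $Z_n=1$ whenever $X_{n-1}\in\{e_1,\ldots,e_d\}$.

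On the event $\{0 \text{ is } K\text{-good}\}$ I would fix, by a deterministic measurable rule (e.g.~lexicographically minimal), an infinite directed $K$-open path $(x_i)_{i\ge 0}$ with $x_0=0$; by Remark~\ref{remopenkey}, $x_n\cdot\vec\ell\ge c n$ for some $c=c(d)>0$. Every $x_{i-1}$ is open, so Remark~\ref{rem_pkopen} yields $\tilde p^\omega((x_{i-1},\cdot),(x_i,1))\ge\kappa$ for a constant $\kappa=\kappa(K,\vec\ell,d)>0$. Writing $A_n=\{X_i=x_i\text{ and }Z_i=1\text{ for all }1\le i\le n\}$, the enhanced Markov property gives $P^\omega_0[A_n]\ge\kappa^n$. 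On $A_n$ none of the three $D$-conditions is triggered up to time $n$: $Z_1=1$, strict forward motion along the open path precludes any backtracking, and the only $m\le n$ with $X_{m-1}\in\{e_1,\ldots,e_d\}$ is $m=2$, at which $Z_2=1$. Choosing $n$ large enough that $cn/2>1$ forces every forward neighbor of $0$ to lie in $\mathcal{H}^-(cn/2)$, so any future violation of conditions $(1)$ or $(3)$ after time $n$ requires the walker to reach $\mathcal{H}^-(cn/2)$ from $x_n$. The Markov property at time $n$ therefore yields
\[
P^\omega_0[D=\infty]\ge\kappa^n\, P^\omega_{x_n}\bigl[T_{\mathcal{H}^-(cn/2)}=\infty\bigr].
\]

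Taking the annealed expectation conditionally on $\{0\text{ good}\}$, the proof reduces to showing that for some $n=n_0(K)$
\[
\mathbf{E}\bigl[P^\omega_{x_n}[T_{\mathcal{H}^-(cn/2)}<\infty]\,\big|\,0\text{ good}\bigr]\le 1/2,
\]
from which the lemma follows with $c(K)=\kappa^{n_0}/2$. This annealed backtracking estimate is the main obstacle: $x_n$ depends on the environment, and $\{0\text{ good}\}$ is a non-local event. My approach is to union-bound over the possible shapes $\gamma=(\gamma_0,\ldots,\gamma_n)$ of the first $n$ steps of the chosen open path. For each such $\gamma$, conditioning on the prefix being $\gamma$ only fixes conductances of edges incident to $\gamma$ and imposes the existence of an infinite open continuation beyond $\gamma_n$, an event essentially supported in $\mathcal{H}^+(\gamma_n\cdot\vec\ell)$ and decoupled from the backtracking event of interest. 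Shifting $\gamma_n$ to the origin and invoking translation invariance of $\mathbf{P}$, Lemma~\ref{thisisboring} yields a bound of the form $Ce^{-c' n}$ on the backtracking probability from $\gamma_n$, uniformly in $\gamma$; summing over $\gamma$ preserves this form, and $n_0$ is chosen large enough to make it at most $1/2$.
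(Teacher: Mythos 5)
Your first half is essentially the paper's argument: follow a directed open path for a fixed number of steps, use the uniform transition bound $\kappa$ from Remark~\ref{rem_pkopen}, check that the $D$-conditions cannot be triggered along the way (and here your observation that the only $m\le n$ with $X_{m-1}\in\{e_1,\dots,e_d\}$ is $m=2$, handled by $Z_2=1$, is correct and matches the paper's events $A_1,A_2,A_3$), and reduce to an annealed backtracking bound from the endpoint.

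The gap is in the second half. You propose to condition on the prefix of the \emph{lexicographically minimal} open path equalling a fixed $\gamma$, and you claim this event ``only fixes conductances of edges incident to $\gamma$ and imposes the existence of an infinite open continuation beyond $\gamma_n$, an event essentially supported in $\mathcal{H}^+(\gamma_n\cdot\vec\ell)$ and decoupled from the backtracking event of interest.'' This is not true: lexicographic minimality also requires that every lexicographically smaller prefix of length $n$ \emph{fails} to admit an infinite open continuation. That failure event involves the environment arbitrarily far away, including precisely the half-space $\mathcal{H}^-(\gamma_n\cdot\vec\ell)$ in which the backtracking event takes place. So the conditioning is not local and the claimed decoupling does not hold; nothing in the proposal addresses this. (The hedge ``essentially supported'' is not a proof, and without some monotonicity/FKG input there is no reason for the sign of the dependence to be favourable.)

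The paper sidesteps this completely, and you could too. Since the quantity is nonnegative, one simply uses $\mathbf{E}[f\mid 0\text{ good}]\le \mathbf{P}[0\text{ good}]^{-1}\mathbf{E}[f]$ to discard the conditioning (at the price of a finite constant, thanks to Lemma~\ref{BLsizeclosedbox}). Then, because after $n$ steps the path endpoint necessarily lies in the deterministic polynomial-size set $\partial^+ B(n,n^2)$, one bounds $P^\omega_{x_n}[T_{\mathcal{H}^-(cn/2)}<\infty]$ by $\max_{x\in\partial^+ B(n,n^2)}P^\omega_x[T_{\mathcal{H}^-(cn/2)}<\infty]$; taking expectations, a union bound plus translation invariance and Lemma~\ref{thisisboring} gives $Cn^{c(d)}e^{-cn}$. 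No path-prefix conditioning, and no decoupling claim, is needed. Replacing the paragraph beginning ``My approach is to union-bound over the possible shapes $\gamma$\dots'' with this two-line argument would make the proof correct.
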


\begin{proof}
First, by Lemma \ref{BLsizeclosedbox}, ${\mathbf P}[\mbox{0 is good}]>0$ as soon as $K$ is large enough, hence the conditioning is properly defined.\\
Fix $n>0$. On the event that $\{0\mbox{ is good}\}$, we denote
$\mathcal
{P}(i)$ a directed path starting at $0$ where all points are open (including $0$). By definition, $\mathcal{P}(0)=0$, $\mathcal{P}(1)=e_1$, and denote $j_2\in\{1,...,d\}$ the integer such that $\mathcal{P}(2)=e_1+e_{j_2}$. We
denote $L_{\partial^+B(n,n^2)}=\inf\{ i, \mathcal{P}(i)\in
\partial^+B(n,n^2)\}$. 
Recall the definition of the enhanced walk $(X_n,Z_n)$ from Section \ref{sect_altcons}. Define the event $A=\cap_{i=1}^4A_i$, where
\begin{eqnarray*}
A_1&=&\left\{(X_1,Z_1)=(e_1,1)\right\};\\
A_2&=&\left\{(X_2,Z_2)=(e_1+e_{j_2},1)\right\};\\
A_3&=&\left\{X_i=\mathcal{P}(i)\text{ for }3\le i\leq L_{\partial
^+B(n,n^2)}\right\};\\
A_4&=&\left\{T_{\mathcal{H}^-(2)}\circ\theta_{T_{\mathcal{P}(L_{\partial^+B(n,n^2)})}} =\infty\right\}.
\end{eqnarray*}
Then, we have $A\subset\{D=\infty\}$, as an immediate consequence of the definitions.

As $\{0\mbox{ is good}\}$, then $L_{\partial
^+B(n,n^2)}\leq Cn$, and any vertex $y$ on the trajectory $\mathcal{P}(i)$ is open, hence ${p}^\omega_K(y,y+e_j)=\left( K^{-1}e^{e_j\cdot\ell}\right)\left(\sum_{i=1}^{2d}  Ke^{e_i\cdot\ell}\right)^{-1}$, for any $j\in\{1,...,d\}$, and where $p^\omega_K$ is defined in \eqref{defpjK}. Therefore, we have
\begin{eqnarray*}
P^\omega_0\bigl[A_1\cap A_2\cap A_3\bigr]
\geq \left[ \left(\min_{j\in\{1,...,d\}}e^{e_j\cdot\ell}\right)\left(\sum_{i=1}^{2d}  K^2e^{e_i\cdot\ell}\right)^{-1}\right]^{Cn}=c^{n},
\end{eqnarray*}
for some constant $c>0$ that only depends on $K$, $d$ and $\ell$.
In particular, we have
\begin{eqnarray*}
&& {\mathbf E} \Bigl[ P^\omega
_0[D=\infty]\mid0
\mbox{ is good} \Bigr]
\\
&&\qquad\geq  {\mathbf E} \Bigl[P^\omega_0\bigl[A_1\cap A_2\cap A_3\bigr]
\times P^{\omega}_{\mathcal{P}(L_{\partial^+B(n,n^2)})}[
T_{\mathcal{H}^-(2)} =\infty]\mid0\mbox{
is good} \Bigr]
\\
&&\qquad\geq  c^{n} {\mathbf E} \Bigl[  P^{\omega}_{\mathcal{P}(L_{\partial
^+B(n,n^2)})}[ T_{\mathcal
{H}^-(2)} =
\infty]\mid0\mbox{ is good} \Bigr].
\end{eqnarray*}
%

Besides, we have
\begin{eqnarray*}
&& {\mathbf E} \bigl[ P^{\omega}_{\mathcal{P}(L_{\partial^+B(n,n^2)})}[ T_{\mathcal{H}^-(2)} <\infty]
\mid\mbox{0 is good} \bigr]
\\
&&\qquad\leq {\mathbf P}[\mbox{0 is good}]^{-1} {\mathbf E} \bigl[
P^{\omega
}_{\mathcal
{P}(L_{\partial^+B(n,n^2)})}[ T_{\mathcal{H}^-(2)} <\infty]
\bigr]
\\
&&\qquad\leq {\mathbf P}[\mbox{0 is good}]^{-1} {\mathbf E} \bigl[
\max_{x\in\partial^+B(n,n^2)}P^{\omega
}_x[ T_{\mathcal{H}^-(2)} <\infty]
\bigr]\\
&&\qquad\leq  Cn^{c(d)} \PR[T_{\mathcal{H}^-(-n+2)} <\infty]\leq  Cn^{c(d)} \exp(-cn),
\end{eqnarray*}
where we use translation invariance, the fact that ${\mathbf P}[\mbox{0 is good}]>0$ (see Lemma \ref{BLsizeclosedbox}) and Lemma \ref{thisisboring}.

We see that the previous quantity
is less than $1/2$ for $n\geq n_0$, for some $n_0$ depending on $K$ and $d$. Hence combining the last two equations,
\[
{\mathbf E} \Bigl[ P^\omega
_0[D=\infty]\mid0
\mbox{ is good} \Bigr] \geq(1/2) c^{n_0} >0,
\]
which implies the result.
\end{proof}

\subsection{Tails of regeneration times}

Here, we state the following theorem on the tails of regeneration times. We postpone its proof to the Appendix because it is extremely similar to the proof of Theorem 4.1 in \cite{Fri11}.\\
\begin{theorem}
\label{tailtau}
For any $M\in(0,+\infty)$, there exists $K_0<\infty$ such that, for any $K\geq K_0$
we have $\tau_1^{(K)}< \infty$ $\PR$-a.s.~and
\[
\PR_0[X_{\tau_1^{(K)}}\cdot\vec{\ell} \geq n] \leq C(M)n^{-M}.
\]
\end{theorem}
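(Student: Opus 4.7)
The plan is to mimic the proof of Theorem 4.1 in \cite{Fri11}, exploiting the iterative structure of the construction in Section \ref{sect_defregen}. The random variable $\tau_1^{(K)}$ is $S_N$, where $N$ is the first successful attempt: an open ladder point at which the walker, after two steps in direction $e_1$, never backtracks and satisfies the additional $Z$-constraint. Writing $X_{\tau_1^{(K)}}\cdot\vec\ell=\sum_{k=1}^N(M_k-M_{k-1})+(X_{\tau_1^{(K)}}\cdot\vec\ell-M_{N-1})$, the goal is to control both $N$ (geometrically) and each increment $M_k-M_{k-1}$ (with stretched-exponential tails), then combine.

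First I would show that, on $\{S_k<\infty\}$, the conditional probability that the attempt succeeds, i.e.~that $D\circ\theta_{S_k}=\infty$, is uniformly bounded below by some $c(K)>0$. At $S_k$ the walker sits at a $K$-open ladder point; by Lemma \ref{BLsizeclosedbox} this point is $K$-good with probability at least $1-Ce^{-\xi_1(K)}$, and on the good event Lemma \ref{posescape} combined with the strong Markov property provides the uniform lower bound. The small modifications to the classical regeneration definition — namely $Z_1=1$, $Z_n=1$ whenever $X_{n-1}=X_0+e_j$, and the forced two consecutive $e_1$-steps — do not destroy this lower bound, because they only restrict the local behaviour at $X_{S_k}$, which is open, so the required local transitions each have probability $\geq\kappa(K)>0$ by Remark \ref{rem_pkopen}. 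Consequently $N$ is stochastically dominated by a geometric random variable of parameter $c(K)$.

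Second I would bound the increments $M_k-M_{k-1}$. Between two unsuccessful attempts, either the walker has already moved to a new ladder-open point (this is controlled by $\mathcal{M}^{(K)}$, whose tail estimates from \cite{Fri11} stated in the Appendix are polynomial in the required sense once $K$ is large enough), or it has wandered back a distance at most $D\circ\theta_{S_k}$. Writing $M_k-M_{k-1}\leq (X_{S_k}\cdot\vec\ell-M_{k-1})+\sup_{S_k\le n\le R_k}(X_n-X_{S_k})\cdot\vec\ell$, the first summand is handled by the $\mathcal{M}^{(K)}$-tail estimates and the second by Theorem \ref{BL} and Lemma \ref{thisisboring}: the walker having failed means it backtracked beyond its own starting level, and the probability that it first travelled a distance $\ell$ ahead in the direction $\vec\ell$ before backtracking is exponentially small in $\ell$ by exiting a tilted box through its positive face with overwhelming probability. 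Combining these gives tails for $M_k-M_{k-1}$ that decay faster than any polynomial once $K$ is sufficiently large as a function of $M$.

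Finally, because $N$ is geometric with parameter $c(K)$ and independent (in the Markovian sense) of the displacements, a union bound over $\{N=k\}$ together with Markov's inequality on $\sum_{k=1}^{n^{1/2}}(M_k-M_{k-1})$ yields
\[
\PR_0\bigl[X_{\tau_1^{(K)}}\cdot\vec\ell\geq n\bigr]\leq \PR_0[N\geq n^{1/2}]+\sum_{k\le n^{1/2}}\PR_0[M_k-M_{k-1}\geq n^{1/2}]\leq C(M)n^{-M},
\]
for $K\geq K_0(M)$. The almost-sure finiteness of $\tau_1^{(K)}$ follows at once from the geometric tail of $N$ together with the a.s.\ finiteness of each $S_k$ on $\{S_k<\infty\}$. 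The main obstacle in executing this plan is the quantitative control of the increments $M_k-M_{k-1}$, because the modifications to $D$ involving the auxiliary variable $Z_\cdot$ break the most direct symmetry arguments; this is where the enhanced-walk construction of Section \ref{sect_altcons} and the uniform positivity in Remark \ref{rem_pkopen} are indispensable to reduce the analysis to estimates already available in \cite{Fri11}.
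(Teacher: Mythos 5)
Your proposal follows the same strategy as the paper's own proof: geometric control of the number $N$ of attempts (the paper's Lemma \ref{Kn}, which indeed rests on the independence furnished by Proposition \ref{prop_idconf} together with the escape bound of Lemma \ref{posescape}), polynomial-in-$n$ control of the per-attempt gains in the $\vec\ell$-direction (which the paper packages into the global events $M(n)$ and $S(n)$ via Lemmas \ref{Mn} and \ref{Sn}), and a union bound realizing $\{X_{\tau_1}\cdot\vec\ell\ge n\}\subset\{N\ge n^{1/2}/3\}\cup M(n)^c\cup S(n)^c$, which is precisely your union bound written with global events rather than summed over increments. One small slip: the telescoping decomposition should run to $N-1$ (so $\sum_{k=1}^{N-1}(M_k-M_{k-1})=M_{N-1}$, with a separate term for $X_{\tau_1}\cdot\vec\ell-M_{N-1}$), since $M_N=\infty$ on the successful attempt; and the increment tails given by Lemma \ref{Mn} are polynomial of order $n^{-M}$ for $K\ge K_0(M)$, not super-polynomial, though that suffices for the conclusion.
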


\subsection{Fundamental property of regeneration times}\label{fundprop}

We are going to define the sequence $\tau_0:=0<\tau_1<\tau_2< \cdots
<\tau_k< \cdots$ of successive regeneration times. Moreover, we state and prove Theorem \ref{thindep} which is the key result about the independence of regeneration blocks.\\

Using a slight abuse
of notation by viewing $\tau_k(\cdot,\cdot)$ as a function of a walk
and an environment, we can define the sequence of successive regeneration times via the following procedure:
%
\begin{equation}
\label{regenstruct} \tau_{k+1}=\tau_1+
\tau_k\bigl((X_{\tau_1+ \cdot}-X_{\tau_1},Z_{\tau_1+\cdot}), \omega( \cdot
+X_{\tau_1})\bigr),\qquad k\geq0,
\end{equation}
meaning that the $(k+1)$-th regeneration time is the $k$-th regeneration
time after the first one. We will denote by $\mathcal{F}_n$ the canonical filtration of the enhanced walk $\widetilde{X}=(X,Z)$, see Section \ref{sect_altcons}.

We set
\begin{align} \nonumber
\mathcal{E}_x &:=\left\{[x,x+e_j], \ j\in\{1,...,2d\}\right\},\\
\label{defleft} \mathcal{L}^x &:=\bigl\{[y,z]\in E
\bigl(\Z^d\bigr), y\cdot\ell\leq x \cdot\ell\mbox{ and } z\cdot\ell
\leq x \cdot\ell\bigr\}\cup\mathcal{E}_x\\
\label{defright} \mathcal{R}^x &:=\bigl\{[y,z]\in E
\bigl(\Z^d\bigr), y\cdot\ell> x \cdot\ell\mbox{ or } z\cdot\ell> x
\cdot\ell\bigr\}\cup\mathcal{E}_x\\ \nonumber
\mathcal{G}_{k} &:=\sigma\bigl\{ \tau_1,\ldots,
\tau_k; (\widetilde{X}_{\tau_k\wedge
m})_{m\geq0}; c_*(e) \mbox{ with } e
\in\mathcal{L}^{X_{\tau
_{k}}}\bigr\}.
\end{align}
We will denote $t_x$ the canonical shift on $\mathbb{Z}^d$.
For $a\in[1/K,K]^{\mathcal
{E}_0}$, we set
\[
{\mathbf P}_x^a =\delta_a \bigl(\bigl(c_*
\bigl(e\bigr)\bigr)_{e\in\mathcal{E}_x} \bigr)
\otimes\int_{e\in E(\Z^d)\setminus\mathcal{E}_x} \otimes\, d {\mathbf P}\bigl
(c_*(e)\bigr),
\]
where $\otimes$ denotes the product of measures. We introduce the
associated annealed measure
\[
\PR_x^a ={\mathbf P}_x^a \times
P_x^{\omega}.
\]

In words, $\PR_x^a$ denotes the annealed measure for the walk started
at $x$ but where the conductances of the edges in $\mathcal{E}_x$ are
fixed and given by $a$. We will use the notation $\PR^a$ (resp., ${\mathbf
P}^a$) for $\PR_0^a$ (resp., ${\mathbf P}_0^a$). 
\begin{definition}\label{defP0K}
We denote $\PR_x^K$ and ${\mathbf P}_x^K$ the annealed law of the walk started at $x$ and the law of the environment, respectively, when the configuration at $x$ is fixed such that $c_*(e)=K$ for any $e\in\mathcal{E}_x$. We also use the heavier notation $\PR^{x,K}_y$ when the configuration at $x$ is fixed such that $c_*(e)=K$ for any $e\in\mathcal{E}_x$, and the walk starts at $y$.
\end{definition}

As a particular case of a result in \cite{Fri11}, the following result holds.
\begin{theorem}[Theorem 7.3 of \cite{Fri11}]
\label{BLK}
For $\alpha>d+3$
\[
\PR_0^K[T_{\partial B(L,L^{\alpha})}\neq T_{ \partial^+ B(L,L^{\alpha})}]
\leq Ce^{-cL}.
\]
\end{theorem}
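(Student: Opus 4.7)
The statement is the $\PR_0^K$-version of Theorem \ref{BL}, where the $2d$ conductances on the edges of $\mathcal{E}_0$ are frozen at the value $K$. Since this is a purely local modification of the environment (the law of the conductances on $E(\Z^d)\setminus\mathcal{E}_0$ is unchanged), the plan is to run the proof of Theorem \ref{BL} while absorbing this bounded local perturbation with a short coupling or Markov argument at the initial step.

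The first step is to decompose the unfavorable exit event as
\[
\{T_{\partial B(L,L^\alpha)}\neq T_{\partial^+ B(L,L^\alpha)}\}\subset\{T_{\mathcal{H}^-(-L)}<\infty\}\cup\bigcup_{i=2}^{d}\mathcal{F}_i,
\]
where $\mathcal{F}_i=\{|X_{T_{\partial B}}\cdot f_i|=L^\alpha\}$ is the exit through the $i$-th transverse face. I will treat the two contributions in parallel with the original proof but under $\PR_0^K$ instead of $\PR$.

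For the back-face contribution $\{T_{\mathcal{H}^-(-L)}<\infty\}$, I would establish the $\PR_0^K$-analogue of Lemma \ref{thisisboring}. The crucial point is that under $\PR_0^K$ the origin is automatically $K$-open, so Remark \ref{rem_pkopen} provides a uniform lower bound $\kappa=\kappa(K,\ell,d)>0$ on $\tilde p^\omega_K((0,z),(y,1))$ for every neighbour $y$ of $0$. Consequently the regeneration construction of Section \ref{subsecregen} still applies starting from $0$, so $\tau_1^{(K)}<\infty$ almost surely and Theorem \ref{tailtau} gives polynomially small tails for $X_{\tau_1^{(K)}}\cdot\vec\ell$. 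After $\tau_1^{(K)}$ the walk lives in $\mathcal{H}^+(0)$ and never revisits any edge of $\mathcal{E}_0$, so we are in a fresh (unperturbed) environment and the original Lemma \ref{thisisboring} applies to the shifted trajectory. Splitting according to $\{X_{\tau_1^{(K)}}\cdot\vec\ell\leq L/2\}$ and its complement and using Theorem \ref{tailtau} for a large enough $M$ together with Lemma \ref{thisisboring} for the shifted walk produces the desired $Ce^{-cL}$ bound.

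For the side-face contribution $\mathcal{F}_i$, I would again follow the proof of Theorem \ref{BL}: concatenate $O(L)$ regeneration blocks to reach longitudinal level $L$, and bound the cumulative transverse displacement by an i.i.d.\ sum (after the first block, in fact independent thanks to the construction of Section \ref{fundprop}) whose individual summands have polynomial tails by Theorem \ref{tailtau}. Since $\alpha>d+3$, the deviation of the transverse sum from its typical $O(L^{1/2})$ value all the way up to $L^\alpha$ is at most $Ce^{-cL}$ by a standard moment bound (e.g.\ a Markov/Chebyshev bound with a sufficiently high polynomial moment produced by Theorem \ref{tailtau}). The local modification at $\mathcal{E}_0$ only affects the first block, whose contribution is absorbed in the constant.

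The main obstacle is verifying that the regeneration machinery, which in Section \ref{subsecregen} is set up under $\PR_0$, really does port over to $\PR_0^K$ with the same tail estimates. Here one exploits that $\tau_1^{(K)}$ was designed precisely to allow such ``conditioning on the configuration at $0$'' arguments: the event $\{D=\infty\}$ has been tailored (Remark \ref{remindep}) so that the law of the future of the walk after the first regeneration is independent of $(c_*(e))_{e\in\mathcal{E}_0}$, so freezing these conductances at $K$ changes only the pre-$\tau_1^{(K)}$ part of the trajectory. Once this pre-regeneration portion is handled by the open-vertex lower bound $\kappa$ above, the argument of Theorem \ref{BL} goes through essentially verbatim.
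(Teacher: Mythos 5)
The paper does not prove this statement at all---it is quoted verbatim from~\cite{Fri11} (Theorem 7.3 there), and the only remark the paper makes is that it is ``a particular case of a result in \cite{Fri11}''. Your proposal is therefore doing genuinely new work, but the route you chose has a circular dependency. The key step in your argument for the backtracking contribution is to get a polynomially-small tail for $X_{\tau_1^{(K)}}\cdot\vec\ell$ under $\PR_0^K$. You cite Theorem~\ref{tailtau} for this, but that theorem is stated under $\PR_0$, not $\PR_0^K$; the version you actually need is Theorem~\ref{tailtauK}. Look at the appendix: the proof of Theorem~\ref{tailtauK} invokes Theorem~\ref{BLK} as its very first estimate. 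So deriving \ref{BLK} from the tail of $X_{\tau_1^{(K)}}$ under $\PR_0^K$ runs the argument in a circle. Even the non-$K$ version, Theorem~\ref{tailtau}, sits downstream of the atypical exit estimate in the dependency graph: Lemmas~\ref{Mn} and~\ref{Sn}, on which its proof rests, are proved in~\cite{Fri11} using the BL-type bound. The order of proof in this family of papers is atypical-quenched-exit-estimate $\Rightarrow$ regeneration-tail estimates, not the reverse, and your proposal inverts it.

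The correct way to get Theorem~\ref{BLK} from Theorem~\ref{BL} is to bypass regeneration times entirely. Freezing $(c_*(e))_{e\in\mathcal{E}_0}$ at $K$ means the law on the rest of the lattice is unchanged, the origin is deterministically $K$-open, and the quenched transition probabilities at $0$ are bounded above and below by constants depending only on $K$, $d$, $\ell$. In~\cite{Fri11} the BL-type estimate is established via the good/bad decomposition and potential-theoretic arguments (the ingredients are Lemma~\ref{BLsizeclosedbox}, Lemma~\ref{timetrap}, and electrical-network bounds), and those arguments only use that $0$ is open, never the specific values of the conductances on $\mathcal{E}_0$; so they apply verbatim to the frozen environment. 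This is the direct adaptation the paper alludes to, and it does not touch $\tau_1^{(K)}$. Your observation that the perturbation is local and absorbed ``in the constant'' is the right intuition, but it has to be cashed out at the level of the potential-theoretic proof of Theorem~\ref{BL}, not by inserting a regeneration time whose tail estimate is itself built on the result being proved.
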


We can also state the following variant of Theorem \ref{tailtau}, whose proof is postpone in the Appendix.
\begin{theorem}
\label{tailtauK}
For any $M\in(0,+\infty)$, there exists $K_0<\infty$ such that, for any $K\geq K_0$
we have $\tau_1< \infty$ $\PR$-a.s.~and
\[
\PR_0^K[X_{\tau_1}\cdot\vec{\ell} \geq n] \leq C(M)n^{-M}.
\]
\end{theorem}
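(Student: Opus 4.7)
The plan is to adapt the proof of Theorem \ref{tailtau} (which closely follows Theorem 4.1 of \cite{Fri11}) by substituting Theorem \ref{BLK} for Theorem \ref{BL} at the only place where the initial condition intervenes. The crucial observation is that under $\PR_0^K$ the origin is automatically $K$-open, so the enhanced walk starting at $0$ behaves like one starting from a generic open point, and the conditioning on the conductances in $\mathcal{E}_0$ affects only a bounded neighborhood of the origin.

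I would decompose along the failed regeneration attempts. Writing $N=\inf\{k\geq 1: M_k=\infty\}$ so that $\tau_1=S_N$, one has
\[
X_{\tau_1}\cdot\lv \leq \sum_{k=1}^{N-1}(M_k - M_{k-1}) + \bigl(X_{S_N}\cdot\lv - M_{N-1}\bigr),
\]
with $M_0=0$ under $\PR_0^K$. Controlling the tail of the left-hand side then reduces to two ingredients: (i) $N$ has geometric tails, and (ii) each increment $M_k-M_{k-1}$, together with the final jump $X_{S_N}\cdot\lv-M_{N-1}$, has polynomial tails of arbitrarily high order, uniformly in the past.

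For (i), I would note that at each time $S_k$ the vertex $X_{S_k}$ is $K$-open by the definition of $\mathcal{M}^{(K)}$, so the argument of Lemma \ref{posescape}---force a directed $K$-open path out of a tilted box and then invoke Lemma \ref{thisisboring} to rule out backtracking---yields a lower bound $c(K)>0$ on the success probability of the $k$-th attempt, uniformly in $\mathcal{F}_{S_k}$. This gives $\PR_0^K[N\geq k]\leq (1-c(K))^{k-1}$. For (ii), I would split each increment into the displacement produced by $\mathcal{M}^{(K)}$, which has polynomial tails of any order via the existence of short directed open paths (Lemma \ref{BLsizeclosedbox}), and the excursion from $X_{S_k}$ to $R_k$ on $\{D\circ\theta_{S_k}<\infty\}$, whose extent in direction $\lv$ is bounded using Theorem \ref{BL} and Lemma \ref{thisisboring}. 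The only moment when the initial condition actually matters is the very first excursion, and there Theorem \ref{BLK} plays exactly the role that Theorem \ref{BL} plays in the proof of Theorem \ref{tailtau}.

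Combining (i) and (ii) via a union bound, I would split on $\{N\leq n^{\epsilon}\}$ versus $\{N>n^{\epsilon}\}$ for a small $\epsilon>0$ to reach $\PR_0^K[X_{\tau_1}\cdot\lv\geq n]\leq C(M) n^{-M}$ for $K$ large enough. The main obstacle is checking that the uniform tail estimates used in steps (i) and (ii), notably Lemmas \ref{thisisboring} and \ref{BLsizeclosedbox}, transfer from $\PR$ to $\PR_0^K$; this should be harmless because $\PR_0^K$ and $\PR$ differ only by fixing finitely many conductances in $\mathcal{E}_0$, and the strong Markov property applied after the walk exits a small neighborhood of $0$ reduces each such estimate to an unconditional one, losing only a multiplicative factor depending on $K$ that is absorbed into $C(M)$.
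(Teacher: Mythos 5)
Your proposal is a genuinely different route from the paper's, and as written it contains a gap in the transfer step. You claim that for estimates such as Lemma \ref{thisisboring}, \lq\lq the strong Markov property applied after the walk exits a small neighborhood of $0$ reduces each such estimate to an unconditional one, losing only a multiplicative factor.\rq\rq\ That is not correct. Applying the strong Markov property at the exit time of a bounded ball $B_\infty(0,r)$ moves the walk's starting point to $\partial B_\infty(0,r)$, but the quenched law from there is still taken in the \emph{same} environment, in which $c_*(e)=K$ for every $e\in\mathcal{E}_0$. The walk will, with positive probability, revisit the edges of $\mathcal{E}_0$ before an event such as $\{T_{\Hc^-(-n)}<\infty\}$ is decided, so the conditioning does not disappear, and no bounded neighborhood of $0$ helps. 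The same issue undermines your claim that \lq\lq the only moment when the initial condition actually matters is the very first excursion\rq\rq: on $\{D<\infty\}$ the walk backtracks into $\Hc^-(0)$ and may cross $\mathcal{E}_0$ again during any later failed regeneration attempt.

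The paper's proof avoids rebuilding the whole chain of lemmas by pushing the walk to $\partial^+ B(n/2,(n/2)^\alpha)$ --- a tilted box whose scale grows with $n$ --- using Theorem \ref{BLK}, the $\PR_0^K$-version of the non-backtracking estimate. Once the walk is at a point $z$ with $z\cdot\lv\approx n/2$, the two remaining events, $\{T_{\Hc^-_{e_1}}<\infty\}$ and $\{T_{\Hc^-_{e_1}}=\infty,\ X_{\tau_1}\cdot\lv\geq n/4\}$, are each measurable with respect to the conductances of edges having an endpoint in $\Hc^+_{e_1}$: the first is a stopping-time event, and the second keeps the entire trajectory in $\Hc^+_{e_1}$. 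Since every edge of $\mathcal{E}_0$ has both endpoints in $\Hc^-_{e_1}$, these quenched probabilities are genuinely independent of the conditioned conductances, and one can then invoke Lemma \ref{thisisboring} and the already-established Theorem \ref{tailtau} under $\PR$ directly. It is this measurability observation about the specific events --- not the strong Markov property alone --- that removes the conditioning. Your alternative of re-running the proof of Theorem \ref{tailtau} under $\PR_0^K$ could conceivably be carried out, but the transfer of each auxiliary lemma would require an argument of exactly this kind, and your current justification does not provide it.
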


%
%
%

The fundamental properties of regeneration times are that:
\begin{itemize}
\item[(1)] the past and the future of the random walk that has arrived at
$X_{\tau_k}$ are independent;
\item[(2)] the law of the future of the random walk has the same law as
a random walk under $\PR_0^{K}[ \cdot\mid D=\infty]$.
\end{itemize}

Let us first state and prove the following result which is important for the independence of regeneration blocks: with our non-classical definition of $D$, the environment at $0$ is irrelevant for the evolution of the walk, on the event $\{D=\infty\}$ and when $0$ is open.

\begin{proposition}\label{prop_idconf} Fix a vertex $x_0\in\Z^d$ and fix an environment $\omega$ such that  $x_0$ is $K$-open. Define the environment $\omega_K$ such that $c^{\omega_K}_*(g)=c^{\omega}_*(g)$ if $g\notin\mathcal{E}_{x_0}$, and  $c^{\omega_K}_*(g)=K$ if $g\in\mathcal{E}_{x_0}$.\\
Then, for any bounded and $\sigma((X_n,Z_n),n\ge0)$-measurable function $f$ and for any $z_0\in\{0,1\}$, we have
\[
E_{(x_0,z_0)}^\omega\left[f(X_\cdot,Z_\cdot)\1{D=\infty}\right]=E_{(x_0,z_0)}^{\omega_K}\left[f(X_\cdot,Z_\cdot)\1{D=\infty}\right].
\]
\end{proposition}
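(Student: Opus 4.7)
The plan is to reduce the identity to a finite-horizon computation and then compare, trajectory by trajectory, the products of transition probabilities in the two environments. Since $\mathbf{1}_{D=\infty}$ is the monotone limit of $\mathbf{1}_{D>n}$ and any bounded $\sigma((\widetilde X_n)_{n\ge0})$-measurable $f$ can be approximated by bounded cylinder functions $f_n$ depending only on $(\widetilde X_0,\dots,\widetilde X_n)$, dominated convergence reduces the claim to showing, for every $n\ge 1$ and every bounded $f_n$,
\[
E_{(x_0,z_0)}^\omega[f_n\,\mathbf{1}_{D>n}] = E_{(x_0,z_0)}^{\omega_K}[f_n\,\mathbf{1}_{D>n}].
\]
Expanding both sides as finite sums over enhanced trajectories $((x_k,z_k))_{0\le k\le n}$ compatible with $\{D>n\}$, the identity amounts to the equality, for every such trajectory, of the products $\prod_{k=1}^n \tilde p^\omega((x_{k-1},z_{k-1}),(x_k,z_k))$ and $\prod_{k=1}^n \tilde p^{\omega_K}((x_{k-1},z_{k-1}),(x_k,z_k))$. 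Note that a trajectory is compatible with $\{D>n\}$ precisely when $x_k\cdot\vec\ell > x_0\cdot\vec\ell$ for every $1\le k\le n$, $z_1=1$, and $z_k=1$ whenever $x_{k-1}=x_0+e_j$ for some $j\in\{1,\dots,d\}$.

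For each factor, three cases must be checked. First, if $x_{k-1}\notin \{x_0\}\cup(x_0+\{e_1,\dots,e_d\})$, then the transition probability $\tilde p^\omega((x_{k-1},z_{k-1}),\cdot)$ depends only on the conductances of edges in $\mathcal{E}_{x_{k-1}}$, and none of these lie in $\mathcal{E}_{x_0}$, so the factor is identical in $\omega$ and $\omega_K$. Second, if $x_{k-1}=x_0$ (which forces $k=1$), compatibility with $\{D>n\}$ imposes $z_1=1$, so the factor equals $p^\omega_K(x_0,x_1)$; since $x_0$ is $K$-open in $\omega$, every conductance $c_*^\omega(e)$ with $e\in\mathcal{E}_{x_0}$ lies in $[1/K,K]$, so $c_*^\omega(e)\wedge K^{-1}=K^{-1}$ and $c_*^\omega(e)\vee K=K$, which coincide with $c_*^{\omega_K}(e)\wedge K^{-1}$ and $c_*^{\omega_K}(e)\vee K$ respectively, giving $p^\omega_K(x_0,x_1)=p^{\omega_K}_K(x_0,x_1)$. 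Third, if $x_{k-1}=x_0+e_j$ for some $j\in\{1,\dots,d\}$, compatibility forces $z_k=1$ and $x_k\ne x_0$, so the factor is $p^\omega_K(x_{k-1},x_k)$; the only edge of $\mathcal{E}_{x_0}$ that can appear in this formula is $[x_0+e_j,x_0]$ in the denominator sum (it cannot appear in the numerator because $x_k\ne x_0$), and the same truncation argument yields agreement between the two environments.

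Combining all factors shows $\tilde p^\omega$ and $\tilde p^{\omega_K}$ produce the same weight on every $\{D>n\}$-compatible trajectory; the finite-horizon identity follows, and letting $n\to\infty$ yields the proposition. The main ``obstacle'' is purely bookkeeping: verifying that the non-standard clauses built into the definition of $D$ (namely $z_1=1$ and $z_k=1$ at each neighbor $x_0+e_j$, $j\le d$) together with the avoidance of $x_0$ after time $0$ are exactly what is needed to ensure that every transition touching an edge of $\mathcal{E}_{x_0}$ is made through the $p^\omega_K$-channel, on which the truncations $c\wedge K^{-1}$ and $c\vee K$ render the conductances of $\mathcal{E}_{x_0}$ invisible whenever $x_0$ is open.
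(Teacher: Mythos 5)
Your proof is correct, but it takes a genuinely different route from the paper. The paper constructs an explicit coupling $P^{\omega,\omega_K}_{(x_0,z_0)}$ of the two enhanced walks, showing that they can only decouple at a step that sets $D^{(1)}=D^{(2)}$ finite, so that on $\{D=\infty\}$ they agree forever; the identity then follows for cylinder events and is extended by the monotone class theorem. You instead reduce directly to the finite-horizon identity $E^\omega[f_n\mathbf{1}_{D>n}] = E^{\omega_K}[f_n\mathbf{1}_{D>n}]$ via $\pi$-$\lambda$ plus dominated convergence and verify it algebraically, trajectory by trajectory, by comparing the products of quenched transition weights. Both arguments ultimately rest on the same observation --- that the definition of $D$ forces every step out of $x_0$ or its forward neighbours $x_0+e_j$ (for $j\le d$) to go through the $p^\omega_K$ channel, and that $p^\omega_K$ is blind to the actual values of the conductances in $\mathcal{E}_{x_0}$ once $x_0$ is $K$-open, because the truncations $\wedge K^{-1}$ and $\vee K$ saturate. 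Your version is more elementary and perhaps a bit shorter, since it bypasses the explicit coupling construction and its case analysis, at the cost of being less amenable to quantitative extensions; the coupling makes the underlying mechanism probabilistically transparent. One small bookkeeping point you glossed over: your Case 1 is stated as $x_{k-1}\notin\{x_0\}\cup(x_0+\{e_1,\dots,e_d\})$, which on its face leaves out the backward neighbours $x_0-e_i$ whose incident edges also meet $\mathcal{E}_{x_0}$; this is harmless because $\{D>n\}$ forces $x_{k-1}\cdot\vec\ell>x_0\cdot\vec\ell$ for $k\ge2$ (and $e_i\cdot\vec\ell\ge0$), so no compatible trajectory ever sits at such a vertex, but it should be said explicitly.
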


\begin{proof}
The proof essentially comes from the definition \eqref{defD} of $D$ and the construction of the enhanced walk $\widetilde{X}=(X,Z)$ in Section \ref{sect_altcons}. Let us prove the result by a coupling argument.

Recall the construction of Section \ref{sect_altcons}.
Given the environment $\omega$, let us define a new probability $P_{(x_0,z_0)}^{\omega,\omega_K}$ such that the process $(\widetilde{X}^{(1)}_\cdot,\widetilde{X}^{(2)}_\cdot)$ under $P_{(x_0,z_0)}^{\omega,\omega_K}$ is such that the marginal law of $\widetilde{X}^{(1)}_\cdot$ (resp. $\widetilde{X}^{(2)}_\cdot$) matches the law of $\widetilde{X}_\cdot$ under $P_{(x_0,z_0)}^{\omega}$ (resp. $P_{(x_0,z_0)}^{\omega_K}$).\\
Notice that, from \eqref{defD}, we obtain that $D>0$ almost surely and, for any integer $N>0$,
\begin{eqnarray*}
\{D>N\}&=& \bigcap_{n=1}^N\left\{X_n\cdot \lv> X_0\cdot\lv\right\} \cap \left\{Z_1=1\right\}\\
&& \cap\bigcap_{j=1}^{d}\left\{ \text{for all }0<n\le N\text{ s.t. } X_{n-1} =X_0+ e_j: Z_{n}=1  \right\}.
\end{eqnarray*}
Moreover, we naturally define the quantities $D^{(1)}$ and $D^{(2)}$ respectively associated to $\widetilde{X}^{(1)}_\cdot$ and $\widetilde{X}^{(2)}_\cdot$.\\
Now, let us define the law of $(\widetilde{X}^{(1)}_\cdot,\widetilde{X}^{(2)}_\cdot)$  under $P_{(x_0,z_0)}^{\omega,\omega_K}$. The important point of the coupling is that if $D^{(1)}\vee D^{(2)}=\infty$, then the two walks remain coupled for ever (and in particular $D^{(1)}= D^{(2)}=\infty$). 

To do this, we couple the walks in the following manner
\begin{enumerate} 
\item if at time $n$ the two walks are still coupled and if the trajectories are still compatible with $D^{(1)}= D^{(2)}=\infty$, then we let $\widetilde{X}^{(1)}$ make a step according to $P^{\omega}$,
\item  if this step is again compatible with $D^{(1)}=\infty$, then $\widetilde{X}^{(2)}$ takes the same step,
\item otherwise, it means that $D^{(1)}=n+1$ and we impose $\widetilde{X}^{(2)}$ to move such that $D^{(2)}=n+1$ (and the walks are considered as decoupled).
\end{enumerate}

Let us do this rigourously. First, recall the definition \eqref{defpjK} of $p^\omega_K$ and let us notice that, as $\omega$ and $\omega_K$ coincide everywhere except on $\mathcal{E}_{x_0}$ and because $x_0$ is open in $\omega$, we have that $p^{\omega}_K(x,\cdot)=p^{\omega_K}_K(x,\cdot)$ for any $x\in\Z^d$, and $p^{\omega}(x,\cdot)=p^{\omega_K}(x,\cdot)$ as soon as $x\notin\{{x_0}\}\cup\{{x_0}+e_j:j\in\{1,...,2d\}\}$.\\
We fix $(\widetilde{X}^{(1)}_0,\widetilde{X}^{(2)}_0)=((x_0,z_0),(x_0,z_0))$, $P_{(x_0,z_0)}^{\omega,\omega_K}$-almost surely. We define the process by induction. Given the trajectory up to time $n\ge0$, the conditional law of $(\widetilde{X}^{(1)}_{n+1},\widetilde{X}^{(2)}_{n+1})$ is given by  the following rules:
\begin{enumerate}
\item if  $\{D^{(1)}>n\}\cap\{D^{(2)}>n\}$ holds, and if $X^{(1)}_n=X^{(2)}_n=x$ for some $x\in \Z^d$, then let $\widetilde{X}^{(1)}$ make a step according to $P^{\omega}$, and
\begin{enumerate}
\item if $x\neq{x_0}$ and $x\notin\{{x_0}+e_j:j\in\{1,...,2d\}\}$, then $\widetilde{X}^{(2)}_{n+1}=\widetilde{X}^{(1)}_{n+1}$;
\item if $x={x_0}$ or $x\in\{{x_0}+e_j:j\in\{1,...,2d\}\}$, if $Z^{(1)}_{n+1}=1$ and regardless of $X^{(1)}_{n+1}$, then $\widetilde{X}^{(2)}_{n+1}=\widetilde{X}^{(1)}_{n+1}$;
\item if $x={x_0}$ or $x\in\{{x_0}+e_j:j\in\{1,...,2d\}\}$, if $Z^{(1)}_{n+1}=0$ and regardless of  $X^{(1)}_{n+1}$, then, for any $j\in\{1,...,2d\}$, $\widetilde{X}^{(2)}_{n+1}=(x+e_j,0)$ with probability
\[
\frac{p^{\omega_K}(x,x+e_j)-p^{\omega_K}_K(x,x+e_j)}{\displaystyle{1-\sum_{1\le i\le 2d} p^{\omega}_K(x,x+e_i)}}.
\]
In particular we have $D^{(1)}=D^{(2)}=n+1$ in this case;
\end{enumerate}

\item if  $\{D^{(1)}\le n\}\cup\{D^{(2)}\le n\}$ holds, then $\widetilde{X}^{(1)}$ and $\widetilde{X}^{(2)}$ move independently according to $P^{\omega}$ and $P^{\omega_K}$ respectively.

\end{enumerate}
In order to end this construction properly, note that, if $\{D^{(1)}\wedge D^{(2)}>n\}\cap\{X^{(1)}_n=X^{(2)}_n\}$ holds, then either $\{D^{(1)}\wedge D^{(2)}>n+1\}\cap\{X^{(1)}_{n+1}=X^{(2)}_{n+1}\}$ or $\{D^{(1)}=D^{(2)}=n+1\}$ holds. Then, as $\{D^{(1)}\wedge D^{(2)}>0\}\cap\{X^{(1)}_0=X^{(2)}_0\}$ holds $P^{\omega,\omega_K}_{(x_0,z_0)}$-a.s., this implies by a simple induction:
\begin{enumerate}
\item the event $\{D^{(1)}\wedge D^{(2)}>n\}\cap\{X^{(1)}_n\neq X^{(2)}_n\}$ never occurs $P^{\omega,\omega_K}_{(x_0,z_0)}$-a.s., therefore the construction is complete;
\item as long as $\{D^{(1)}>n\}\cap\{D^{(2)}>n\}$ holds, we have $X^{(1)}_k=X^{(2)}_k$ for any $0\le k\le n$;
\item  $D^{(1)}=D^{(2)}$ $P_{(x_0,z_0)}^{\omega,\omega_K}$-almost surely.
\end{enumerate}
 Using this, we have 
\begin{align*}
&P_{(x_0,z_0)}^{\omega,\omega_K}\left[\{D^{(1)}=\infty\}\cap\left\{\{\exists n\ge0: \widetilde{X}^{(1)}_n\neq \widetilde{X}^{(2)}_n\}\cup\{D^{(2)}<\infty\}\right\}\right]\\
\le&P_{(x_0,z_0)}^{\omega,\omega_K}\left[D^{(1)}\neq D^{(2)}\right]+\sum_{n\ge0}P_{(x_0,z_0)}^{\omega,\omega_K}\left[D^{(1)}\wedge D^{(2)}>n+1,\widetilde{X}^{(1)}_{n+1}\neq \widetilde{X}^{(2)}_{n+1}\right]\\
=&0.
\end{align*}


Besides, recalling that $p^{\omega}_K(x,\cdot)=p^{\omega_K}_K(x,\cdot)$ for any $x\in\Z^d$, and $p^{\omega}(x,\cdot)=p^{\omega_K}(x,\cdot)$ as soon as $x\notin\{{x_0}\}\cup\{{x_0}+e_j:j\in\{1,...,2d\}\}$, it is easy to check that the law of $\widetilde{X}^{(1)}_\cdot$ (resp. $\widetilde{X}^{(2)}_\cdot$) under $P_{(x_0,z_0)}^{\omega,\omega_K}$ is the law of $\widetilde{X}_\cdot$ under $P_{(x_0,z_0)}^{\omega}$ (resp. $P_{(x_0,z_0)}^{\omega_K}$).\\
Finally, for  any integer $n\ge0$ and any set $A=A_0\times...\times A_n$ with $A_0,...,A_n$ in the $\sigma$-algebra generated by the subsets of $\Z^d\times\{0,1\}$, and on the event $\{D=\infty\}$, we have that
\begin{align*}
 &E_{(x_0,z_0)}^{\omega}\left[\1{(\widetilde{X}_0,...,\widetilde{X}_n)\in A}\1{D=\infty}\right]\\\
=&E_{(x_0,z_0)}^{\omega,\omega_K}\left[\1{(\widetilde{X}^{(1)}_0,...,\widetilde{X}^{(1)}_n)\in A}\1{D^{(1)}=\infty}\right]\\
=&E_{(x_0,z_0)}^{\omega,\omega_K}\left[\1{(\widetilde{X}^{(2)}_0,...,\widetilde{X}^{(2)}_n)\in A}\1{D^{(2)}=\infty}\right]\\
=&E_{(x_0,z_0)}^{\omega_K}\left[\1{(\widetilde{X}_0,...,\widetilde{X}_n)\in A}\1{D=\infty}\right],
\end{align*}
and we conclude the proof using the Monotone Class Theorem.

%
%
%
\end{proof}
\begin{remark}\label{suissenergie}
The last result implies, for example, that $\PR_0^K[D=\infty]=\PR_0[D=\infty|0\text{ is open}]$.
\end{remark}

Let us now prove the fundamental property of regeneration times which provides  an i.i.d.~structure on the trajectory of the walk.

\begin{theorem}
\label{thindep}
Let us fix $K$ large enough, and $k\ge 1$. First, for any $k\geq1$, we have $\tau
_k^{(K)}<\infty$ $\PR$-a.s. (or $\PR^a$-a.s. for any $a\in
[1/K,K]^{\mathcal{E}}$).\\
Second, let $f$, $g$, $h_k$ be bounded functions
which are measurable with respect to $\sigma\{X_0,\widetilde{X}_n:  n\geq1\}$, $\sigma
\{
c_*(e),e\in\mathcal{R}^0\setminus \mathcal{E}_0\}$ and $\mathcal{G}_k$, respectively. Then,
we have
\[
\ES_{(0,z_0)}\bigl[f(X_{\tau_k+\cdot}-X_{\tau_k},Z_{\tau_k+\cdot})g\circ
t_{X_{\tau_k}}h_k\bigr]=\ES_{(0,z_0)}[h_k]\times 
\ES_0^{K}[f(\widetilde{X}_\cdot)g\mid D=\infty].
\]
\end{theorem}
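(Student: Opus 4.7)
The argument proceeds by induction on $k$, with the case $k=1$ bearing essentially all of the work. Finiteness $\tau_1 < \infty$ almost surely under $\PR$ (resp.\ $\PR^a$, $\PR_0^K$) is Theorem \ref{tailtau} (resp.\ Theorem \ref{tailtauK}); finiteness of subsequent $\tau_k$ then follows inductively from the identity itself applied with $f \equiv g \equiv 1$ and $h_k$ an indicator, since each regeneration increment will turn out to have the law of $\tau_1$ under $\PR_0^K[\cdot \mid D=\infty]$, which is almost-surely finite because $\PR_0^K[D=\infty] > 0$ by Lemma \ref{posescape} together with Remark \ref{suissenergie}.

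For $k=1$, decompose $\{\tau_1 = S_m\} = A_m \cap \{D \circ \theta_{S_m} = \infty\}$, where
\[
A_m := \{S_m < \infty\} \cap \bigcap_{i<m}\{D\circ\theta_{S_i} < \infty\}
\]
is measurable with respect to the $\sigma$-field $\mathcal{H}_m$ generated by $(\widetilde{X}_j)_{0 \leq j \leq S_m}$ together with the conductances on $\mathcal{L}^{X_{S_m}}$. On $A_m$ the function $h_1$ coincides with an $\mathcal{H}_m$-measurable version $\widetilde{h}_m$. The strong Markov property of the enhanced walk at $S_m$ gives
\[
\ES_{(0,z_0)}\bigl[f\,g\circ t_{X_{\tau_1}}\,h_1\,\1{\tau_1 = S_m}\bigr] = \ES_{(0,z_0)}\bigl[\widetilde{h}_m\,\1{A_m}\,J_m\bigr],
\]
with
\[
J_m := E^\omega_{\widetilde{X}_{S_m}}\bigl[f(X_\cdot - X_0,\,Z_\cdot)\,g \circ t_{X_0}\,\1{D=\infty}\bigr].
\]
By the very definition of $\mathcal{M}^{(K)}$ the vertex $X_{S_m}$ is $K$-open on $A_m$, so Proposition \ref{prop_idconf} lets me reset the conductances on $\mathcal{E}_{X_{S_m}}$ to $K$ without changing $J_m$. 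After this reset, $J_m$ depends on $\omega$ only through the conductances on $\mathcal{R}^{X_{S_m}} \setminus \mathcal{E}_{X_{S_m}}$, which are ${\bf P}$-independent of $\mathcal{H}_m$ and i.i.d.\ $P_*$-distributed. Integrating them out and using the translation invariance of ${\bf P}$, $\ES_{(0,z_0)}[J_m \mid \mathcal{H}_m] = \PR_0^K[fg\,\1{D=\infty}]$ on $A_m$, a deterministic constant. Summing over $m$ yields
\[
\ES_{(0,z_0)}\bigl[f\,g\circ t_{X_{\tau_1}}\,h_1\bigr] = \PR_0^K[fg\,\1{D=\infty}]\cdot\sum_{m\ge 1}\ES_{(0,z_0)}[\widetilde{h}_m\,\1{A_m}],
\]
and the same identity applied with $f \equiv g \equiv 1$ identifies the sum as $\ES_{(0,z_0)}[h_1]/\PR_0^K[D=\infty]$, completing the base case.

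For the induction $k \to k+1$, use \eqref{regenstruct}: the shifted walk $\widetilde{X}^{(1)} := (X_{\tau_1+\cdot}-X_{\tau_1},\,Z_{\tau_1+\cdot})$ in the shifted environment admits its own regeneration times $\tau_j^{(1)}$ with $\tau_{k+1} = \tau_1 + \tau_k^{(1)}$, and $h_{k+1}$ factors as $h_1 \cdot \widetilde{h}_k^{(1)}$ with $\widetilde{h}_k^{(1)}$ being $\mathcal{G}_k$-measurable for the shifted system. Packaging $f$, $g$ and $\widetilde{h}_k^{(1)}$ into a single functional of the shifted walk playing the role of the triple $(f,g,h_1)$ and invoking the $k=1$ identity converts the left-hand side into $\ES_{(0,z_0)}[h_1]\cdot \ES_0^K[F \mid D=\infty]$ for an appropriate $F$; the induction hypothesis applied under $\PR_0^K[\cdot\mid D=\infty]$ then separates $F$ into $\ES_0^K[fg\mid D=\infty]\cdot \ES_0^K[\widetilde{h}_k^{(1)}\mid D=\infty]$; and finally the $k=1$ identity applied with $f\equiv g\equiv 1$ and past-function $h_1\widetilde{h}_k^{(1)}$ collapses $\ES_{(0,z_0)}[h_1]\cdot \ES_0^K[\widetilde{h}_k^{(1)}\mid D=\infty]$ to $\ES_{(0,z_0)}[h_{k+1}]$, giving the claim. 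The main obstacle throughout is measurability bookkeeping: one must check that the edge partition $E(\Z^d) = \mathcal{L}^{X_{\tau_1}} \cup \mathcal{E}_{X_{\tau_1}} \cup (\mathcal{R}^{X_{\tau_1}}\setminus\mathcal{E}_{X_{\tau_1}})$ is respected by each of $f$, $g$, $h_k$ and by the iterated shifts, and that Proposition \ref{prop_idconf} is invoked correctly on $\{D\circ\theta_{\tau_1} = \infty\}$ so that the $\mathcal{E}_{X_{\tau_1}}$-conductances (which are $\mathcal{G}_1$-measurable via $\mathcal{L}^{X_{\tau_1}}$) do not couple the two blocks.
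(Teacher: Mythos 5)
Your proof is correct and follows essentially the same route as the paper's: the same decomposition over the regeneration attempt $S_m$ at which $\tau_1$ is realized, the strong Markov property, Proposition \ref{prop_idconf} to replace the conductances at the open vertex $X_{S_m}$ by the $K$-configuration, the resulting ${\bf P}$-independence of $\mathcal{L}^{X_{S_m}}$-data from $\mathcal{R}^{X_{S_m}}\setminus\mathcal{E}_{X_{S_m}}$-data plus translation invariance, and the $f\equiv g\equiv 1$ specialization to identify the normalizing sum. Your sketch of the induction step is looser than a fully spelled-out argument (in particular, $h_{k+1}$ factoring as $h_1\cdot\widetilde{h}_k^{(1)}$ needs a monotone-class step, and the $\mathcal{E}_{X_{\tau_1}}$-dependence you flag at the end is exactly the point addressed by the paper's auxiliary $\sigma$-algebra $\bar{\mathcal{G}}_k$), but this is on par with the paper, which delegates that part to Sznitman--Zerner and Shen with an indication of the necessary modifications.
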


\begin{remark}
According to Remark \ref{rem_measD}, the event $\{D=\infty\}$ is $\sigma\{X_0, \widetilde{X}_n:  n\geq1\}$-measurable but not $\sigma\{{X}_n:  n\geq0\}$-measurable, this is why we need to deal with the enhanced walk in Theorem \ref{thindep}. Note that the function that we consider for the future of the walk should not depend on $Z_0$ in order to have the independence between the future and the past.
\end{remark}

\begin{proof}
The proof follows the blueprint of \cite{SznZer}, precisely Proposition 1.3 and Theorem 1.4. We only need to  adapt it slightly in order to have, in our case, the independence of the future and the past of the walk at regeneration times. We will not give details on the part that are identical (see also \cite{Shen} for more detailed arguments). We will prove the theorem for $k=1$ and the conclusion will then follow by induction. As we will need it to make the induction step, we also need to prove that,
for any $a\in[1/K,K]^{\mathcal{E}_0}$ and $z_0\in\{0,1\}$,
\[
\ES_{(0,z_0)}^a\bigl[f(X_{\tau_k+\cdot}-X_{\tau_k},Z_{\tau_k+\cdot})g\circ
t_{X_{\tau_k}}h_k\bigr]=\ES_{(0,z_0)}^a[h_k]\times 
\ES_0^{K}[f(\widetilde{X}_\cdot)g\mid D=\infty].
\]
We give the argument for this case but the proof is the same for $\ES$. First, let us point out that $\mathcal{G}_1$ is generated by the sets $\{\tau_1=k\}\cap\{\widetilde{X}_{\tau_1}=(x,z)\}\cap A$, with $A\in\sigma\left(c_*(e),\ e\in\mathcal{L}^{X_{\tau_{1}}}\right)\otimes{\mathcal{F}_{\infty}}$, and $(x,z)\in\Z^d\times\{0,1\}$. Besides, recall that $\tau_1<\infty$ a.s.~by Theorem \ref{tailtau}.\\
Now, recalling the construction of the enhanced random walk $\widetilde{X}$ in Section \ref{sect_altcons}, we have
\begin{eqnarray*}
&&\ES_{(0,z_0)}^a\bigl[f(X_{\tau_1+\cdot}-X_{\tau_1},Z_{\tau_1+\cdot})g\circ t_{X_{\tau_1}}h_1\bigr]\\
&&= \sum_{k\ge0} \ES_{(0,z_0)}^a\bigl[f(X_{\tau_1+\cdot}-X_{\tau_1},Z_{\tau_1+\cdot})g\circ t_{X_{\tau_1}}h_1, S_k<\infty, R_k=\infty\bigr]\\
&&= \sum_{k\ge0,(x,z)} \ES_{(0,z_0)}^a\bigl[f(X_{S_k+\cdot}-x,Z_{S_k+\cdot})g\circ t_{x}h_1, S_k<\infty, \widetilde{X}_{S_k}=(x,z), R_k=\infty\bigr].
\end{eqnarray*}

Following the arguments of \cite{SznZer}, there exists a random variable $h_1^{k,(x,z)}$, measurable with respect to $\sigma\left(c_*(e),\ e\in\mathcal{L}^{x}\right)\otimes{\mathcal{F}_{S_k}}$, which coincides with $h_1$ on the event $\{\widetilde{X}_{\tau_1}=(x,z)\}\cap\{\tau_1=S_k\}$. Therefore,
\begin{eqnarray*}
&&\ES_{(0,z_0)}^a\bigl[f(X_{\tau_1+\cdot}-X_{\tau_1},Z_{\tau_1+\cdot})g\circ t_{X_{\tau_1}}h_1\bigr]\\
&&=\sum_{k\ge0,(x,z)} \ES_{(0,z_0)}^a\left[E_0^\omega\left[f(X_{S_k+\cdot}-x,Z_{S_k+\cdot})h_1^{k,(x,z)}, S_k<\infty,\right.\right.\\
&&\qquad\qquad\qquad\qquad \left. \left.\widetilde{X}_{S_k}=(x,z), D\circ \theta_{S_k} =\infty\right]g\circ t_{x}\right]\\
&&=\sum_{k\ge0,(x,z)} \ES_{(0,z_0)}^a\left[E_0^\omega\left[h_1^{k,(x,z)}, S_k<\infty, \widetilde{X}_{S_k}=(x,z)\right]  \right.\\
&&    \qquad\qquad\qquad\qquad  \left. \times E_{(x,z)}^\omega\left[    f({X}_{\cdot}-x,Z_\cdot), D =\infty   \right]g\circ t_{x}\right]
\end{eqnarray*}
where we used the strong Markov property at time $S_k$.\\
Let $E^K_{(x,z)}$ denote the quenched law in the environment $\omega^x_K$ which is identical to $\omega$ everywhere except that we fix the conductances $c^{\omega^x_K}_*([x,x+e_j])=K$ for all $j\in\{1,...,2d\}$.\\
Now, notice that if $X_{S_k}=x$ then $x$ is open and recall that $f$ is $\sigma\{X_0, \widetilde{X}_n:  n\geq1\}$-measurable. Hence, by Proposition \ref{prop_idconf}, $E_{(x,z)}^\omega\left[    f(X_{\cdot}-x,Z_\cdot), D =\infty   \right]=E_{x}^{K}\left[    f(X_{\cdot}-x,Z_\cdot), D =\infty   \right]$, which does not depend on $z$. Moreover, this last quantity is bounded and measurable w.r.t.~$\sigma\left(c_*(e),\ e\in\mathcal{R}^{x}\setminus \mathcal{E}_x\right)$, hence it is $\PR$-independent of $E_0^\omega\left[h_1^{k,(x,z)}, S_k<\infty, \widetilde{X}_{S_k}=(x,z)\right]$ which is bounded and $\sigma\left(c_*(e),\ e\in\mathcal{L}^{x}\right)$-measurable. Hence, we have
\begin{align}
&\ES_{(0,z_0)}^a\bigl[f(X_{\tau_1+\cdot}-X_{\tau_1})g\circ t_{X_{\tau_1}}h_1\bigr]\nonumber\\
 &=\sum_{k\ge0,(x,z)} \ES_0^a\left[h_1^{k,(x,z)}, S_k<\infty, \widetilde{X}_{S_k}=(x,z)\right] \ES_{0}^K\left[    fg, D =\infty   \right]\label{eqf1g1}\\
&=\ES_0^K\left[   \left. fg\right|D =\infty   \right]\sum_{k\ge0,(x,z)} \ES_{(0,z_0)}^a\left[h_1^{k,(x,z)}, S_k<\infty, \widetilde{X}_{S_k}=(x,z)\right] \PR_{0}^K[D=\infty] \nonumber\\
&=\ES_0^K\left[   \left. fg\right|D =\infty   \right] \ES_{(0,z_0)}^a\left[h_1\right], \nonumber
\end{align}
where we used \eqref{eqf1g1} in the case $f=1$ and $g=1$ in order to obtain the last equality. This concludes the proof the theorem in the case $k=1$.\\

Now, using this result, \eqref{regenstruct} and Theorem \ref{tailtau}, we have, by induction, that $\tau_k<\infty$ almost surely, for any $k$.\\
We conclude the proof and obtain the result for general $k$ by induction, closely following the arguments of the proof Theorem 1.4 of \cite{SznZer}, or the more detailed proof of Theorem 3.5 of \cite{Shen}. The only modification to make to this last one is to define $\bar{\mathcal{G}_{k}}$ as
\[
\bar{\mathcal{G}_{k}}:=\sigma\bigl\{ \tau_1,\ldots,
\tau_k; (\widetilde{X}_{\tau_k\wedge
m})_{m\geq0}; c_*(e) \mbox{ with } e
\in \left(\mathcal{R}_0\setminus \mathcal{E}_0\right)\cap\mathcal{L}^{X_{\tau
_{k}}}\bigr\},
\]
and to the turn, at the very end of the proof,  $E_0^{a_{X_{\tau_{k+1}}}}$ into $E_0^{K}$, using Proposition \ref{prop_idconf}.
\end{proof}


\section{The time spent outside abnormally large edges is negligible}\label{pfff17}

In this section, the goal is to prove that the time spent, during one regeneration period, on edges with a conductance that is not too large is asymptotically negligible. This result is given by Lemma \ref{decaytau}. We need first to have estimates on the size of the regeneration blocks.\\

Recall that the regeneration times $(\tau_i)$ depend on the constant $K$, see Remark \ref{remKdeptau}. Also, recall that, for any $x\in\Z^d$, $T_x$ is the hitting of the vertex $x$. Fix some constant $\alpha>d+3$ and define
\begin{equation}\label{def_chi}
\chi:= \chi^{(K)}=\inf\{m\in\N: \{X_i, i\in[0,\tau_1]\} \subset B(m,m^{\alpha})\}.
\end{equation}

\begin{lemma}\label{tail_chi_tau}
For any $M\in(0,+\infty)$, there exists $K_0<\infty$ such
that, for any $K\geq K_0$,
\[
\PR_0[\chi^{(K)} \geq k]\leq Ck^{-M}.
\]

This implies that for any $M<\infty$, there exists $K_0<\infty$ such
that, for any $K\geq K_0$ and for any $x\in\Z^d$,
\[
\PR_0[T_{x} \leq \tau_1]  \leq  C \abs{\abs{x}}_{\infty}^{-M/\alpha}.
\]
The same results hold for $\PR_0^K$.
\end{lemma}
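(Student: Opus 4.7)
The plan is to decompose the event $\{\chi^{(K)} \geq k\}$ according to how the walk leaves the tilted box $B(k-1,(k-1)^\alpha)$ before the regeneration time $\tau_1$. Write
\[
\PR_0[\chi^{(K)} \geq k] = \PR_0\bigl[T_{\partial B(k-1,(k-1)^\alpha)} \leq \tau_1\bigr],
\]
and split according to whether the first exit of the box is through the positive boundary or not:
\[
\PR_0[\chi^{(K)} \geq k] \leq \PR_0\bigl[T_{\partial B} = T_{\partial^+ B} \leq \tau_1\bigr] + \PR_0\bigl[T_{\partial B} \neq T_{\partial^+ B}\bigr].
\]
The second term is bounded by $Ce^{-c(k-1)}$ by Theorem \ref{BL}.

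For the first term, the key observation is that $X_{\tau_1}\cdot\vec\ell = \sup_{0 \leq t \leq \tau_1} X_t\cdot\vec\ell$: indeed, by construction $\tau_1$ is an open ladder point reached via two $e_1$-steps from a previous strict maximum, so $X_{\tau_1}\cdot\vec\ell \geq X_t\cdot\vec\ell$ for every $t \leq \tau_1$. Consequently, on the event $\{T_{\partial^+ B(k-1,(k-1)^\alpha)} \leq \tau_1\}$, we have $X_{\tau_1}\cdot\vec\ell \geq X_{T_{\partial^+ B}}\cdot\vec\ell > k-1$, and hence by Theorem \ref{tailtau},
\[
\PR_0\bigl[T_{\partial B} = T_{\partial^+ B} \leq \tau_1\bigr] \leq \PR_0\bigl[X_{\tau_1}\cdot\vec\ell > k-1\bigr] \leq C(M) k^{-M}.
\]
Combining the two bounds gives $\PR_0[\chi^{(K)} \geq k] \leq C(M) k^{-M}$ (after possibly enlarging $C(M)$ to absorb the exponentially decaying term).

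For the second statement, if $T_x \leq \tau_1$ then $x$ lies on the trajectory up to $\tau_1$, so $x \in B(\chi, \chi^\alpha)$. Since $x \in B(m, m^\alpha)$ forces $\|x\|_2^2 \leq m^2 + (d-1) m^{2\alpha} \leq d\, m^{2\alpha}$ for $m \geq 1$, we obtain $\|x\|_\infty \leq \sqrt{d}\, \chi^\alpha$, so that $\chi \geq c\|x\|_\infty^{1/\alpha}$ for a suitable $c > 0$. Applying the first part,
\[
\PR_0[T_x \leq \tau_1] \leq \PR_0\bigl[\chi \geq c\|x\|_\infty^{1/\alpha}\bigr] \leq C \|x\|_\infty^{-M/\alpha}
\]
after adjusting the constant $M$. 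The argument transfers verbatim to $\PR_0^K$, using Theorem \ref{BLK} in place of Theorem \ref{BL} and Theorem \ref{tailtauK} in place of Theorem \ref{tailtau}. The proof is essentially routine once one has the two key inputs from \cite{Fri11}; no significant obstacle is expected.
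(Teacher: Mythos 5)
Your proof is essentially correct and reconstructs, using exactly the ingredients the paper names, the argument of Lemma~8.7 of \cite{Fri11} to which the paper defers: the decomposition into ``exit not through $\partial^+B$'' (handled by Theorem~\ref{BL}, resp.~\ref{BLK}) and ``exit through $\partial^+B$ before $\tau_1$'', the observation that $X_{\tau_1}\cdot\vec\ell$ is the running maximum up to $\tau_1$, and then Theorem~\ref{tailtau} (resp.~\ref{tailtauK}). The deduction of the second assertion from the first via $x\in B(\chi,\chi^\alpha)\Rightarrow \chi\ge c\|x\|_\infty^{1/\alpha}$ is also the standard step.

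One point deserves care. As literally written, the paper defines $\partial^+B_y(L,L')$ with $|(x-y)\cdot\vec\ell|>L$, so it formally contains both the ``top'' ($(x-y)\cdot\vec\ell>L$) and the ``bottom'' ($(x-y)\cdot\vec\ell<-L$) of the tilted box. Your inference $X_{T_{\partial^+B}}\cdot\vec\ell>k-1$ holds only on the top-exit event; on the bottom-exit event one has $X_{T_{\partial^+B}}\cdot\vec\ell<-(k-1)$, which the argument as written does not cover. That case is, however, immediate to absorb: a bottom exit forces $T_{\mathcal H^-(-(k-1))}<\infty$, which Lemma~\ref{thisisboring} bounds by $C e^{-c(k-1)}$ (one also needs the analogous bound under $\PR_0^K$, which follows after one step away from the origin and translation invariance). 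That said, the paper's own usage of $\partial^+B$ (for instance in the proof of Lemma~\ref{tail_taue1}, where for $z\in\partial^+B_y$ the inequality $(z-y)\cdot\vec\ell\ge L/2$ is used) and the verbal description of Theorem~\ref{BL} (``towards $\vec\ell$'') indicate that $\partial^+B$ is intended to mean only the top side, in which case your argument holds verbatim. Either way the issue is cosmetic; your route is the paper's route.
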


\begin{proof}
We can follow line by line the proof of Lemma 8.7 of \cite{Fri11} except that $\max_{a\in[1/K,K]^{\mathcal{E}}}\PR^a$ has to be replaced by $\PR_0$ (resp.~$\PR_0^K$) and, instead of using Theorems 7.2 and 7.3 from \cite{Fri11}, we need to use the analog Theorems \ref{tailtau} and \ref{BL} (resp.~\ref{tailtauK} and \ref{BLK}) from this paper.
\end{proof}

When observing the random variables $(\tau_i)$, we want to distinguish the time spent on abnormally large edges (traps) and the time spent on the other edges which will be negligible.\\
For this purpose, recall the definitions \eqref{def_Et} of $E_{< t}$ and \eqref{defright} of $\mathcal{R}$, and let us define,
\begin{eqnarray}
\tau^{\ge t}_1&:= &\sum_{e\in E_{< t}^c\cap \left\{\mathcal{R}^0\setminus\mathcal{E}_{0}\right\}} \left| \left\{k\in[1,\tau_1]:[X_{k-1},X_k]=e\right\}\right|,\label{def_tausup}
\end{eqnarray}
as well as
\begin{eqnarray}
\tau^{< t}_1&:=&\tau_1-\tau^{\ge t}_1.\label{def_tauinf}
\end{eqnarray}
\begin{remark}\label{rem_regensup}
We are careful about the definition of $\tau^{\ge t}_1$ in order to make sure that this quantity does not depend on the conductances outside $\mathcal{R}^0\setminus\mathcal{E}_0$, so that we can later apply Theorem \ref{thindep}. Note that, under $\PR_0^K[\cdot|D=\infty]$, one edge of $\mathcal{E}_0$ is crossed once and the other edges of $\left\{\mathcal{R}^{0}\setminus\mathcal{E}_{0}\right\}^c$ are not crossed at all. Moreover, as soon as $t\ge K$, $\mathcal{E}_{0}\subset E_{< t}$.
\end{remark}
As for the regeneration times, we can define, for $k\ge0$:
\begin{eqnarray}
\tau_{k+1}^*=\tau_1^*+
\tau_k^*\bigl((X_{\tau_1+ \cdot}-X_{\tau_1},Z_{\tau_1+\cdot}), \omega( \cdot
+X_{\tau_1})\bigr),\label{def_tausupn}
\end{eqnarray}
where $*$ stands for $<$ or $\ge$.\\

We now give an upper-bound on the $\PR^K[\cdot|D=\infty]$-probability that $\tau^{< t}_1$ is large, when $t$ is large.

%
%
%
%

\begin{lemma}\label{decaytau}
For any
$\delta\in(0,1)$, there exists $K_0<\infty$ such that, for any $K\geq K_0$ and for any constant $a>0$,
\[
\PR^K_0\bigl[{
\tau}_1^{< n^{\delta}}>an\mid D=\infty\bigr]\leq C(K,\delta,a)
n^{-\gamma-\frac{(1-\delta)(1-\gamma)}{2}}.
\]
\end{lemma}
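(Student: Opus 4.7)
The plan is to apply Markov's inequality at a fractional moment $\beta$ slightly above $\beta_0 := (1+\gamma)/2 \in (\gamma,1)$. A direct computation gives
$$\beta_0(1-\delta) + \delta\gamma \;=\; \frac{(1+\gamma)(1-\delta)}{2} + \delta\gamma \;=\; \gamma + \frac{(1-\delta)(1-\gamma)}{2},$$
so taking $\beta = \beta_0 + \eta$ for some small $\eta>0$ produces an extra $n^{-\eta(1-\delta)}$ factor which will absorb slowly-varying and polylogarithmic corrections. Since $\PR_0^K[D=\infty] \geq c(K) > 0$ by Lemma \ref{posescape} and Remark \ref{suissenergie}, it is enough to establish the annealed moment bound
$$\ES_0^K\bigl[\bigl(\tau_1^{<n^\delta}\bigr)^\beta\bigr] \;\leq\; C(K,\delta)\, n^{\delta(\beta-\gamma)}\, L(n^\delta)\,(\log n)^{C}.$$

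For the quenched part, decompose the regeneration block into excursions between successive visits to $\mathrm{GOOD}$. Under $\PR_0^K$ the origin is $K$-open and, on $\{D=\infty\}$, the walk stays in $\Hc^+(0)$, so every edge traversed in $[0,\tau_1]$ lies either in $\mathcal{E}_0$ (crossed $O_K(1)$ times, a negligible contribution) or in $E(\mathrm{BAD}^\mathrm{s}_x)$ for some good vertex $x$ visited by the walk. Lemma \ref{backbone} bounds the quenched expected number of returns to each good $x$ by $C(K)$; Lemma \ref{timetrap} bounds the quenched expected time spent on $E_{<n^\delta}$ during one excursion from $x$ back to $\mathrm{GOOD}$ by $C(K)\,e^{3\lambda W(\mathrm{BAD}^\mathrm{s}_x)}\bigl(1+S_x^{<n^\delta}\bigr)$, where $S_x^{<n^\delta}:=\sum_{e\in E(\mathrm{BAD}^\mathrm{s}_x)\cap E_{<n^\delta}} c_*(e)$; and Lemma \ref{tail_chi_tau} confines the walk to $B(\chi,\chi^\alpha)$. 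Wald's identity applied to the strong Markov decomposition yields
$$E^\omega_0\bigl[\tau_1^{<n^\delta}\bigr] \;\leq\; C(K) \sum_{x\in B(\chi,\chi^\alpha)\cap\mathrm{GOOD}(\omega)} e^{3\lambda W(\mathrm{BAD}^\mathrm{s}_x)}\bigl(1+S_x^{<n^\delta}\bigr).$$
Jensen's inequality (valid since $\beta<1$) combined with the sub-additivity $\bigl(\sum_x a_x\bigr)^\beta \leq \sum_x a_x^\beta$ reduces the task to controlling
$$\sum_{x\in\Z^d}\ES_0^K\Bigl[\mathbf{1}\{x\in B(\chi,\chi^\alpha)\}\, e^{3\beta\lambda W(\mathrm{BAD}^\mathrm{s}_x)}\bigl(1+S_x^{<n^\delta}\bigr)^{\beta}\Bigr].$$

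The key annealed input is the fractional moment of the truncated conductance: by \eqref{assProba} and Karamata's theorem, for any $\beta'\in(\gamma,1)$,
$$\ES\bigl[c_*^{\beta'}\mathbf{1}\{c_*<n^\delta\}\bigr] \;=\; \beta'\int_0^{n^\delta} t^{\beta'-1-\gamma}L(t)\,dt \;\leq\; C\,n^{\delta(\beta'-\gamma)}\,L(n^\delta).$$
Another use of sub-additivity on $(1+S_x^{<n^\delta})^\beta$ reduces it to a sum of $c_*(e)^\beta\mathbf{1}\{c_*<n^\delta\}$ over $e\in E(\mathrm{BAD}^\mathrm{s}_x)$, and Lemma \ref{tgb} handles the exponential factor $e^{3\beta\lambda W(\mathrm{BAD}^\mathrm{s}_x)}$ together with the combinatorial factor $|E(\mathrm{BAD}^\mathrm{s}_x)|$, provided $K$ is chosen large enough so that $\xi_1(K)$ dominates a fixed multiple of $\lambda$. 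Finally, the indicator $\mathbf{1}\{x\in B(\chi,\chi^\alpha)\}$ is decoupled from the local bad-cluster statistics near $x$ via Hölder's inequality at exponent $p=1+\varepsilon$ (chosen so that $p\beta<1$ persists), using the arbitrary polynomial decay $\PR[x\in B(\chi,\chi^\alpha)]\leq C\|x\|_\infty^{-M}$ from Lemma \ref{tail_chi_tau} to ensure summability in $x\in\Z^d$. The main technical obstacle is precisely this last decoupling, since $\chi$ is globally correlated with the environment through the conductances in the whole block; the arbitrarily fast polynomial decay supplied by Lemma \ref{tail_chi_tau} is exactly what lets us push $p\to 1$ and preserve the sharp conductance-moment exponent, at the cost of a factor $n^{O(\varepsilon)}$ that is absorbed into the cushion $\eta$ originally built into $\beta$.
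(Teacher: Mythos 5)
Your approach is a genuine departure from the paper's: you apply Markov's inequality at a fractional moment $\beta$ just above $\beta_0=(1+\gamma)/2$, whereas the paper uses Markov at first moment combined with a dichotomy over the size of $\max_{e} c_*(e)$ (partitioning $[1,n^\delta]$ into geometric intervals and summing the contributions). Your exponent arithmetic is correct: $\beta_0(1-\delta)+\delta\gamma=\gamma+\tfrac{(1-\delta)(1-\gamma)}{2}$, and $\beta_0\in(\gamma,1)$ so the truncated fractional moment $\ES[c_*^{\beta'}\mathbf{1}\{c_*<n^\delta\}]\le Cn^{\delta(\beta'-\gamma)}L(n^\delta)$ by Karamata is exactly the input needed; taking $\beta=\beta_0+\eta$ to absorb the slowly-varying corrections is the right move. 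The overall strategy (quenched bound via Lemmas \ref{backbone} and \ref{timetrap}, restriction to a polynomially sized box using Lemma \ref{tail_chi_tau}, then an annealed estimate) is parallel to the paper's, so the fractional-moment idea buys mainly a cleaner final step at the price of having to control fractional moments of products.

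There is, however, a genuine gap at the technical heart of the argument, which is the step where you write ``Another use of sub-additivity \dots reduces it to a sum of $c_*(e)^\beta\mathbf{1}\{c_*<n^\delta\}$ over $e\in E(\mathrm{BAD}^\mathrm{s}_x)$, and Lemma \ref{tgb} handles the exponential factor $e^{3\beta\lambda W(\mathrm{BAD}^\mathrm{s}_x)}$ together with the combinatorial factor.'' This implicitly treats $\ES\bigl[e^{3\beta\lambda W(\mathrm{BAD}^\mathrm{s}_x)}\sum_{e\in E(\mathrm{BAD}^\mathrm{s}_x)\cap E_{<n^\delta}}c_*(e)^\beta\bigr]$ as if the indicator $\mathbf{1}\{e\in E(\mathrm{BAD}^\mathrm{s}_x)\}$, the width $W(\mathrm{BAD}^\mathrm{s}_x)$, and the conductance $c_*(e)$ factored, but they do not: whether $e$ belongs to the bad cluster of $x$, and how wide that cluster is, are functions of the same conductances you then want to integrate. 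In fact an edge in $E(\mathrm{BAD}^\mathrm{s}_x)$ is typically there precisely \emph{because} it is abnormal, so $c_*(e)$ conditioned on $e\in E(\mathrm{BAD}^\mathrm{s}_x)$ is biased towards the tail. The paper sidesteps this by replacing the random set $E(\mathrm{BAD}^\mathrm{s}_x)$ by a deterministic box of volume $n^{C\epsilon}$ (valid on $\{|E(\mathrm{BAD}^\mathrm{s}_x)|<n^\epsilon\}$, the complement having super-polynomially small probability by Lemma \ref{tgb}), and only then taking a union bound over all edges of the box. To rescue your argument you would need something analogous, for instance conditioning on the $\sigma$-field generated by the normal/abnormal indicators $\{\mathbf{1}\{c_*(e)\in[1/K,K]\}\}_e$ — which determines the bad clusters and $W$ — and observing that the conditional moments $\ES[c_*(e)^{\beta'}\mathbf{1}\{c_*<n^\delta\}\mid e\text{ normal}]$ and $\ES[c_*(e)^{\beta'}\mathbf{1}\{c_*<n^\delta\}\mid e\text{ abnormal}]$ are both bounded by $C(K)\,n^{\delta(\beta'-\gamma)}L(n^\delta)$. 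Without some such decoupling, the crucial estimate $\ES_0^K[(\tau_1^{<n^\delta})^\beta]\le Cn^{\delta(\beta-\gamma)}L(n^\delta)$ is not established. Two smaller remarks: the claim that every edge in $[0,\tau_1]$ lies in $\mathcal{E}_0$ or some $E(\mathrm{BAD}^\mathrm{s}_x)$ is not literally true (edges joining two good vertices are in neither), though Lemma \ref{backbone} does cover those via the number of visits to good sites; and the quenched bound should be phrased on the deterministic event $\{\chi\le n^\epsilon\}$ with a separate error term, rather than writing a quenched expectation whose upper bound involves the walk-random variable $\chi$.
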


\begin{proof}

Let us introduce
\begin{eqnarray*}
\widetilde{\tau}_1^{< n^\delta}:= &\sum_{e\in E_{\le n^\delta}\cap \left\{\mathcal{R}^{0}\setminus\mathcal{E}_{0}\right\}} &\left| \left\{k\in[
T_{\mathrm{GOOD}},T_{\mathrm{GOOD}}\circ\tau_{1})\right.\right.\\
&&\left.\left.\text{ such that }[X_{k},X_{k+1}]=e\right\}\right|.
\end{eqnarray*}
Define $T_\tau:=T^+_{      \mathrm{GOOD}\cup\{0\}, E_{< n^\delta}     }+\widetilde{\tau}_1^{< n^\delta}$. It is clear that, under $\PR_0^K[\cdot|D=\infty]$, $0$ is open and $\tau_1^{< n^\delta}\le T_\tau$. Therefore, to prove the lemma it will be enough to prove that
\[
\PR_0^K\bigl[T_\tau>an\bigr]\leq C(K)
n^{-\gamma-\frac{(1-\delta)(1-\gamma)}{2}}.
\]

In this proof, we will point out the $K$ dependence of constants,
since the proof requires us to be careful with this dependence. On the other hand, we drop the dependence on $\delta$ and $a$ since these quantities will be fixed throughout the proof. Also, we fix $n>3$ for convenience and note that the statement is obvious in the other case.

Fix $\delta>0$ and $a>0$. Let us work in an environment $\omega$ which is such that $0$ is open.
We have
\begin{eqnarray*}
T_\tau &\leq&T^+_{      \mathrm{GOOD}\cup\{0\}, E_{\le n^\delta}     }+\sum_{x\in\mathrm{GOOD}(\omega)} {
\mathbf1} {\{T_x<\tau_1\}} \sum_{i=0}^\infty\1{X_i=x} \\
&&+ \sum
_{x\in\partial\mathrm{BAD}(\omega) } {\mathbf1} {\{T_x<\tau_1\}}\sum_{i=1}^{\infty
} {\mathbf1} {
\{X_i=x\}} T^+_{\mathrm
{GOOD},E_{\le n^{\delta}}}\circ\theta_i.
\end{eqnarray*}

Recalling the definition \eqref{def_chi} of $\chi$, for any $\epsilon>0$, we see that
\begin{eqnarray*}
E^{\omega}_0\bigl[{\mathbf1} {\bigl\{\chi\leq n^{\epsilon}\bigr\}}
T_\tau\bigr]&\leq&  E_0^\omega\left[T^+_{      \mathrm{GOOD}\cup\{0\}, E_{< n^\delta}     }\right]
\\
&+&\sum_{x\in B(n^{\epsilon}, n^{C\epsilon})} \Biggl[{\mathbf1}
{\bigl\{x\in
\mathrm{GOOD}(\omega)\bigr\}}E^\omega_0\left[ \sum_{i=0}^\infty\1{X_i=x}\right]\\
&+& {\mathbf1} {\bigl\{x\in\partial\mathrm
{BAD}(\omega)
\bigr\}} E^{\omega
}_0 \Biggl[ \sum_{i=1}^{\infty}{
\mathbf1} {\{X_i=x\}} T^+_{\mathrm{GOOD},E_{< n^{\delta}}}\circ\theta_i
\Biggr] \Biggr].
\end{eqnarray*}
Using Markov's property and Lemma \ref{backbone}, we obtain
\begin{eqnarray*}
E^{\omega}_0\bigl[{\mathbf1} {\bigl\{\chi\leq n^{\epsilon}\bigr\}}
T_\tau\bigr]&\leq&  E_0^\omega\left[T^+_{      \mathrm{GOOD}\cup\{0\}, E_{< n^\delta}     }\right]\\
&&+ C(K) \sum_{x\in B(n^{\epsilon}, n^{C\epsilon})} \bigl[{\mathbf1}
{\bigl\{ x\in
\mathrm{GOOD}(\omega)\bigr\}}\\
&&+ {\mathbf1} {\bigl\{x\in\partial\mathrm
{BAD}(\omega)
\bigr\}} E^{\omega
}_x\bigl[T^+_{\mathrm{GOOD},E_{< n^{\delta}}}\bigr]\bigr].
\end{eqnarray*}
%

Recall that $\partial\mathrm{BAD}(\omega)\subset \mathrm{GOOD}(\omega)$ and notice that, if $x$ is good, then $T^+_{\mathrm{GOOD},E_{< n^{\delta}}}=T^+_{\mathrm{GOOD}\cup\{x\},E_{< n^{\delta}}}$. We may now apply Lemma \ref{timetrap}: 
\begin{eqnarray*}
 E_0^{\omega}\bigl[{\mathbf1} {\bigl\{\chi\leq n^{\epsilon}\bigr\}}
T_\tau\bigr]&\le& C(K)\biggl[ \exp
\bigl(3\lambda\left|\partial\mathrm{BAD}^{\mathrm{s}}_0(
\omega)\right|\bigr) \biggl(1+\sum_{e\in
E(\mathrm{BAD}^{\mathrm{s}}_0)\cap E_{< n^\delta}}
c_*^{\omega}(e) \biggr)
\\
&+& \sum_{x\in B(n^{\epsilon}, n^{C\epsilon})} {\mathbf1}
{\bigl\{x
\in\mathrm{GOOD}(\omega)\bigr\}}
\\
&+&\sum_{x\in B(n^{\epsilon},n^{C\epsilon})} {\mathbf1} {\bigl\{x\in
\partial\mathrm
{BAD}(\omega)\bigr\}} \times\exp
\bigl(3\lambda W\left(\mathrm{BAD}^{\mathrm{s}}_x(
\omega)\right)\bigr)\\
&&\qquad\qquad\qquad\times \biggl(1+\sum_{e\in
E(\mathrm{BAD}^{\mathrm{s}}_x)\cap E_{< n^{\delta}}}
c_*^{\omega}(e) \biggr)\biggr]
\\
&\leq& C(K)n^{C\epsilon}\left[1+ \max_{x\in B(n^{\epsilon},n^{C\epsilon})}
{\mathbf1} {\bigl\{x
\in\partial\mathrm{BAD}(\omega)\cup\{0\}\bigr\}}\right.\\
&&\hspace*{60pt}{}\times \left| E\bigl(\mathrm
{BAD}^{\mathrm{s}}_x(\omega)\bigr)\right|\times
\exp\bigl(3\lambda W\left(\mathrm{BAD}^{\mathrm{s}}_x(
\omega)\right)\bigr)
\\
&&\left.\hspace*{90pt}\qquad\quad{} \times\Bigl(1+\max_{e\in E(\mathrm{BAD}^{\mathrm{s}}_x)\cap E_{< n^{\delta}}}
c_*(e) \Bigr)\right]
\\
&&\qquad\leq C(K) n^{C\epsilon}\left[ \max_{x\in B(n^{\epsilon},n^{C\epsilon})}
 \exp\bigl(4\lambda W\left(\mathrm{BAD}^{\mathrm{s}}_x(\omega)\right)\bigr)\right]
\\
&&\hspace*{90.5pt}\qquad\quad{} \times\left[1+\max_{e\in F^\omega(B(n^{\epsilon},n^{C\epsilon}))}
c_*(e) \right],
\end{eqnarray*}
where we used that $\left| E(\mathrm{BAD}^{\mathrm{s}}_x(\omega
))\right|\leq C
W\left(\mathrm{BAD}^{\mathrm{s}}_x(\omega)\right)^d \leq C \exp\left(\lambda W\left(\mathrm{BAD}^{\mathrm{s}}_x(\omega)\right)\right)$ and defined
\begin{equation}\label{blablablablou}
F^\omega(B(n^{\epsilon},n^{C\epsilon})):=\bigcup_{x\in B(n^{\epsilon},n^{C\epsilon})}E(\mathrm{BAD}^{\mathrm{s}}_x)\cap E_{< n^{\delta}}.
\end{equation}
Now, fix some integers $I\in\N^*$ and $i\in[0,I-1]$. Using the previous inequality, on the event $\{\max_{e\in F^\omega(B(n^{\epsilon},n^{C\epsilon}))}c_*(e) \le n^{\delta\frac{i+1}{I}}\}\cap\{E^{\omega}\bigl[{\mathbf1} {\bigl\{\chi\leq n^{\epsilon}\bigr\}}T_\tau\bigr]>n^{\delta\frac{i+2}{I}}\}$, we have that
\[
\max_{x\in B(n^{\epsilon},n^{C\epsilon})}
 \exp\bigl(4\lambda W\left(\mathrm{BAD}^{\mathrm{s}}_x(\omega)\right)\bigr)>\frac{n^{\frac{\delta}{I}-C\epsilon}}{C(K)}.
\]
Using Lemma \ref{tgb} and the previous remark, we have
\begin{eqnarray}
&\PR_0^K&\left[E^{\omega}\bigl[{\mathbf1} {\bigl\{\chi\leq n^{\epsilon}\bigr\}}\widetilde{\tau}_1^{\le n^{\delta}}\bigr]>n^{\delta\frac{i+2}{I}},\max_{e\in F^\omega(B(n^{\epsilon},n^{C\epsilon}))}c_*(e) \le n^{\delta\frac{i+1}{I}}\right]\nonumber\\
&&\le \PR_0^K\left[\max_{x\in B(n^{\epsilon},n^{C\epsilon})}
 \exp\bigl(4\lambda W\left(\mathrm{BAD}^{\mathrm{s}}_x(\omega)\right)\bigr)>\frac{n^{\frac{\delta}{I}-C\epsilon}}{C(K)}\right]\nonumber\\
&&\le Cn^{C\epsilon}\frac{C(K)}{In^2}\le\frac{C(K)}{In},\label{inegIn}
\end{eqnarray}
as soon as $\epsilon>0$ is small enough (depending on $\delta$, $I$ and $C$ but not $K$) and $K$ is large enough (depending on $\delta$, $I$ and $C$).\\
Now, using Lemma \ref{tail_chi_tau} and since every edge $e\in F^\omega(B(n^{\epsilon},n^{C\epsilon}))$ is such that $c_*(e)< n^{\delta}$, we have
\begin{eqnarray*}
\PR_0^K\left[T_\tau\ge an\right]&\le& \PR_0^K[\chi> n^\epsilon]+\PR_0^K\left[{\mathbf1} {\bigl\{\chi\leq n^{\epsilon}\bigr\}}T_\tau\ge an\right]\\
&\le& \frac{C(K)}{n}+\sum_{i=0}^{I-1}\PR_0^K\left[{\mathbf1} {\bigl\{\chi\leq n^{\epsilon}\bigr\}}T_\tau\ge an,\right.\\
&&\qquad\qquad\qquad\left.\max_{e\in F^\omega(B(n^{\epsilon},n^{C\epsilon}))}c_*(e) \in [n^{\delta\frac{i}{I}},n^{\delta\frac{i+1}{I}})\right].
\end{eqnarray*}
By \eqref{inegIn}, this implies
\begin{eqnarray*}
\PR_0^K\left[T_\tau\ge an\right]&\le& \frac{C(K)}{n}+\sum_{i=0}^{I-1}\ES_0^K\Bigg[P^\omega\left[{\mathbf1} {\bigl\{\chi\leq n^{\epsilon}\bigr\}}T_\tau\ge an\right]\\
&&\qquad\qquad\qquad\times\1{\max_{e\in F^\omega(B(n^{\epsilon},n^{C\epsilon}))}c_*(e) \in [n^{\delta\frac{i}{I}},n^{\delta\frac{i+1}{I}})}\\
&&\qquad\qquad\qquad\times \1{E^{\omega}\bigl[{\mathbf1} {\bigl\{\chi\leq n^{\epsilon}\bigr\}}T_\tau\bigr]\le n^{\delta\frac{i+2}{I}}}\Bigg].
\end{eqnarray*}
By Markov's inequality, we obtain
\begin{eqnarray}
&&\PR_0^K\left[T_\tau\ge an\right]\nonumber\\
&&\le \frac{C(K)}{n}+\sum_{i=0}^{I-1}\frac{n^{\delta\frac{i+2}{I}}}{an}{\bf P}\left[\max_{e\in F^\omega(B(n^{\epsilon},n^{C\epsilon}))}c_*(e) \in [n^{\delta\frac{i}{I}},n^{\delta\frac{i+1}{I}})\right].\label{noinspiration}
\end{eqnarray}
Now, for any $i\in[0,I-1]$, using that $\left| E(\mathrm{BAD}^{\mathrm{s}}_x(\omega
))\right|\leq C
\left|\partial\mathrm{BAD}^{\mathrm{s}}_x(\omega)\right|^d\le\left|W(\mathrm{BAD}^{\mathrm{s}}_x)\right|^{dC}$ and Lemma \ref{tgb}, we have
\begin{eqnarray*}
&&\PR_0^K\left[\max_{e\in F^\omega(B(n^{\epsilon},n^{C\epsilon}))}c_*(e) \in [n^{\delta\frac{i}{I}},n^{\delta\frac{i+1}{I}})\right]\\
&&\qquad\qquad \le\sum_{x\in B(n^{\epsilon},n^{C\epsilon})} {\bf P}\left[\max_{e\in E(\mathrm{BAD}^{\mathrm{s}}_x)}c_*(e) \in [n^{\delta\frac{i}{I}},n^{\delta\frac{i+1}{I}})\right]\\
&&\qquad\qquad \le n^{C\epsilon}\sum_{e\in E(B(n^{C\epsilon},n^{C\epsilon}))} {\bf P}\left[c_*(e) \in [n^{\delta\frac{i}{I}},n^{\delta\frac{i+1}{I}})\right] \\
&&\qquad\qquad\qquad\qquad+n^{C\epsilon}\sum_{x\in B(n^{\epsilon},n^{C\epsilon})}{\bf P}\left[\left|E(\mathrm{BAD}^{\mathrm{s}}_x)\right|\ge n^\epsilon\right]\\
&&\qquad\qquad \le C(K)\frac{n^{C\epsilon} }{n^{\gamma\delta\frac{i}{I}}}L_{\max}(n),
\end{eqnarray*}
as soon as $K$ is large enough (depending on $\delta$, $I$, $C$ and $\epsilon$), and where $L_{\max}(n)=\max\{L(n^{\gamma\delta\frac{i}{I}}), i=0,...,I-1\}$, with $L$ the slowly-varying function from the tail $c_*$ introduced in \eqref{assProba}. From \eqref{noinspiration}, we then deduce
\begin{eqnarray*}
\PR_0^K\left[T_\tau\ge n\right]
&\le& \frac{C(K)}{n}+ C(K)L_{\max}(n)\frac{n^{\delta\frac{2}{I}+C\epsilon}}{n}\sum_{i=0}^{I-1} n^{   \frac{ \delta(1-\gamma)  }{I}i   }\\
&\le& \frac{C(K)}{n}+ C(K)L_{\max}(n)n^{\delta\frac{2}{I}+C\epsilon}\frac{ n^{ \delta(1-\gamma)   }}{n}\\
&\le &\frac{C(K)}{n}+ \frac{C(K) L_{\max}(n)}{n^{     1-\delta(1-\gamma)-   \delta\frac{2}{I} -C\epsilon      }        }\\
&\le& \frac{C(K) }{n^{     \gamma+\frac{(1-\delta)(1-\gamma)}{2}        }},
\end{eqnarray*}
as soon as $I$ is large enough (depending on $\gamma$ and $\delta$, but not $K$), $\epsilon$ small enough (depending on $\delta$, $\gamma$, $I$ and $C$), and for $n$ large enough (depending on $\delta$, $\gamma$ and $L$). We used the fact that, as $L$ is slowly varying, for any $\epsilon'>0$, $x^{-\epsilon'}L(x)\to0$ as $x\to+\infty$.\\
Recall that $K$ depends on $\delta$, $I$, $C$ and $\epsilon$, so the proof is consistent and we can conclude.

\end{proof}

A simple variation of the previous lemma is the following result.

\begin{lemma}\label{decaytau2}
Fix $e\in E(\Z^d)$.
For any
$\delta\in(0,1)$ and $m>\delta$, there exists $K_0<\infty$ such that, for any $K\geq K_0$ and for any constant $a>0$,
\begin{eqnarray*}
&&\PR^K_0\bigl[{
\tau}_1^{< n^{\delta}}>an, c_*(e)\ge n^m\mid D=\infty\bigr]\\
&&\qquad\qquad\qquad\leq C(K,\delta,a,m)
n^{-\gamma-\frac{(1-\delta)(1-\gamma)}{2}}{\bf P}[c_*(e)\ge n^m].
\end{eqnarray*}
\end{lemma}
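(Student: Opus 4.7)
The plan is to reduce to Lemma~\ref{decaytau} by integrating out the conductance at the distinguished edge $e$. First, if $e \in \mathcal{E}_0$ then $c_*(e) = K$ is fixed under $\PR_0^K$, so the statement is trivial for $n$ large; assume therefore $e \notin \mathcal{E}_0$. Then under $\PR_0^K$, the conductance $c_*(e)$ has law $P_*$ and is independent of every other conductance. Denoting by $\PR_0^{K,(e,c)}$ the annealed law further specified by $c_*(e) = c$, and using that $\PR_0^K[D = \infty] \geq c(K) > 0$ by Lemma~\ref{posescape}, we obtain
\begin{align*}
\PR_0^K\bigl[\tau_1^{<n^\delta} > an,\, c_*(e) \geq n^m \bigm| D = \infty\bigr] &\leq C(K) \int_{n^m}^\infty \PR_0^{K,(e,c)}\bigl[\tau_1^{<n^\delta} > an\bigr] \, dP_*(c).
\end{align*}
It therefore suffices to establish the uniform bound
\begin{align*}
\sup_{c \geq n^m}\PR_0^{K,(e,c)}\bigl[\tau_1^{<n^\delta} > an\bigr] \leq C(K,\delta,a,m)\, n^{-\gamma - (1-\delta)(1-\gamma)/2},
\end{align*}
since integrating against $dP_*(c)$ then produces the factor ${\bf P}[c_*(e) \geq n^m]$.

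To prove this uniform bound, I would run the proof of Lemma~\ref{decaytau} line by line under the modified law $\PR_0^{K,(e,c)}$. The crucial observation is that $c \geq n^m > n^\delta$, so $e \notin E_{<n^\delta}$: hence edge $e$ contributes nothing to $\tau_1^{<n^\delta}$, nothing to the sum $\sum_{e' \in E(\mathrm{BAD}^{\mathrm{s}}_x) \cap E_{<n^\delta}} c_*^\omega(e')$ in Lemma~\ref{timetrap}, and nothing to the maximum in the definition~\eqref{blablablablou} of $F^\omega(B(n^\epsilon, n^{C\epsilon}))$. Consequently the deterministic bound on $E_0^\omega[\1{\chi \leq n^\epsilon}\, T_\tau]$ derived in the proof of Lemma~\ref{decaytau} goes through unchanged, and so does the subsequent decomposition on the dyadic scales of $\max_{e' \in F^\omega} c_*^\omega(e')$.

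What remains is to verify that the probabilistic inputs of that proof continue to hold uniformly in $c \geq n^m$ under $\PR_0^{K,(e,c)}$: namely the exponential tail of $W(\mathrm{BAD}^{\mathrm{s}}_x)$ from Lemma~\ref{tgb} and the polynomial tail of $\chi$ from Lemma~\ref{tail_chi_tau}. The former is immediate because conditioning on $c_*(e) = c$ only forces the two endpoints of $e$ to be $K$-closed while leaving the remaining edges i.i.d., so Lemma~\ref{tgb} applies verbatim with the same exponential rate (up to a deterministic enlargement of the bad area by a single edge). The tail on $\chi$ is the main technical obstacle, which I would handle by repeating the proof of Lemma~\ref{tail_chi_tau} with $\PR_0^{K,(e,c)}$-analogues of Theorems~\ref{tailtauK} and~\ref{BLK}. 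The key point is that since the walker can always bypass the single edge $e$ via the directed open paths guaranteed by Lemma~\ref{BLsizeclosedbox}, the exponential exit estimates from large tilted boxes and the polynomial tail of $\tau_1^{(K)}$ remain controlled by constants depending on $K$ but not on $c$. Combining these ingredients as in Lemma~\ref{decaytau} yields the uniform bound and hence the result.
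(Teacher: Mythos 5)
Your conditioning-then-integrating strategy is a legitimate alternative decomposition, and your observation that $e\notin E_{<n^\delta}$ (hence $e$ drops out of Lemma~\ref{timetrap} and of $F^\omega$) correctly identifies the structural core shared with the paper. However, your approach routes through a uniform-in-$c$ estimate under $\PR_0^{K,(e,c)}$, and you yourself flag the $\chi$-tail under this conditioned measure as ``the main technical obstacle'' --- and then dispose of it with a heuristic appeal to ``$\PR_0^{K,(e,c)}$-analogues of Theorems~\ref{tailtauK} and~\ref{BLK}.'' That is precisely the part that the paper does \emph{not} need and does \emph{not} prove. The paper never conditions on the value of $c_*(e)$ at all. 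It keeps the event $\{c_*(e)\ge n^m\}$ inside the probability and exploits two things: (i) for the term $\sum_i n^{\delta(i+2)/I}/(an)\cdot{\bf P}[\max_{e'\in F^\omega}c_*(e')\in[\cdots],\,c_*(e)\ge n^m]$, the modified $F^\omega$ (with $e$ excised) makes the two events in the joint probability independent, so the factor ${\bf P}[c_*(e)\ge n^m]$ pulls out; and (ii) for the $\chi$-tail and the $\max_x\exp(4\lambda W(\mathrm{BAD}^{\mathrm s}_x))$ term, no independence is needed at all: one simply notes that ${\bf P}[c_*(e)\ge n^m]\ge cn^{-\gamma m-\epsilon}$ by \eqref{assProba}, so since $\PR_0^K[\chi>n^\epsilon]$ and the bad-width tail can be made smaller than $n^{-M}$ for arbitrary $M$ by choosing $K$ large (Lemma~\ref{tail_chi_tau} and Lemma~\ref{tgb}), they are in particular smaller than $C(K){\bf P}[c_*(e)\ge n^m]/n$. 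This ``crude absorption'' is what lets the paper avoid proving any new conditioned version of a tail estimate.

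So the gap in your argument is concrete: you assert but do not establish a bound like $\sup_{c\ge n^m}\PR_0^{K,(e,c)}[\chi>n^\epsilon]\le C(K)n^{-M}$. Proving it would require revisiting the regeneration structure with an edge whose conductance is pinned to an arbitrarily large value, and checking that the large time the walk may spend crossing $e$ back and forth does not inflate the spatial size of the regeneration block. This is plausible, but it is real additional work that is not in the paper, and your sketch does not supply it. The paper's absorption trick is both shorter and avoids this issue entirely; if you want to keep your conditioning-based outline, you should either prove the uniform $\chi$-tail or, better, replace that step with the same absorption argument: bound $\PR_0^{K,(e,c)}[\chi>n^\epsilon,\,T_e>\tau_1]$ (which does not depend on $c>K$) and handle $\{T_e\le\tau_1\}$ separately, or simply switch to the joint-probability formulation as the paper does.
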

\begin{proof}
The proof of this lemma is very similar to the proof of Lemma \ref{decaytau}. Here are the only modifications to be made:
\begin{enumerate}
\item in \eqref{blablablablou}, as $e\notin E_{< n^{\delta}}$, we can define $F^\omega(B(n^{\epsilon},n^{C\epsilon}))$ as
\[
F^\omega(B(n^{\epsilon},n^{C\epsilon})):=\left(\bigcup_{x\in B(n^{\epsilon},n^{C\epsilon})}E(\mathrm{BAD}^{\mathrm{s}}_x)\cap E_{< n^{\delta}}\right)\setminus \{e\},
\]
excluding the edge $e$ will also ensure that events measurable with respect to $F^\omega(B(n^{\epsilon},n^{C\epsilon}))$ are independent of those measurable with respect to $e$.
\item in the bound corresponding to \eqref{inegIn}, we will obtain the same upper-bound multiplied by ${\bf P}[c_*(e)\ge n^m]$ as soon as $\epsilon$ is small enough and $K$ is large enough, depending on $m$, by Lemma \ref{tgb} and using that ${\bf P}[c_*(e)\ge n^m]\ge cn^{-\gamma m-\epsilon}$ for any $\epsilon>0$ by \eqref{assProba};
\item similarly, we can obtain a bound $\PR_0^K[\chi>n^\epsilon]\le C(K){\bf P}[c_*(e)\ge n^m]/n$ by Lemma \ref{tail_chi_tau};
\item therefore, instead of \eqref{noinspiration}, we obtain
\begin{eqnarray*}
&&\PR_0^K\left[T_\tau\ge an, c_*(e)\ge n^m\right]\\
&&\le \frac{C(K)}{n}{\bf P}[c_*(e)\ge n^m]+\sum_{i=0}^{I-1}\frac{n^{\delta\frac{i+2}{I}}}{an}{\bf P}\left[\max_{e\in F^\omega(B(n^{\epsilon},n^{C\epsilon}))}c_*(e) \in [n^{\delta\frac{i}{I}},n^{\delta\frac{i+1}{I}}), \right.\\
&& \hspace*{300pt}c_*(e)\ge n^m\bigg]\\
&&\le \frac{C(K)}{n}{\bf P}[c_*(e)\ge n^m]+\sum_{i=0}^{I-1}\frac{n^{\delta\frac{i+2}{I}}}{an}{\bf P}\left[\max_{e\in F^\omega(B(n^{\epsilon},n^{C\epsilon}))}c_*(e) \in [n^{\delta\frac{i}{I}},n^{\delta\frac{i+1}{I}})\right]\\
&&\qquad\qquad\qquad\qquad\qquad\qquad\qquad\times{\bf P}[c_*(e)\ge n^m],
\end{eqnarray*}
and the conclusion follows easily as in the proof of Lemma \ref{decaytau}.
\end{enumerate}
\end{proof}


\section{First estimates on the number of large traps in regeneration times}


As explained in Section \ref{sketch}, in order to understand the time spent by the walker during one regeneration period, it is important to understand the time it spends in large traps it meets. In this section, we start by studying the number of such traps the walker can meet. In particular, we prove that, with overwhelming probability, the walker meets at most one edge with large conductance during one regeneration period, see Proposition \ref{not_2_traps}.

First let us introduce some notations. We will write
\begin{align}\label{romeo}
\overline{\PR}[\ \cdot \ ]:=\PR_0^K[\ \cdot\ | D=\infty].
\end{align}
 We call a {\it large trap} (resp.~{\it medium trap}) an edge with a conductance greater than $n$ (resp.~$n^\delta$), where $n$ and $\delta$ are some variables we will be using later. Also, we define
\begin{equation}\label{def_exist_1_trap}
LT(K,n)=\{\text{there exists $e\in E(\Z^d)$ such that, } c_*^{\omega}(e)\geq n \text{ and } T_e<\tau_1^{(K)}\},
\end{equation}
and define the measure of a regeneration block in which we encounter a large trap.
\begin{eqnarray}\label{defPn}
\PR_n[\ \cdot \ ]:=\PR_0^K[\ \cdot \ | LT(n),\ D=\infty].
\end{eqnarray}

Further, we set
\begin{align}\label{def_exist_2_trap}
 SLT(\delta,K,n) &=\{\text{there exists $e\in E(\Z^d)$ such that, } c_*^{\omega}(e)\geq n,\ T_e<\tau_1, \\  \nonumber
&   \text{and there exists } e'\neq e,\ e'\in B(2\chi,2\chi^{\alpha})\text{, such that }\ c_*^{\omega}(e')\geq n^{\delta} \},
\end{align}
and 
\begin{equation}\label{def_bn_trap}
{NLT}(\delta,K,n)=\text{card}\{e\in E(\Z^d),\ c^{\omega}_*(e)\geq n^{\delta} \text{ and } e\in B(2\chi,2\chi^{\alpha})\},
\end{equation}
a random variable which upper-bounds the number of medium traps seen in a regeneration time.

Let us introduce the event on which the walker meets only one large conductance during one regeneration period:
 \begin{align}\nonumber
 OLT(\delta,K,n)=\{\exists e\in E(\Z^d): c^{\omega}_*(e)\geq n, T_e<\tau_1\text{ and for any }e'\neq e\\
 \text{ with }T_{e'}<\tau_1 \text{ or }e'\sim e,
 \text{ we have }c_*^{\omega}(e')< n^{\delta}\}.\label{def_OLT}
\end{align}
Moreover, for any $e\in E(\Z^d)$, we define the event  
 \begin{align}\nonumber
 OLT_e(\delta,K,n)=\{ c^{\omega}_*(e)\geq n, T_e<\tau_1\text{ and for any }e'\neq e\text{ with }T_{e'}<\tau_1\\
\text{or }e'\sim e, \text{ we have }c_*^{\omega}(e')< n^{\delta}\},\label{def_OLTe}
\end{align}
so that $ OLT(\delta,K,n)=\bigcup_{e\in E(\Z^d)}  OLT_e(\delta,K,n)$.

\begin{remark}\label{elodie}
Note that if $e$ is hit before $\tau_1$, then $e'\in B(2\chi,2\chi^{\alpha})$ for any $e'\sim e$. Therefore, on $SLT(\delta,K,n)^c\cap \{T_e<\tau_1,c_*(e)\ge n\}$, we have that $c_*(e')<n^\delta$ for any $e'\sim e$.
\end{remark}
The goal of this section is to show that 
\begin{proposition}\label{not_2_traps}
Fix $\delta\in (0,1)$. There exists $K_0<\infty$ such that, for any $K\ge K_0$, there exists some $\epsilon>0$ such that we have
\[
\ES_n[{NLT}(\delta,K,n)\1{SLT(\delta,K,n)}] =o(n^{-\epsilon}),
\]
in particular
\[
\PR_n[SLT(\delta,K,n)]=o(n^{-\epsilon}).
\]
\end{proposition}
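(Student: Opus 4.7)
The plan is to bound the unnormalized expectation $\ES_0^K[NLT\cdot \1{SLT\cap LT};D=\infty]$ from above and $\PR_0^K[LT;D=\infty]$ from below, then take the ratio. Write $A_e=\{c_*(e)\geq n,\,T_e<\tau_1\}$ and $B_{e'}=\{c_*(e')\geq n^\delta,\,e'\in B(2\chi,2\chi^\alpha)\}$, so that $LT=\bigcup_e A_e$ and $NLT=\sum_{e'}\1{B_{e'}}$. On $SLT$ the large trap is itself a medium trap, and there is at least one further medium trap distinct from it; a short case analysis on whether a given medium trap coincides with the large one yields the pointwise bound
\[
NLT\cdot \1{SLT\cap LT}\;\leq\;2\sum_{e\neq e'}\1{A_e\cap B_{e'}}.
\]

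For the numerator, fix a small $\epsilon_0>0$. Using the deterministic bound $NLT\leq|E(B(2\chi,2\chi^\alpha))|\leq C\chi^{1+\alpha(d-1)}$, Cauchy--Schwarz, and the super-polynomial tail of $\chi$ provided by Lemma \ref{tail_chi_tau} (valid for $K$ large), the contribution to $\ES_0^K[NLT\cdot\1{SLT\cap LT};D=\infty]$ coming from $\{\chi>n^{\epsilon_0}\}$ is $o(n^{-M})$ for every $M$. On $\{\chi\leq n^{\epsilon_0}\}$ the constraints $T_e<\tau_1$ and $e'\in B(2\chi,2\chi^\alpha)$ force $e,e'$ into the deterministic box $B(2n^{\epsilon_0},2n^{\alpha\epsilon_0})$, which contains at most $Cn^{(1+\alpha(d-1))\epsilon_0}$ edges. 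Dropping the $T_e<\tau_1$ and $D=\infty$ conditions and using the fact that under $\PR_0^K$ the conductances outside $\mathcal{E}_0$ are i.i.d.\ with tail $L(t)t^{-\gamma}$, I obtain
\[
\ES_0^K[NLT\cdot \1{SLT\cap LT};D=\infty]\;\leq\; Cn^{C\epsilon_0}L(n)L(n^\delta)n^{-\gamma(1+\delta)}+o(n^{-M}),
\]
the main term coming from the factorization $\PR_0^K[c_*(e)\geq n,\,c_*(e')\geq n^\delta]=\PR_0^K[c_*(e)\geq n]\cdot\PR_0^K[c_*(e')\geq n^\delta]$ for distinct $e,e'\notin\mathcal{E}_0$.

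For the denominator I claim $\PR_0^K[LT;D=\infty]\geq cL(n)n^{-\gamma}$. Consider the edge $e_\star=[2e_1,3e_1]\notin\mathcal{E}_0$; the event $\{c_*(e_\star)\geq n\}$ has ${\bf P}$-probability $\geq cL(n)n^{-\gamma}$ and is independent of the conductances of all other edges. With uniformly positive probability the rest of the environment in a fixed neighborhood of the origin is $K$-normal and extends to an infinite open directed path leaving $3e_1$ towards infinity. On this joint event, Remark \ref{rem_pkopen} together with $c_*(e_\star)\geq n$ (which forces the two transition probabilities across $e_\star$ to be $1-O(K/n)$) guarantees that the enhanced walk reaches $e_\star$; after a geometric number of bounces it exits $\{2e_1,3e_1\}$ in a direction governed by the independent, $K$-normal neighboring conductances, and with uniformly positive probability the exit is towards $+\vec{\ell}$, whereupon the walk escapes along the open path while satisfying the $Z$-conditions defining $D=\infty$, in the spirit of the proof of Lemma \ref{posescape}. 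Combining,
\[
\ES_n[NLT\cdot\1{SLT}]\;\leq\;Cn^{C\epsilon_0-\gamma\delta}L(n^\delta),
\]
so choosing $\epsilon_0$ small enough that $C\epsilon_0<\gamma\delta/2$ gives $o(n^{-\epsilon})$ for any $\epsilon<\gamma\delta/2$ (absorbing the slowly varying factor); the ``in particular'' statement follows because $NLT\geq 1$ on $SLT$. The main obstacle will be the uniform-in-$c_*(e_\star)$ lower bound on the denominator: although a very heavy $e_\star$ traps the walk, the exit distribution from $\{2e_1,3e_1\}$ is determined by the neighboring (independent, $K$-normal) conductances and therefore does not degenerate with $n$, so a positive escape probability persists and the argument goes through.
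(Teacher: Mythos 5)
Your proposal is correct and follows the paper's approach: the paper formalizes your numerator and denominator bounds as Lemma~\ref{not_2_trapsb} (localising in $B(n^{\epsilon'},n^{\alpha\epsilon'})$ via the tail bound on $\chi$ from Lemma~\ref{tail_chi_tau} and then factorising the i.i.d.\ conductances) and Lemma~\ref{LB_pn} (the explicit configuration around $[2e_1,3e_1]$, with Lemma~\ref{posescape} supplying the uniform escape probability), and then takes the ratio exactly as you do. Your minor variations --- Cauchy--Schwarz to dispose of $\{\chi>n^{\epsilon_0}\}$ and the combinatorial bound $NLT\cdot\1{SLT\cap LT}\le 2\sum_{e\neq e'}\1{A_e\cap B_{e'}}$ in place of the cruder $NLT\le C\chi^{2d\alpha}$ --- are valid but purely cosmetic.
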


This proposition will follow easily from an upper bound on the quantity {$\ES[{NLT}(\delta,K,n) \1{SLT(\delta,K,n)}]$} (see Lemma~\ref{not_2_trapsb}) and a lower bound on $\PR[LT(n)]$ (see Lemma~\ref{LB_pn}). This is the focus of the remainder of this section.

\subsection{It is unlikely to see medium traps close to large traps}

\begin{lemma}\label{not_2_trapsb}
For any $\delta\in (0,1)$ and any $\epsilon>0$ there exists $K_0<\infty$ such that, for any $K\ge K_0$,
\[
\overline{\ES}[{NLT}(\delta,K,n)\1{SLT(\delta,K,n)}]\leq  C n^{\epsilon} {\bf P}[c_*\geq n^{\delta}] {\bf P}[c_*\geq n],
\]
in particular
\[
\overline{\PR}[SLT(\delta,K,n)] \leq C n^{\epsilon} {\bf P}[c_*\geq n^{\delta}] {\bf P}[c_*\geq n].
\]
\end{lemma}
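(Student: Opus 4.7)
The plan is to use a direct union bound over pairs of edges, after truncating on the diameter of the regeneration block. The two key ingredients are the polynomial tail on $\chi$ from Lemma \ref{tail_chi_tau} and the independence of conductances at distinct edges under the annealed measure.

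First I would set $M:=n^{\epsilon_1}$ for some small $\epsilon_1=\epsilon_1(\epsilon)>0$ to be fixed later, and split $\overline{\ES}[NLT\cdot\1{SLT}]$ according to whether $\chi\le M$ or $\chi>M$. On $\{\chi\le M\}$ we have $B(2\chi,2\chi^{\alpha})\subset B(2M,(2M)^{\alpha})$ and any edge traversed before $\tau_1$ lies in $E(B(M,M^\alpha))$; in particular $NLT\le CM^{1+(d-1)\alpha}$ deterministically on this event. By definition of $SLT$, on $\{SLT,\chi\le M\}$ we may choose distinct edges $e_0,e_1\in E(B(2M,(2M)^{\alpha}))$ with $c_*(e_0)\geq n$ and $c_*(e_1)\geq n^\delta$, so a union bound gives
\[
\1{SLT,\chi\le M}\ \le\ \sum_{\substack{e_0\neq e_1 \\ e_0,e_1\in E(B(2M,(2M)^{\alpha}))}} \1{c_*(e_0)\ge n,\ c_*(e_1)\ge n^\delta}.
\]
For $n$ large enough that $n^\delta>K$, neither edge can sit in $\mathcal{E}_0$, so under $\overline{\PR}$ the conductances $c_*(e_0),c_*(e_1)$ are independent and each distributed as $c_*$; combined with $\overline{\PR}[\,\cdot\,]\le C\PR_0^K[\,\cdot\,]$ (valid because $\PR_0^K[D=\infty]$ is bounded below by a variant of Lemma \ref{posescape}), this yields $\overline{\PR}[c_*(e_0)\ge n,\,c_*(e_1)\ge n^\delta]\le C{\bf P}[c_*\ge n]{\bf P}[c_*\ge n^\delta]$. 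Multiplying by the deterministic $NLT$ bound and using $|E(B(2M,(2M)^\alpha))|\le CM^{1+(d-1)\alpha}$, I get
\[
\overline{\ES}[NLT\cdot \1{SLT,\chi\le M}]\ \le\ CM^{3(1+(d-1)\alpha)}\,{\bf P}[c_*\ge n]\,{\bf P}[c_*\ge n^\delta],
\]
and choosing $\epsilon_1:=\epsilon/(3(1+(d-1)\alpha))$ converts this into the desired $Cn^\epsilon{\bf P}[c_*\ge n]{\bf P}[c_*\ge n^\delta]$.

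For the complementary event I would use the crude bound $NLT\le C\chi^{1+(d-1)\alpha}$ together with Lemma \ref{tail_chi_tau}: for any $m$ there exists $K_0$ such that for $K\ge K_0$ one has $\overline{\PR}[\chi\ge k]\le Ck^{-m}$, whence integration yields $\overline{\ES}[\chi^p\1{\chi>M}]\le CM^{p-m+1}$ for $m>p+1$. Taking $m$ large enough (which forces $K$ to be large enough) makes this tail contribution smaller than ${\bf P}[c_*\ge n]{\bf P}[c_*\ge n^\delta]\sim n^{-\gamma(1+\delta)+o(1)}$, hence negligible. Combining the two regimes proves the bound on $\overline{\ES}[NLT\cdot\1{SLT}]$, and the bound on $\overline{\PR}[SLT]$ follows at once because $NLT\ge 1$ on $SLT$.

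The main obstacle is bookkeeping rather than any deep estimate: one must verify that $K$ can be chosen uniformly large enough to make both the tail of $\chi$ decay faster than any prescribed polynomial and the denominator $\PR_0^K[D=\infty]$ bounded below, and must take care that the edges counted as traps avoid the frozen set $\mathcal{E}_0$. The second point is automatic for $n$ large since $n^\delta\gg K$, while the first is handled by choosing $K$ after $\epsilon_1$ and the required order of moments of $\chi$ have been fixed.
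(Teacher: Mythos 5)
Your proof follows essentially the same approach as the paper's: split on the diameter $\chi$ of the regeneration block, use the deterministic polynomial bound on $NLT$ together with a union bound over pairs of edges and the ${\bf P}_0^K$-independence of conductances on distinct non-frozen edges when $\chi$ is small, and use the arbitrarily-fast polynomial tail of $\chi$ (Lemma \ref{tail_chi_tau}) to kill the complementary event. Your exponent $1+(d-1)\alpha$ for the edge count is tighter than the paper's $2d\alpha$, and you spell out the step $\overline{\PR}\le C\,\PR_0^K$ and the exclusion of $\mathcal{E}_0$ more explicitly, but these are cosmetic differences; the argument is the same.
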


\begin{proof}
Note that we have 
\begin{equation} \label{ub_nlt_chi}
{NLT}(\delta,K,n)\leq C(d)\chi^{2d\alpha},
\end{equation}
 where $\alpha>d+3$ is the constant from the definition \eqref{def_chi} of $\chi$. Using this, we see that for any $\epsilon'>0$
 \[
\overline{\ES}[{NLT}(\delta,K,n)\1{\chi\geq n^{\epsilon'}}]\leq 
C(K)n^{-2/\gamma}=o({\bf P}[c_*\geq n^{\delta}] {\bf P}[c_*\geq n]),
\]
by Lemma~\ref{tail_chi_tau}, and choosing $K\ge K_0$ with $K_0<\infty$ large enough (depending on $\epsilon'$, $d$, $\gamma$ and $\alpha$). Recalling~\eqref{ub_nlt_chi}, we can see that for any $\epsilon'>0$, there exists $K_0<\infty$ such that, for any $K\ge K_0$,
\begin{align*}
& \overline{\ES}[{NLT}(\delta,K,n)\1{SLT(\delta,K,n)}] \\
\leq & C(d)n^{ 2d \alpha \epsilon'}\overline{\PR}[SLT(\delta,K,n),{\chi\leq n^{\epsilon'}}]+o({\bf P}[c_*\geq n^{\delta}] {\bf P}[c\geq n]) \\
\leq & C(d)n^{2d \alpha \epsilon'} \sum_{e,e'\in  B(n^{\epsilon'},n^{\alpha\epsilon'})} {\bf P}[c_*^{\omega}(e)\geq n] {\bf P}[c_*^{\omega}(e')\geq n^{\delta}] +o({\bf P}[c_*\geq n^{\delta}] {\bf P}[c\geq n]) \\
\leq & C(K,d)n^{c(d,\alpha)\epsilon'}{\bf P}[c_*\geq n^{\delta}] {\bf P}[c_*\geq n],
\end{align*}
and the result follows from choosing $\epsilon'$ small enough.
\end{proof}

\subsection{Lower bound on the probability of meeting a large trap in a regeneration time}

\begin{lemma}\label{LB_pn}
We have
\[
c(K,d) {\bf P}[c_*\geq n] \leq \overline{\PR}[LT(n)].
\]
\end{lemma}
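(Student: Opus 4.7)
The plan is to exhibit a specific edge $e^*$ close to the origin and to prove
\[
\PR_0^K[T_{e^*}<\tau_1,\ c_*(e^*)\ge n,\ D=\infty]\ \ge\ c(K,d)\,{\bf P}[c_*\ge n],
\]
after which the statement follows by dividing by $\PR_0^K[D=\infty]$, which is a positive constant by Lemma~\ref{posescape} and Remark~\ref{suissenergie}.

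We shall take $e^*:=[2e_1,2e_1+e_2]$. This choice has three key features. First, on $\{X_1=e_1,\,X_2=2e_1\}$ one has $T_{e^*}=2$, while $\tau_1\ge S_1\ge T_{\mathcal{H}^+(0)}+2=3$ because $e_1\cdot\vec\ell\ge 1/\sqrt{d}>0$ forces $T_{\mathcal{H}^+(0)}=1$, and $\mathcal{M}^{(K)}\ge 2$ by definition~\eqref{def_Mcal}; hence $T_{e^*}<\tau_1$. Second, neither endpoint of $e^*$ belongs to $\{e_1,\dots,e_d\}$, so the $Z$-constraints in~\eqref{defD} do not impose any condition on steps out of $2e_1$ or $2e_1+e_2$; had $e^*$ been incident to some $e_j$, the constraint $Z=1$ on a step out of $e_j$ together with $c_*(e^*)\ge n$ would force the corresponding $K$-type transition probability to be of order $1/n$, which would kill the desired lower bound. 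Third, $e^*$ is disjoint from $\mathcal{E}_0$ and from the edges incident to $e_1$, so under ${\bf P}_0^K$ the random variable $c_*(e^*)$ is independent of the conductances governing $p^\omega(0,e_1)$, $p^\omega_K(e_1,2e_1)$, and the event $\{e_1\text{ is $K$-open}\}$.

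Conditioning on $c_*(e^*)=t$ for $t\ge n$ (which contributes the factor ${\bf P}[c_*\ge n]$) and restricting to $\{e_1\text{ is $K$-open}\}$ (an event of positive ${\bf P}$-probability, independent of $t$), the probability of $\{X_1=e_1,X_2=2e_1,Z_2=1\}$ is bounded below by a positive constant uniformly in $t$: the first transition $P^\omega_0[X_1=e_1]$ is a deterministic function of $K,d,\lambda$ since the conductances in $\mathcal{E}_0$ are all equal to $K$, while the step $e_1\to(2e_1,1)$ has probability $p^\omega_K(e_1,2e_1)\ge\kappa$ by Remark~\ref{rem_pkopen}.

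The main obstacle is the uniform-in-$t$ lower bound
\[
P^\omega[D=\infty\mid X_0=0,\,X_1=e_1,\,X_2=2e_1,\,Z_2=1]\ \ge\ c(K,d).
\]
From $2e_1$ the walker is caught in the two-vertex trap $\{2e_1,2e_1+e_2\}$ along the heavy edge $e^*$ and oscillates between the two endpoints a random number of times of order $t$ before escaping; at each step the probability of leaving the trap through any given adjacent vertex is of order $1/t$. An electrical-network computation in the spirit of Lemma~\ref{timetrap} shows that the exit distribution from the trap is essentially proportional to the conductances of the surrounding ``escape'' edges: in particular the probability of exiting into the strict forward half-space $\mathcal{H}^+(2e_1\cdot\vec\ell)$ is bounded below by a positive constant, while the expected number of visits to $e_1$ during the excursion is $O(1)$ (not growing in $t$), so the associated $Z$-constraints at $e_1$ are satisfied with positive probability thanks to the ellipticity of $p^\omega_K$ at the open vertex $e_1$. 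After such a forward escape, Lemma~\ref{thisisboring} (combined with translation invariance and the escape arguments of Lemma~\ref{posescape}) yields a uniform positive probability of never returning to $\mathcal{H}^-(0)$, and hence $D=\infty$.
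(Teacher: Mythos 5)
Your overall strategy is the same as the paper's: fix a specific edge near the origin as the trap, fix a trajectory that hits it before $\tau_1$ and is compatible with $\{D=\infty\}$, and lower-bound the probability of that trajectory. The trap choice $[2e_1,2e_1+e_2]$ is essentially as good as the paper's $[2e_1,3e_1]$ (both have neither endpoint among $\{e_1,\dots,e_d\}$, so your point about avoiding the fatal $Z$-constraint at a vertex $e_j$ incident to a heavy edge applies equally to the paper's choice — it is not a reason to prefer the perpendicular orientation), and your discussion of $T_{e^*}=2<\tau_1$ is correct.

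However, there is a genuine gap in the final step. You condition on only $\{e_1 \text{ is $K$-open}\}$ and $\{c_*(e^*)\ge n\}$, and then claim the quenched bound $P^\omega[D=\infty\mid X_1=e_1,X_2=2e_1,Z_2=1]\ge c(K,d)$, citing Lemma~\ref{thisisboring} ``combined with translation invariance and the escape arguments of Lemma~\ref{posescape}.'' This does not work as stated. First, once the walker exits the trap $\{2e_1,2e_1+e_2\}$, it lands at some vertex $z$ with $z\cdot\vec\ell=O(1)$; Lemma~\ref{thisisboring} then gives a backtracking bound of the form $\exp(-c\,O(1))$, which is not close to zero, so it cannot by itself produce a uniform positive escape probability from $z$. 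Second, Lemma~\ref{posescape} is an annealed statement conditioned on the \emph{starting vertex being $K$-good}; to invoke it you must exhibit a good vertex reachable from the trap exit by an open path and \emph{condition the environment on this}, which you never do. Without conditioning on a set of open vertices surrounding the trap (and one good vertex at the end of the escape route), the exit distribution computation (``essentially proportional to conductances'') and the $O(1)$ expected visits to $e_1$ are not justified either, since they require the edges around the trap to be $K$-normal. The paper handles all of this by fixing the finite open set $\mathcal{A}$ around the trap with a designated exit $6e_1$, conditioning on the events $A=\{\mathcal{A}$ is $6e_1$-open$\}$ and $B=\{6e_1$ is good$\}$, forcing the walk along a deterministic open path of length $\le 7$ from the trap exit to $6e_1$ (probability $\ge\kappa^7$, uniformly in the trap conductance), and only then applying Lemma~\ref{posescape} at $6e_1$. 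Your oscillation-and-expected-visits argument could in principle be made to work, but it would still need this explicit conditioning on the surrounding environment and a concrete, bounded-length open escape route ending at a good vertex, which is precisely the missing content.
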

\begin{proof}
First, notice that 
\begin{align}\nonumber
\overline{\PR}[LT(n)]&=\PR_0^K[\exists e\in E(\Z^d)\text{ such that, } c_*^{\omega}(e)\geq n \text{ and } T_e<\tau_1\mid D=\infty] \\ \label{zf}
 &=  \frac 1{\PR_0^K[D=\infty]}\PR_0^K[\text{there exists $e\in E(\Z^d)$ such that, } c_*^{\omega}(e)\geq n \\\nonumber
 &\qquad\qquad\qquad\qquad\qquad\text{ and } T_e<\tau_1, D=\infty] \\ \nonumber
  & \geq C(K) \PR_0^K[\text{there exists $e\in E(\Z^d)$ such that, } c_*^{\omega}(e)\geq n \\\nonumber
  &\qquad\qquad\qquad\qquad\qquad\text{ and } T_e<\tau_1, D=\infty].
  \end{align}
  
  Let us describe a way to construct the event appearing in the last probability. 
  \begin{figure}[h] 
\includegraphics[width=250pt]{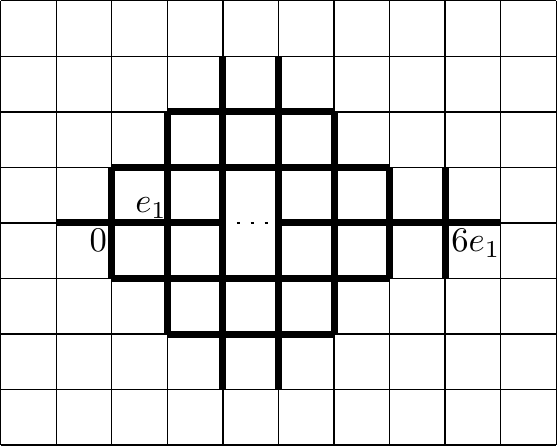}
   \caption{\label{setA} The edges in bold are those incident to some vertex of $\mathcal{A}$ and the dotted edge is $[e_2,e_3]$.}
\end{figure}
  As depicted in Figure \ref{setA}, we set $\mathcal{A}$ to be the set of vertices: $0$, $e_1$, $e_1\pm e_i$, $2e_1 \pm e_i$, $2e_1\pm 2e_i$, $3e_1\pm 2e_i$, $3e_1\pm e_i$, $4e_1$, $4e_1\pm e_i$,  for all $i\in [2,d]$, $5e_1$, $6e_1$ and the events
\[
A=\{\text{any $x\in \mathcal{A}$ is $6e_1$-open}\} \text{ and } B=\{6e_1 \text{ is good}\},
\]
where  a vertex is called $x$-open if it would be open in $\omega$ after all edges adjacent to $x$ are turned normal.

Note that  $A$ and $B$ are independent and independent of $c_*([2e_1,3e_1])$. Also, recall that, from the definition \eqref{deftau1}, we have that $\tau_1\ge 3$.

We may notice that on $A\cap B$, if
\begin{enumerate}
\item $(X_1,Z_1)=(e_1,1), (X_2,Z_2)=(2e_1,1)$ (hence $\tau_1> T_{[2e_1,3e_1]}=2$),
\item $T_{6e_1}\circ \theta_{T_{\Z^d\setminus \{2e_1,3e_1\}}\circ \theta_2}\leq T_{\partial (\mathcal{A}\setminus\{0\})}\circ \theta_{T_{\Z^d\setminus \{2e_1,3e_1\}}\circ \theta_2}$, and $Z_i=1$ for any $3\le i \le T_{6e_1}$ such that $X_i\sim0$,
\item $D\circ \theta_{T_{6e_1}} =\infty$,
\item $c^{\omega}_*([2e_1,3e_1])\geq n$.
\end{enumerate}
then we have $D=\infty$ and $\tau_1> T_{[2e_1,3e_1]}$ and thus there exists $e\in E(\Z^d)$ such that,  $c_*^{\omega}(e)\geq n$  and $T_e<\tau_1$. To provide a lower-bound on $\overline{\PR}[LT(n)]$, we aim at estimating the four different events which will give us a lower bound on $\overline{\PR}[LT(n)]$.

On $A\cap B$, we see, by Remark~\ref{rem_pkopen}, that we have 
\begin{equation}\label{poiq}
P^{\omega}_0[(X_1,Z_1)=(e_1,1), (X_2,Z_2)=(2e_1,1)]\geq \kappa^2,
\end{equation}
moreover on $A\cap B$
\begin{align}\label{poiw}
 P^{\omega}_{2e_1}&[T_{6e_1}\circ \theta_{T_{\Z^d\setminus \{2e_1,3e_1\}}}\leq T_{\partial (\mathcal{A}\setminus\{0\})}\circ \theta_{T_{\Z^d\setminus \{2e_1,3e_1\}}},\nonumber\\
& Z_i=1\text{ for any }3\le i \le T_{6e_1}\text{ such that }X_i\sim0] \geq \kappa^7,
\end{align}
which follows from Strong Markov's property, Remark~\ref{rem_pkopen} and the fact that, on $A\cap B$,  from any neighbour of $2e_1$ or $3e_1$, there exists an open nearest-neighbour path of length at most $7$ in $\mathcal{A}\setminus  \{0\}$ to $6e_1$.

Using Markov's property (at times $2$, $T_{\Z^d\setminus \{2e_1,3e_1\}}\circ \theta_2$ and $T_{6e_1}$) along with~(\ref{poiq}) and~(\ref{poiw}) we may see
\begin{align}\label{tie}
 &\PR_0^K[A,B,\text{there exists $e\in E(\Z^d)$ such that, } c^{\omega}_*(e)\geq n \text{ and } T_e<\tau_1, D=\infty] \\ \nonumber
\geq &c {\bf E}_0^K[\1{A,B}\1{c_*([2e_1,3e_1])\geq n}P_{6e_1}^{\omega}[D=\infty] ].
\end{align}

Recalling that $\1{A}$, $c_*([2e_1,3e_1])$ and $\1{B}P_{6e_1}^{\omega}[D=\infty]$ are ${\bf P}_0^K$-independent, we have
\begin{align*}
& {\bf E}_0^K[\1{A,B}\1{c_*([2e_1,3e_1])\geq n}P_{6e_1}^{\omega}[D=\infty] ]\\
\geq & {\bf P}^K_0[A] {\bf P}[c_*([2e_1,3e_1])\geq n]{\bf E}[\1{B}P_{6e_1}^{\omega}[D=\infty]].
\end{align*}

We have $ {\bf P}^K_0[A] \geq c>0$ and by translation invariance 
\[
{\bf E}[\1{B}P_{6e_1}^{\omega}[D=\infty]]={\bf E}[\1{0\text{ is good}}P^{\omega}_0[D=\infty]]>0,
\]
by Lemma~\ref{posescape} and the fact that ${\bf P}[0\text{ is good}]>0$ (see Lemma \ref{BLsizeclosedbox}). This means that, by~\eqref{tie},
\begin{align}\label{tyu}
 &\PR[A,B,\text{there exists $e\in E(\Z^d)$ such that, } c^{\omega}_*(e)\geq n \text{ and } T_e<\tau_1, D=\infty] \\ \nonumber
 \geq & c {\bf P}[c_*\geq n].
\end{align}

This and~(\ref{zf}) imply the result.
\end{proof}

\section{The environment seen by the particle close to a large edge using a coupling}\label{timberlake}

We know that with high probability a large edge will only be surrounded by relatively small edges by Proposition \ref{not_2_traps}. Because of this fact, the random walk (once it hits this edge) will typically make a large number of back and forth crossings of this edge. This has several important consequences.

Firstly, the exit probabilities from the edge $e$ are almost proportional to the conductances leaving $e$ (see Lemma~\ref{exit_pro}). This indicates that we should be able to couple, with high probability, the random walk with a random walk $Y^e$ in an environment $\omega_e$ where the large edge $e$ is collapsed into one point, see Figure \ref{collapsing}.

We will then argue that for this coupling
\begin{enumerate}
\item there exists a regeneration $\tau_1^{Y^e}$ for the random walk in $\omega_e$ which coincides with high probability with $\tau_1$. This regeneration time will take into account that with overwhelming probability the random walker will do a back and forth crossing of $e$ when it reaches it (see Lemma~\ref{coupling}).
\item The time change induced by the  back and forth crossings of $e$ will be described in terms of exponential random variables in the limit (see Lemma~\ref{coupl_exp}).
\end{enumerate}

The strength of this coupling is that both the modified regeneration time $\tau_1^{Y^e}$ and the time change, normalized by $c_*(e)$, will essentially be independent of the value of $c_*(e)$. This will allow us to describe the environment seen from the particle around a large conductance independently of the precise value of $c_*(e)$ as long as it is large. Hence, this coupling will be useful to describe the behaviour of the random walk in a regeneration block where it meets a large conductance.

\subsection{Approximation of exit probabilities of the  large edge}

Recall that, for a vertex $y$ and an edge $e$, we write $y\sim e$ if $y\sim e^+$ or $y\sim e^-$ and $y\notin \{e^+,e^-\}$. Moreover, if $y$ is a neighbour of $e$ we denote $e_y$ the unique endpoint of $e$ which is adjacent to $y$.\\
Finally, we denote $\omega_e$ the environment inherited from $\omega$ but where the edge $e$ is collapsed into one point denoted $x_e$, see Figure \ref{collapsing}. In this environment $\omega_e$, we have $c^{\omega_e}(e')=c^{\omega}(e')$ for any $e'\neq e$. Therefore, $\pi^{\omega_e}(x)=\pi^{\omega}(x)$ for any $x\neq x_e$ and
\[
\pi^{\omega_e}(x_e)=\sum_{\substack{e'\in E(\Z^d):\\ e'\sim e}} c^{\omega_e}(e')=\pi^{\omega}(e^+)+\pi^{\omega}(e^-)-2c^{\omega}(e).
\]

\begin{lemma}\label{exit_pro}
Fix $\delta\in(0,1)$, an environment $\omega$ and an edge $e\in E(\Z^d)$. Assume that $c_*^{\omega}(e)\geq n$ and $c^{\omega}_*(e')\leq n^{\delta}$ for any $e'\sim e$. For any $x\in \{e^+,e^-\}$ and $y\sim e$, we have
\[
(1-C(d)n^{\delta-1}) \frac{c^{\omega_e}(y,x_e)}{\pi^{\omega_e}(x_e)}\leq  P^{\omega}_{x}[X_{T^{\text{ex}}_e}=y] \leq 
(1+C(d)n^{\delta-1}) \frac{c^{\omega_e}(y,x_e)}{\pi^{\omega_e}(x_e)}.
\]
\end{lemma}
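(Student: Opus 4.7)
The plan is to perform a first-step analysis on the walk restricted to the vertex set $\{e^+,e^-\}$. Set $a=e^+$, $b=e^-$, $A=\pi^\omega(a)$, $B=\pi^\omega(b)$, $c=c^\omega(e)$. In $\Z^d$, any $y\sim e$ is adjacent to exactly one endpoint $e_y$; assume first $e_y=a$, so from $b$ the walk can reach $y$ only by first returning to $a$. Writing $f_x:=P^\omega_x[X_{T^{\text{ex}}_e}=y]$, the Markov property at time one yields the linear system
\[
f_a=\frac{c^\omega(a,y)}{A}+\frac{c}{A}f_b,\qquad f_b=\frac{c}{B}f_a,
\]
which solves to
\[
P^\omega_a[X_{T^{\text{ex}}_e}=y]=\frac{B\, c^\omega(a,y)}{AB-c^2},\qquad P^\omega_b[X_{T^{\text{ex}}_e}=y]=\frac{c\, c^\omega(a,y)}{AB-c^2}.
\]
The case $e_y=b$ is symmetric in $(a,b)$.

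The target quantity is $c^{\omega_e}(y,x_e)/\pi^{\omega_e}(x_e)=c^\omega(a,y)/\pi^{\omega_e}(x_e)$, where $\pi^{\omega_e}(x_e)=(A-c)+(B-c)$. I use the algebraic identity
\[
AB-c^2=c\,\pi^{\omega_e}(x_e)+(A-c)(B-c)=B(A-c)+c(B-c),
\]
from which the \emph{exact} multiplicative errors read
\[
\frac{P^\omega_a[X_{T^{\text{ex}}_e}=y]}{c^\omega(a,y)/\pi^{\omega_e}(x_e)}=1+\frac{(B-c)^2}{B(A-c)+c(B-c)},
\]
\[
\frac{P^\omega_b[X_{T^{\text{ex}}_e}=y]}{c^\omega(a,y)/\pi^{\omega_e}(x_e)}=1-\frac{(A-c)(B-c)}{c\,\pi^{\omega_e}(x_e)+(A-c)(B-c)}.
\]
The first error is nonnegative and the second lies in $[0,1)$, so one side of each bound in the lemma is immediate; it only remains to show that each error is at most $C(d)n^{\delta-1}$.

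For this I invoke the two hypotheses. By \eqref{def_conduct} and $c^\omega_*(e')\leq n^\delta$ for every $e'\sim e$,
\[
B-c=\sum_{y\sim b,\,y\neq a}c^\omega(b,y)\leq C(d,\lambda)\,n^\delta\, e^{(a+b)\cdot\ell},
\]
since $|(y-a)\cdot\vec\ell|\leq 2$ and hence $e^{(b+y)\cdot\ell}$ is comparable to $e^{(a+b)\cdot\ell}$ up to a $\lambda$-dependent constant; the same bound holds for $A-c$. Meanwhile $c\geq n\, e^{(a+b)\cdot\ell}$ by hypothesis on $c_*^\omega(e)$. Dropping the nonnegative term $B(A-c)$ in the first denominator gives
\[
\frac{(B-c)^2}{B(A-c)+c(B-c)}\leq \frac{B-c}{c}\leq C(d,\lambda)\,n^{\delta-1},
\]
and using $\pi^{\omega_e}(x_e)\geq A-c$ in the second denominator gives
\[
\frac{(A-c)(B-c)}{c\pi^{\omega_e}(x_e)+(A-c)(B-c)}\leq \frac{(A-c)(B-c)}{c(A-c)}=\frac{B-c}{c}\leq C(d,\lambda)\,n^{\delta-1}.
\]
This proves the lemma when $e_y=a$; the case $e_y=b$ follows by swapping the roles of $a$ and $b$. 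The only mildly delicate point is observing that the exponential factor $e^{(a+b)\cdot\ell}$ common to numerator and denominator of $(B-c)/c$ cancels; the rest is direct algebra.
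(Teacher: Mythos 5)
Your proof is correct. You solve the same two-state linear system the paper solves (Markov property at times one and two on $\{e^+,e^-\}$) and arrive at the same closed-form exit probabilities $\frac{B\,c^{\omega}(a,y)}{AB-c^2}$ and $\frac{c\,c^{\omega}(a,y)}{AB-c^2}$. Where the two proofs diverge is in the final step: the paper first bounds the ratio $P^{\omega}_{e^\pm}[X_{T^{\text{ex}}_e}=y']/P^{\omega}_{e^\pm}[X_{T^{\text{ex}}_e}=y]$ against the ratio of boundary conductances, and then normalizes using $\sum_{y'}P^{\omega}_{e^\pm}[X_{T^{\text{ex}}_e}=y']=1$ and $\sum_{y'}c^{\omega}(y',e_{y'})=\pi^{\omega_e}(x_e)$ to conclude; you instead write the multiplicative error $P^{\omega}_x[X_{T^{\text{ex}}_e}=y]\big/\bigl(c^{\omega_e}(y,x_e)/\pi^{\omega_e}(x_e)\bigr)$ in exact form via the algebraic decomposition $AB-c^2=c\,\pi^{\omega_e}(x_e)+(A-c)(B-c)=B(A-c)+c(B-c)$ and bound each error term directly by $(B-c)/c\le C(d)n^{\delta-1}$. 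Your route is slightly more direct and has the additional byproduct of pinning down the sign of the deviation (the exit probability from the far endpoint always undershoots the target, from the near endpoint always overshoots), which the paper's ratio-and-normalize argument does not make visible. The error bounds, including the comparison of $e^{(b+y)\cdot\ell}$ with $e^{(a+b)\cdot\ell}$ up to an $e^{2\lambda}$ factor, are all handled correctly, and the symmetry in $(e^+,e^-)$ closes the remaining cases.
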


\begin{proof}
For $y \sim e^+$ such that $y \neq e^-$, we can apply Markov's property at time $1$ and $2$ to see that
\[
 P^{\omega}_{e^+}[X_{T^{\text{ex}}_e}=y] =P^{\omega}_{e^+}[X_1=y]+P^{\omega}_{e_+}[X_1=e_-]P^{\omega}_{e_-}[X_1=e_+]P^{\omega}_{e^+}[X_{T^{\text{ex}}_e}=y] ,
 \]
 which yields
 \[
  P^{\omega}_{e^+}[X_{T^{\text{ex}}_e}=y] = \frac{c^{\omega}(e^+,y)}{\pi^{\omega}(e^+)} \frac 1 {1-P^{\omega}_{e_+}[X_1=e_-]P^{\omega}_{e_-}[X_1=e_+]}.
 \]
 
 A similar computation will yield that, for $z \sim e^-$ such that $z \neq e^+$,
 \[
   P^{\omega}_{e^+}[X_{T^{\text{ex}}_e}=z] = \frac{c^{\omega}(e^-,z)}{\pi^{\omega}(e^-)} \frac {P^{\omega}_{e_+}[X_1=e_-]} {1-P^{\omega}_{e_+}[X_1=e_-]P^{\omega}_{e_-}[X_1=e_+]}.
 \]

Recalling~\eqref{def_conduct} and \eqref{defpi}, we have that
\[
1-C(d)n^{\delta-1}\leq \frac{c^{\omega}(e) }{\pi^{\omega}(e^\mp)}\le \frac{\pi^{\omega}(e^\pm) }{\pi^{\omega}(e^\mp)}\leq 1+C(d)n^{\delta-1},
\]
where ``$\pm$'' can stand for $+$ or $-$ and $\mp$ is the opposite sign.
Then, we can see that, for any $y,y'\sim e$,
\[
(1-C(d)n^{\delta-1}) \frac{c^{\omega}(y',e_{y'})}{c^{\omega}(y,e_y)} \leq \frac{   P^{\omega}_{e^+}[X_{T^{\text{ex}}_e}=y'] }{   P^{\omega}_{e^+}[X_{T^{\text{ex}}_e}=y] }  \leq (1+C(d)n^{\delta-1}) \frac{c^{\omega}(y',e_{y'})}{c^{\omega}(y,e_y)} ,
\]
and a similar inequality holds under $P^{\omega}_{e^-}$.

Since $\sum_{y'\sim e }P^{\omega}_{e^+}[X_{T^{\text{ex}}_e}=y'] =1$ and $\sum_{y'\sim e}c^{\omega}(y',e_{y'})=\sum_{e'\sim e}c^{\omega}(e')=\pi^{\omega}(e^+)+\pi^{\omega}(e^-)-2c^{\omega}(e)$, a simple computation allows us to see that for any $y\sim e $
\[
(1-C(d)n^{\delta-1}) \frac{c^{\omega}(y,e_y)}{\pi^{\omega}(e^+)+\pi^{\omega}(e^-)-2c^{\omega}(e)}\leq  P^{\omega}_{e^+}[X_{T^{\text{ex}}_e}=y] ,
\]
and 
\[
P^{\omega}_{e^+}[X_{T^{\text{ex}}_e}=y] \leq 
(1+C(d)n^{\delta-1}) \frac{c^{\omega}(y,e_y)}{\pi^{\omega}(e^+)+\pi^{\omega}(e^-)-2c^{\omega}(e)},
\]
and a similar inequality holds under $P^{\omega}_{e^-}$.

The last two lines are a restatement of the lemma.
\end{proof}

\subsection{Definition of two walks on a modified graph} \label{iwontdo}

For $e\in E(\Z^d)$, we denote $(\Z^d_{e},E(\Z^d_e))$ the graph obtained by collapsing the edge $e$ into one vertex $x_e$, see Figure \ref{collapsing}. Given an environment $\omega$ of conductances on $\Z^d$, we denote $\omega_e$ the conductances induced on $\Z^d_e$ by $\omega$. Moreover, we denote ${\bf P}^e$ the law of the environment $\omega_e$ on $\Z^d_e$.

We will define two random walks  on $\Z^d_{e}$. We will show later that it is possible to couple those random variables with high probability. The first walk will simply correspond to the trace on $\Z^d_{e}$ of the random walk $X$ on $\Z^d$ in the environment $\omega$, which still depends on $c_*(e)$. The second walk will be a random walk on $\Z^d_{e}$ in the environment $\omega_e$ and thus will be independent of $c_*(e)$. 

Provided $c_*(e)$ is large, we will be able to couple those random walks with high probability, which intuitively means that the trace on $\Z^d_{e}$ of the random walk $X$ is essentially independent of the value of $c_*(e)$ when the latter is large.

We define and compare these walks in order to understand the influence of $c_*(e)$ on the trajectory when this conductance is large. In particular, we will prove that when  $c_*(e)$ is large, even if the walk $X$ spends a lot of time on $e$, the trace of $X$ outside $e$ will be essentially independent of $c_*(e)$.
 
 \subsubsection{The random walk in the environment where $e$ is collapsed}\label{sectwalkcollapsed}

First we define the random walk obtained from the conductances induced on $\omega_e$ and we denote the corresponding walk by $(Y_n^e)_{n\in \N}$. We extend to $\Z^d_e$ all the vocabulary and the definitions  introduced in Section \ref{sect_badareas}, where, in particular, we defined an open point, a good point and the subsets $\mathrm{GOOD}(\omega_e)$, $\mathrm{BAD}(\omega_e)$ and $\mathrm{BAD}^{\omega_e}(x)$ for $x\in\Z^d$. The only difference is that, for an environment $\omega_e$ on $\Z^d_e$, we always consider the vertex $x_e$ to be closed and consequently bad.\\
Besides, given an environment $\omega_e$, we define an enhanced version $(Y^e,Z^{Y^e})$ of $Y^e$, with law $P^{\omega_e}$, in the exact same way as in Section \ref{sect_altcons}, except for the transition probabilities on $x_e$ where, for any $y\sim x_e$,
\begin{enumerate}
\item $\tilde{p}^{\omega_e}((x_e,z_1),(y,1))=0$,
\item $\tilde{p}^{\omega_e}((x_e,z_1),(y,0))=p^{\omega_e}(x_e,y)=\frac{c([x_e,y])}{\pi^{\omega_e}(x)}$.
\end{enumerate}
Finally, we define ${\bf P}^e$ the law of the environment $\omega_e$ on $\Z^d_e$ and we denote $\PR^e_0$ the annealed law of $(Y^e,Z^{Y^e})$.

 \subsubsection{The trace of $(X_n)_{n\geq 0}$ outside of $e$}

Now, we are going to introduce $({X}^e_n, Z^{X^e}_n)_{n\in \N}$, an enhanced walk on $\Z^d_e$, as the trace of $(X_n, Z_n)_{n\geq 0}$ outside of $e$.

For this introduce the time change $(A^e_n)_{n\in \N}$ defined by $A^e_k=k$ for $k\leq T_{e}$ and for larger $k$ we set $A^e_{k+1}=\inf \{ j>A_k,\ \{X_j,X_{j-1}\} \neq e\}$. 

Now set
\begin{align}\label{defAek}
({X}^e_n,Z^{X^e}_n) =\begin{cases} (X_{A^e_n},Z_{A^e_n}) & \text{ if } X_{A^e_n-1},X_{A^e_n}\notin e \\
							(X_{A^e_n},0)& \text{ if } X_{A^e_n-1}\in e, \ X_{A^e_n}\notin e \\
                                 (x_e,Z_{A^e_n}) & \text{otherwise.}
                                 \end{cases}
\end{align}

In words  ${X}^e_n$ follows the transitions that $X_n$ makes outside of $e$. $Z^{X^e}$ does as well, except when the walk is jumping out of the edge $e$. 

\begin{remark} We may notice that $({X}^e_n)_{n\in \N}$ is not a Markov chain, because its transition probabilities at $x_e$ depends on which vertex it used to enter $x_e$. It is however a ``two-step'' Markov chain.
\end{remark}

\subsection{Coupling $(Y^e_n)_{n\in \N}$ and $({X}^e_n)_{n\in \N}$}

The goal of this section is to show that there exists a version of $(Y^e_n,Z^{Y^e}_n)_{n\in \N}$ which is, with high probability, equal to $({X}^e_n,Z^{X^e}_n)_{n\in \N}$. This will allow us to say that the behaviour of $(X_n)_{n\in \N}$ outside of the edge $e$ is, with high probability, independent of the value of $c_*({e})$.\\

For any set $A$ of vertices or edges of $\Z^d_e$, we denote $T_A^{X^e}$ (resp.~$T_A^{Y^e}$) the first time $X^e$ (resp.~$Y^e$) hits $A$.\\
Recall that $X^e$ is the trace of $X$ on $\mathbb{Z}^d_e$ and, as the two walks $X$ and $X^e$ coincide until $T_e$, we have $T_e=T^{X^e}_{x_e}$. Thus, later on, when dealing with $X^e$ and by a slight abuse, we will use the stopping time $T_e$ without any further specification.

\begin{lemma}\label{coupling0}
Fix $\omega$, $e\in E(\Z^d)$ and $\delta\in(0,1/3)$. Assume that $c^{\omega}_*(e)\geq n$ and $c^{\omega}_*(e')\leq n^{\delta}$ for any $e'\sim e$. There exists a coupling $\widehat{P}^\omega$ of $({X}^e_k,Z^{X^e}_k)_{k}$ and $(Y^e_k,Z^{Y^e}_k)_{k}$ such that
\[
\widehat{P}^{\omega}[({X}^e_k,Z^{X^e}_k)_{k\le T_{e}+n^{2\delta}}\neq (Y^e_k,Z^{Y^e}_k)_{k\le T_{e}+n^{2\delta}}] \leq C(d)n^{3\delta-1},
\]
and $T^{X^e}_e=T^{Y^e}_{x_e}=T_e$, $\widehat{P}^{\omega}$-almost surely.
We also define $\widehat{\PR}^K$, the annealed version of $\widehat{P}^{\omega}$.
\end{lemma}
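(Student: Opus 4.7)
The plan is to build $\widehat{P}^\omega$ inductively, coupling the two enhanced walks perfectly between successive visits to $x_e$ and using a maximal coupling of the one exit step at each visit. Lemma \ref{exit_pro} will ensure that each exit coupling succeeds with probability at least $1 - C(d)n^{\delta-1}$; since $[T_e, T_e+n^{2\delta}]$ contains at most $n^{2\delta}$ visits to $x_e$, a union bound then yields the stated bound $C(d)n^{3\delta-1}$. The main obstacle is only notational: $X^e$ is a two-step rather than one-step Markov chain, so one must verify that the bound from Lemma \ref{exit_pro} is uniform in the side through which $X$ enters $e$ — and indeed it is.

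In detail, for any $y \neq x_e$ in $\Z^d_e$ the edge $[y,x_e]$ (if present) is naturally identified with $[y,e_y]$ in $\Z^d$, and under this identification $c^{\omega_e}$ agrees with $c^\omega$ on every edge incident to $y$. Consequently the enhanced transition kernels of $(X^e,Z^{X^e})$ and $(Y^e,Z^{Y^e})$ at $y$ coincide, so we can couple the two walks to follow identical trajectories until the common first arrival at $x_e$. Since $T^{X^e}_{x_e} = T_e$ by the very definition of the trace, this already gives $T^{X^e}_{x_e} = T^{Y^e}_{x_e} = T_e$ almost surely under $\widehat{P}^\omega$. At each subsequent visit to $x_e$, both $X^e$ and $Y^e$ take exactly one exit step: for $Y^e$ the exit is to $y$ with probability $q_Y(y) := c^{\omega_e}(y,x_e)/\pi^{\omega_e}(x_e)$, and for $X^e$ entering $x_e$ through side $e_{y_0} \in \{e^+,e^-\}$ the exit is to $y$ with probability $q_{X,y_0}(y) := P^\omega_{e_{y_0}}[X_{T^{\text{ex}}_e}=y]$. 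By \eqref{defAek} and the definition of $(Y^e,Z^{Y^e})$ at $x_e$, both exits carry $Z$-coordinate deterministically equal to $0$, so it suffices to couple the spatial exits. Lemma \ref{exit_pro} gives
\[
|q_{X,y_0}(y) - q_Y(y)| \leq C(d)n^{\delta-1} q_Y(y),
\]
uniformly in $y_0$ and $y$, so a standard maximal coupling realises the two exits as the same vertex with probability at least $1 - C(d)n^{\delta-1}$; on the failure event we let the two walks evolve independently from then on, and on the success event we resume the perfect coupling until the next visit to $x_e$.

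Finally, since neither walk can return to $x_e$ in fewer than two time steps after leaving, the number of visits to $x_e$ during the window $[T_e, T_e+n^{2\delta}]$ is at most $n^{2\delta}$. A union bound over the at most $n^{2\delta}$ potential exit failures gives
\[
\widehat{P}^\omega\bigl[(X^e_k,Z^{X^e}_k)_{k\leq T_e+n^{2\delta}} \neq (Y^e_k,Z^{Y^e}_k)_{k\leq T_e+n^{2\delta}}\bigr] \leq n^{2\delta} \cdot C(d) n^{\delta-1} = C(d)n^{3\delta-1},
\]
as required. The annealed coupling $\widehat{\PR}^K$ is then obtained by integrating $\widehat{P}^\omega$ against the environment law, the key point being that the per-visit bound above is deterministic in $\omega$ once the hypotheses on $c_*^\omega(e)$ and on the neighbouring conductances are fixed.
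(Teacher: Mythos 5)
Your proposal is correct and follows essentially the same strategy as the paper: couple the two enhanced walks perfectly away from $x_e$ (where their transition kernels coincide), use a maximal coupling of the single exit step at each visit to $x_e$ with per-visit failure probability $\le C(d)n^{\delta-1}$ controlled via Lemma \ref{exit_pro}, and apply a union bound over the at most $n^{2\delta}$ visits in the time window. The only cosmetic difference is that you declare the walks permanently decoupled on a failure event, whereas the paper's explicit construction allows accidental re-coupling; both give the same bound.
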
 

\begin{proof} Note first that it is plain that the transition probabilities of $(Y^e_k,Z^{Y^e}_k)_{k\in \N}$ and $({X}^e_k,Z^{X^e}_k)_{k\in \N}$ at any point $x\in \Z^d_e$ are given by those of $(X_k,Z_k)_{k\in \N}$ except for $x=x_e$.\\
At the point $x_e$ we have that for any $y,y' \sim x_e$
\begin{align}\nonumber
P^{\omega}[({X}^e_{k+1},Z^{X^e}_{k+1})=(y,1)\mid X^e_k=x_e \text{ and  } {X}^e_{k-1}=y']=&0,\\
P^{\omega}[({X}^e_{k+1},Z^{X^e}_{k+1})=(y,0)\mid X^e_k=x_e \text{ and  } {X}^e_{k-1}=y']=&P^{\omega}_{e_{y'}}[X_{T^{\text{ex}}_e}=y],\label{bosselarecherche}
\end{align}
and
\begin{align}\nonumber
P^{\omega}[Y^e_{k+1}=(y,1)\mid Y^e_k=x_e]=&0,\\
P^{\omega}[Y^e_{k+1}=(y,0)\mid Y^e_k=x_e]=&\frac{c^{\omega_e}(y,x_e)}{\pi^{\omega_e}(x_e)}.\label{bosselarecherche2}
\end{align}

So, under $\widehat{P}^{\omega}$ we define the process $(({X}^e_k,Z^{X^e}_k),(Y^e_k,Z^{Y^e}_k))_k$ with the following transition probabilities:
\begin{enumerate}
\item if ${X}^e_k\neq Y^e_k$, then $(X^e,Z^{X^e})$ and $(Y^e,Z^{Y^e})$ move independently according their respective original laws;
\item if ${X}^e_k=Y^e_k=x$, with $x\neq x_e$, then  $({X}^e_{k+1}k,Z^{X^e}_{k+1})=(Y^e_{k+1},Z^{Y^e}_{k+1})$ almost surely and $({X}^e_{k+1},Z^{X^e}_{k+1})=(y,z')$ with probability
\[
P^\omega_{(x,z)}[X_1=(y,z')];
\]
\item if ${X}^e_k=Y^e_k=x_e$ and $X^e_{k-1}=y'\sim x_e$, then 

\begin{enumerate}
\item[(a)] $({X}^e_{k+1},Z^{X^e}_{k+1})=(Y^e_{k+1},Z^{Y^e}_{k+1})=(y,0)$ with probability
\[
p_{y',y}:=P^{\omega}_{e_{y'}}[X_{T^{\text{ex}}_e}=y]\wedge P^{\omega}_{x_e}[Y^e_{k+1}=(y,0)];
\]
\item[(b)] $(({X}^e_{k+1},Z^{X^e}_{k+1}),(Y^e_{k+1},Z^{Y^e}_{k+1}))=((y_1,0),(y_2,0))$ with probability
\[
\frac{\left(P^{\omega}_{e_{y'}}[X_{T^{\text{ex}}_e}=y_1]-p_{y',y_1}\right)\left(P^{\omega}_{x_e}[Y^e_{k+1}=(y_2,0)]-p_{y',y_2}\right)}{1-\sum_{y\sim x_e}p_{y',y}}.
\]
\end{enumerate}
\end{enumerate}

With this coupling, the marginal laws of $X^e$ and $Y^e$ are their original laws. Moreover, as long as the two walks are coupled, if they are not on $x_e$ then they stay coupled for at least one more step almost surely, and if they are on $x_e$ (and if they were on $y'$ one time step before), they decouple with probability
\[
1-\sum_{y\sim x_e}p_{y',y}\le 2(2d-1)\max_{y,y'\sim x_e}\left|P^{\omega}_{e_{y'}}[X_{T^{\text{ex}}_e}=y]- P^{\omega}_{x_e}[Y^e_{k+1}=(y,0)]\right|.
\]

Hence, it is sufficient to show that the transition probabilities at $x_e$ are close for $({X}^e_k,Z^{X^e}_k)$ and $(Y^e_k,Z^{Y^e}_k)$.

In particular, using \eqref{bosselarecherche}, \eqref{bosselarecherche2} and Lemma~\ref{exit_pro}, we see that if we assume that $c^{\omega}_*(e')\leq n^{\delta}$ for any $e'\sim e$, then for any $y,y' \sim x_e$, we have 
\begin{eqnarray*}
&&\left|P^{\omega}_{e_{y'}}[X_{T^{\text{ex}}_e}=y]- P^{\omega}_{x_e}[Y^e_{k+1}=(y,0)]\right| \leq C(d)n^{\delta-1}.
\end{eqnarray*}

Until $T_{e}$ we can keep $({X}^e_k,Z^{X^e}_k)$ and $(Y^e_k,Z^{Y^e}_k)$ coupled with probability 1. After that  at every point in time, we can keep the walks coupled except with probability at most $C(d)n^{\delta-1}$, it is clear that we can keep $({X}^e_k,Z^{X^e}_k)$ and $(Y^e_k,Z^{Y^e}_k)$ coupled for $n^{2\delta}$ units of time with probability at least $1-C(d)n^{3\delta-1}$.

\end{proof}

\subsection{The coupling of regeneration times of the new processes}

Let us define new regeneration times ${\tau}^{Y^e}_1$ in $\Z^d_e$ associated to $({Y}^e_k,Z^{Y^e}_k)$. This is done in the same way we defined $\tau_1$ except that
\begin{enumerate}
\item we define ${D}^{Y^e}$ as in \eqref{defD}, using the convention that $x_e\cdot \vec{\ell}=e_+\cdot \vec{\ell} \wedge e_-\cdot \vec{\ell}$,
\item we define ${M}^{Y^e}_{k+1}$ as in \eqref{defrealM}, using the convention that $x_e\cdot \vec{\ell}=e_+\cdot \vec{ \ell} \vee e_-\cdot \vec{\ell}$,
\item we replace the definition \eqref{def_Mcal} of $\mathcal{M}$ by
\begin{eqnarray}\nonumber
\mathcal{M}^{Y^e}&:=&\inf\{i\geq2: X_{i} \mbox{ is $K$-open, } X_i,X_{i-1},X_{i-2}\neq x_e\text{, for }j< i-2, \\
&&X_j\cdot\vec{\ell} < X_{i-2} \cdot\vec{
\ell}
 \mbox{ and }X_i=X_{i-1}+e_1=X_{i-2}+2e_1
\}\label{def_McalY}.
\end{eqnarray}
\end{enumerate}
Recall that the times corresponding to $\mathcal{M}^{Y^e}$ are potential times for regeneration. The definition \eqref{def_McalY} is chosen to into account that $x_e$ corresponds to an edge of high conductance, hence a regeneration is unlikely to occur there because a back and forth crossing will occur with overwhelming probability.

Also we define $\tau_1^{X^e}$, the regeneration time associated to $(X^e_k,Z^{X^e}_k)$ which is defined as the greatest number $k$ such that $A^e_k\le \tau_1$, where $(A^e_k)$ is the time-change introduced in \eqref{defAek}. Note that if $X_{\tau_1}\notin e$, then $A^e_{\tau_1^{X^e}}=\tau_1$. Furthermore, as soon as $c_*(e)>K$, it is guaranteed that $X_{\tau_1}\notin e$, since, in this case, $e^+$ and $e^-$ are not open.
%
%

\begin{lemma}\label{coupling}
Fix any environment  $\omega$, an edge $e\in E(\Z^d)$ and $\delta\in(0,1/3)$. Assume that $c^{\omega}_*(e)\geq n>K$ and $c^{\omega}_*(e')\leq n^{\delta}$ for any $e'\sim e$.  For the coupling $\widehat{P}^{\omega}$ of $(({X}^e_k,Z^{X^e}_k),{\tau}^{X^e}_1)$ and $((Y^e_k,Z^{Y^e}_k),\tau_1^{Y^e})$ from Lemma \ref{coupling0}, we have, for any $z\in\Z^d_e$,
\begin{eqnarray*}
\widehat{P}_z^{\omega}[({X}^e_k)_{k\in\N}\neq (Y^e_k)_{k\in\N},T_{e}<\tau_1^{X^e}\le n^{2\delta}] &\leq& C(d,\lv,K)n^{3\delta-1},\\
\widehat{P}_z^{\omega}[D=\infty, D^{Y^e}<\infty,T_{e}<\tau_1^{X^e}\le n^{2\delta}] &\leq& C(d,\lv,K)n^{3\delta-1},
\end{eqnarray*}
and
\[
\widehat{P}_z^{\omega}[{\tau}^{X^e}_1\neq \tau_1^{Y^e}, T_{e}<\tau_1^{X^e}\le n^{2\delta}] \leq C(d,\lv,K)n^{3\delta-1}.
\]
\end{lemma}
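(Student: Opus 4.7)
The proof of all three estimates will rest on the coupling $\widehat{P}^\omega$ from Lemma \ref{coupling0}, combined with a propagation argument based on the regeneration property. Let $\mathcal{C}$ denote the event that the enhanced walks $(X^e,Z^{X^e})$ and $(Y^e,Z^{Y^e})$ coincide on $[0,T_e+n^{2\delta}]$; Lemma \ref{coupling0} gives $\widehat{P}^\omega[\mathcal{C}^c]\le C(d)n^{3\delta-1}$. The plan is to show that, on $\mathcal{C}\cap\{T_e<\tau_1^{X^e}\le n^{2\delta}\}$, the three desired equalities hold except for a few explicit bad configurations whose total probability is at most $C(K,d)n^{\delta-1}$, which is negligible compared to $n^{3\delta-1}$ since $\delta<1/3$.

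First I will establish that on $\mathcal{C}\cap\{T_e<\tau_1^{X^e}\le n^{2\delta}\}$ the two walks stay coupled forever. They agree up to and including $\tau_1^{X^e}$; since $c_*(e)>K$ forces $e^\pm$ to be $K$-closed, we have $X^e_{\tau_1^{X^e}}=X_{\tau_1}\neq x_e$, and by the regeneration property $X^e$ never revisits $x_e$ afterwards. The coupling rules of Lemma \ref{coupling0} prescribe identical steps whenever the two walks agree at a vertex different from $x_e$, so by induction they remain equal for all time, proving the first inequality.

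For the third inequality I will compare the definitions of $\tau_1^{X^e}$ and $\tau_1^{Y^e}$ on this same event. The $\mathcal{M}$-structure defining $\tau_1$ for $X$ and the $\mathcal{M}^{Y^e}$-structure defining $\tau_1^{Y^e}$ for $Y^e=X^e$ coincide unless $X_{\tau_1-1}\in e$ or $X_{\tau_1-2}\in e$; each such configuration forces a step from $e^\pm$ to a specific non-$e$ neighbor, and the ratio $n^\delta/n$ between adjacent and central conductances at $e^\pm$ bounds the probability of such a step by $C(K,d)n^{\delta-1}$ via Lemma \ref{exit_pro}. A parallel argument will handle the second inequality: I will split according to the position of $e$ relative to $z\cdot\vec{\ell}$. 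If $e^-\cdot\vec{\ell}>z\cdot\vec{\ell}$ the backtracking conditions for $D$ and $D^{Y^e}$ are equivalent on the coupling event and the bad event lies in $\mathcal{C}^c$; if $e^+\cdot\vec{\ell}\le z\cdot\vec{\ell}$ then $\{T_e<\tau_1^{X^e},D=\infty\}$ is empty; in the intermediate regime $e^-\cdot\vec{\ell}\le z\cdot\vec{\ell}<e^+\cdot\vec{\ell}$, the bad event requires $X$ to enter $e^+$ and never visit $e^-$, which has probability $O(n^{\delta-1})$ by the conductance ratio at $e^+$.

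The main obstacle will be the careful bookkeeping between the three regeneration structures associated with $X$, $X^e$, and $Y^e$. The subtlety is that the $\mathcal{M}^{Y^e}$-definition excludes $x_e$-based candidates while the $D^{Y^e}$-definition uses the minimum convention for $x_e\cdot\vec{\ell}$, and the $Z$-conditions entering $D$ translate non-trivially through the time change $A^e$; these choices generate a thin family of exceptional trajectories in which $X$ briefly visits $e^\pm$ right before its regeneration, and it is these trajectories which must be isolated and shown to contribute only $O(n^{\delta-1})$, comfortably absorbed by the coupling failure probability of order $n^{3\delta-1}$.
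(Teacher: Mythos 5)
Your overall strategy — invoke Lemma \ref{coupling0} for the bulk of the estimate, then isolate and bound a thin family of exceptional trajectories — matches the paper's approach, and your plan for the second and third inequalities (splitting on the position of $e$ relative to $z\cdot\lv$, and on whether $X_{\tau_1-1}$ or $X_{\tau_1-2}$ lies in $e$) hits two of the paper's bad events. However, your proof of the first inequality contains a gap that also contaminates the third.

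You assert that on $\mathcal{C}\cap\{T_e<\tau_1^{X^e}\le n^{2\delta}\}$ the walks remain coupled forever because, by the regeneration property, $X^e$ never revisits $x_e$ after $\tau_1^{X^e}$. This is false in general. The regeneration at $\tau_1$ prevents $X$ from dropping to or below level $X_{\tau_1}\cdot\lv$, but if the edge $e$ \emph{straddles} that level — say $e^-\cdot\lv\le X_{\tau_1}\cdot\lv<e^+\cdot\lv$, which is consistent with $T_e<\tau_1$ when only $e^-$ has been visited before $\tau_1$ — then $X$ can still reach $e^+$ after $\tau_1$, so $X^e$ can revisit $x_e$, and the coupling can fail there. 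The paper handles exactly this by introducing an explicit bad event (its event (A3)): $e$ straddles the regeneration level \emph{and} there has been no back-and-forth crossing of $e$ immediately after $T_e$ (if a crossing had occurred, both endpoints would be strictly below $X_{\tau_1}\cdot\lv$ and the walk could not return). The probability of not immediately crossing a large edge is $O(n^{\delta-1})$ by the conductance ratio — the same bound you invoke for your $(A2)$-type events. Your closing remark about "$X$ briefly visits $e^\pm$ right before its regeneration" only covers the pre-regeneration exceptional case; the post-regeneration revisit must be added both to the first estimate (as a supplementary decoupling mechanism) and to the third estimate (as an alternative reason for $\tau_1^{X^e}\neq\tau_1^{Y^e}$ even when the walks remain coupled).
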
 
\begin{proof}
For simplicity, we choose $z=0$, which could stand for $z=x_e$ if $0\in e$.\\

By Lemma~\ref{coupling0}, we already have a coupling of $({X}^e_k,Z^{X^e}_k)$ and $({Y}^e_k,Z^{Y^e}_k)$ such that $T_e=T_{x_e}^{X^e}=T_{x_e}^{Y^e}$. We simply have to verify the three equations for that coupling. To do this, we need first to control the events on which the walks decouple. Second, it could happen that the two walks stay couple forever but $\tau^{X^e}_1\neq \tau^{Y^e}_1$, thus we also want to control these events.\\

Let us define some events:
\begin{enumerate}
\item[(A1)] $({X}^e_k,Z^{X^e}_k)$ and $({Y}^e_k,Z^{Y^e}_k)$ decouple before time $n^{2\delta}$;
\item[(A2)] $X_{\tau_1^{X}-2}$ or $X_{\tau_1^{X}-1}$ belongs to $e$ and there has not been a back and forth crossing of $e$; 
\item[(A3)] $e$ is such that $(e_--X_{\tau_1})\cdot\lv\le0$ and $(e_+-X_{\tau_1})\cdot\lv>0$ (or switching $e_+$ and $e_-$) and there has not been a crossing of $e$ right after $T_e<\infty$;
\item[(A4)] $e$ is such that one of its ends is a neighbour of $0$, $e_+$ say, $T_e<\infty$ and either $T_{e_+}=\infty$ or $Z_{T_{e_+}+1}=1$ and $X_{T_{e_+}+1}\neq 0$;
\item[(A5)] $e$ is such that $e_-\cdot\lv\le0$ and $e_+\cdot\lv>0$ (or switching $e_+$ and $e_-$) and there has not been a crossing of $e$ right after $T_e<\infty$.
\end{enumerate}
Recall that, under the coupling of Lemma \ref{coupling0}, $X^e$ and $Y^e$ can decouple only at times when they are on $x_e$.\\
Firstly, on $T_{e}<\tau_1^{X^e}\le n^{2\delta}$, if $({X}^e_k,Z^{X^e}_k)$ and $({Y}^e_k,Z^{Y^e}_k)$ decouple at some finite time then we are either in the situation $(A1)$ or in $(A1)^c$ and it means that $X$ goes back to $e$ after $\tau_1$ and we are in the situation $(A3)$.\\
Secondly, on $T_{e}<\tau_1^{X^e}\le n^{2\delta}$, if the two walks remain coupled forever, and if $D=\infty$ and $D^{Y^e}<\infty$, then we are in the situation  $(A4)$ or $(A5)$.\\
Thirdly, on $T_{e}<\tau_1^{X^e}\le n^{2\delta}$, if the two walks remain coupled forever, and if $\tau_1^{Y^e}\neq {\tau}^{X^e}_1$ then we are either in the situation  $(A2)$ or $(A3)$.\\

By Lemma \ref{coupling0} and using Markov's property at time $T_e$, we have
\begin{eqnarray*}
\widehat{P}_0^\omega\left[(A1)\cup(A2)\cup(A3)\cup(A4)\cup(A5), T_{e}<\tau_1^{X^e}\le n^{2\delta}\right]&\le& C(d,\lv)n^{3\delta-1},
\end{eqnarray*}
where, recalling \eqref{defpjK}, we used for $(A4)$ that ${P}^\omega_{e_+}[Z_1=1]\le C(d,\lv)K^{-1}/c_*(e)$. Now, the conclusion follows easily.

\end{proof}

\subsection{Tail estimates on the new regeneration times}

In this section we prove some estimates on the tail of the newly introduced regeneration times.

\subsubsection{Regeneration times for $Y^e$ cannot be very large  when a large conductance is met }

We start by proving the following technical lemma.
\begin{lemma}\label{close_tau}
Fix $e\in E(\Z^d)$ and $\delta\in(0,1/3)$.   We have, for $n>K$,
\begin{eqnarray*}
\widehat{\PR}_0^K[{\tau}^{X^e}_1\neq \tau_1^{Y^e},\ \ T_{e}<\tau_1^{X^e}\le n^{2\delta},\ c_*(e)\ge n, SLT(\delta,K,n)^c]\\
\leq  C(d,\lv,K)n^{3\delta-1}{\bf P}[c_*(e)\ge n],
\end{eqnarray*}
where $\widehat{\PR}^K$ is the coupling of $(({X}^e_k,Z^{X^e}_k),{\tau}^{X^e}_1)$ and $((Y^e_k,Z^{Y^e}_k),\tau_1^{Y^e})$ from Lemma \ref{coupling0}.
\end{lemma}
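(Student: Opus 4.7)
The plan is to deduce this annealed bound from the quenched estimate of Lemma \ref{coupling}, with the bridge being Remark \ref{elodie}.

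First I would use Remark \ref{elodie}, combined with the observation from Section \ref{iwontdo} that $\tau_1^{X^e}\le \tau_1$ (so $\{T_e<\tau_1^{X^e}\}\subset\{T_e<\tau_1\}$), to obtain the deterministic inclusion
\[
\{\tau_1^{X^e}\neq \tau_1^{Y^e},\ T_e<\tau_1^{X^e}\le n^{2\delta},\ c_*(e)\ge n\}\cap SLT(\delta,K,n)^c
\]
\[
\subset \{\tau_1^{X^e}\neq \tau_1^{Y^e},\ T_e<\tau_1^{X^e}\le n^{2\delta}\}\cap\{c_*(e)\ge n,\ c_*(e')\le n^\delta\text{ for all }e'\sim e\}.
\]
The effect of this replacement is to trade the ``mixed'' environment-and-trajectory event $SLT^c$ for a purely environmental event involving only finitely many conductances around $e$, which is precisely the situation where Lemma \ref{coupling} applies.

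Second, I would condition on the environment and apply Lemma \ref{coupling} quenched. On any $\omega$ with $c_*^\omega(e)\ge n>K$ and $c_*^\omega(e')\le n^\delta$ for every $e'\sim e$, Lemma \ref{coupling} yields
\[
\widehat{P}^\omega_0[\tau_1^{X^e}\neq \tau_1^{Y^e},\ T_e<\tau_1^{X^e}\le n^{2\delta}]\le C(d,\vec{\ell},K)\, n^{3\delta-1}.
\]
Integrating this quenched bound against ${\bf P}_0^K$ restricted to the environmental event $\{c_*(e)\ge n,\ c_*(e')\le n^\delta\ \forall e'\sim e\}$ then produces the upper bound $C n^{3\delta-1}\cdot{\bf P}_0^K[c_*(e)\ge n,\ c_*(e')\le n^\delta\ \forall e'\sim e]$.

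To conclude I would drop the neighbour constraint and bound the remaining probability by ${\bf P}[c_*(e)\ge n]$ using the product structure of ${\bf P}_0^K$. If $e\in\mathcal{E}_0$, then $c_*(e)=K<n$ under ${\bf P}_0^K$, and the left-hand side of the lemma is trivially zero. If $e\notin\mathcal{E}_0$, then $c_*(e)$ has its original law under ${\bf P}_0^K$ and is independent of the conductances at neighbouring edges, so the neighbour constraint drops out and produces the factor ${\bf P}[c_*(e)\ge n]$. I do not foresee any real obstacle; the core observation is the deterministic inclusion in the first step, which decouples the heavy-tailed conductance at $e$ from the rest of the trajectory and allows a clean quenched-to-annealed transfer via Lemma \ref{coupling}.
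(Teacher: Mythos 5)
Your proposal is correct and takes essentially the same route as the paper: translate $SLT(\delta,K,n)^c$ into the purely environmental event $\{c_*(e')<n^\delta\ \forall e'\sim e\}$ via Remark \ref{elodie}, apply the quenched bound from Lemma \ref{coupling}, then integrate against ${\bf P}_0^K$. Your explicit treatment of the degenerate case $e\in\mathcal{E}_0$ (where the LHS vanishes trivially since $c_*(e)=K<n$) is a small but welcome addition the paper leaves implicit.
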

\begin{proof}
For an environment $\omega$, recall that, under $\widehat{P}^\omega$, $T_e=T_{x_e}^{X^e}=T_{x_e}^{Y^e}$ by Lemma \ref{coupling0}. On $\{T_{e}<\tau_1^{X^e}\le n^{2\delta}\}\cap\{c_*(e)\ge n\}\cap SLT(\delta,K,n)^c$, we know that the hypothesis of Lemma~\ref{coupling} is verified  as, by Remark \ref{elodie}, there exists no other edge $e'\sim e$ such that $c_*^{\omega}(e')\geq n^{\delta}$.

Hence, by applying Lemma~\ref{coupling}
\begin{eqnarray*}
&&{\bf E}_0^K\left[\1{c_*(e)\ge n}P^{\omega}_0\left[T_{e}<\tau_1^{X^e}\le n^{2\delta}, SLT(\delta,K,n)^c,\ {\tau}^{X^e}_1\neq \tau_1^{Y^e}\right]\right] \\
&&\le {\bf E}_0^K\left[\1{c_*(e)\ge n, c_*(e')< n^\delta, \forall e'\sim e}P^{\omega}_0\left[T_{e}<\tau_1^{X^e}\le n^{2\delta}, {\tau}^{X^e}_1\neq \tau_1^{Y^e}\right]\right]\\
&&\leq C(d,\lv,K)n^{3\delta-1}{\bf P}[c_*(e)\ge n].
\end{eqnarray*}
%


\end{proof}




We proceed to prove another intermediate result.
\begin{lemma}\label{small_time2}
Fix $e\in\Z^d$. For any $\delta\in(0,1/(\gamma+3))$, there exists $K_0<\infty$ such that, for all $K\ge K_0$ and any $n>0$,
\[
\widehat{\PR}^K_0[\left.\tau_1^{Y^e}\geq n^{2\delta}, c_*(e)\ge n, SLT(n,K,\delta)^c, T_{e}<\tau_1\right|D=\infty]\leq Cn^{-\gamma\delta}{\bf P}[c_*\geq n],
\]
where $\widehat{\PR}^K$ is the coupling of $(({X}^e_k,Z^{X^e}_k),{\tau}^{X^e}_1)$ and $((Y^e_k,Z^{Y^e}_k),\tau_1^{Y^e})$ from Lemma \ref{coupling0}.
\end{lemma}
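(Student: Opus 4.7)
The plan is to split the event $\{\tau_1^{Y^e}\ge n^{2\delta}\}$ according to whether $\tau_1^{X^e}\le n^{2\delta}$ or $\tau_1^{X^e}>n^{2\delta}$, handling the first case via the coupling estimate of Lemma \ref{close_tau} and the second via the tail estimate of Lemma \ref{decaytau2} applied to $\tau_1^{<n^\delta}$.

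\textbf{Small case.} If $\tau_1^{X^e}\le n^{2\delta}$ and $\tau_1^{Y^e}\ge n^{2\delta}$, then $\tau_1^{X^e}\neq \tau_1^{Y^e}$. The definition of the time change gives $T_e=T^{X^e}_{x_e}$ (as $A^e_k=k$ for $k\le T_e$), and since $c_*(e)\ge n>K$ forces the endpoints of $e$ to be $K$-closed we cannot have $X_{\tau_1}\in e$, so the walk must leave $e$ before $\tau_1$; hence $T_e<\tau_1^{X^e}$ on $\{T_e<\tau_1\}$. Lemma \ref{close_tau} then bounds this contribution by $Cn^{3\delta-1}\mathbf{P}[c_*(e)\ge n]$, which is at most $Cn^{-\gamma\delta}\mathbf{P}[c_*\ge n]$ precisely because $\delta<1/(\gamma+3)$.

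\textbf{Large case.} The crucial step is the identity $\tau_1^{X^e}=\tau_1^{<n^\delta}$ on
\[
\{D=\infty\}\cap SLT(\delta,K,n)^c\cap\{c_*(e)\ge n\}\cap\{T_e<\tau_1\}, \qquad n>K.
\]
Indeed, by $SLT^c$ together with Remark \ref{elodie}, no edge $e'\neq e$ visited before $\tau_1$ or adjacent to $e$ has conductance $\ge n^\delta$; and $e\notin\mathcal{E}_0$ since under $\PR^K$ every $e'\in\mathcal{E}_0$ satisfies $c_*(e')=K<n$. Under $D=\infty$ the walk lies in the strict right half-space for $k\ge 1$, so $e\in\mathcal{R}^0\setminus\mathcal{E}_0$, and $\tau_1^{\ge n^\delta}$ counts precisely the steps of $X$ along $e$. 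A direct inspection of $A^e$ shows that every excursion of $X$ on $e$ is collapsed into exactly two steps of $X^e$ (the arrival at $x_e$ and the departure from $x_e$) while all intermediate crossings of $e$ are suppressed, so $\tau_1-\tau_1^{X^e}$ equals the total number of $X$-steps along $e$, which is $\tau_1^{\ge n^\delta}$. This gives the claimed identity.

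Consequently the large-case contribution is bounded by
\[
\PR^K_0\bigl[\tau_1^{<n^\delta}>n^{2\delta},\,c_*(e)\ge n,\,D=\infty\bigr]\le C(K)\,\PR^K_0\bigl[\tau_1^{<n^\delta}>n^{2\delta},\,c_*(e)\ge n\,\big|\,D=\infty\bigr],
\]
using $\PR^K_0[D=\infty]\ge c(K)>0$. Setting $N:=n^{2\delta}$ and applying Lemma \ref{decaytau2} with parameters $\delta_{\text{Lem}}=1/2$ (so $N^{1/2}=n^\delta$), $m=1/(2\delta)>1/2$, $a=1$, we obtain
\[
C\,N^{-\gamma-(1-\gamma)/4}\,\mathbf{P}\bigl[c_*\ge N^{1/(2\delta)}\bigr]=C\,n^{-2\delta\gamma-\delta(1-\gamma)/2}\,\mathbf{P}[c_*\ge n]\le C\,n^{-\gamma\delta}\,\mathbf{P}[c_*\ge n].
\]
Adding the two contributions and dividing by $\PR^K_0[D=\infty]\ge c(K)$ to recover the conditioned probability gives the claim.

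The delicate point is the identity $\tau_1^{X^e}=\tau_1^{<n^\delta}$ on the good event: one must carefully unpack the excursion structure of the time change $A^e$ and verify both that $e\in\mathcal{R}^0\setminus\mathcal{E}_0$ and that no other visited edge contributes to $\tau_1^{\ge n^\delta}$. Once this is established, the rest is a direct application of Lemma \ref{close_tau} and Lemma \ref{decaytau2}.
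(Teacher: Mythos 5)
Your proof is correct and follows the same two-case decomposition (split at $\tau_1^{X^e}\lessgtr n^{2\delta}$, apply Lemma~\ref{close_tau} to the small case and Lemma~\ref{decaytau2} to the large case) that the paper uses. In fact you supply a cleaner justification for the large case: the paper's one-line remark ``as $\tau_1^{X^e}\le\tau_1$'' does not by itself give $\{\tau_1^{X^e}\ge n^{2\delta}\}\subset\{\tau_1^{<n^\delta}\ge n^{2\delta}\}$, and your careful derivation of the identity $\tau_1^{X^e}=\tau_1^{<n^\delta}$ on $\{D=\infty\}\cap SLT^c\cap\{c_*(e)\ge n\}\cap\{T_e<\tau_1\}$ via the excursion structure of the time change $A^e$ (noting that one must keep the condition $T_e<\tau_1$, which the paper silently drops) is the step that actually makes the bound go through.
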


\begin{proof}
Firstly, recall that if $n>K$ then $e$ is closed, $X_{\tau_n}\notin e$ and  $\{T_{e}<\tau_1\}=\{T_{e}^{X^e}<\tau_1^{X^e}\}$. Using Lemma \ref{close_tau}, we have
\begin{eqnarray*}
&&\widehat{\PR}^K_0[\left.\tau_1^{Y^e}\geq n^{2\delta}, c_*(e)\ge n, SLT(n,K,\delta)^c, T_{e}<\tau_1, \tau_1^{X^e}\le n^{2\delta}\right|D=\infty]\\
&&\le C(K)\widehat{\PR}_0^K[{\tau}^{X^e}_1\neq \tau_1^{Y^e},\ \ T_{e}<\tau_1^{X^e}\le n^{2\delta},\ c_*(e)\ge n, SLT(\delta,K,n)^c]\\
&&\leq  C(d,\lv,K)n^{3\delta-1}{\bf P}[c_*(e)\ge n]. 
\end{eqnarray*}

Secondly, as $\tau_1^{X^e}\le\tau_1$ by the time-change \eqref{defAek}, Lemma \ref{decaytau2} yields
\begin{eqnarray}\nonumber
&&\widehat{\PR}^K_0\left[\left. c_*(e)\ge n, SLT(n,K,\delta)^c,  \tau_1^{X^e}\ge n^{2\delta}\right|D=\infty\right]\\ \nonumber
&&\le {\PR}_0^K\left[\left.{\tau}^{\le n^\delta}_1\ge n^{2\delta},\ c_*(e)\ge n\right|D=\infty\right]\\ \label{dodo}
&&\leq  C(d,\lv,K,\delta)n^{-\gamma\delta}{\bf P}[c_*(e)\ge n],
\end{eqnarray}
where ${\tau}^{\le n^\delta}_1$, defined in  \eqref{def_tauinf}, is the time spend before $\tau_1$ on the edges with a conductance less than $n^\delta$.

%
%
\end{proof}

We now prove the main result of this section.
\begin{lemma}\label{tail_regen_new}
Fix $e\in E(\Z^d)$ and fix $\delta\in(0,1/(\gamma+3))$, there exists $K_0<\infty$ such that for any $K\ge K_0$, there exists $\epsilon'>0$ such that
\[
\widehat{\PR}_0^K\left[\left.\tau_1^{Y^e}\geq n^{2\delta}, T_{e}<\tau_1,c_*(e)\ge n\right|D=\infty\right]\leq Cn^{-\epsilon'} {\bf P}[c_*\geq n]=o(\overline{\PR}[LT(n)]),
\]
and
\[
\widehat{\PR}_0^K\left[\left.{\tau}^{X^e}_1\geq n^{2\delta}, T_{e}<\tau_1,c_*(e)\ge n\right|D=\infty\right]\leq Cn^{-\epsilon'} {\bf P}[c_*\geq n]=o(\overline{\PR}[LT(n)]),
\]
where $\widehat{\PR}^K$ is the coupling of $(({X}^e_k,Z^{X^e}_k),{\tau}^{X^e}_1)$ and $((Y^e_k,Z^{Y^e}_k),\tau_1^{Y^e})$ from Lemma \ref{coupling0}.
\end{lemma}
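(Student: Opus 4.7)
The plan is to decompose each event along $SLT(\delta,K,n)$ vs.\ its complement, then apply existing estimates. On $SLT^c$ the two bounds reduce to Lemma \ref{small_time2} (for $\tau_1^{Y^e}$) and to Lemma \ref{decaytau2} (for $\tau_1^{X^e}$, after identifying $\tau_1^{X^e}$ with $\tau_1^{<n^\delta}$). On $SLT$ itself, I would use the crude bound $\overline{\PR}[SLT]\le Cn^\epsilon{\bf P}[c_*\ge n^\delta]{\bf P}[c_*\ge n]$ from Lemma \ref{not_2_trapsb}; since ${\bf P}[c_*\ge n^\delta]=L(n^\delta)n^{-\gamma\delta}$ is at most $n^{-\gamma\delta/2}$ for $n$ large by the regular variation \eqref{assProba}, this contributes at most $Cn^{-\gamma\delta/2}{\bf P}[c_*\ge n]$ once $\epsilon$ is chosen small enough. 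The equality $=o(\overline{\PR}[LT(n)])$ will then follow from Lemma \ref{LB_pn}, which supplies $\overline{\PR}[LT(n)]\ge c(K,d){\bf P}[c_*\ge n]$.

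For the $\tau_1^{X^e}$ inequality on $SLT^c$, the key step is to show that, on the event $\{T_e<\tau_1,c_*(e)\ge n\}\cap SLT(\delta,K,n)^c\cap\{D=\infty\}$ with $n>K$, one has $\tau_1^{X^e}=\tau_1^{<n^\delta}$. Recall from Section \ref{iwontdo} that the time-change $A^e$ skips exactly the steps along $e$, so $\tau_1^{X^e}=\tau_1-|\{k\in[1,\tau_1]:[X_{k-1},X_k]=e\}|$. Under $\overline{\PR}$ the walk satisfies $X_k\cdot\vec\ell>0$ for every $k\ge 1$ (since $D=\infty$), so every edge crossed in $[1,\tau_1]$ lies in $\mathcal{R}^0$; moreover, because $\PR_0^K$ fixes $c_*(e')=K$ for every $e'\in\mathcal{E}_0$ and we take $n>K$, our edge $e$ lies in $\mathcal{R}^0\setminus\mathcal{E}_0$. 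Finally, on $SLT^c\cap\{T_e<\tau_1,c_*(e)\ge n\}$, the walk's trajectory up to $\tau_1$ stays in $B(\chi,\chi^\alpha)\subset B(2\chi,2\chi^\alpha)$ by definition of $\chi$, so (together with Remark \ref{elodie}) every other edge it crosses has conductance $<n^\delta$. Hence $\tau_1^{\ge n^\delta}$ reduces to the $e$-crossings and $\tau_1^{X^e}=\tau_1^{<n^\delta}$ as claimed. Applying Lemma \ref{decaytau2} after the substitution $N=n^{2\delta}$, $\delta'=1/2$, $m'=1/(2\delta)$, $a=1$ (legitimate since $\delta<1/(\gamma+3)<1/2$, so $m'>\delta'$) then yields a bound of order $n^{-2\delta(\gamma+(1-\gamma)/4)}{\bf P}[c_*\ge n]$, supplying the required $n^{-\epsilon'}$ factor.

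Combining the two pieces for each inequality gives the announced bounds, and dividing by $\overline{\PR}[LT(n)]\ge c(K,d){\bf P}[c_*\ge n]$ produces the $o(\overline{\PR}[LT(n)])$ conclusion. The only delicate point is the identity $\tau_1^{X^e}=\tau_1^{<n^\delta}$ above, for which the book-keeping of $\mathcal{R}^0\setminus\mathcal{E}_0$ and the use of $\{D=\infty\}$ to force every edge crossed after time $0$ into $\mathcal{R}^0$ need to be spelled out carefully; once this is done, the rest is a direct combination of Lemmas \ref{small_time2}, \ref{decaytau2}, \ref{not_2_trapsb} and \ref{LB_pn}.
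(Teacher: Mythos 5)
Your proposal is correct and follows essentially the same route as the paper: the paper's proof simply cites Lemma \ref{small_time2} for the $\tau_1^{Y^e}$ bound, display \eqref{dodo} for the $\tau_1^{X^e}$ bound, and Lemmas \ref{not_2_trapsb} and \ref{LB_pn} for the $SLT$ part and the comparison with $\overline{\PR}[LT(n)]$, respectively. Your careful identification $\tau_1^{X^e}=\tau_1^{<n^\delta}$ on $SLT^c\cap\{T_e<\tau_1,\,c_*(e)\ge n\}\cap\{D=\infty\}$ is exactly what underlies the first inequality in \eqref{dodo} (which the paper states somewhat tersely, invoking the time-change without spelling out why $e$ is the only edge in $E_{<n^\delta}^c\cap(\mathcal{R}^0\setminus\mathcal{E}_0)$ crossed); your substitution $N=n^{2\delta}$, $\delta'=1/2$, $m'=1/(2\delta)$ in Lemma \ref{decaytau2} is likewise the computation hidden behind the second inequality of \eqref{dodo}. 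So you have in effect re-derived the content of Lemma \ref{small_time2} rather than simply invoked it, but the structure of the argument --- decompose along $SLT$, bound each piece, divide out ${\bf P}[c_*\ge n]$ via Lemma \ref{LB_pn} --- is the same.
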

\begin{proof}
For the first inequality, we just need to use Lemma~\ref{small_time2} together with Lemma \ref{not_2_trapsb} and Lemma \ref{LB_pn}.
The second inequality is given by \eqref{dodo} together with Lemma \ref{not_2_trapsb} and Lemma \ref{LB_pn}.
\end{proof}

\subsubsection{Backtracking probabilities for the random walk $Y^e$}

Recall, from Section \ref{iwontdo}, the definition of ${\bf P}^e$, the law of the environment $\omega_e$ where the edge $e\in E(\Z^d)$ is collapsed. Note that, in order to define $\omega_e$, as the environment outside $e$ is independent of $c^{\omega}(e)$,  it is equivalent to pick $\omega$ under ${\bf P}[\cdot|e\text{ is closed}]$ and then to collapse the edge $e$.\\
Besides, recall that we extended, in Section \ref{sectwalkcollapsed}, the definition of {\it good} and {\it bad} vertices to the environment $\omega_e$ keeping the exact same definition except that the vertex $x_e$ is always considered as closed, whatever are the conductances of the surrounding edges. In words, a vertex $x\in\Z^d_e$ is $\omega_e$-good if there exists an infinite directed open path starting from $x$ (hence, this path does not go through $x_e$).\\
Now, notice that for ${\bf P}[\cdot|e\text{ is closed}]$-a.e.~environment $\omega$, the vertices $e^+$ and $e^-$ are closed. Thus, under ${\bf P}[\cdot|e\text{ is closed}]$, this path does not go through $e^+$ or $e^-$. This means that, for ${\bf P}[\cdot|e\text{ is closed}]$-a.e.~environment $\omega$, a vertex is good in $\omega$ if and only if it is good in $\omega_e$. In other words, 
\begin{equation}\label{eleanorrigby}
\mathrm{GOOD}(\omega_e)=\mathrm{GOOD}(\omega)\qquad {\bf P}[\cdot|e\text{ is closed}]\text{-a.s.,}
\end{equation}
where each vertex $x\in\Z^d_e\setminus\{x_e\}$ is naturally associated to a unique vertex in $\Z^d\setminus\{e^+,e^-\}$.\\
Also, under ${\bf P}[\cdot|e\text{ is closed}]$,  $\mathrm{BAD}(\omega_e)\setminus\{x_e\}=\mathrm{BAD}(\omega)\setminus\{e^+,e^-\}$ a.s., $x_e\in\mathrm{BAD}(\omega_e)$ and $e^+,e^-\in\mathrm{BAD}(\omega)$. Moreover, for any $x\neq x_e$, we have $\mathrm{BAD}^{\omega_e}(x)\setminus\{x_e\}=\mathrm{BAD}^{\omega}(x)\setminus\{e^+,e^-\}$ and $x_e\in\mathrm{BAD}^{\omega_e}(x_e)$ if and only if $e^+,e^-\in\mathrm{BAD}^{\omega}(e^+)$. Besides, $\mathrm{BAD}^{\omega_e}(x_e)\setminus\{x_e\}=\mathrm{BAD}^{\omega}(e^+)\setminus\{e^+,e^-\}$. Then, we extend the definition of the width $W(\cdot)$ such that
\begin{align*}
W(\mathrm{BAD}^{\omega_e}(x))&=W(\mathrm{BAD}^{\omega}(x)),\text{ for any }x\neq x_e,\\
W(\mathrm{BAD}^{\omega_e}(x_e))&=W(\mathrm{BAD}^{\omega}(e^+))
\end{align*}

The two following results are the analog of Lemma \ref{backbone} and Lemma \ref{thisisboring} for the walk $Y^e$.

\begin{lemma}\label{backboneY}
Fix some edge $e\in E(\Z^d)$ and some environment $\omega_e$ on $\Z^d_e$. For any $x\in\mathrm{GOOD}(\omega_e)$, we have
\[
E^{\omega_e}_x \Biggl[\sum_{i=0}^{\infty}
{\mathbf1} {\{Y^e_i=x\}} \Biggr]\leq C(K)<\infty.
\]
\end{lemma}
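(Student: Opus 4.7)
The plan is to use the same electrical-network identity that underlies the proof of Lemma~\ref{backbone}. The position process $Y^e$ is a reversible Markov chain on $\Z^d_e$ with conductances $c^{\omega_e}$ and invariant measure $\pi^{\omega_e}$ (the auxiliary coordinate $Z^{Y^e}$ does not affect the transitions of $Y^e$). Therefore, whenever the walk is transient from $x$,
\[
E^{\omega_e}_x \Biggl[\sum_{i=0}^{\infty} \1{Y^e_i = x} \Biggr]
= \pi^{\omega_e}(x) \cdot \mathcal{R}_{\mathrm{eff}}^{\omega_e}\bigl(x \leftrightarrow \infty\bigr),
\]
and it will suffice to bound each factor separately by something uniform in the environment.

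For the first factor, since $x \in \mathrm{GOOD}(\omega_e)$, the vertex $x$ is open in $\omega_e$ (and in particular $x \neq x_e$, as $x_e$ is declared closed). The same calculation as in Remark~\ref{fakeUE}, which uses only openness of $x$ and the identities $c^{\omega_e}(y,z)=c^{\omega}(y,z)$ for $y,z\neq x_e$, then yields $\pi^{\omega_e}(x) \leq CK\,e^{2\lambda\,x\cdot\vec\ell}$.

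For the second factor, the goodness of $x$ in $\omega_e$ provides an infinite directed open path $(x_i)_{i\geq 0}$ in $\omega_e$ with $x_0 = x$, which necessarily avoids $x_e$. By Remark~\ref{remopenkey}, $(x_i-x)\cdot\vec\ell \geq c(d)\,i$, and each edge $[x_i, x_{i+1}]$ is $K$-normal, hence of conductance at least $(c/K)\,e^{2\lambda\,x_i\cdot\vec\ell}$. Rayleigh's monotonicity principle then bounds the effective resistance by the series resistance along this single path:
\[
\mathcal{R}_{\mathrm{eff}}^{\omega_e}\bigl(x \leftrightarrow \infty\bigr)
\leq \sum_{i \geq 0} \frac{CK}{e^{2\lambda\,x_i\cdot\vec\ell}}
\leq CK\,e^{-2\lambda\,x\cdot\vec\ell} \sum_{i \geq 0} e^{-2\lambda c(d) i}
\leq C(K)\,e^{-2\lambda\,x\cdot\vec\ell}.
\]
In particular this makes $Y^e$ transient from $x$, justifying the identity above, and multiplying the two bounds yields the desired constant $C(K)$, independent of $x$ and $\omega_e$.

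The only structural difference with Lemma~\ref{backbone} is that $Y^e$ behaves atypically at $x_e$ (both because $x_e$ is always treated as closed and because the enhanced-walk rule forces $Z^{Y^e}=0$ upon leaving $x_e$). This is completely harmless here: the open path used in the Rayleigh bound avoids $x_e$ by construction, and the estimate on $\pi^{\omega_e}(x)$ only involves edges incident to $x \neq x_e$, so the effective-resistance computation runs exactly as in the original graph and no further work is required.
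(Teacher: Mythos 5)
Your proposal is correct and takes essentially the same route as the paper's proof: both use the identity $E^{\omega_e}_x[\sum_i \1{Y^e_i=x}] = \pi^{\omega_e}(x)/C^{\omega_e}(x\leftrightarrow\infty)$, bound $\pi^{\omega_e}(x)$ via Remark~\ref{fakeUE}, and bound the effective resistance by Rayleigh's monotonicity along the infinite directed open path guaranteed by goodness of $x$ in $\omega_e$ (Remark~\ref{remopenkey}). Your extra remark that the open path avoids $x_e$ by construction is a point the paper leaves implicit, but it does not change the argument.
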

\begin{proof}
This proof is identical to the proof of Lemma 8.1 in \cite{Fri11} but, as it is short, we give it again. First, we have that
\[
E^{\omega_e}_x \Biggl[\sum_{i=0}^{\infty}{\mathbf1} {\{Y^e_i=x\}} \Biggr]=\frac{1}{P^{\omega_e}[T_x^+=\infty]}=\frac{\pi^{\omega_e}(x)}{C^{\omega_e}(x\leftrightarrow \infty)},
\]
where $C^{\omega_e}(x\leftrightarrow \infty)$ is the effective conductance between $x$ and infinity in $\omega_e$. Since $x\in\mathrm{GOOD}(\omega_e)$, using Remark \ref{fakeUE}, we have
\[
\pi^{\omega_e}(x)\le C(K)e^{2x\cdot \ell}
\]
Moreover, using Rayleigh's Monotonicity Principle (see \cite{LP}) and Remark \ref{remopenkey}, we obtain that
\[
C^{\omega_e}(x\leftrightarrow \infty)\ge \frac{1}{\sum_{i\ge0} \frac{1}{c^{\omega_e}(g_i)}}\ge  \frac{C(K)e^{2x\cdot \ell}}{\sum_{i\ge0} e^{-c(d)i}}\ge C(K)e^{2x\cdot \ell},
\]
where $(g_i)_{i\ge0}$ is the sequence of the edges of an infinite directed open path starting at $x$. The result now follows easily.
\end{proof}

For any $e\in E(\Z^d)$, we define
\begin{equation*}
d_e:=e^+\cdot \vec{\ell}\wedge e^-\cdot \vec{\ell}.
\end{equation*}

Let us define, for any $k\ge0$, the following half-space in $\Z^d_e$:
\[
\Hc_e^-(-k)=\left\{x\in\Z^d_e\setminus\{x_e\}: x\cdot \lv-d_e\le -k\right\}.
\]
We also naturally extend the definition of $\Hc_e^-(-k)$ in $\Z^d$.

\begin{lemma}\label{thisisboringY}
Fix some edge $e\in E(\Z^d)$. We have, for any $k\ge 0$,
\[
\PR^e_{x_e}[T^{Y^e}_{\mathcal{H}_e^-(-k)}<\infty]\leq C\exp(-ck).
\]
\end{lemma}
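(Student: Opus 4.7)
My approach is to transport the backtracking estimate of Lemma \ref{thisisboring} (Lemma 7.3 of \cite{Fri11}) from the original lattice $\Z^d$ to the collapsed lattice $\Z^d_e$. The reason this should succeed with essentially no new work is the observation, already made in the paragraph preceding the statement: under ${\bf P}[\,\cdot\,|\,e\text{ is closed}]$ we have $\mathrm{GOOD}(\omega_e)=\mathrm{GOOD}(\omega)$ outside $\{x_e,e^+,e^-\}$, the bad clusters of $\omega_e$ are in bijection with those of $\omega$ (with $x_e$ standing in for $\{e^+,e^-\}$), and their widths agree. Since the environment ${\bf P}^e$ is obtained by collapsing an $\omega$ drawn from ${\bf P}[\,\cdot\,|\,e\text{ is closed}]$ (the conductance of $e$ being irrelevant after collapsing), the same good/bad decomposition used in \cite{Fri11} to prove Lemma \ref{thisisboring} is available for $Y^e$ in $\omega_e$.

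\textbf{First step.} Decompose the trajectory of $Y^e$ into excursions away from $x_e$. Starting at $x_e$, the walk takes its first step to some neighbour $y\sim x_e$; pulling back to $\Z^d$, this $y$ is a neighbour of either $e^+$ or $e^-$ distinct from both, so in particular $y\cdot\lv\geq d_e-1$. Between successive visits to $x_e$, the transition probabilities of $Y^e$ in $\omega_e$ coincide with those of $X$ in $\omega$ away from the edge $e$ (identifying $y\in\Z^d_e\setminus\{x_e\}$ with the corresponding vertex in $\Z^d\setminus\{e^+,e^-\}$). Hence each excursion of $Y^e$ away from $x_e$ can be coupled with an excursion of $X$ away from $e$ started at the same $y$ and stopped when $X$ returns to $e$.

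\textbf{Second step.} For a single excursion started at $y$, the event $\{T^{Y^e}_{\Hc_e^-(-k)}<\infty\text{ during this excursion}\}$ translates to $\{X \text{ reaches } \Hc^-(d_e-k) \text{ before returning to } e\}$. This is bounded by $P^\omega_y[T_{\Hc^-(d_e-k)}<\infty]$, which after taking expectations is $\leq C\exp(-ck)$ by (translation-invariance and) Lemma \ref{thisisboring}.

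\textbf{Third step.} The number of excursions from $x_e$ is geometric with success parameter $q_{\omega_e}:=P^{\omega_e}_{x_e}[T^{+,Y^e}_{x_e}=\infty]$, so I bound
\[
P^{\omega_e}_{x_e}\bigl[T^{Y^e}_{\Hc_e^-(-k)}<\infty\bigr]\;\leq\;\frac{1}{q_{\omega_e}}\;\max_{y\sim x_e}P^\omega_y\bigl[T_{\Hc^-(d_e-k)}<\infty\bigr].
\]
It remains to show that $\ES^e[1/q_{\omega_e}]\leq C$ after suitable conditioning. By reversibility, $1/q_{\omega_e}=\pi^{\omega_e}(x_e)/C^{\omega_e}(x_e\leftrightarrow\infty)$, and Lemma \ref{backboneY}, together with Rayleigh monotonicity along a directed open path emanating from a good neighbour of $x_e$, controls this ratio on the event that such a good neighbour exists; the complementary event has exponentially small probability in $k$ by Lemma \ref{tgb} (using that the width of $\mathrm{BAD}^{\omega_e}(x_e)$ has exponential tail). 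Combining with the second step via an annealed computation yields $\PR^e_{x_e}[T^{Y^e}_{\Hc_e^-(-k)}<\infty]\leq C\exp(-ck)$.

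\textbf{Main obstacle.} The only non-routine point is the uniform (annealed) control of the expected number of excursions from $x_e$, i.e.\ bounding $\ES^e[1/q_{\omega_e}]$. Away from $x_e$ the argument is a faithful copy of Lemma \ref{thisisboring}; the vertex $x_e$ itself is a single additional bad site, but the analog of the effective-conductance lower bound at $x_e$ needs the existence of a good vertex close to $x_e$, which is exactly what the preserved good/bad decomposition and Lemma \ref{tgb} furnish. Once this point is handled, the exponential decay is immediate from Lemma \ref{thisisboring}.
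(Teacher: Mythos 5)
Your strategy of transporting Lemma~\ref{thisisboring} to $\Z^d_e$ via a decomposition of the trajectory is the right idea, but the specific decomposition you chose --- excursions away from $x_e$, with the count controlled by $\ES^e[1/q_{\omega_e}]$ where $1/q_{\omega_e}=\pi^{\omega_e}(x_e)/C^{\omega_e}(x_e\leftrightarrow\infty)$ is the expected number of visits to $x_e$ --- breaks down precisely in the regime of this paper. The quantity $\pi^{\omega_e}(x_e)=\sum_{e'\sim e}c^{\omega}(e')$ involves conductances of edges adjacent to $e^+$ or $e^-$ that are unconstrained under ${\bf P}^e$ (conditioning on $e$ closed says nothing about them), and since $\gamma<1$ these have \emph{infinite mean}. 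Meanwhile $C^{\omega_e}(x_e\leftrightarrow\infty)$ stays of order one (effective conductance to infinity is governed by the bottleneck, not the huge edge at $x_e$). So $\ES^e[1/q_{\omega_e}]=\infty$, and the annealed bound you propose in the third step cannot close. Lemma~\ref{backboneY} does not rescue this: it controls the Green function at \emph{good} vertices, whereas $x_e$ is by definition always closed. Nor does Lemma~\ref{tgb}: having $W(\mathrm{BAD}^{\omega_e}(x_e))$ small does not force $1/q_{\omega_e}$ to be bounded --- a single abnormally large edge incident to $e^+$ already produces a heavy tail for $1/q_{\omega_e}$ even when the bad cluster has width $1$, and the event that such an edge exists has constant (not exponentially small in $k$) probability.

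The paper's proof sidesteps this by decomposing at the right collection of points: it conditions on the width of $\mathrm{BAD}^{\omega_e}(x_e)$ being less than $k/(8\sqrt{d})$ (which \emph{is} exponentially likely, by Lemma~\ref{BLsizeclosedbox}), and on that event it sums over visits to the \emph{good boundary} $\partial\mathrm{BAD}^{\omega_e}(x_e)\subset\mathrm{GOOD}(\omega_e)$. For each such good point $x$, Lemma~\ref{backboneY} gives a uniform $C(K)$ bound on the quenched Green function, and from $x$ the walk must travel at least $k/2$ against the drift to reach $\mathcal{H}^-_e(-k)$ before re-entering the bad cluster --- this is where Lemma~\ref{thisisboring} is applied, after converting $P^{\omega_e}_x$ back to $P^{\omega}_x$ and absorbing the conditioning on $\{e\text{ closed}\}$ into a constant $C(K)$. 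The good points are the right places to anchor the union bound because the number of visits to them is uniformly (in the environment) controlled; the number of visits to $x_e$ is not, and that is the gap in your argument.
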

\begin{proof}
Recall that the law ${\bf P}^e$ of $\omega_e$ is given by the law of $\omega$ under ${\bf P}[\cdot|e\text{ is closed}]$ where $e$ is collapsed into $x_e$, and recall the equality \eqref{eleanorrigby}.
We then have, for any constant $c>0$, as soon as $K$ is large enough,
\begin{align}\nonumber
{\bf P}^e\left[W\left(\mathrm{BAD}^{\omega_e}(x_e)\right)\ge \frac{k}{8\sqrt{d}}\right]&= {\bf P}\left[\left.W\left(\mathrm{BAD}^{\omega}(e^+)\right)\ge \frac{k}{8\sqrt{d}}\right| e\text{ is closed}\right]\\ \label{loooser1}
&\le C(K) \exp(-ck),
\end{align}
where we used Lemma \ref{BLsizeclosedbox}.\\
Now, when $Y^e$ is started from $x_e$, we have the following inclusion:
\begin{align*}
&\left\{T^{Y^e}_{\mathcal{H}_e^-(-k)}<\infty\right\}\cap\left\{W\left(\mathrm{BAD}^{\omega_e}(x_e)\right)< \frac{k}{8\sqrt{d}}\right\}\subset A,
\end{align*}
where
\begin{align}\nonumber
&A:=\left\{  \exists i\in\N \text{ s.t. }X_i\in\partial\mathrm{BAD}^{\omega_e}(x_e),\  T^{Y^e}_{\mathcal{H}_e^-(-k)}\circ \theta_i < T^{Y^e}_{\mathrm{BAD}^{\omega_e}(x_e)}\circ \theta_i\right\}\\ \label{loooser2}
& \qquad \qquad \cap\left\{W\left(\mathrm{BAD}^{\omega_e}(x_e)\right)< \frac{k}{8\sqrt{d}}\right\}.
\end{align}
Besides, recall that $\partial\mathrm{BAD}^{\omega_e}(x_e)\subset \mathrm{GOOD}(\omega_e)$ and that $x_e\notin  \mathrm{GOOD}(\omega_e)$ by definition. We then have
\begin{align*}
\PR^e_{x_e}[A]\le& \ES^e_{x_e}\left[\1{W\left(\mathrm{BAD}^{\omega_e}(x_e)\right)< \frac{k}{8\sqrt{d}}}\right.\\
&\left.\sum_{i\ge0}\sum_{x\in\partial\mathrm{BAD}^{\omega_e}(x_e)}\1{X_i=x,\  T^{Y^e}_{\mathcal{H}_e^-(-k)}\circ \theta_i < T^{Y^e}_{\mathrm{BAD}^{\omega_e}(x_e)}\circ \theta_i}\right]\\
\le& \sum_{x\in B_\infty(e^+,k/8\sqrt{d})\setminus\{x_e\}}\ES^e_{x_e}\left[\1{x\in\mathrm{GOOD}(\omega_e)}\sum_{i\ge0}\1{X_i=x}  \right.\\
&\left.\qquad \qquad \qquad \qquad \qquad \qquad \times\1{  T^{Y^e}_{\mathcal{H}_e^-(-k)}\circ \theta_i < T^{Y^e}_{\mathrm{BAD}^{\omega_e}(x_e)}\circ \theta_i } \right]\\
\le& \sum_{x\in B_\infty(e^+,k/8\sqrt{d})\setminus\{x_e\}}\ES^e_{x_e}\left[\1{x\in\mathrm{GOOD}(\omega_e)}\sum_{i\ge0}\1{X_i=x}  \right.\\
&\left.\qquad \qquad \qquad \qquad \qquad \qquad \times P_x^{\omega_e}\left[{  T^{Y^e}_{\mathcal{H}_e^-(-k)} < T^{Y^e}_{\mathrm{BAD}^{\omega_e}(x_e)} } \right]\right]\\
\le& \sum_{x\in B_\infty(e^+,k/8\sqrt{d})\setminus\{x_e\}}C(K)\ES^e_{x_e}\left[  P_x^{\omega_e}\left[{  T^{Y^e}_{\mathcal{H}_e^-(-k)} < T^{Y^e}_{\mathrm{BAD}^{\omega_e}(x_e)} } \right]\right],
\end{align*}
where we used Markov's property and Lemma \ref{backboneY}. Moreover, notice that for ${\bf P}[\cdot|e\text{ is closed}]$-a.e.~environment $\omega$, $\omega_e$ coincides with $\omega$ outside $x_e$, and the transition probabilities of $Y^e$ outside $x_e$ are equal to those of $X$ outside $\{e^-,e^+\}$, hence, for any $x\neq x_e$,
\[
P_x^{\omega_e}\left[{  T^{Y^e}_{\mathcal{H}_e^-(-k)} < T^{Y^e}_{\mathrm{BAD}^{\omega_e}(x_e)} } \right]=P_x^{\omega}\left[{  T^{X}_{\mathcal{H}_e^-(-k)} < T^{X}_{\mathrm{BAD}^{\omega_e}(e^+)} } \right].
\]

Therefore, using Lemma \ref{thisisboring}, we have
\begin{align*}
\PR^e_{x_e}[A] \le& \sum_{x\in B_\infty(e^+,k/8\sqrt{d})}C(K)\PR_x\left[  \left.{  T^{X}_{\mathcal{H}_e^-(-k)} < T^{X}_{\mathrm{BAD}^{\omega_e}(e^+)} }  \right|e\text{ is closed}\right]\\
 \le& \sum_{x\in B_\infty(e^+,k/8\sqrt{d})}C(K)\PR_x\left[  {  T^{X}_{\mathcal{H}_e^-(-k)} < \infty}  \right]\\
 \le& C(K)k^d\PR_0\left[  {  T^{X}_{\mathcal{H}^-(-k/2)} < \infty}  \right]
\\
\le& C(K)\exp(-ck).
\end{align*}
This implies the conclusion, together with \eqref{loooser1} and \eqref{loooser2}.
\end{proof}

The next lemma deals with the original walk $X$ and improves Lemma  \ref{thisisboring}.

\begin{lemma} \label{thisisboring+}
Fix an edge $e\in E(\Z^d)$. For any $n\ge0$ and any $k\ge0$, we have
\[
\max_{z\in\{e^+,e^-\}} \PR_z\left[T^{X}_{\mathcal{H}_e^-(-k)}<\infty,c_*(e)\ge n\right]\le C\exp(-ck){\bf P}[c_*(e)\ge n].
\]
\end{lemma}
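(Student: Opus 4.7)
The plan is to imitate Lemma~\ref{thisisboringY}, tracking the walk $X$ in place of $Y^e$ and exploiting the independence between $c_*(e)$ and the rest of the environment to extract the factor ${\bf P}[c_*(e)\ge n]$. Set $A:=\mathcal{H}^-_e(-k)$ and $B:=\mathrm{BAD}^\omega_K(e^+)$. If $n\le K$ the claim is essentially free: ${\bf P}[c_*(e)\ge n]\ge{\bf P}[c_*(e)\ge K]=:c_K>0$, and since $z\cdot\vec{\ell}\ge d_e$ implies $A-z\subset\mathcal{H}^-(-k)$, Lemma~\ref{thisisboring} together with translation invariance gives $\PR_z[T^X_A<\infty]\le C\exp(-ck)$, which already entails the claim. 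From now on I assume $n>K$, so $e$ is abnormal on $\{c_*(e)\ge n\}$ and $\{e^+,e^-\}\subset B$.

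I would split according to whether $W(B)$ is large or small. On $\{W(B)\ge k/(8\sqrt{d})\}$ the trivial bound $P^\omega_z[T^X_A<\infty]\le 1$ reduces matters to controlling ${\bf P}[W(B)\ge k/(8\sqrt{d}),c_*(e)\ge n]$. Since $\{c_*(e)\ge n\}\subset\{c_*(e)>K\}$ already forces $e^+$ to be bad and the subsequent exploration of $\mathrm{BAD}^\omega_K(e^+)$ only inspects conductances on edges different from $e$, this exploration is independent of $c_*(e)$; consequently Lemma~\ref{BLsizeclosedbox}, combined with ${\bf P}[c_*(e)>K]>0$, yields ${\bf P}[W(B)\ge k/(8\sqrt{d}),c_*(e)\ge n]\le C(K)\exp(-ck)\,{\bf P}[c_*(e)\ge n]$.

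On $\{W(B)<k/(8\sqrt{d})\}$ every $y\in B$ satisfies $y\cdot\vec{\ell}\ge d_e-k/8>d_e-k$, so $B\cap A=\emptyset$ and any walk started at $z\in B$ must visit $\partial B\subset\mathrm{GOOD}(\omega)$ before reaching $A$. Mirroring the proof of Lemma~\ref{thisisboringY}, I would bound $\1{T^X_A<\infty}\1{W(B)<k/(8\sqrt{d})}$ by $\sum_{i\ge0}\sum_{x\in\partial B}\1{X_i=x,\,T^X_A\circ\theta_i<T^X_B\circ\theta_i}$, apply the strong Markov property together with Lemma~\ref{backbone} (which gives $E^\omega_z[\sum_i\1{X_i=x}]\le G^\omega(x,x)\le C(K)$ for $x\in\mathrm{GOOD}$), and arrive at the contribution
\[
C(K)\sum_{x\in B_\infty(e^+,k/(8\sqrt{d})+1)\setminus\{e^+,e^-\}}\ES\bigl[\1{c_*(e)\ge n}P^\omega_x[T^X_A<T^X_B]\bigr].
\]

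The main obstacle is to show that each summand above is at most $C\exp(-ck)\,{\bf P}[c_*(e)\ge n]$. The key observation is that $\{e^+,e^-\}\subset B$ forces $T^X_B\le T^X_{\{e^+,e^-\}}$, whence $P^\omega_x[T^X_A<T^X_B]\le P^\omega_x[T^X_A<T^X_{\{e^+,e^-\}}]$; on the latter event the walk never visits the endpoints of $e$ and in particular never traverses $e$, so the probability is $\sigma(c_*(e'):e'\ne e)$-measurable and therefore independent of $c_*(e)$. This lets me factor ${\bf P}[c_*(e)\ge n]$ out and bound the remainder by $\PR_x[T^X_A<\infty]\le C\exp(-7ck/8)$ via Lemma~\ref{thisisboring} and translation (because $x\cdot\vec{\ell}\ge d_e-k/8$ places $x$ at $\vec{\ell}$-distance at least $7k/8$ from $A$). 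Summing the resulting $C\exp(-7ck/8)\,{\bf P}[c_*(e)\ge n]$ over the $O(k^d)$ vertices of the ball and combining with the large-cluster bound produces the stated inequality with a slightly smaller rate.
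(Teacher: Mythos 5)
Your proof is correct and follows essentially the same strategy as the paper: split on the width of the bad cluster at $e^+$, handle the large-cluster case via Lemma~\ref{BLsizeclosedbox} (conditioning on $e$ being closed, which is the only way $c_*(e)$ enters), and on the small-cluster event use the strong Markov property plus Lemma~\ref{backbone} to reduce to summing $P^\omega_x\left[T^X_{\mathcal{H}_e^-(-k)} < T^X_{\mathrm{BAD}(e^+)}\right]$ over a ball. The only (cosmetic) difference is in how the factor ${\bf P}[c_*(e)\ge n]$ is extracted: you observe directly that, after the monotone bound $T^X_{\mathrm{BAD}(e^+)}\le T^X_{\{e^+,e^-\}}$, the remaining probability is $\sigma(c_*(e'):e'\ne e)$-measurable, whereas the paper introduces the resampled environment $\widetilde\omega$; these express the same independence, and your version is a touch more direct. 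Your explicit handling of the trivial case $n\le K$ is also a clean addition that the paper leaves implicit.
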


\begin{proof}
This proof is similar to the previous one. Fix $z\in\{e^+,e^-\}$.\\
Recall that the sets $\mathrm{BAD}(x)=\mathrm{BAD}_K^\omega(x)$ depend on the value of some constant $K$. Fix some constant $c>0$. There exists a constant $K_0<\infty$ such that for any $K\ge K_0$ and any $n>K$, we have
\begin{align*}
&{\bf P}\left[\mathrm{BAD}(e^+)\cup\mathrm{BAD}(e^-)\ge k/8\sqrt{d},c_*(e)\ge n\right]\\
=&{\bf P}\left[\left.\mathrm{BAD}(e^+)\ge k/8\sqrt{d}\right|c_*(e)\ge n\right]{\bf P}\left[c_*(e)\ge n\right]\\
=&{\bf P}\left[\left.\mathrm{BAD}(e^+)\ge k/8\sqrt{d}\right|e\text{ is closed}\right]{\bf P}\left[c_*(e)\ge n\right]\\
\le& C(K){\bf P}\left[\mathrm{BAD}(e^+)\ge k/8\sqrt{d}\right]{\bf P}\left[c_*(e)\ge n\right]\\
\le&C(K)\exp(-ck){\bf P}\left[c_*(e)\ge n\right],
\end{align*}
where we used Lemma \ref{BLsizeclosedbox}.\\
Now, in a way that is very similar to the proof of Lemma \ref{thisisboringY}, we obtain
\begin{align*}
&\PR_z\left[T^{X}_{\mathcal{H}_e^-(-k)}<\infty,c_*(e)\ge n, \mathrm{BAD}(e^+)\cup\mathrm{BAD}(e^-)< k/8\sqrt{d}\right]\\
\le& \sum_{x\in B_\infty(e^+,k/8\sqrt{d})}C(K)\ES_{z}\left[  P_x^{\omega}\left[{  T^{X}_{\mathcal{H}_e^-(-k)} < T^{X}_{\mathrm{BAD}(e^+)} } \right]\1{c_*(e)\ge  n}\right].
\end{align*}
Moreover,  consider the environment $\widetilde{\omega}$ which coincides with $\omega$ everywhere expect on $e$ for which we independently resample a conductance $\tilde{c}_*(e)$ under ${\bf P}[\cdot|e\text{ is closed}]$. We then have, as soon as $c_*(e)\ge n>K$,
\[
P_x^{\omega}\left[{  T^{X}_{\mathcal{H}_e^-(-k)} < T^{X}_{\mathrm{BAD}(e^+)} } \right]=P_x^{\widetilde{\omega}}\left[{  T^{X}_{\mathcal{H}_e^-(-k)} < T^{X}_{\mathrm{BAD}(e^+)} } \right],
\]
and this last quantity is independent of $c_*(e)$. This yields, using Lemma \ref{thisisboring},
\begin{align*}
&\PR_z\left[T^{X}_{\mathcal{H}_e^-(-k)}<\infty,c_*(e)\ge n, \mathrm{BAD}(e^+)\cup\mathrm{BAD}(e^-)< k/8\sqrt{d}\right]\\
\le& \sum_{x\in B_\infty(e^+,k/8\sqrt{d})}C(K)\ES_{z}\left[  P_x^{\omega}\left[\left.{  T^{X}_{\mathcal{H}_e^-(-k)} < T^{X}_{\mathrm{BAD}(e^+)} } \right]\right|e\text{ is closed}\right]{\bf P}\left[{c_*(e)\ge  n}\right],
\end{align*}
where we used the fact that $P_x^{\omega}\left[ T^{X}_{\mathcal{H}_e^-(-k)} < T^{X}_{\mathrm{BAD}(e^+)}  \right]$ depends on the value of $c_*(e)$ only through the fact that the edge $e$ is closed. In turn we obtain that 
\begin{align*}
&\PR_z\left[T^{X}_{\mathcal{H}_e^-(-k)}<\infty,c_*(e)\ge n, \mathrm{BAD}(e^+)\cup\mathrm{BAD}(e^-)< k/8\sqrt{d}\right]\\
\le& C(K)k^d\PR_{0}\left[  T^{X}_{\mathcal{H}^-(-k/2)} < \infty \right]{\bf P}\left[{c_*(e)\ge  n}\right]\le  C(K)\exp(-ck){\bf P}\left[{c_*(e)\ge  n}\right].
\end{align*}
This enables us to conclude.
\end{proof}

\subsubsection{Probability of reaching $x_e$ before regeneration for the random walk $Y^e$}

We need some control on the probability for the walk $Y^e$ to touch $x_e$ before the first regeneration time when this vertex is far away.
\begin{lemma}\label{tail_taue1}
Fix $e\in E(\Z^d)$. For any $M\in(0,+\infty)$, there exists $K_0<\infty$ such that, for any $K\ge K_0$,
\[
\PR_0^K[T^{Y^e}_{x_e} \leq \tau_1^{Y^e}]  \leq  C(K) \abs{\abs{e}}_{\infty}^{-M}.
\]
\end{lemma}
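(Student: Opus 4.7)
The plan is to adapt the proof of Lemma \ref{tail_chi_tau} to the walk $Y^e$, which will give its second statement with $X$ replaced by $Y^e$ and $x$ replaced by $x_e$; since $\|x_e\|_\infty$ is comparable to $\|e\|_\infty$, this yields the desired bound. Introduce the analog of $\chi$ for the walk $Y^e$:
\[
\chi^{Y^e} := \inf\bigl\{m\in\N : \{Y^e_i,\, 0\le i \le \tau_1^{Y^e}\} \subset B(m, m^{\alpha})\bigr\}.
\]
On the event $\{T^{Y^e}_{x_e} \le \tau_1^{Y^e}\}$, the trajectory of $Y^e$ visits $x_e$ before its first regeneration, so $x_e \in B(\chi^{Y^e}, (\chi^{Y^e})^{\alpha})$. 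Using that $B(m, m^\alpha) \subset \{y : \|y\|_\infty \le \sqrt{d}\, m^\alpha\}$ and that $\|x_e\|_\infty \ge \|e\|_\infty - 1$, I get, for $\|e\|_\infty$ larger than some absolute constant,
\[
\chi^{Y^e} \ge c(d)\,\|e\|_\infty^{1/\alpha}
\]
(while for bounded $\|e\|_\infty$ the statement is trivial by enlarging $C(K)$). Hence it is enough to establish that for every $M'<\infty$ there exists $K_0<\infty$ such that for all $K\ge K_0$,
\[
\PR_0^K[\chi^{Y^e} \ge k] \le C(K)\, k^{-M'};
\]
applying this with $M'=M\alpha$ then yields the lemma.

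This tail bound on $\chi^{Y^e}$ is obtained by following the proof of Lemma \ref{tail_chi_tau} (i.e.~Lemma 8.7 of \cite{Fri11}) verbatim, with $X$ replaced by $Y^e$ and $\tau_1$ by $\tau_1^{Y^e}$ throughout. The two key ingredients are the analogs for $Y^e$ of Theorems \ref{tailtauK} and \ref{BLK}, namely
\[
\PR_0^K\bigl[Y^e_{\tau_1^{Y^e}} \cdot \vec\ell \ge n\bigr] \le C(M)\, n^{-M},
\qquad
\PR_0^K\bigl[T^{Y^e}_{\partial B(L, L^\alpha)} \ne T^{Y^e}_{\partial^+ B(L, L^\alpha)}\bigr] \le C e^{-cL}.
\]
Both follow from the same arguments as for the walk $X$: since $\omega$ and $\omega_e$ coincide outside the collapsed edge, the environment seen by $Y^e$ differs from that of $X$ only through the treatment of a single vertex, and the already established good-point visit bound (Lemma \ref{backboneY}) together with the backtracking estimate (Lemma \ref{thisisboringY}) play the roles of Lemmas \ref{backbone} and \ref{thisisboring} in those original proofs.

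The main obstacle will be controlling excursions of $Y^e$ that touch $x_e$: there the natural regeneration candidate is forbidden by the modified definition \eqref{def_McalY}, and the surrounding environment is atypical. However, $x_e$ is declared closed in $\omega_e$, so no regeneration can occur at $x_e$ itself, and each excursion out of $x_e$ either successfully regenerates with positive probability (via an adapted version of Lemma \ref{posescape}, following a directed open path away from $x_e$ provided by an open neighbour of $x_e$) or backtracks into $\Hc^-_e(-k)$, an event of probability exponentially small in $k$ by Lemma \ref{thisisboringY}. Combining these ingredients exactly as in the original proof of Lemma \ref{tail_chi_tau} produces the required polynomial tail on $\chi^{Y^e}$ and concludes the argument.
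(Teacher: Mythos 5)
Your approach is a genuinely different route from the paper's, and while plausible, it sweeps the hard part under the rug. The paper never constructs a regeneration-box radius $\chi^{Y^e}$ for $Y^e$; instead it exploits the coupling of Lemma \ref{coupling0}: $Y^e$ and $X^e$ agree with probability one up to $T_e = T^{Y^e}_{x_e}$, so the event $\{T^{Y^e}_{x_e} \le \tau_1^{Y^e}\}$ can be analyzed entirely in terms of the already-proved estimates for $X$ together with Lemma \ref{thisisboringY} for $Y^e$ started at $x_e$. Concretely, either $X$ exits $B_{\infty}(0,\abs{\abs{e}}_{\infty}/3)$ before $\tau_1$ (controlled by Lemma \ref{tail_chi_tau}), or $X$ regenerates inside that box at some $y$ and yet $Y^e$ — coinciding with $X^e$ through $T_e$ — later reaches $x_e$ and must backtrack past the level of $y$; this is controlled by splitting on the size of $\abs{(y-e^+)\cdot\vec{\ell}}$ and invoking Lemmas \ref{thisisboring}, \ref{thisisboringY} and Theorem \ref{BL}. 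No regeneration theory specific to $Y^e$ is needed.

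Your plan, by contrast, requires the analogs for $Y^e$ of Theorems \ref{tailtauK} and \ref{BLK} — and, feeding into them, analogs of Lemma \ref{posescape}, Proposition \ref{prop_idconf}, and Lemmas \ref{Mn}, \ref{Sn}, \ref{Kn}. You assert these ``follow from the same arguments,'' but that is exactly the content that would need to be established: $x_e$ is an atypical vertex (always closed, with up to $4d-2$ incident edges in $\Z^d_e$), the set of regeneration candidates is altered by the modified $\mathcal{M}^{Y^e}$ which excludes a neighbourhood of $x_e$, the convention on $x_e\cdot\vec{\ell}$ in the definitions of $D^{Y^e}$ and $M^{Y^e}_{k+1}$ is non-standard, and the decoupling at potential regeneration points (the analog of Proposition \ref{prop_idconf}) must be re-examined when one such point is adjacent to $x_e$. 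None of these is obviously fatal, but together they amount to re-developing a substantial part of Section 5 and the Appendix for the process $Y^e$, and the proposal does not carry out any of these verifications. The coupling-based argument is shorter precisely because it routes everything through estimates on $X$, which are already in hand.
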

\begin{proof}
We fix $e$ such that $||e||_\infty\ge 8$ for convenience. We can notice that, under the coupling $\widehat{P}^\omega$ from Lemma \ref{coupling0}, $(Y^e_n)_{n\in \N}$ and $(X^e_n)_{n\in \N}$ are coupled with probability $1$ up to $T_e=T^{Y^e}_{x_e}$. This allows us to say that, on $\{T^{Y^e}_{x_e} \leq \tau_1^{Y^e}\}$, we have either $T_{B_{\infty}^c(0,\abs{\abs{e}}_{\infty}/3)}\le\tau_1$,
 or $T_{B_{\infty}^c(0,\abs{\abs{e}}_{\infty}/3)}>\tau_1$ and  $\tau_1^{Y^e} \geq  T^{Y^e}_{x_e}\ge T_{B^c_{\infty}(0,\abs{\abs{e}}_{\infty}/3)}=T^{Y^e}_{B^c_{\infty}(0,\abs{\abs{e}}_{\infty}/3)}$. Furthermore $T_{B_{\infty}^c(0,\abs{\abs{e}}_{\infty}/3)}>\tau_1$ implies that $Y^e_{\tau_1}$ is a potential regeneration point for $Y^e_n$ up to time $T^{Y^e}_e$ and since  $\tau_1 \neq \tau_1^{Y^e}$ we know $Y^e_{\tau_1}$ is not a regeneration point which means that after $T^{Y^e}_e$ the walk $Y^e$ has to backtrack in $\mathcal{H}^-((Y^e_{\tau_1}+e_1)\cdot \vec{\ell}) \subset\mathcal{H}^-(\abs{\abs{e}}_{\infty}/2)$. This means that there exists $y\in B_{\infty}(0,\abs{\abs{e}}_{\infty}/2)\cap\Hc^+_0$ (corresponding to $Y^e_{\tau_1}$ and where $\Hc^+_0$ is defined in \eqref{johnlennonstar}) such that $T^{Y^e}_y<\infty$, $T^{Y^e}_{x_e}\circ \theta_{T^{Y^e}_y}<\infty $ and $T^{Y^e}_{\mathcal{H}^-((y+e_1)\cdot \vec{\ell})}\circ \theta_{T^{Y^e}_{x_e}}<\infty$.
To sum up we either have
\begin{enumerate}
\item $T_{B_{\infty}^c(0,\abs{\abs{e}}_{\infty}/3)}\le\tau_1$, or
\item there exists $y\in B_{\infty}(0,\abs{\abs{e}}_{\infty}/2)\cap\Hc^+_0$  such that $T^{Y^e}_y<\infty$, $T^{Y^e}_{x_e}\circ \theta_{T^{Y^e}_y}<\infty $ and $T^{Y^e}_{\mathcal{H}^-((y+e_1)\cdot \vec{\ell})}\circ \theta_{T^{Y^e}_{x_e}}\circ \theta_{T^{Y^e}_y}<\infty$,
\end{enumerate}
so we obtain, recalling Definition \ref{defP0K} of $\PR_y^{0,K}$,
\begin{align*}
& \PR_0^K[T^{Y^e}_{x_e} \leq \tau_1^{Y^e}]
\leq  \PR_0^K[T^X_{B_{\infty}^c(0,\abs{\abs{e}}_{\infty}/3)}\le\tau_1]\\
& \qquad \qquad +\sum_{y\in B_{\infty}(0,\abs{\abs{e}}_{\infty}/2)\cap\Hc^+_0}\PR^{0,K}_y[T^{Y^e}_{x_e}<\infty, \ T^{Y^e}_{\mathcal{H}^-(y\cdot \vec{\ell})}\circ \theta_{T^{Y^e}_{x_e}}<\infty].
\end{align*}

Using Markov's property  we can see that
\begin{align}\label{awe}
 \PR_0^K[T^{Y^e}_{x_e} \leq \tau_1^{Y^e}] & \leq \PR_0^K[T^X_{B_{\infty}^c(0,\abs{\abs{e}}_{\infty}/3)}\le\tau_1] \\  \nonumber
&  +C\abs{\abs{e}}_{\infty}^d\max_{y\in B_{\infty}(0,\abs{\abs{e}}_{\infty}/2)\cap\Hc^+_0}\Bigl(\PR^{0,K}_y[T^X_e<\infty]\wedge \PR^{0,K}_{x_e}[T^{Y^e}_{\mathcal{H}^-(y\cdot \vec \ell)}<\infty] \Bigr) \\ \nonumber
& = \PR_0^K[T^X_{B_{\infty}^c(0,\abs{\abs{e}}_{\infty}/3)}<\tau_1] \\  \nonumber
&  +C\abs{\abs{e}}_{\infty}^d\max_{y\in B_{\infty}(0,\abs{\abs{e}}_{\infty}/2)\cap\Hc^+_0}\Bigl(\PR^{0,K}_y[T^X_e<\infty]\wedge \PR_{x_e}[T^{Y^e}_{\mathcal{H}^-(y\cdot \vec \ell)}<\infty] \Bigr),
\end{align}
where   $\mathcal{H}^-(y\cdot \vec \ell)$ is defined in \eqref{johnlennonstar}.\\

By Lemma~\ref{tail_chi_tau}, we can see that for any $M<\infty$, there exists $K_0<\infty$ such that, for any $K\ge K_0$,
\begin{equation}\label{awe1}
\PR_0^K[T_{B_{\infty}^c(0,\abs{\abs{e}}_{\infty}/3)}\le\tau_1] \leq C(K) \abs{\abs{e}}_{\infty}^{-M}.
\end{equation}

Moreover, notice that for $\alpha>d+3$,
\begin{enumerate}
\item if $\abs{(y-e^+)\cdot \vec{\ell}}\geq \abs{\abs{e}}^{1/\alpha}/8$ then  by Lemma \ref{thisisboring} and Lemma~\ref{thisisboringY} we have 
\begin{align*}
\PR^{0,K}_y[T^X_e<\infty]\wedge\PR_{x_e}[T^{Y^e}_{\mathcal{H}^-(y\cdot \vec \ell)}<\infty] &\leq \PR_0[T^{X}_{\mathcal{H}^-(-\frac{\abs{\abs{e}}^{1/\alpha}}{16})}<\infty]+\PR_{x_e}[T^{Y^e}_{\mathcal{H}^-(-\frac{\abs{\abs{e}}^{1/\alpha}}{16})}<\infty]  \\
&\leq C \exp(-c\abs{\abs{e}}_{\infty}^{-1/\alpha}).
 \end{align*}
\item otherwise for $y\in B_{\infty}(0,\abs{\abs{e}}_{\infty}/2)\cap\Hc^+_0$, as $\alpha>1$ and by the triangle inequality, we know that $d_{\infty}(y,e)\geq \abs{\abs{e}}_{\infty}/2$. Furthermore, notice that, as $\abs{(y-e^+)\cdot \vec{\ell}}< \abs{\abs{e}}^{1/\alpha}/8$, we have $0,e^+,e^-\notin  B_y(\abs{\abs{e}}^{1/\alpha}/2,\abs{\abs{e}}/2^\alpha)$, hence 
\begin{align*}
&\PR^{0,K}_y[T^X_{\partial B_y(\frac{\abs{\abs{e}}^{1/\alpha}}{2},\frac{\abs{\abs{e}}}{2^\alpha})}\neq T^X_{\partial^+ B_y(\frac{\abs{\abs{e}}^{1/\alpha}}{2},\frac{\abs{\abs{e}}}{2^\alpha})}]\\
=& \PR_y[T^X_{\partial B_y(\frac{\abs{\abs{e}}^{1/\alpha}}{2},\frac{\abs{\abs{e}}}{2^\alpha})}\neq T^X_{\partial^+ B_y(\frac{\abs{\abs{e}}^{1/\alpha}}{2},\frac{\abs{\abs{e}}}{2^\alpha})}].
\end{align*}
In this case 
\begin{align*}
 \PR^{0,K}_y[T^X_e<\infty]
 \leq & \PR_y[T^X_{\partial B_y(\frac{\abs{\abs{e}}^{1/\alpha}}{2},\frac{\abs{\abs{e}}}{2^\alpha})}\neq T^X_{\partial^+ B_y(\frac{\abs{\abs{e}}^{1/\alpha}}{2},\frac{\abs{\abs{e}}}{2^\alpha})}]\\
 &+\sum_{z\in \partial^+ B_y(\frac{\abs{\abs{e}}^{1/\alpha}}{2},\frac{\abs{\abs{e}}}{2^\alpha})} \PR_z[T^X_{\mathcal{H}^-(e\cdot \vec \ell\wedge0)}<\infty],
\end{align*}
noticing that for $z\in \partial^+ B_y(\frac{\abs{\abs{e}}^{1/\alpha}}{2},\frac{\abs{\abs{e}}}{2^\alpha})$ we have
\[
(z-e^+)\cdot \vec{\ell}\ge(y-e^+)\cdot \vec{\ell}+ \frac{\abs{\abs{e}}^{1/\alpha}}{2} \geq 1/4 \abs{\abs{e}}_{\infty}^{1/\alpha}.
\]
\end{enumerate}
This implies, by Lemma~\ref{tail_chi_tau} and Lemma~\ref{thisisboring},
\begin{align}\label{ended}
 &\max_{y\in B_{\infty}(0,\abs{\abs{e}}_{\infty}/2)\cap\Hc^+_0}\Bigl(\PR^{0,K}_y[T^X_e<\infty]\wedge \PR_{x_e}[T^{Y^e}_{\mathcal{H}^-(y\cdot \vec \ell)}<\infty] \Bigr)\\ \nonumber
 \leq & C  \abs{\abs{e}}_{\infty}^{d} \exp(-C\abs{\abs{e}}_{\infty}^{1/\alpha}).
 \end{align}

This last equation with~\eqref{awe} and~\eqref{awe1} implies the lemma.
\end{proof}

We will also need to control the probability to reach an edge far away, conditionally on the fact that this edge has a large conductance.

\begin{lemma}\label{tail_taue}
Fix $e\in E(\Z^d)$. For any $M<\infty$, there exists $K_0<\infty$ such that, for any $K\ge K_0$,
\[
\PR_0^K[T_{e} \leq \tau_1, c_*^{\omega}(e)\geq n]  \leq  C \abs{\abs{e}}_{\infty}^{-M} {\bf P}[c_*\geq n],
\]
where $C$ does not depend on $n$.
\end{lemma}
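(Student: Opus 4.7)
The plan is to follow the structure of the proof of Lemma \ref{tail_taue1}, but with Lemma \ref{thisisboring+} (which carries the factor ${\bf P}[c_*\ge n]$) replacing Lemma \ref{thisisboring} wherever backtracking estimates are invoked; this is what will produce the required factor in the final bound. Throughout I work under the coupling $\widehat{\PR}^K_0$ of Lemma \ref{coupling0}, which, for $n>K$, guarantees $T_e = T^{Y^e}_{x_e}$ and keeps the trajectories of $X$ and $Y^e$ identical up to time $T_e$.

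The first step will be to decompose
\[
\{T_e \le \tau_1\} \subseteq \{T^{Y^e}_{x_e} \le \tau_1^{Y^e}\}\cup \{\tau_1^{Y^e} < T_e \le \tau_1\}.
\]
The first event depends only on $Y^e$, which is defined on $\Z^d_e$ and is therefore independent of $c_*(e)$ under $\widehat{\PR}^K_0$; factoring this independence and applying Lemma \ref{tail_taue1} bounds the first piece by $C\|e\|_\infty^{-M}{\bf P}[c_*\ge n]$ directly.

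For the second event, I set $y := X_{\tau_1^{Y^e}}$. Since $Y^e$ has a true regeneration at $y$ and the walks coincide on $[0,T_e]$, no backtracking of $X$ past $y$ can occur on $[0,T_e]$; yet $\tau_1 > \tau_1^{Y^e}$ forces $X$ to backtrack past $y$ at some time strictly greater than $T_e$, starting from $X_{T_e}\in\{e^+,e^-\}$. I then split over the position of $y$. If $y\notin B_\infty(0,\|e\|_\infty/2)$, under the coupling the walk $Y^e$ has exited a tilted box of perpendicular width comparable to $\|e\|_\infty$ before its first regeneration; the proof of Lemma \ref{tail_chi_tau} applied to $Y^e$ (whose ingredients, namely the tails of $\tau_1^{Y^e}$ and the backtracking estimate for $Y^e$, are available via Lemma \ref{tail_regen_new} and Lemma \ref{thisisboringY}) yields a probability bounded by $C\|e\|_\infty^{-M}$, and independence of $Y^e$ from $c_*(e)$ then supplies the factor ${\bf P}[c_*\ge n]$. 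If $y\in B_\infty(0,\|e\|_\infty/2)$, I apply the strong Markov property at $T_e$ to reduce the problem, for each candidate position of $y$, to bounding the annealed backtracking probability $\PR_{X_{T_e}}[T^X_{\Hc^-(y\cdot\lv)}<\infty,\,c_*(e)\ge n]$, which is exactly what Lemma \ref{thisisboring+} controls, with exponential decay in the backtracking distance times the factor ${\bf P}[c_*\ge n]$; summing over the $O(\|e\|_\infty^d)$ candidate positions of $y$ will then yield the bound.

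The main obstacle is geometric: in the sub-case $y\in B_\infty(0,\|e\|_\infty/2)$, if $e$ has a small $\lv$-component the backtracking distance $X_{T_e}\cdot\lv - y\cdot\lv$ may itself be small, so a direct application of Lemma \ref{thisisboring+} is insufficient. This will be handled exactly as in the proof of Lemma \ref{tail_taue1}, via a further split on whether $|(y-e^\pm)\cdot\lv|$ is larger or smaller than $\|e\|_\infty^{1/\alpha}$: in the large case Lemma \ref{thisisboring+} applies directly with a large backtracking distance, while in the small case a tilted-box argument on $B_y(\|e\|_\infty^{1/\alpha}/2,\|e\|_\infty/2^\alpha)$ forces the walk to traverse an extra $\lv$-distance of order $\|e\|_\infty^{1/\alpha}$ before any relevant backtracking can occur, restoring the exponential decay. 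The crucial property that makes all of this work is that Lemma \ref{thisisboring+} preserves the factor ${\bf P}[c_*\ge n]$ throughout the geometric argument.
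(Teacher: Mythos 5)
Your decomposition
\[
\{T_e\le\tau_1\}\subseteq\{T_e\le\tau_1^{Y^e}\}\cup\{\tau_1^{Y^e}<T_e\le\tau_1\}
\]
is genuinely different from the paper's, and the first piece is handled correctly via Lemma~\ref{tail_taue1} and independence of $Y^e$ from $c_*(e)$. The geometric split in the sub-case $y\in B_\infty(0,\|e\|_\infty/2)$ of your second piece is also essentially the same mechanism the paper uses. The problem is the sub-case $y:=Y^e_{\tau_1^{Y^e}}\notin B_\infty(0,\|e\|_\infty/2)$, and there the argument has a genuine gap.

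For that sub-case, the event $\{\tau_1^{Y^e}<T^{Y^e}_{x_e}\}\cap\{Y^e_{\tau_1^{Y^e}}\notin B_\infty(0,\|e\|_\infty/2)\}$ is indeed a functional of $Y^e$ alone, so the factor $\mathbf{P}[c_*\ge n]$ does split off; after that you need
\[
\PR_0^K\bigl[\|Y^e_{\tau_1^{Y^e}}\|_\infty\ge\|e\|_\infty/2\bigr]\le C\|e\|_\infty^{-M},
\]
i.e.\ the $Y^e$-analogue of Theorem~\ref{tailtauK} (or of Lemma~\ref{tail_chi_tau}). That analogue is not proved in the paper, and the ingredients you cite do not produce it. Lemma~\ref{thisisboringY} gives a $Y^e$-backtracking estimate, which is one of the needed ingredients, but Lemma~\ref{tail_regen_new} is not a tail estimate on $\tau_1^{Y^e}$ at all: it bounds the event $\{\tau_1^{Y^e}\ge n^{2\delta}\}$ \emph{jointly} with $\{T_e<\tau_1\}$, $\{c_*(e)\ge n\}$ and $\{D=\infty\}$ — precisely the events you are in the middle of bounding — so invoking it here is circular, and it concerns a time scale, not the spatial location of $Y^e_{\tau_1^{Y^e}}$. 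To make your route work one would have to re-derive for $Y^e$ the whole chain behind Theorem~\ref{tailtauK}: a $Y^e$-version of the tilted-box exit estimate (Theorem~\ref{BLK}), a $Y^e$-version of Lemma~\ref{posescape}, and the $Y^e$-analogues of Lemmas~\ref{Mn}, \ref{Sn}, \ref{Kn}. That is a substantial body of additional work which the proposal silently assumes is available.

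The paper sidesteps this entirely by never introducing $\tau_1^{Y^e}$ here. It defines the event $A(e)$, \emph{for the walk $X$ itself}, that some candidate regeneration point $y\in B_\infty(0,\|e\|_\infty/2)$ exists on the trajectory up to $T_e$; since $A(e)$ depends only on $(X_k)_{k\le T_e}$ it is independent of $c_*(e)$, and on $A(e)^c$ one gets $T_{\partial B_\infty(0,\|e\|_\infty/2)}\le\tau_1$, so the bound follows from the already-proved Lemma~\ref{tail_chi_tau} (which controls the $X$-regeneration, not the $Y^e$-one) combined with a resampling of $c_*(e)$. This buys exactly what you are missing: a tail estimate on a box-exit event expressed in terms of $\tau_1$, not $\tau_1^{Y^e}$. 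The $A(e)$ part of the paper's proof then proceeds much as your $y\in B_\infty$ sub-case does.
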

\begin{proof}
Let us denote $A(e)$ the event on which there exists $y \in B_{\infty}(0,\abs{\abs{e}}_{\infty}/2)$ such that, considering the trajectory up  to time $T_e$, the point $y$ is compatible with $X_{\tau_1}=y$. This means that there exists a time $n<T_e$ such that $X_n=y$ is a new maximum for the trajectory in the direction $\vec\ell$ and $X_{n+k}\cdot\vec\ell>X_n\cdot\vec\ell$ for any $1\le k\le T_e-n$. In particular, if such a vertex $y$ exists then $e\in\mathcal{H}^+(0)$ and  $y\in\mathcal{H}_{0,e}:=\mathcal{H}^+(0)\cap\mathcal{H}^-_{e^++e_1}$.\\

First let us control our event on $A(e)^c$. Let us write $\tilde{\omega}$  the environment coinciding with $\omega$ where $c_*(e)$ has been resampled according to ${\bf P}$. Notice that $A^{\omega}(e)=A^{\tilde{\omega}}(e)$ and $\tilde{\omega}$ has the same law as $\omega$.

We have 
\begin{eqnarray*}
 \PR_0^K[A(e)^c, T_{e} \leq \tau_1, c_*^{\omega}(e)\geq n]  &\leq&   \PR_0^K[A(e)^c, c_*^{\omega}(e)\geq n] \\
 &=& \PR_0^K[A(e)^c, c_*^{\tilde{\omega}}(e)\geq n]=\PR_0^K[A(e)^c] {\bf P}[c_*\geq n],
 \end{eqnarray*}
 but now we can notice that on $A(e)^c$ we necessarily have $T_{\partial  B_{\infty}(0,\abs{\abs{e}}_{\infty}/2) }<\tau_1$
 \[
  \PR_0^K[A(e)^c] 
\leq  \sum_{y \in \partial  B_{\infty}(0,\abs{\abs{e}}_{\infty}/2)} \PR_0^K[T_y<\tau_1] \leq C \abs{\abs{e}}^d_{\infty} \abs{\abs{e}}^{-M}_{\infty} ,
\]
where we used Lemma~\ref{tail_chi_tau} and we chose $K$ large enough.

Combining the last two equations we have
\begin{equation}\label{pasd}
 \PR_0^K[A(e)^c, T_{e} \leq \tau_1, c_*^{\omega}(e)\geq n]  \leq  C \abs{\abs{e}}^{-M}_{\infty}{\bf P}[c_*\geq n].
 \end{equation}
 
Now,  we can estimate $\{A(e), T_{e} \leq \tau_1\}$. On that event, we know that there exists $y \in B_{\infty}(0,\abs{\abs{e}}_{\infty}/2)\cap\mathcal{H}_{0,e}$ such that, considering the trajectory up  to time $T_e$, the point $y$ is compatible with $X_{\tau_1}=y$, but since $T_y  < T_e \leq \tau_1$, we know that $y$ is not where the regeneration occurs. This means that $T_{\mathcal{H}^-(y\cdot \vec{\ell})}\circ \theta_{T_e}<\infty$. 

This means that we obtain
\begin{align*}
& \PR_0^K[A(e), T_{e} \leq \tau_1, c_*^{\omega}(e)\geq n]
\\
&\leq  \sum_{y\in B_{\infty}(0,\abs{\abs{e}}_{\infty}/2)\cap\mathcal{H}_{0,e}}\PR_y^{0,K}[T_e\circ \theta_{T_y}<\infty \text{ and }T_{\mathcal{H}^-((y+e_1)\cdot \vec{\ell})}\circ \theta_{T_e}<\infty, c_*^{\omega}(e)\geq n].
\end{align*}

Using Markov's property we can see that
\begin{align} \label{pasds}
 \PR_0^K[A(e), T_{e} \leq \tau_1, c_*^{\omega}(e)\geq n] \le& C\abs{\abs{e}}_{\infty}^d\max_{\substack{y\in B_{\infty}(0,\abs{\abs{e}}_{\infty}/2)\\ \cap\mathcal{H}_{0,e}}}\Bigl(\PR^{0,K}_y[T^X_e<\infty, c_*^{\omega}(e)\geq n]\\ \nonumber
&\wedge \ES^{0,K}[\max_{z\in \{e^+,e^-\}}P^{\omega}_z[T^{X}_{\mathcal{H}^-((y+e_1)\cdot \vec \ell)}<\infty], c_*^{\omega}(e)\geq n] \Bigr)\\ \nonumber
\le &C\abs{\abs{e}}_{\infty}^d\max_{\substack{y\in B_{\infty}(0,\abs{\abs{e}}_{\infty}/2)\\ \cap\mathcal{H}_{0,e}}}\Bigl(\PR^{0,K}_y[T^X_e<\infty, c_*^{\omega}(e)\geq n]\\ \nonumber
&\wedge\max_{z\in \{e^+,e^-\}} \PR_z[T^{X}_{\mathcal{H}^-((y+e_1)\cdot \vec \ell\vee e_1\cdot\vec\ell)}<\infty, c_*^{\omega}(e)\geq n] \Bigr),
\end{align}
where, in the last line, we consider the hitting time of the hyperplane $\mathcal{H}^-((y+e_1)\cdot \vec \ell\vee e_1\cdot\vec\ell)$ in order to lose the dependence to the environment around the origin.

Besides, we can notice that the event $\{T^X_e<\infty,\}$ is $\PR$-independent of $c_*^{\omega}(e)$. 
Now, proceeding as for the estimate~\eqref{ended}, we obtain, for $\alpha>d+3$,
\begin{enumerate}
\item if $(e^+-y)\cdot \vec{\ell}\geq \abs{\abs{e}}^{1/\alpha}/8$ then  by  Lemma~\ref{thisisboring+} we have 
\[
\max_{z\in \{e^+,e^-\}} \PR_z[T^{X}_{\mathcal{H}^-((y+e_1)\cdot \vec \ell\vee e_1\cdot\vec\ell)}<\infty, c_*^{\omega}(e)\geq n] \leq C \exp(-c\abs{\abs{e}}_{\infty}^{-1/\alpha}){\bf P}[c_*^{\omega}(e)\geq n].
 \]
\item otherwise, for $y\in B_{\infty}(0,\abs{\abs{e}}_{\infty}/2)\cap\mathcal{H}_{0,e}$. we know that $d_{\infty}(y,e)\geq \abs{\abs{e}}_{\infty}/2$. Furthermore, notice that, as $-2\le(e^+-y)\cdot \vec{\ell}< \abs{\abs{e}}^{1/\alpha}/8$, we have $0,e^+,e^-\notin  B_y(\abs{\abs{e}}^{1/\alpha}/2,\abs{\abs{e}}/2^\alpha)$. In this case 
\begin{align*}
& \PR^{0,K}_y[T^X_e<\infty, c_*^{\omega}(e)\geq n]={\bf P}[c_*^{\omega}(e)\geq n]\PR^{0,K}_y[T^X_e<\infty]\\
 \leq &{\bf P}[c_*^{\omega}(e)\geq n]\PR_y\left[T^X_{\partial B_y(\abs{\abs{e}}^{1/\alpha}/2,\abs{\abs{e}}/2^\alpha)}\neq T^X_{\partial^+  B_y(\abs{\abs{e}}^{1/\alpha}/2,\abs{\abs{e}}/2^\alpha)}\right]\\
 &+{\bf P}[c_*^{\omega}(e)\geq n]\sum_{z\in \partial^+  B_y(\abs{\abs{e}}^{1/\alpha}/2,\abs{\abs{e}}/2^\alpha)} \PR_z[T^X_{\mathcal{H}^-((e^++e_1)\cdot \vec \ell)}<\infty],
\end{align*}
but noticing that for $z\in \partial^+  B_y(\abs{\abs{e}}^{1/\alpha}/2,\abs{\abs{e}}/2^\alpha)$ we have $(z-e^+)\cdot \vec{\ell} \geq 1/4 \abs{\abs{e}}_{\infty}^{1/\alpha}$.
\end{enumerate}
these two points imply, by Lemma~\ref{tail_chi_tau} and Lemma~\ref{thisisboring}, that
\begin{align*}
 &\max_{\substack{y\in B_{\infty}(0,\abs{\abs{e}}_{\infty}/2)\\ \cap\mathcal{H}_{0,e}}}\Bigl(\PR^{0,K}_y[T^X_e<\infty, c_*^{\omega}(e)\geq n]\wedge\max_{z\in \{e^+,e^-\}} \PR_z[T^{X}_{\mathcal{H}^-((y+e_1)\cdot \vec \ell\vee e_1\cdot\vec\ell)}<\infty, c_*^{\omega}(e)\geq n] \Bigr)\\
 \leq & C {\bf P}[c_*^{\omega}(e)\geq n] \abs{\abs{e}}_{\infty}^{d} \exp(-C\abs{\abs{e}}_{\infty}^{1/\alpha}).
 \end{align*}

Using this equation, \eqref{pasds} and~\eqref{pasd} yields the result.

\end{proof}

\subsection{Probability of events conditional on the encounter of a large trap}
In this section, we give one of the key results about the asymptotic environment seen from the particle. Indeed, Lemma \ref{big_trap_cond} provides an explicit formula for the law the walk, conditioned to meet a large trap, outside the edge with largest conductance.\\
Let us define $e^{(n)}$ as a random edge of $E(\Z^d)$ verifying 
\begin{enumerate}
\item $c_*^{\omega}(e^{(n)})\geq n$,
\item for all $i< T_e$, we have $c_*^{\omega}(e')<n$ for any $e'\in E(\Z^d)$ such that $X_i\in e'$,
\end{enumerate}
and in case of multiple possible choices we choose $e^{(n)}$ according to some predetermined order on $\Z^d$. Under $\PR_n$, defined in \eqref{defPn}, this edge is met before $\tau_1$ and, because of Proposition~\ref{not_2_traps}, there is only one possible choice for $e^{(n)}$, with high probability. Also, note that, on $ OLT_e(\delta,K,n)$ defined in \eqref{def_OLTe}, $e^{(n)}=e$.

\begin{remark}\label{rem_mes_e}
We note that, for a fixed environment $\omega$, $T_{e^{(n)}}$ is a stopping-time and the random variable $e^{(n)}$ is measurable with respect to $(X_i)_{i\leq T_{e^{(n)}}}$.
\end{remark}

We write $(A,B)\in \mathfrak{F}$ if
\begin{enumerate}
\item  $A=\{ \{e\}\times A^e, e\in E(\Z^d)\}$ where each $A^e$ belongs to the $\sigma$-field generated by the finite nearest-neighbour paths in $\Z^d_e$,
\item $B=\{ \{e\}\times B^e, e\in E(\Z^d)\}$ where each $B^e$ belongs to the $\sigma$-field generated by the functions from $E_e$ to the positive numbers, where $E_e$ is a subset of $E(\Z^d_e)$.
\end{enumerate}

For $(A,B)\in \mathfrak{F}$ and for any trajectory $\mathcal{T}^e$ on $\Z^d_e$ associated with some environment $\omega_e$, we write ${\mathcal{T}^e} \in (A,B)$, with an abuse of notation, to designate the event that
\begin{enumerate}
\item $(\mathcal{T}^e_n)_{n\leq \tau_1}\in A^e$, 
\item $\{c^{\omega_e}(e'), e'\in E(\Z^d_e) \text{ with } \mathcal{T}^e_i\in e' \text{ for } i\leq \tau_1 \text{ or } e'\sim e\}\in B^e$.
\end{enumerate}
In particular, this applies to $X^e$, $Y^e$ and  we will also use the notation ${X}^{e^{(n)}} \in (A,B)$. Recall that $e\notin E(\Z^d_e)$.


We are now going to prove one of the key propositions for understanding the behaviour of the the walk around large traps.
\begin{proposition}\label{big_trap_uncond}
There exists $K_0<\infty$ such that, for any $K\ge K_0$, there exists $\eta>0$ and a function $g(n)$ such that for any  $n\in \N$, any $(A,B)\in \mathfrak{F}$ and any borelian set $F$, we have that 
\begin{align*}
&\Big| { \PR}_0^K[c_*^\omega({e^{(n)}})\in F,\ {X}^{e^{(n)}} \in (A,B),LT(n),D=\infty]\\
 & \qquad -  {\bf P}[c_*\geq n, c_*\in F] \sum_{e\in E(\Z^d)} {\PR}_0^K[Y^e \in (A,B), T_e<\tau_1^{Y_e},D^{Y^e}=\infty]\Big| \leq g(n),
\end{align*}
where $g(n)=o(n^{-\eta}\overline{\PR}[LT(n)])$. In words, the difference on the left-hand side can be upper-bounded independently of our choice of $(A,B)$ or $F$.
\end{proposition}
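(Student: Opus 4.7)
The strategy is to decompose the left-hand side as a sum over the possible locations $e$ of the large trap; reduce to the event $OLT_e(\delta,K,n)$, on which there is exactly one large trap and its immediate neighborhood is regular; apply the coupling of Section~\ref{timberlake} to transfer $X^e$ to the walk $Y^e$ on the collapsed graph $\Z^d_e$; and finally exploit the independence of $c_*(e)$ from the environment $\omega_e$.

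I would start by fixing $\delta\in(0,1/(\gamma+3))$ small and $K$ large. Since $e^{(n)}$ is measurable with respect to the trajectory stopped at $T_{e^{(n)}}$ (Remark~\ref{rem_mes_e}), partitioning on its value gives
\begin{align*}
\text{LHS} &= \sum_{e\in E(\Z^d)} \PR_0^K\bigl[c_*(e)\in F\cap[n,\infty),\,e^{(n)}=e,\,X^e\in(A,B),\,LT(n),\,D=\infty\bigr].
\end{align*}
By Proposition~\ref{not_2_traps}, the event $\{e^{(n)}=e\}\cap LT(n)$ can be replaced by $OLT_e(\delta,K,n)$ in each summand at a total cost $o(n^{-\epsilon}\overline{\PR}[LT(n)])$. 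On $OLT_e$ we have $T_e<\tau_1$, $c_*(e)\ge n$, and every edge sharing a vertex with $e$ has conductance below $n^\delta$, which is precisely the regime where the coupling of Lemma~\ref{coupling0} applies.

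Next, by Lemma~\ref{tail_regen_new} (using $\delta<1/(\gamma+3)$) the further restriction to $\{\tau_1\le n^{2\delta}\}$ introduces an extra error of order $n^{-\epsilon'}{\bf P}[c_*\ge n]$. On the resulting event, Lemma~\ref{coupling} identifies $(X^e,\tau_1,\mathbf{1}_{\{D=\infty\}})$ with $(Y^e,\tau_1^{Y^e},\mathbf{1}_{\{D^{Y^e}=\infty\}})$ up to an error of order $n^{3\delta-1}$ per edge. Having performed the coupling, I would then relax the $OLT_e$ side constraint on the surrounding conductances by reinserting the excluded $SLT$-type events, which is again controlled by Proposition~\ref{not_2_traps} (applied now to the $Y^e$-quantities using the coupling). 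The decisive observation is that, for $n>K$ and $e\notin\mathcal{E}_0$, the random variable $c_*(e)$ is ${\bf P}_0^K$-independent of the environment on $\Z^d_e$ (while for $e\in\mathcal{E}_0$ the corresponding terms vanish since conductances on $\mathcal{E}_0$ are fixed to $K<n$), so that
\begin{align*}
&\PR_0^K\bigl[c_*(e)\in F\cap[n,\infty),\,Y^e\in(A,B),\,T_e<\tau_1^{Y^e},\,D^{Y^e}=\infty\bigr]\\
&\qquad={\bf P}\bigl[c_*\ge n,\,c_*\in F\bigr]\,\PR_0^K\bigl[Y^e\in(A,B),\,T_e<\tau_1^{Y^e},\,D^{Y^e}=\infty\bigr].
\end{align*}
Summing over $e$ produces the target right-hand side. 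To make the errors summable uniformly in $(A,B)$ and $F$, I would use Lemma~\ref{tail_taue1} and Lemma~\ref{tail_taue}, which imply that $\PR_0^K[T_e\le\tau_1^{Y^e}]$ and $\PR_0^K[T_e\le\tau_1,\,c_*(e)\ge n]/{\bf P}[c_*\ge n]$ decay faster than any polynomial in $\|e\|_\infty$; hence only edges with $\|e\|_\infty$ at most a small power of $n$ contribute, and the cumulative error remains $o(n^{-\eta}\overline{\PR}[LT(n)])$.

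The main obstacle is the bookkeeping: each substitution (removing $SLT$, truncating $\tau_1\le n^{2\delta}$, coupling $X^e$ with $Y^e$, re-adding the surrounding-conductance constraints, and extracting $c_*(e)$ by independence) contributes a small error, and one must tune $\delta$ small enough and $K$ large enough so that all these errors combine into a single $g(n)=o(n^{-\eta}\overline{\PR}[LT(n)])$ \emph{uniformly} in $(A,B)$ and $F$. The uniformity requirement is what forces every comparison to be carried out at the level of the underlying probability events rather than through bounded functionals, and makes the use of Lemmas~\ref{tail_taue1}--\ref{tail_taue} (rather than crude $L^1$ estimates) essential to control the sum over $e$.
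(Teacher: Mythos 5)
Your proposal is correct and follows essentially the same route as the paper's own proof: decompose over the location $e$ of the large trap, control the transition from $e^{(n)}$-events to individual-edge events via Proposition~\ref{not_2_traps} (i.e.\ via the $SLT$/$NLT$ estimates), truncate with Lemma~\ref{tail_regen_new}, apply the coupling of Lemma~\ref{coupling} to replace $X^e$ by $Y^e$, factor out $c_*(e)$ by independence, and use Lemmas~\ref{tail_taue1}--\ref{tail_taue} (together with Lemma~\ref{tail_chi_tau}) to make the errors summable over $e$. The paper's write-up orders the steps slightly differently (it compares the two sums over $e$ directly rather than passing explicitly through $OLT_e$, and it performs the far-edges/close-edges split before invoking the coupling), but the ingredients and the logic are the same, and your observation that uniformity in $(A,B)$ and $F$ is what forces the argument to run at the level of events is accurate.
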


The series appearing in the lemma cannot be infinite by Lemma~\ref{tail_taue1}.
\begin{proof}
Fix some $\delta\in(0,1)$ that will be chosen later. Recalling the notations~\eqref{def_exist_1_trap} and~\eqref{def_exist_2_trap}, we have
\begin{align*}
& {\PR}_0^K[{X}^{e^{(n)}} \in (A,B),\ c_*(e^{(n)})\in F,LT(n),D=\infty]\\
\le& \sum_{e\in E(\Z^d)}{\PR}_0^K[c_*(e)\ge n, T_e<\tau_1,{X}^e \in (A,B),c^{\omega}_*(e)\geq n,c^{\omega}_*(e)\in F,D=\infty]\\
&+{\PR}_0^K[SLT(\delta,K,n)],
\end{align*}
and
\begin{align*}
& {\PR}_0^K[{X}^{e^{(n)}} \in (A,B),\ c_*(e^{(n)})\in F,LT(n),D=\infty]\\
\ge& \sum_{e\in E(\Z^d)}{\PR}_0^K[c_*(e)\ge n, T_e<\tau_1,{X}^e \in (A,B),c^{\omega}_*(e)\geq n,c^{\omega}_*(e)\in F,D=\infty]\\
&-\sum_{e\in E(\Z^d)}{\PR}_0^K[c_*(e)\ge n^\delta,e\in B(2\chi,2\chi^\alpha),SLT(\delta,K,n)].
\end{align*}
Therefore, this yields
\begin{align*}
& \left|{\PR}_0^K[{X}^{e^{(n)}} \in (A,B),\ c_*(e^{(n)})\in F,LT(n),D=\infty]\right.\\
&\left. - \sum_{e\in E(\Z^d)}{\PR}_0^K[{X}^e \in (A,B),T_e<\tau_1,c^{\omega}_*(e)\geq n,c^{\omega}_*(e)\in F,D=\infty]\right|\\
 \leq  &
{\ES}_0^K[NLT(\delta,n)\1{SLT(\delta,K,n),D=\infty}].
\end{align*}
and we know by Lemma~\ref{not_2_trapsb} (applied with $\epsilon=\gamma\delta/4$) and Lemma~\ref{LB_pn} that
\[
\ES[NLT(\delta,K,n)\1{SLT(\delta,K,n)}\1{D=\infty}] \leq Cn^{-\gamma\delta/2}{\bf P}[c_*\geq n] =n^{-\gamma\delta/4}o(\overline{\PR}[LT(n)]),
\]
(which is a bound that does not depend on $(A,B)$ or $F$) and thus 
\begin{align}
& {\PR}_0^K[{X}^{e^{(n)}} \in (A,B),\ c^{\omega}_*(e^{(n)})\in F,LT(n),D=\infty]\label{approx0}\\
&= \sum_{e\in E(\Z^d)}{\PR}_0^K[{X}^e \in (A,B),T_e<\tau_1,c^{\omega}_*(e)\geq n,c^{\omega}_*(e)\in F, D=\infty]+n^{-\gamma\delta/4}o(\overline{\PR}[LT(n)]).\nonumber
\end{align}


Fix $M<\infty$, by Lemma~\ref{tail_chi_tau} and Lemma~\ref{tail_taue1}, there exists $K_0<\infty$ such that, for any $K\ge K_0$,
\[
{\PR}_0^K[T_{e} \leq \tau_1]  \leq  C \abs{\abs{e}}_{\infty}^{-M}\text{ and }{\PR}_0^K[T_{e} \leq \tau_1^{Y^e}]  \leq  C \abs{\abs{e}}_{\infty}^{-M},
 \]
 and  hence
\begin{align}\nonumber
& \left|\sum_{e\in E(\Z^d)}{\PR}_0^K[{X}^e \in (A,B),T_e<\tau_1,c^{\omega}_*(e)\geq n,c^{\omega}_*(e)\in F,D=\infty] \right.\\
&\left.- \sum_{e\in E(\Z^d)}{{\PR}}_0^K[{Y}^e \in (A,B),T^{Y^e}_{x_e}<\tau^{Y^e}_1,c^{\omega}_*(e)\geq n,c^{\omega}_*(e)\in F,D^{Y^e}=\infty]\right| \\ \nonumber
\leq & 2\sum_{e\in E(\Z^d) e\notin B_{\Z^d}(0,n^{1/M})} C \abs{\abs{e}}_{\infty}^{-2M}\\ \nonumber
&  +  \sum_{\substack{e\in E(\Z^d),\\ e\in B_{\Z^d}(0,n^{1/M})}} \left|{\PR}_0^K[{X}^e \in (A,B),T_e<\tau_1,c^{\omega}_*(e)\geq n,c^{\omega}_*(e)\in F,D=\infty] \right.\\ \nonumber
&\qquad\qquad\left.- {{\PR}}_0^K[{Y}^e \in (A,B),T^{Y^e}_{x_e}<\tau^{Y^e}_1,c^{\omega}_*(e)\geq n,c^{\omega}_*(e)\in F,D^{Y^e}=\infty]\right| \\ \nonumber
\leq & C n^{-1} \\ \nonumber
&  +  \sum_{\substack{e\in E(\Z^d),\\ e\in B_{\Z^d}(0,n^{1/M})}} \left|{\PR}_0^K[{X}^e \in (A,B),T_e<\tau_1,c^{\omega}_*(e)\geq n,c^{\omega}_*(e)\in F,D=\infty] \right.\\  \label{whereis}
&\qquad\qquad\left.- {{\PR}}_0^K[{Y}^e \in (A,B),T^{Y^e}_{x_e}<\tau^{Y^e}_1,c^{\omega}_*(e)\geq n,c^{\omega}_*(e)\in F,D^{Y^e}=\infty]\right| 
\end{align}

 Choose $\delta\in(0,1/(\gamma+3))$ and recall that
 \[
 {\bf P}[c_*(e)\ge n, \exists e'\sim e: c_*(e')\ge n^{\delta}]\le C n^{-\delta\gamma/2}{\bf P}[c_*(e)\ge n].
 \]
Thus, as $\gamma\in(0,1)$, we can choose $\delta>0$ and  $\epsilon>0$, independently of $(A,B)$ or $F$, such that, using the coupling and the results in Lemma~\ref{coupling} and Lemma~\ref{tail_regen_new}, we have
  \begin{align*}
&  \left|{\PR}_0^K[{X}^e \in (A,B),T_e<\tau_1,c^{\omega}_*(e)\geq n,c^{\omega}_*(e)\in F,D=\infty] \right.\\
&\left.- {{\PR}}_0^K[{Y}^e \in (A,B),T^{Y^e}_{x_e}<\tau^{Y^e}_1,c^{\omega}_*(e)\geq n,c^{\omega}_*(e)\in F,D^{Y^e}=\infty]\right|\\
& \le C n^{-2\epsilon}\overline{\PR}[LT(n)]).
\end{align*}

Choosing $M$ large enough (depending $\epsilon$ and $d$), using the last inequality, \eqref{approx0} and~\eqref{whereis}, we obtain
 \begin{align*}
&  {\PR}_0^K[{X}^{e^{(n)}} \in (A,B),\ c_*(e^{(n)})\geq n,c^{\omega}_*(e^{(n)})\in F,LT(n),D=\infty] \\
&= \sum_{e\in E(\Z^d)}{{\PR}}_0^K[{Y}^e \in (A,B),T^{Y^e}_{x_e}<\tau^{Y^e}_1,c^{\omega}_*(e)\geq n,c^{\omega}_*(e)\in F,D^{Y^e}=\infty] +n^{-\epsilon}o(\overline{\PR}[LT(n)]).
\end{align*}
Now, we can use the fact that the trajectory of $Y^e$ and the environment outside $e$ are independent of $c^{\omega}_*(e)$ and thus
\begin{eqnarray*}
&&\sum_{e\in E(\Z^d)}{{\PR}}_0^K[{Y}^e \in (A,B),T^{Y^e}_{x_e}<\tau^{Y^e}_1,c^{\omega}_*(e)\geq n,c^{\omega}_*(e)\in F,D^{Y^e}=\infty]\\
&&=  {\bf P}[c_*\geq n,c_*\in F]\sum_{e\in E(\Z^d)}{{\PR}}_0^K[{Y}^e \in (A,B),T^{Y^e}_{x_e}<\tau^{Y^e}_1,D^{Y^e}=\infty] ,
 \end{eqnarray*}
 and so finally
 \begin{align*}
&  {\PR}_0^K[{X}^{e^{(n)}} \in (A,B),\ c_*(e^{(n)})\geq n,c^{\omega}_*(e^{(n)})\in F,LT(n),D=\infty] \\
&= {\bf P}[c_*\geq n,c_*\in F]\sum_{e\in E(\Z^d)}{{\PR}}_0^K[{Y}^e \in (A,B),T^{Y^e}_{x_e}<\tau^{Y^e}_1,D^{Y^e}=\infty] \\
&\quad+n^{-\epsilon}o(\overline{\PR}[LT(n)]).
\end{align*}
This implies the result.
\end{proof}

The previous proposition implies the following statement.
\begin{lemma}\label{big_trap_cond}
Take $(A,B)\in \mathfrak{F}$. There exists $K_0<\infty$ such that, for any $K\ge K_0$, we have, as $n$ goes to infinity,
\begin{align*}
&{ \PR}_0^K[{X}^{e^{(n)}} \in (A,B) \mid LT(n),D=\infty]\\
 &\qquad\longrightarrow \frac{\sum_{e\in E(\Z^d)}{{\PR}}_0^K[{Y}^e \in (A,B),T^{Y^e}_{x_e}<\tau^{Y^e}_1,D^{Y^e}=\infty]  }{\sum_{e\in E(\Z^d)}{{\PR}}_0^K[T^{Y^e}_{x_e}<\tau^{Y^e}_1,D^{Y^e}=\infty]  }.
 \end{align*}
 \end{lemma}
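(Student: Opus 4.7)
The plan is to deduce this lemma directly from Proposition~\ref{big_trap_uncond} by writing the conditional probability as a ratio and applying the proposition to both the numerator and the denominator.

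Concretely, I would start from
\begin{equation*}
\PR_0^K[X^{e^{(n)}} \in (A,B) \mid LT(n), D=\infty] = \frac{\PR_0^K[X^{e^{(n)}} \in (A,B), LT(n), D=\infty]}{\PR_0^K[LT(n), D=\infty]}
\end{equation*}
and apply Proposition~\ref{big_trap_uncond} with $F = [0,+\infty)$ to rewrite the numerator as
\begin{equation*}
\PR_0^K[X^{e^{(n)}} \in (A,B), LT(n), D=\infty] = {\bf P}[c_*\geq n]\, N(A,B) + g_1(n),
\end{equation*}
where $N(A,B) := \sum_{e \in E(\Z^d)} \PR_0^K[Y^e \in (A,B), T^{Y^e}_{x_e} < \tau_1^{Y^e}, D^{Y^e} = \infty]$ is a convergent series independent of $n$ (convergence following from Lemma~\ref{tail_taue1}) and $g_1(n) = o(n^{-\eta} \overline{\PR}[LT(n)])$. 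Taking in the same proposition $A^e$ and $B^e$ to equal the full $\sigma$-algebras for every edge, so that the event $X^{e^{(n)}} \in (A,B)$ is automatic, gives the analogous expansion for the denominator,
\begin{equation*}
\PR_0^K[LT(n), D=\infty] = {\bf P}[c_*\geq n]\, D_0 + g_0(n),
\end{equation*}
with $D_0 := \sum_{e\in E(\Z^d)} \PR_0^K[T^{Y^e}_{x_e} < \tau_1^{Y^e}, D^{Y^e} = \infty]$ and $g_0(n) = o(n^{-\eta} \overline{\PR}[LT(n)])$.

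To conclude, it remains to check that both error terms become negligible after dividing by ${\bf P}[c_*\geq n]$. The lower bound $\overline{\PR}[LT(n)] \geq c(K,d) {\bf P}[c_*\geq n]$ from Lemma~\ref{LB_pn}, combined with the denominator expansion (which, once we know $g_0(n) = o(\overline{\PR}[LT(n)])$, also forces $\overline{\PR}[LT(n)] \leq C {\bf P}[c_*\geq n]$ for large $n$), shows that $\overline{\PR}[LT(n)]$ and ${\bf P}[c_*\geq n]$ are of the same order and, in particular, that $D_0 > 0$. Hence $g_i(n)/{\bf P}[c_*\geq n] = o(1)$ for $i = 0,1$, and dividing numerator by denominator and letting $n \to \infty$ yields the stated limit $N(A,B)/D_0$.

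The real mathematical difficulty is concentrated in Proposition~\ref{big_trap_uncond}; the present lemma is essentially a routine ``division'' step, and I do not anticipate any substantial obstacle here beyond the bookkeeping sketched above.
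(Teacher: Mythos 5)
Your proof is correct, and the overall structure (divide Proposition~\ref{big_trap_uncond} applied to the numerator by the same proposition applied with trivial $(A,B)$ to the denominator, then send $n\to\infty$) is exactly the one the paper intends. The only point where you diverge from the paper's own argument is in establishing the strict positivity of the denominator series $D_0 = \sum_{e} \PR_0^K[T^{Y^e}_{x_e}<\tau_1^{Y^e}, D^{Y^e}=\infty]$. The paper proves $D_0>0$ directly by isolating the single edge $e_0=[2e_1,3e_1]$ and exhibiting, via a short path-construction in the spirit of Lemma~\ref{LB_pn}, an explicit event of positive probability contained in $\{T^{Y^{e_0}}_{x_{e_0}}<\tau_1^{Y^{e_0}},D^{Y^{e_0}}=\infty\}$. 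You instead deduce $D_0>0$ indirectly: from Lemma~\ref{LB_pn} you have $\overline{\PR}[LT(n)] \ge c\,{\bf P}[c_*\ge n]$, and combined with the asymptotic $\PR_0^K[D=\infty]\,\overline{\PR}[LT(n)] = {\bf P}[c_*\ge n]\,D_0 + o(\overline{\PR}[LT(n)])$ this forces $D_0 \ge c\,\PR_0^K[D=\infty] > 0$. Both routes are valid; yours is shorter and reuses estimates already in hand, whereas the paper's self-contained construction has the mild advantage of not depending on the machinery of Proposition~\ref{big_trap_uncond} for the positivity, only for the ratio asymptotics. Since the hard part genuinely is Proposition~\ref{big_trap_uncond} (as you note), the choice here is a matter of taste.
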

\begin{proof}
The result follows easily once, we have proved that 
\[
\sum_{e\in E(\Z^d)}{{\PR}}_0^K[T^{Y^e}_{x_e}<\tau^{Y^e}_1,D^{Y^e}=\infty] \geq c .
\]

For $e_0=[2e_1,3e_1]$, we can prove a lower bound 
\[
{\PR}_0^K[T_{x_{e_0}}^{Y^{e_0}}<\tau_1^{Y^{e_0}},D^{Y^{e_0}}=\infty] \geq c,
\]
 by noticing that the event $\{T_{x_{e_0}}^{Y^{e_0}}<\tau_1^{Y^{e_0}},D^{Y^{e_0}}=\infty\}$ occurs if
\begin{enumerate}
\item $0$, $e_1$, $2e_1$, $3e_1$ and $4e_1$ are good;
\item $(Y^{e_0}_1,Z^{Y^{e_0}}_1)=(e_1,1)$, $(Y^{e_0}_2,Z^{Y^{e_0}}_2)=(x_{e_0},1)$ and $Y^{e_0}_3=4e_1$;
\item $D^{Y^{e_0}}\circ \theta_3=\infty.$
\end{enumerate}
and then we can do a computation very similar to the proof of Lemma~\ref{LB_pn} and use the fact that 
\[
\ES^K_0\left[\1{4e_1\text{ is good}}{P}^\omega_{4e_1}[D^{Y^{e_0}}=\infty]\right]=\ES^K_0\left[\1{4e_1\text{ is good}}{P}^\omega_{4e_1}[D=\infty]\right]>0.
\]
\end{proof}

%
%
%
%


\section{Approximation of the time spent in a large trap.}

The goal of this section is to find an asymptotic approximation of the time spent in a trap when the edge associated to this trap has a large conductance. More specifically, we want to show that the time is roughly the conductance of the large edge times an independent random variable $W_{\infty}$.

\subsection{Number of return to a large trap}

We define the random variable 
\[
V^{x_e}=\text{card}\{i \geq 0,\ Y_i^e=x_e,\ i\leq \tau_1^{Y^e}\}.
\]
 Besides, let us define 
 \begin{equation}\label{sandro}
 V_n:=\text{card}\{i< \tau_1,\ X_i\notin e^{(n)} \text{ and } X_{i+1} \in e^{(n)}\}.
 \end{equation}
 
Note that, chosen under the measure $\PR[\cdot \mid LT(n)]$, $V_n\ge 1$, we can see that the previous random variable verifies
 \begin{equation}\label{sandro}
 V_n=\text{card}\{i\ge0,\ X_i\notin e^{(n)} \text{ and } X_{i+1} \in e^{(n)}\}.
 \end{equation}
Finally, let us define, for any $e\in E(\Z^d)$,
\begin{equation}\label{traindemerde}
\overline{\pi}^{\omega_e}(x_e):=e^{-(e^++e^-)\cdot \ell}{\pi}^{\omega_e}(x_e).
\end{equation}

 A direct application of the Lemma~\ref{big_trap_cond} implies that
\begin{lemma}\label{cvg_vpi}
There exists $K_0<\infty$ such that, for any $K\ge K_0$, there exists a couple of random variables $(V_{\infty},\overline{\pi}^{\infty})$ such that under ${\PR}_n$ we have
\[
(V_n,\overline{\pi}^{\omega_{e^{(n)}}}(x_{e^{(n)}}))\xrightarrow{(d)} (V_{\infty},\overline{\pi}^{\infty}).
\]
Denoting $\PR^\infty$ the probability associated to $(V_{\infty},\overline{\pi}^{\infty})$, the distribution of this couple of variables is given by
\[
\PR^\infty[(V_{\infty},\overline{\pi}^{\infty})\in\cdot]=\frac{\displaystyle{\sum_{e\in E(\Z^d)}} {\PR}_0^K[(V^{x_e}, \overline{\pi}^{\omega_e}(x_e))\in\cdot, T_e<\tau_1^{Y^e},D^{Y^e}=\infty] }{\displaystyle{\sum_{e\in E(\Z^d)}} {\PR}_0^K[T^{Y^e}_{x_e}<\tau^{Y^e}_1,D^{Y^e}=\infty] }.
\]
\end{lemma}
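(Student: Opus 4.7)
The plan is to recognize this as an immediate consequence of Lemma~\ref{big_trap_cond}, once we encode the pair $(V_n, \overline{\pi}^{\omega_{e^{(n)}}}(x_{e^{(n)}}))$ as a measurable functional of $X^{e^{(n)}}$ that fits into the class $\mathfrak{F}$. First I will observe that each entry of the original walk $X$ into the edge $e^{(n)}$ corresponds, via the time-change $A^{e^{(n)}}$ defined in~\eqref{defAek}, to exactly one visit of the trace process $X^{e^{(n)}}$ to the collapsed vertex $x_{e^{(n)}}$. Consequently, $V_n$ equals the number of visits of $X^{e^{(n)}}$ to $x_{e^{(n)}}$ during $[0,\tau_1^{X^{e^{(n)}}}]$, and this is a function measurable with respect to the finite nearest-neighbour path of $X^{e^{(n)}}$ up to $\tau_1^{X^{e^{(n)}}}$. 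Second, $\overline{\pi}^{\omega_{e^{(n)}}}(x_{e^{(n)}}) = e^{-(e^{(n),+}+e^{(n),-})\cdot\ell}\sum_{e'\sim x_{e^{(n)}}} c^{\omega_{e^{(n)}}}(e')$ depends only on the conductances of edges adjacent to $x_{e^{(n)}}$ in $\Z^d_{e^{(n)}}$, which are precisely the edges $e'\sim e^{(n)}$ in $\Z^d$ allowed by the definition of $B^{e^{(n)}}$ in~$\mathfrak{F}$.

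Fixing an integer $k\ge 0$ and a Borel set $F\subset\R_+$, I would then set, for each edge $e\in E(\Z^d)$, $A^e_k$ to be the collection of finite nearest-neighbour paths in $\Z^d_e$ visiting $x_e$ exactly $k$ times, and $B^e_F$ to be the collection of conductance assignments on the edges incident to $x_e$ satisfying $e^{-(e^++e^-)\cdot\ell}\sum_{e'\sim x_e} c(e')\in F$. Both choices are measurable in the required sense, so $(A_k,B_F)\in\mathfrak{F}$, and by construction the event $\{(V_n, \overline{\pi}^{\omega_{e^{(n)}}}(x_{e^{(n)}}))\in\{k\}\times F\}$ coincides with $\{X^{e^{(n)}}\in(A_k,B_F)\}$.

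An application of Lemma~\ref{big_trap_cond} with this choice of $(A_k,B_F)$ yields the convergence of $\PR_n[(V_n,\overline{\pi}^{\omega_{e^{(n)}}}(x_{e^{(n)}}))\in\{k\}\times F]$ to
\[
\frac{\sum_{e\in E(\Z^d)} \PR_0^K[Y^e\in(A_k,B_F),\, T_e<\tau_1^{Y^e},\, D^{Y^e}=\infty]}{\sum_{e\in E(\Z^d)} \PR_0^K[T^{Y^e}_{x_e}<\tau_1^{Y^e},\, D^{Y^e}=\infty]}.
\]
Since by definition of $Y^e$ the event $\{Y^e\in(A_k,B_F)\}$ is exactly $\{V^{x_e}=k,\,\overline{\pi}^{\omega_e}(x_e)\in F\}$, the numerator rewrites as $\sum_e \PR_0^K[(V^{x_e},\overline{\pi}^{\omega_e}(x_e))\in\{k\}\times F,\,T_e<\tau_1^{Y^e},\,D^{Y^e}=\infty]$, matching the claimed formula. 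Since sets of the form $\{k\}\times F$ generate the Borel $\sigma$-algebra of $\N\times\R_+$, weak convergence of the pair follows.

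The only subtle point, which I would address briefly, is that the limit is a well-defined probability distribution: the denominator is finite (and in fact bounded below by a positive constant) by the argument already given in the proof of Lemma~\ref{big_trap_cond}, where a lower bound was obtained by specifying an explicit open configuration near $e_0=[2e_1,3e_1]$, and finiteness from above follows from Lemma~\ref{tail_taue1} which forces $\sum_e \PR_0^K[T_{x_e}^{Y^e}\le \tau_1^{Y^e}]<\infty$ by taking $M>d$.
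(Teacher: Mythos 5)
Your proof is correct and follows essentially the same approach as the paper's: encode the pair as membership in an event $(A,B)\in\mathfrak{F}$ and apply Lemma~\ref{big_trap_cond}. The extra remarks about the finiteness and positivity of the denominator and the bijection between entries of $X$ into $e^{(n)}$ and visits of $X^{e^{(n)}}$ to $x_{e^{(n)}}$ are helpful but match what the paper relies on implicitly.
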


\begin{proof}
Fix some sets $A\subset \N$ and $B\subset \R$ which are measurable and define
\begin{eqnarray*}
\widetilde{A}&=\left\{\{e\}\times A^e, e\in E(\Z^d)\right\},\\
\widetilde{B}&=\left\{\{e\}\times B^e, e\in E(\Z^d)\right\},
\end{eqnarray*}
where $A^e$ a the set of trajectories on $\Z^d_e$ such that $V^{x_e}\in A$, and $B^e$ is the event $\{\overline{\pi}^{\omega_{e^{(n)}}}(x_{e^{(n)}}) \in B\}$. Note that $(\widetilde{A},\widetilde{B})\in\mathfrak{F}$. Therefore,
 we have by Lemma~\ref{big_trap_cond}
\begin{align*}
& {\PR}_n[V_n \in A, \overline{\pi}^{\omega_{e^{(n)}}}(x_{e^{(n)}}) \in B] \\
=& {\PR}_n[X^{e^{(n)}}\in(\widetilde{A},\widetilde{B})]\\
\to & \frac{\displaystyle{\sum_{e\in E(\Z^d)}{{\PR}}_0^K[{Y}^e \in (\widetilde{A},\widetilde{B}),T^{Y^e}_{x_e}<\tau^{Y^e}_1,D^{Y^e}=\infty] } }{\displaystyle{\sum_{e\in E(\Z^d)}{{\PR}}_0^K[T^{Y^e}_{x_e}<\tau^{Y^e}_1,D^{Y^e}=\infty]  }}\\
=& \frac{\displaystyle{\sum_{e\in E(\Z^d)}} {\PR}_0^K[V^{x_e}\in A, \overline{\pi}^{\omega_e}(x_e)\in B, T_e<\tau_1^{Y^e},D^{Y^e}=\infty] }{\displaystyle{\sum_{e\in E(\Z^d)}} {\PR}_0^K[T^{Y^e}_{x_e}<\tau^{Y^e}_1,D^{Y^e}=\infty] },
\end{align*}
and notice that the right-hand side is a probability distribution corresponding to our limiting random variables.
\end{proof}

 \subsection{Time in excursions in the large edge $e$}

 Let us define the exit time of an edge $e\in E(\Z^d)$ once it has been hit $T_e^{\mathrm{ex}}=T_{\{e^+,e^-\}^c}\circ \theta_{T_{\{e^+,e^-\}}}$ with the convention that $T_e^{\mathrm{ex}}=0$ if $T_{\{e^+,e^-\}}=\infty$. Moreover, we define, for any $i\ge 1$, $T_e^{\mathrm{ex},i}$ as the time spent in $e$ during the $i$-th excursion, again with the convention that $T_e^{\mathrm{ex},i}=0$ if there are not $i$ excursions to $e$, i.e.~$V_n<i$.

 \begin{lemma}\label{coupl_exp}
Fix $\delta>0$. Take  $e\in E(\Z^d)$ such that $c^{\omega}_*(e)\geq n$ and $c^{\omega}_*(e')\leq n^{\delta}$ for all $e'\sim e$. Then, there exists a coupling of $({\bf e}_i)$ and $(T^{\mathrm{ex},i}_e)$ where $({\bf e}_i)$ are i.i.d.~exponential random variables with mean $1$ such that, for any $i\ge1$ such that $V_n\ge i$, $P^\omega$-almost surely,
\[
(1-C(d)n^{\delta-1})\frac{c^\omega(e)}{\pi^{\omega_e}(x_e)}2{\bf e}_i \le T^{\mathrm{ex},i}_e\le (1+C(d)n^{\delta-1})\frac{c^\omega(e)}{\pi^{\omega_e}(x_e)}2{\bf e}_i+1,
\]
 where $({\bf e}_i)$ is independent of $c^\omega(e)$, $\pi^{\omega_e}(x_e)$ and $(X^e_n)_n$.
\end{lemma}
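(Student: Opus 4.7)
\medskip

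The plan is to reduce each excursion to a two-state Markov chain and then couple a geometric random variable with an exponential. Once the walker enters $e$ at some endpoint $z\in\{e^+,e^-\}$, by the strong Markov property the walker evolves until exit as a Markov chain on $\{e^+,e^-\}$ with absorption, governed entirely by the transition probabilities at $e^\pm$. Let $q_\pm := 1-c^\omega(e)/\pi^\omega(e^\pm)$ be the one-step exit probabilities from $e^\pm$. Using $c^\omega_*(e)\ge n$, $c^\omega_*(e')\le n^\delta$ for $e'\sim e$, and the definitions \eqref{def_conduct} and \eqref{defpi}, the discrepancy $\pi^\omega(e^\pm)-c^\omega(e)$ is of order $e^{2\lambda e^\pm\cdot\vec\ell}\,n^\delta$ while $c^\omega(e)$ is of order $e^{2\lambda e^\pm\cdot\vec\ell}\,n$, so that $q_\pm = O(n^{\delta-1})$.

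First I would decompose $T^{\mathrm{ex},i}_e = 2M_i + R_i$, where $M_i$ is the number of complete round-trips $z\to z'\to z$ performed before exit, and $R_i\in\{1,2\}$ is a residual. Then $M_i$ has a geometric law with failure probability $p_i=(1-q_+)(1-q_-)$, and a direct computation gives
\[
1-p_i = q_++q_--q_+q_- = \frac{\pi^\omega(e^+)-c^\omega(e)}{\pi^\omega(e^+)} + \frac{\pi^\omega(e^-)-c^\omega(e)}{\pi^\omega(e^-)} + O(n^{2(\delta-1)}).
\]
Using $\pi^\omega(e^+)+\pi^\omega(e^-)-2c^\omega(e)=\pi^{\omega_e}(x_e)$ (see the discussion preceding Lemma \ref{exit_pro}) together with $\pi^\omega(e^\pm)=c^\omega(e)(1+O(n^{\delta-1}))$, I would obtain
\[
1-p_i = \frac{\pi^{\omega_e}(x_e)}{c^\omega(e)}\,\bigl(1+O(n^{\delta-1})\bigr).
\]

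Second, I would use the canonical coupling of a geometric with an exponential: given an independent Exp(1) variable $\mathbf{e}_i$, set $M_i := \lfloor \mathbf{e}_i/\log(1/p_i)\rfloor$, which has exactly the required geometric distribution and satisfies $\mathbf{e}_i/\log(1/p_i) - 1 \le M_i \le \mathbf{e}_i/\log(1/p_i)$. Expanding $\log(1/p_i) = (1-p_i)\bigl(1+O(1-p_i)\bigr) = (1-p_i)\bigl(1+O(n^{\delta-1})\bigr)$ and combining with the previous step yields
\[
2M_i = 2\mathbf{e}_i\,\frac{c^\omega(e)}{\pi^{\omega_e}(x_e)}\,\bigl(1+O(n^{\delta-1})\bigr) + O(1),
\]
and adding $R_i\in\{1,2\}$ gives the two-sided inequality in the statement, where the additive $+1$ absorbs both the residual step and the integer rounding error.

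Third, I would perform this coupling at each entry time to $e$ using an independent Exp(1) variable $\mathbf{e}_i$ drawn from an external source. The strong Markov property at the $i$-th entry into $e$ makes the successive excursion chains conditionally independent given $\omega$ and the sequence of entry points (which is measurable with respect to $(X^e_n)_n$), so one may take $(\mathbf{e}_i)_i$ i.i.d.\ and independent of $\sigma(\omega,(X^e_n)_n)$, in particular of $c^\omega(e)$ and $\pi^{\omega_e}(x_e)$. The main bookkeeping obstacle is that the parameter $p_i$ depends on the entry point and so varies (slightly) across excursions; however, the bound $q_\pm=O(n^{\delta-1})$ is uniform in the endpoint, so the multiplicative constant in the error can be chosen independent of $i$, yielding the stated uniform coupling.
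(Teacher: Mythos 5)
Your proof takes the same route as the paper's: reduce the excursion time to a geometric number of back-and-forth crossings and couple that geometric with an exponential. In fact your choice of variable is slightly cleaner than the paper's. You decompose $T^{\mathrm{ex},i}_e=2M_i+R_i$ with $M_i$ the number of complete round trips and $R_i\in\{1,2\}$, and $M_i$ is \emph{exactly} geometric, $P[M_i\ge k]=p_i^k$ with $p_i=(1-q_+)(1-q_-)$, a parameter symmetric in the two endpoints; moreover $M_i$ is independent of both the entry and the exit vertex, which is precisely the structural fact that lets one draw $\mathbf{e}_i$ from an external source independent of $(X^e_n)_n$. (The paper works instead with $\lfloor T^{\mathrm{ex}}_e/2\rfloor$, whose tail carries an innocuous entry-point-dependent prefactor $1-q_z$; your $M_i$ sidesteps this.) You do assert the independence rather than verify it: the key step, which the paper carries out explicitly, is to check that conditionally on the entry, the excursion duration is independent of the exit vertex. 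This is true for your $M_i$ as well, and should be stated since otherwise the claim that $(\mathbf{e}_i)$ can be taken independent of the trace is unjustified.

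There is, however, a genuine bookkeeping gap in your last step. Your coupling $M_i=\lfloor\mathbf{e}_i/\log(1/p_i)\rfloor$ is the monotone (inverse-CDF) coupling, and the paper uses the same one, so that choice is fine; but it yields
\[
2\frac{\mathbf{e}_i}{\log(1/p_i)}-1\;<\;T^{\mathrm{ex},i}_e\;\le\;2\frac{\mathbf{e}_i}{\log(1/p_i)}+2,
\]
so after expanding $\log(1/p_i)$ you only obtain $(1-C(d)n^{\delta-1})\frac{c^\omega(e)}{\pi^{\omega_e}(x_e)}2\mathbf{e}_i-1\le T^{\mathrm{ex},i}_e\le(1+C(d)n^{\delta-1})\frac{c^\omega(e)}{\pi^{\omega_e}(x_e)}2\mathbf{e}_i+2$. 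Your closing sentence claims that the ``$+1$ absorbs both the residual step and the integer rounding error,'' but it does not: the residual $R_i$ can be $2$, and the integer rounding also produces a $-1$ on the left-hand side, which the statement does not allow. For small $\mathbf{e}_i$ this $-1$ is not negligible compared to the leading term (take $\mathbf{e}_i$ such that $\frac{c^\omega(e)}{\pi^{\omega_e}(x_e)}2\mathbf{e}_i\approx 3/2$ on the event $\{M_i=0,R_i=1\}$). You should either carry this extra additive slack explicitly into the statement you can actually prove, or argue that the weakening is immaterial for the subsequent use of the lemma (it is, since the bound is only used in aggregate in Lemmas~\ref{lem_momWn} and~\ref{cvg_wn}), rather than claim the exact stated inequality.
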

 
\begin{proof}
{\it Step 1: Probability distribution estimates }

\vspace{0.5cm}

We do the proof for $T_e^{\mathrm{ex}}=T_e^{\mathrm{ex},1}$, and the result will follow by induction. Let us apply Markov's property at time $T_e$, the time when the edge $e$ is hit. For any $k\in\N$, we have, almost surely,
\begin{align*}
& P_{X_{T_e}}^{\omega}\left[\left\lfloor \frac{T_e^{\mathrm{ex}}}{2}\right\rfloor >k\right] \\
=& P\left[\mathrm{Geom}\left(1-\frac{(c^\omega(e))^2}{\pi^\omega(e^+)\pi^\omega(e^-)}\right)> k\right]\\
=& P\left[\frac{{\bf e}}{\ln\left(\frac{\pi^\omega(e^+)\pi^\omega(e^-)}{(c^\omega(e))^2}\right)}> k\right]
\end{align*}
where ${\bf e}$ is an exponential random variable ${\bf e}$ with mean $1$, which does not depend on the environment. These last equalities show in particular that $\lfloor {T_e^{\mathrm{ex}}}/{2}\rfloor$ does not depend on $X_{T_e}$. Using that $c^\omega(e)\ge n$ and $c^\omega(e')\le n^\delta$ for any $e'\sim e$, a straightforward computation yields
\begin{align}\nonumber
 P\left[(1-C(d)n^{\delta-1})\frac{c^\omega(e)}{\pi^{\omega_e}(x_e)}{\bf e}> k\right] \le & P_{X_{T_e}}^{\omega}\left[\left\lfloor \frac{T_e^{\mathrm{ex}}}{2}\right\rfloor > k\right] \\ \label{buuullshit}
\le & P\left[(1+C(d)n^{\delta-1})\frac{c^\omega(e)}{\pi^{\omega_e}(x_e)}{\bf e} > k\right].
\end{align}
We give an explicit construction of the coupling in order to emphasize the dependencies. First, note that, for any $k\in \N$ and any $y\sim e$, $y\notin e$,
\begin{align*}
&P^\omega_{X_{T_e}}\left[\left\lfloor \frac{T_e^{\mathrm{ex}}}{2}\right\rfloor=k, X_{ T_e^{\mathrm{ex}}}=y\right]\\
=& P^\omega_{X_{T_e}}\left[\left\lfloor \frac{T_e^{\mathrm{ex}}}{2}\right\rfloor=k\right]P_{X_{T_e}}^\omega\left[X_{T_e^{\mathrm{ex}}}=y\left| \left\lfloor \frac{T_e^{\mathrm{ex}}}{2}\right\rfloor=k\right.\right]\\
=& P^\omega_{X_{T_e}}\left[\left\lfloor \frac{T_e^{\mathrm{ex}}}{2}\right\rfloor=k\right]P_{X_{T_e}}^\omega\left[X_{T_e^{\mathrm{ex}}}=y\right],
\end{align*} 
where the last equality can be checked with a straightforward computation.
This means that, conditioned on $T_e$, $\lfloor T_e^{\mathrm{ex}}/2\rfloor$ and $X_{T_e^{\mathrm{ex}}}$ are independent.\\

\vspace{0.5cm}

{\it Step 2: The coupling}

\vspace{0.5cm}

Let us now explain how we couple the exponential variables with the time spent in the edge $e$. 

Assume that $X$ hits $e$ for the $i$-th time. Then, pick an exponential random variable ${\bf e}_i$ independently of anything else. First we define ${\bf e}_{i,-}:=(1-C(d)n^{\delta-1})\frac{c^\omega(e)}{\pi^{\omega_e}(x_e)}{\bf e}_i$ and ${\bf e}_{i,+}:=(1+C(d)n^{\delta-1})\frac{c^\omega(e)}{\pi^{\omega_e}(x_e)}{\bf e}_i$, where $C(d)$ is the same constant as in \eqref{buuullshit}. Let us denote $G_-$, $G_+$ and $F$ the respective cumulative distribution functions of  ${\bf e}_{i,-}$, ${\bf e}_{i,+}$ (conditioned on the environment) and the quenched cumulative distribution function of $\lfloor T_e^{\mathrm{ex}}/2\rfloor$ (recall  that this does not depend on the starting point). Note that, given the environment, $G_+$ and $G_-$ are continuous. 

Now, given ${\bf e}_i$, we will introduce the coupling for the time spent on the edge by setting
\[
\left\lfloor \frac{T_e^{\mathrm{ex},i}}{2}\right\rfloor=F^{-1}\left(G_-({\bf e}_{i,-})\right).
\]

Once we notice that $G_-(\cdot)=G_+(\cdot\times(1+C(d)n^{\delta-1})/(1-C(d)n^{\delta-1}))$ and that ${\bf e}_{i,+}={\bf e}_{i,-}(1+C(d)n^{\delta-1})/(1-C(d)n^{\delta-1})$, it is easy to prove that, by construction, ${\bf e}_{i,-}\le \lfloor T_e^{\mathrm{ex},i}/2\rfloor \le {\bf e}_{i,+}$, which implies the inequality of the statement. The independence of ${\bf e}_i$ and $X_{ T_e^{\mathrm{ex},i} }$ comes from the independence of $\lfloor T_e^{\mathrm{ex},i}/2\rfloor$ and $X_{ T_e^{\mathrm{ex},i} }$. Indeed, we use ${\bf e}_i$ only to determine the value of $\lfloor T_e^{\mathrm{ex},i}/2\rfloor$ and this is independent of $X_{ T_e^{\mathrm{ex},i} }$.

\end{proof}

\subsection{The time spent in the largest edge described using a random variable $W_n$}

We know that  under ${\PR}_n$, defined in \eqref{defPn}, the random variable $T=\text{card}\{i\leq \tau_1,\ X_i \in e^{(n)}\}$ measuring the time spent in $e^{(n)}$ can be written
\[
T=\sum_{i=1}^{V_n} T^{(i)}_{e^{(n)}},
\]
where $T^{(i)}_{e^{(n)}}$ is the time spent during the $i$-th excursion in ${e^{(n)}}$ .
\begin{remark}\label{whysoserious}
The random variables $T^{(i)}_{e^{(n)}}$  are distributed as $T^{\mathrm{ex}}_{e^{(n)}}$  chosen under $P^{\omega}_{x}$ for some  $x \in {e^{(n)}}$. The law of the $T^{(i)}_{e^{(n)}}$ typically depends on $V_n$ and may not be the same for all $i$, as it depends on which vertex the walker enters and exits the edge $e^{(n)}$. Nevertheless, in the previous proof, we showed that the random variables $\lfloor T^{(i)}_{e^{(n)}}/2\rfloor$ are independent of $V_n$.
\end{remark}

As we stated in Section \ref{sketch}, we aim to prove that the time spent in the trap, when this trap is large, is the product of the conductance $c^{\omega}_*(e^{(n)})$, with a random variable which is almost independent. For this reason we introduce
\begin{equation}\label{def_Wblabla}
W_n:=\frac{T}{c^{\omega}_*(e^{(n)})},
\end{equation}
and we will now prove that this random variable admits a limit under ${\PR}_n$ and that as $n$ gets large $W_n$ and $c^{\omega}_*(e^{(n)})$ are asymptotically independent.


Recall the definitions \eqref{def_OLT} and \eqref{def_OLTe} of $OLT(\delta,K,n)$ and $OLT_e(\delta,K,n)$. 
Using Lemma~\ref{not_2_traps} and that $LT(n)\cap SLT(\delta,K,n)^c\subset OLT(\delta,K,n)$ by Remark \ref{elodie}, it is easy to conclude that there exists $\epsilon>0$ such that
\begin{equation}\label{thomas}
\PR_n[OLT(\delta,K,n)^c]=o(n^{-\epsilon}).
\end{equation}

We have $ OLT(\delta,K,n)=\bigcup_{e\in E(\Z^d)}  OLT_e(\delta,K,n)$, and on $ OLT_e(\delta,K,n)$ we have $e^{(n)}=e$

Let us first prove a uniform estimate on the moments of $W_n$ which will be useful to prove limiting results.
\begin{lemma}\label{lem_momWn}
Fix $\delta>0$ and $\epsilon\in[0,1-\gamma)$. There exists $K_0<\infty$ such that, for any $K\ge K_0$, there exists a constant $C(K)$ such that, for any $n>K$, we have
\begin{align*}
\ES_n\left[W_n^{\gamma+\epsilon}\1{OLT(\delta,K,n)}\right]&<C(K),\\
\ES_n\left[\left(\frac{V_n}{\overline{\pi}^{\omega_{e^{(n)}}}(x_{e^{(n)}})}\right)^{\gamma+\epsilon}\1{OLT(\delta,K,n)}\right]&<C(K).
\end{align*}
\end{lemma}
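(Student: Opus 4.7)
The plan is to first reduce the bound on $\ES_n[W_n^{\gamma+\epsilon}\1{OLT(\delta,K,n)}]$ to the second bound via Lemma \ref{coupl_exp}, and then to establish the second bound by conditioning, decomposing over $OLT_e(\delta,K,n)$, and using the coupling from Lemma \ref{coupling} to transfer the question to the random walk $Y^e$, where the number of visits to $x_e$ is essentially geometric with success probability of order $1/\overline{\pi}^{\omega_e}(x_e)$.

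For the reduction step, on $OLT(\delta,K,n)\subset\bigcup_e OLT_e(\delta,K,n)$ the hypothesis of Lemma \ref{coupl_exp} applies to $e=e^{(n)}$. Coupling the excursion times $T^{(i)}_{e^{(n)}}$ with i.i.d.~$\mathrm{Exp}(1)$ variables $({\bf e}_i)$ independent of everything else, and noting that $c^\omega(e)/\pi^{\omega_e}(x_e)=c_*^\omega(e)/\overline{\pi}^{\omega_e}(x_e)$, we obtain on $OLT(\delta,K,n)$
\[
W_n\leq (1+Cn^{\delta-1})\,\frac{2}{\overline{\pi}^{\omega_{e^{(n)}}}(x_{e^{(n)}})}\sum_{i=1}^{V_n}{\bf e}_i\;+\;\frac{V_n}{n}.
\]
Since $\gamma+\epsilon<1$, subadditivity of $x\mapsto x^{\gamma+\epsilon}$ together with Jensen's inequality conditionally on $(V_n,\overline{\pi}^{\omega_{e^{(n)}}}(x_{e^{(n)}}))$ gives $\ES[(\sum_{i=1}^{V_n}{\bf e}_i)^{\gamma+\epsilon}\mid V_n]\leq V_n^{\gamma+\epsilon}$. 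Moreover, the bound $c_*^\omega(e')<n^\delta$ for all $e'\sim e^{(n)}$ yields $\overline{\pi}^{\omega_{e^{(n)}}}(x_{e^{(n)}})\leq C n^\delta$, so $V_n/n\leq Cn^{\delta-1}V_n/\overline{\pi}^{\omega_{e^{(n)}}}(x_{e^{(n)}})$. Therefore the first inequality of the lemma follows from the second.

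For the second bound, by definition of $\PR_n$ and Lemma \ref{LB_pn},
\[
\ES_n\!\left[\left(\frac{V_n}{\overline{\pi}^{\omega_{e^{(n)}}}(x_{e^{(n)}})}\right)^{\gamma+\epsilon}\1{OLT(\delta,K,n)}\right]\leq\frac{C(K)}{{\bf P}[c_*\geq n]}\sum_{e\in E(\Z^d)}\ES_0^K\!\left[\left(\frac{V_n}{\overline{\pi}^{\omega_e}(x_e)}\right)^{\gamma+\epsilon}\1{OLT_e,\,D=\infty}\right].
\]
On $OLT_e$ one has $e^{(n)}=e$ and the hypothesis of Lemma \ref{coupling} is satisfied. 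Under the coupling $\widehat{P}^\omega$ either $V_n=V^{x_e}$, or the walks decouple, the latter event having annealed probability $O(n^{3\delta-1}){\bf P}[c_*\geq n]$ by arguments identical to those of Lemma \ref{close_tau}; combined with the crude bound $V_n\leq C\chi^{2d\alpha}$ and Lemma \ref{tail_chi_tau}, the contribution of the decoupling event to the sum above is $o({\bf P}[c_*\geq n])$. Using the ${\bf P}_0^K$-independence of $c_*(e)$ and $(Y^e,\overline{\pi}^{\omega_e}(x_e))$, the problem reduces to proving the uniform bound
\[
\sum_{e\in E(\Z^d)}\ES_0^K\!\left[\left(\frac{V^{x_e}}{\overline{\pi}^{\omega_e}(x_e)}\right)^{\gamma+\epsilon}\1{T_{x_e}^{Y^e}<\tau_1^{Y^e},\,D^{Y^e}=\infty}\right]\leq C(K).
\]

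The main obstacle is this last estimate. By Lemma \ref{tail_taue1}, $\sum_e\PR_0^K[T_{x_e}^{Y^e}<\tau_1^{Y^e}]<\infty$, so it is enough to show that $\ES_0^K[(V^{x_e}/\overline{\pi}^{\omega_e}(x_e))^{\gamma+\epsilon}\mid T_{x_e}^{Y^e}<\tau_1^{Y^e}]\leq C(K)$ uniformly in $e$. Conditionally on a visit to $x_e$, the random variable $V^{x_e}$ is stochastically dominated by $1+\mathrm{Geom}(q_e)$, where $q_e=P_{x_e}^{\omega_e}[T_{x_e}^+=\infty]=C^{\omega_e}(x_e\leftrightarrow\infty)/\pi^{\omega_e}(x_e)$ is the escape probability. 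A lower bound $C^{\omega_e}(x_e\leftrightarrow\infty)\geq c e^{2\lambda d_e}$, holding on the event $\{W(\mathrm{BAD}^{\omega_e}(x_e))\leq m\}$, follows as in Lemma \ref{backboneY} from Rayleigh's monotonicity applied to a directed open path escaping the bad cluster around $x_e$; consequently $q_e\geq c/\overline{\pi}^{\omega_e}(x_e)$ on that event, while its complement has ${\bf P}^e$-probability at most $Ce^{-\xi_1(K)m}$ by Lemma \ref{BLsizeclosedbox}. Integrating in $m$ and using that $\gamma+\epsilon<1$ so that $\mathrm{Geom}(q_e)^{\gamma+\epsilon}$ is easily controlled by $q_e^{-(\gamma+\epsilon)}$, one obtains the required uniform bound, completing the proof.
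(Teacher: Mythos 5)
Your reduction of the first inequality to the second via Lemma \ref{coupl_exp} is sound and matches the paper's Step~1 in spirit (the paper takes expectation over the exponentials directly and arrives at $E_{\mathrm{exp}}[W_n\1{OLT}]\le C\,V_n/\overline{\pi}^{\omega_{e^{(n)}}}(x_{e^{(n)}})$, which is cleaner but equivalent). The identity $c^\omega(e)/\pi^{\omega_e}(x_e)=c_*^\omega(e)/\overline{\pi}^{\omega_e}(x_e)$ and the bound $\overline{\pi}^{\omega_e}(x_e)\le Cn^\delta$ on $OLT_e$ are both correct.

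For the main estimate on $\ES_n[(V_n/\overline{\pi})^{\gamma+\epsilon}\1{OLT}]$, however, you diverge significantly from the paper. The paper stays with the walk $X$ throughout: Steps~2--3 establish the quenched first-moment bound $E_0^\omega[V_n\1{OLT_e,D=\infty}]\le C(K)\exp\bigl(2\lambda W(\mathrm{BAD}(e^+))\bigr)\,\overline{\pi}^{\omega_e}(x_e)$ using Lemmas~\ref{psyche} and \ref{backbone}, and Step~4 then couples a direct probability bound via Lemma~\ref{tail_taue} (for $k\le\|e\|_\infty^M$) with Markov's inequality applied to the $L^1$ bound (for $k>\|e\|_\infty^M$). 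Your route through $Y^e$ and a geometric domination is genuinely different, but as written it has two gaps.

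\textbf{The conditioning is not $\mathcal{F}_{T_{x_e}^{Y^e}}$-measurable.} You want $\ES_0^K[(V^{x_e}/\overline{\pi}^{\omega_e}(x_e))^{\gamma+\epsilon}\mid T_{x_e}^{Y^e}<\tau_1^{Y^e}]\le C(K)$ uniformly in $e$, and support this by the observation that conditionally on a visit to $x_e$ the number of visits is dominated by $1+\mathrm{Geom}(q_e)$. That domination is quenched and holds given $\{T_{x_e}^{Y^e}<\infty\}$ and the history up to $T_{x_e}^{Y^e}$. But you further condition on $\{T_{x_e}^{Y^e}<\tau_1^{Y^e}\}$, which is \emph{not} measurable with respect to $\mathcal{F}_{T_{x_e}^{Y^e}}$: whether a regeneration occurred before $T_{x_e}^{Y^e}$ depends on backtracking events in the trajectory \emph{after} $T_{x_e}^{Y^e}$ -- exactly the same randomness that generates the geometric count of future returns to $x_e$. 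The conditional law of $V^{x_e}$ under $\PR_0^K[\cdot\mid T_{x_e}^{Y^e}<\tau_1^{Y^e}]$ is therefore not automatically dominated by $1+\mathrm{Geom}(q_e)$, and the resulting conditional moment bound is not justified. This might be repairable (e.g.~bound $\ES_0^K[(N^{x_e}/\overline{\pi})^{p(\gamma+\epsilon)}]$ unconditionally for some $p>1$ with $p(\gamma+\epsilon)<1$ via the geometric argument, then apply H\"older with a high power of Lemma~\ref{tail_taue1} for summability in $e$), but that is a different proof than the one written.

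\textbf{The decoupling error term is not controlled.} You bound $V_n\le C\chi^{2d\alpha}$ on the decoupling event and appeal to Lemma~\ref{tail_chi_tau}. But the quantity inside the expectation is $(V_n/\overline{\pi}^{\omega_{e^{(n)}}}(x_{e^{(n)}}))^{\gamma+\epsilon}$, not $V_n^{\gamma+\epsilon}$, and the model imposes no lower-tail assumption on $c_*$, so $\overline{\pi}^{\omega_e}(x_e)^{-(\gamma+\epsilon)}$ need not be $\mathbf{P}$-integrable. A pointwise bound on $V_n$ plus smallness of the decoupling event therefore does not close the argument. The paper sidesteps this completely: its quenched first-moment bound on $V_n$ already carries a factor $\overline{\pi}^{\omega_e}(x_e)$ that cancels the $\overline{\pi}^{-1}$ before Markov's inequality is applied, and the complementary regime is handled by the hitting-probability bound of Lemma~\ref{tail_taue}, so no deterministic pointwise bound on $V_n/\overline{\pi}$ is ever needed. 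Finally, note that the coupling of Lemma~\ref{coupling} holds only up to time $T_e+n^{2\delta}$ and only gives $\tau_1^{X^e}=\tau_1^{Y^e}$ on $\{\tau_1^{X^e}\le n^{2\delta}\}$, so the event $\{\tau_1^{X^e}>n^{2\delta}\}$ must also be isolated and handled (cf.~Lemma~\ref{decaytau2} or Lemma~\ref{tail_regen_new}); the dichotomy ``either $V_n=V^{x_e}$ or the walks decouple'' glosses over this case.
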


\begin{proof}
Recall that under $\PR_n$, defined in \eqref{defPn}, $0$ is open and thus $0\notin e^{(n)}$.
Fix some $\epsilon\in[0,1-\gamma)$. 

\vspace{0.5cm}

{\it Step 1: Relating $W_n$ and $V_n$}

\vspace{0.5cm}

Firstly, we want to show that $ W_n \1{OLT(\delta,K,n)}$ has, under ${\PR}_n$, a moment $\gamma + \epsilon$ which is uniform in $n$.\\
Conditionally on the fact that $e^{(n)}$ is hit for the $i$-th time, the random variables $T^{(i)}_{e^{(n)}}$ are distributed as $T^{\text{ex}}_{e^{(n)}}$ under $P^{\omega}_{x}$ for some random $x\in e^{(n)}$ (see Remark~\ref{whysoserious}). Fix some edge $e\in E(\Z^d)$. We can use Lemma~\ref{coupl_exp} to see that on  $OLT_e(\delta,K,n)$  for all $i$
\begin{align}\label{abel}
T^{(i)}_{e} \leq 
(1+C(d)n^{\delta-1})\frac{c^\omega(e)}{\pi^{\omega_e}(x_e)}2{\bf e}_i+1,
\end{align}
where ${\bf e}_i$ are i.i.d.~exponential random variables of mean $1$ that are independent of $({X}^{e^{(n)}})$, $e^{(n)}$, ${\pi^{\omega_{e^{(n)}}}(x_{e^{(n)}})}$ and $c_*(e^{(n)})$.
Thus, under ${\PR}_n$, we have
 \[
 W_n \1{OLT(\delta,K,n)}  \leq (1+C(d)n^{\delta-1})\frac{ \1{OLT(\delta,K,n)}}{\overline{\pi}^{\omega_{e^{(n)}}}(x_{e^{(n)}})} \sum_{i=1}^{V_n}(2{\bf e}_i+1)\1{OLT(\delta,K,n)},
 \]
 so taking the expectation with respect to the randomness of the exponentials, we see that
 \[
E_{\text{exp}}\left[W_n\1{OLT(\delta,K,n)}\right] \leq C  \frac{V_n}{\overline{\pi}^{\omega_{e^{(n)}}}(x_{e^{(n)}})}\1{OLT(\delta,K,n)}.
 \]

\vspace{0.5cm}

{\it Step 2: Relating $V_n$ to the number of visits of $e^+$ and $e^-$}

\vspace{0.5cm}

 On the event $ OLT_e(\delta,K,n)$, we have
\[
V_n= \text{card}\{i\ge 1: X_i\in e\text{ and } X_{i+1}\notin e\},
\]
thus
\begin{eqnarray}
&&E^\omega_0[V_n\1{OLT_e(\delta,K,n),D=\infty}]\nonumber\\
&&\le E_0^\omega\Bigg[ \sum_{i=1}^\infty (\1{T_0^+>i,X_i=e^-}(1-\1{X_{i+1}=e^+})\nonumber\\
&&\qquad +\1{T_0^+>i,X_i=e^+}(1-\1{X_{i+1}=e^-}))\Bigg]\nonumber\\
&=& E_0^\omega\left[ N_0(e^-)\right]\frac{\sum_{y\sim e^-, y\neq e^+} c([y,e_y])}{\pi(e^-)}\nonumber\\
&&+E^\omega_0\left[N_0(e^+)\right]\frac{\sum_{y\sim e^+, y\neq e^-} c([y,e_y])}{\pi(e^+)},\label{snoop}
\end{eqnarray}
where we used Markov's property and where $N_0(e^\pm):=\sum_{i=1}^\infty\1{T_0^+>i,X_i=e^\pm}$.

\vspace{0.5cm}

{\it Step 3: The number of visits to $e^+$ (resp.~$e^-$) are related to the size of the surrounding bad area and$\pi^{\omega}(e^+)$ (resp.~$\pi^{omega}(e^-)$)}

\vspace{0.5cm}

 We now want to estimate the expectations appearing in previous equation. For this purpose, note that, as $n>K$, $e^+$ and $e^-$ are bad vertices so that $\mathrm{BAD}(e^+)=\mathrm{BAD}(e^-)$ and, for any $y\in\Z^d$ and stopping-time $T$, define $N(y,T)=\left| \left\{ 0\le n\le T:X_n=y\right\}\right|$. We have that
\begin{eqnarray}
N_0(e^+)&\le& \1{0\in\mathrm{BAD}(e^+)}N(e^+,T^+_{\mathrm{GOOD}\cup\{0\}})\nonumber\\
&&+\sum_{x\in\partial\mathrm{BAD}(e^+)} \sum_{i=0}^\infty \1{X_i=x}N(e^+,T^+_{\mathrm{GOOD}})\circ\theta_i,\label{snoop2}
\end{eqnarray}
and a similar inequality holds for $N_0(e^-)$.\\
Now, as in the proof of Lemma \ref{timetrap}, we can consider the finite graph $\omega_\delta$, obtained by merging all the points of $\partial \mathrm{BAD}(e^+)$ (or $\{0\}\cup\partial \mathrm{BAD}(e^+)$ if $0\in\mathrm{BAD}(e^+)$) into one point $\delta$.\\
%
In the case $0\in\mathrm{BAD}(e^+)$ and by merging $\{0\}\cup\partial \mathrm{BAD}(e^+)$ into $\delta$, as $0$ is open, we have by Lemma \ref{psyche} and Remark \ref{fakeUE}, for any $y\in\mathrm{BAD}(e^+)$, $y\sim 0$,
\begin{align*}
&E^{\omega}_y[N(e^+,T^+_{\mathrm{GOOD}\cup\{0\}})]=E^{\omega_\delta}_y[N(e^+,T^+_{\delta})]\le C(K)\frac{\pi^\omega(e^+)}{c^\omega([0,y])}\\
&\le C\exp\left(2\lambda\left|\max_{z\in\{0\}\cup\partial \mathrm{BAD}(e^+)} z \cdot\vec{\ell
}-\min_{z\in\{0\}\cup\partial \mathrm{BAD}(e^+)}
z \cdot\vec{\ell}       \right|\right)      e^{-(e^++e^-)\cdot\ell}\pi^\omega(e^+),
\end{align*}
where we used the fact that the number of visits to $e^+$ is upper bounded by the number of times incident edges have been crossed.\\
Using that
\[
\max_{z\in\{0\}\cup\partial \mathrm{BAD}(e^+)} z \cdot\vec{\ell
}-\min_{z\in\{0\}\cup\partial \mathrm{BAD}(e^+)}
z \cdot\vec{\ell}\leq W\left(\mathrm{BAD}(e^+)\right),
\]
we obtain
\begin{eqnarray*}
E^{\omega_\delta}_y[N(e^+,T^+_{\mathrm{GOOD}\cup\{0\}})]  \le C\exp\left(2\lambda W\left(\mathrm{BAD}(e^+)\right)\right) e^{-(e^++e^-)\cdot\ell}\pi^\omega(e^+).
\end{eqnarray*}
This yields
\begin{align}\nonumber
E^\omega_0[N(e^+,T^+_{\mathrm{GOOD}\cup\{0\}})] & \le 1+ C(K)\sum_{y\sim0} E^{\omega}_y[N(e^+,T^+_{\mathrm{GOOD}\cup\{0\}})]\\ \label{eminem}
& \le C\exp\left(2\lambda W\left(\mathrm{BAD}(e^+)\right)\right)e^{-(e^++e^-)\cdot \ell}\pi(e^+),
\end{align}
where we used that $0$ is open and, as we are on $OLT_e(\delta,K,n)$, $e^{-(e^++e^-)\cdot \ell}\pi(e^+)\ge C(K)$.
A similar inequality holds for $e^-$.\\
In the same way, by merging only $\partial \mathrm{BAD}(e^+)$ into $\delta$, we have, for any $x\in\partial \mathrm{BAD}(e^+)$, we have 
\begin{equation}\label{eminem2}
E^\omega_x[N(e^+,T^+_{\mathrm{GOOD}})]\le C\exp\left(2\lambda W\left(\mathrm{BAD}(e^+)\right)\right)e^{-(e^++e^-)\cdot l}\pi(e^+),
\end{equation}
and a similar inequality holds for $e^-$.\\
Using Lemma \ref{backbone}, \eqref{snoop} and \eqref{snoop2}, we have
\begin{equation}\label{dr.dre}
E^\omega_0[V_n\1{OLT_e(\delta,K,n),D=\infty}]\le  C(K)\exp\left(2\lambda W\left(\mathrm{BAD}(e^+)\right)\right)\overline{\pi}^{\omega_e}(x_e).
\end{equation}

\vspace{0.5cm}

{\it Step 4: The size of bad areas is too small to have a significant effect}

\vspace{0.5cm}

Fix some constant $M\ge  4d+4d/(1-\gamma-\epsilon)$.
Now, for $k\in\{0,...,\lfloor||e||_\infty^{M}\rfloor\}$, by Lemma \ref{tail_taue}, there exists $K_0$ such that, for any $K\ge K_0$,
\begin{eqnarray}\nonumber
&&\PR_0^K\left[\left(\frac{V_n}{ \overline{\pi}^{\omega_e}(x_e)} \right)^{\gamma+\epsilon}\1{OLT_e(\delta,K,n),D=\infty}\ge k \right]\\ \label{matelas1}
&&\qquad \le \PR_0^K[T_e<\tau_1,c_*(e)\ge n]\le C(K)||e||_\infty^{-2M-4d}{\bf P}[c_*(e)\ge n],
\end{eqnarray}

Another more delicate upper bound can be obtained, for an integer $k>\lfloor||e||_\infty^{M}\rfloor$, Markov's inequality and \eqref{dr.dre} yield
\begin{align}\nonumber
&\PR_0^K\left[\left(\frac{V_n}{ \overline{\pi}^{\omega_e}(x_e)} \right)^{\gamma+\epsilon}\1{OLT_e(\delta,K,n),D=\infty}\ge k \right]\\ \nonumber
 \le & \ES_0^K\left[P_0^\omega\left[\left(\frac{V_n}{ \overline{\pi}^{\omega_e}(x_e)} \right)^{\gamma+\epsilon}\1{OLT_e(\delta,K,n),D=\infty}\ge k\right]\1{c_*(e)\ge n}\right]\\ \nonumber
\le & \frac{C(K)}{k^{1/(\gamma+\epsilon)}} \ES_0^K\left[\exp\left(2\lambda W\left(\mathrm{BAD}(e^+)\right)\right)\1{c_*(e)\ge n}\right]\\ \nonumber
\le & \frac{C(K)}{k^{1/(\gamma+\epsilon)}} \ES_0^K\left[\left.\exp\left(2\lambda W\left(\mathrm{BAD}(e^+)\right)\right)\right|{c_*(e)\ge n}\right]{\bf P}\left[{c_*(e)\ge n}\right]\\ \nonumber
\le &  \frac{C(K)}{k^{1/(\gamma+\epsilon)}} \ES_0^K\left[\left.\exp\left(2\lambda W\left(\mathrm{BAD}(e^+)\right)\right)\right|e\text{ is closed}\right]{\bf P}\left[{c_*(e)\ge n}\right]\\ 
&\le \frac{C(K)}{k^{1/(\gamma+\epsilon)}} {\bf P}[{c_*(e)\ge n}], \label{matelas2}
\end{align}
where we used Lemma \ref{BLsizeclosedbox} with $K$ large enough (depending on $\lambda$ and $d$).\\

\vspace{0.5cm}

{\it Step 5: Conclusion}

\vspace{0.5cm}

Summing over $k\ge0$, we obtain, by \eqref{matelas1} and \eqref{matelas2},
\begin{eqnarray*}
&&\ES_0^K\left[\left(\frac{V_n}{ \overline{\pi}^{\omega_e}(x_e)} \right)^{\gamma+\epsilon}\1{OLT_e(\delta,K,n),D=\infty} \right]\\
&&\le C(K)||e||_\infty^{-4d}{\bf P}[c_*(e)\ge n]+C(K)||e||_\infty^{-(1-\gamma-\epsilon)M/(\gamma+\epsilon)}{\bf P}[c_*(e)\ge n]\\
&&\le C(K)||e||_\infty^{-4d}{\bf P}[c_*(e)\ge n].
\end{eqnarray*}
Finally, as $\text{card}\{e\in E(\Z^d):||e||_\infty=k\}\le ck^{d-1}$, we have
\begin{eqnarray*}
&&\ES_0^K\left[\left(\frac{V_n}{ \overline{\pi}^{\omega_{e^{(n)}}}(x_{e^{(n)}})} \right)^{\gamma+\epsilon}\1{OLT(\delta,K,n),D=\infty} \right]\\
&&=\sum_{e\in E(\Z^d)} \ES_0^K\left[\left(\frac{V_n}{ \overline{\pi}^{\omega_e}(x_e)} \right)^{\gamma+\epsilon}\1{OLT_e(\delta,K,n),D=\infty} \right]\\
&&\le C(K){\bf P}[c_*(e)\ge n]\\
&&\le C(K)\PR_0^K[LT(n),D=\infty],
\end{eqnarray*}
using Lemma \ref{LB_pn} in the last line. This implies the result.
\end{proof}


\subsection{Limit in law of the random variables $W_n$}

The random variables $W_n$ introduced at \eqref{def_Wblabla} to understand the time spent in large trap have a limit in law.

\begin{lemma}\label{cvg_wn}
Define
\[
W_{\infty}= \frac 1 {\overline{\pi}^{\infty}} \sum_{i=1}^{V_{\infty}}2{\bf e}_i,
\]
where ${\bf e}_i$ are some i.i.d.~exponential random variables with mean $1$, independent of $V_{\infty}$ and $\overline{\pi}^{\infty}$ (defined in Lemma \ref{cvg_vpi}).\\
Then, for the random variables $W_n$ chosen under ${\PR}_n$, we have
\[
W_n \xrightarrow{(d)} W_{\infty}\text{ as }n\text{ goes to infinity}.
\]
\end{lemma}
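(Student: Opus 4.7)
The plan is to combine three ingredients: the coupling of Lemma~\ref{coupl_exp} giving pointwise control of the excursion times, the joint convergence of $(V_n,\overline{\pi}^{\omega_{e^{(n)}}}(x_{e^{(n)}}))$ from Lemma~\ref{cvg_vpi}, and the negligibility of $OLT(\delta,K,n)^c$ from \eqref{thomas}. Throughout let $\overline{\pi}_n:=\overline{\pi}^{\omega_{e^{(n)}}}(x_{e^{(n)}})$. Since $\PR_n[OLT(\delta,K,n)^c]=o(n^{-\epsilon})$, it suffices to prove convergence in distribution of $W_n\mathbf{1}_{OLT(\delta,K,n)}$.

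First, fix $\delta\in(0,1/3)$. On $OLT(\delta,K,n)$ we have $c_*^\omega(e^{(n)})\geq n$, $c_*^\omega(e')\leq n^\delta$ for every $e'\sim e^{(n)}$, and we may write $T=\sum_{i=1}^{V_n}T^{(i)}_{e^{(n)}}$. The key identity $c^\omega(e)/\pi^{\omega_e}(x_e)=c_*^\omega(e)/\overline{\pi}^{\omega_e}(x_e)$, obtained from \eqref{def_conduct} and \eqref{traindemerde}, converts Lemma~\ref{coupl_exp} (applied on the coupled probability space, for every $i\leq V_n$) into
\[
(1-C(d)n^{\delta-1})\frac{2{\bf e}_i}{\overline{\pi}_n}\,c_*^\omega(e^{(n)})\;\leq\;T^{(i)}_{e^{(n)}}\;\leq\;(1+C(d)n^{\delta-1})\frac{2{\bf e}_i}{\overline{\pi}_n}\,c_*^\omega(e^{(n)})+1,
\]
where the $({\bf e}_i)$ are i.i.d.~exponentials independent of $V_n$, $\overline{\pi}_n$ and $c_*^\omega(e^{(n)})$. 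Summing over $i\leq V_n$ and dividing by $c_*^\omega(e^{(n)})\geq n$ yields, on $OLT(\delta,K,n)$,
\[
\Bigl|W_n-\frac{1}{\overline{\pi}_n}\sum_{i=1}^{V_n}2{\bf e}_i\Bigr|\;\leq\;C(d)n^{\delta-1}\frac{1}{\overline{\pi}_n}\sum_{i=1}^{V_n}2{\bf e}_i\,+\,\frac{V_n}{n}.
\]

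The right-hand side goes to $0$ in $\PR_n$-probability. Indeed, using Jensen's inequality one obtains $E[(\sum_{i=1}^{k}2{\bf e}_i)^{\gamma+\epsilon}]\leq C k^{\gamma+\epsilon}$ for every $k$ and any fixed $\epsilon\in[0,1-\gamma)$, so conditioning on $(V_n,\overline{\pi}_n)$ and invoking Lemma~\ref{lem_momWn} gives
\[
\ES_n\Bigl[\Bigl(\tfrac{1}{\overline{\pi}_n}\sum_{i=1}^{V_n}2{\bf e}_i\Bigr)^{\gamma+\epsilon}\mathbf{1}_{OLT(\delta,K,n)}\Bigr]\;\leq\;C(K);
\]
thus this quantity is tight and the $n^{\delta-1}$ prefactor sends the first error to $0$. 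A similar argument, using $V_n/n\leq (V_n/\overline{\pi}_n)\cdot(\overline{\pi}_n/n)$ together with the boundedness of $\overline{\pi}_n$ on $OLT$ (since all adjacent conductances are $\leq n^\delta$ with $\delta<1$), controls the second error.

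It remains to show $\frac{1}{\overline{\pi}_n}\sum_{i=1}^{V_n}2{\bf e}_i\xrightarrow{(d)}W_\infty$ under $\PR_n$. By Lemma~\ref{cvg_vpi}, $(V_n,\overline{\pi}_n)\xrightarrow{(d)}(V_\infty,\overline{\pi}^\infty)$, and the sequence $({\bf e}_i)$ is independent of $(V_n,\overline{\pi}_n)$ as well as of the limiting pair. By Skorokhod's representation, we may realize $(V_n,\overline{\pi}_n,({\bf e}_i))\to (V_\infty,\overline{\pi}^\infty,({\bf e}_i))$ almost surely on a common probability space with the same exponential sequence throughout. Since $V_n,V_\infty\in\N$ and $V_n\to V_\infty$ a.s., eventually $V_n=V_\infty$, hence $\sum_{i=1}^{V_n}2{\bf e}_i\to\sum_{i=1}^{V_\infty}2{\bf e}_i$ a.s. Dividing by $\overline{\pi}_n\to\overline{\pi}^\infty>0$ yields the desired convergence to $W_\infty$.

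The only subtlety I anticipate lies in handling the joint law cleanly: one must check that the ${\bf e}_i$ produced by the coupling of Lemma~\ref{coupl_exp} are genuinely independent of $(V_n,\overline{\pi}_n)$ (which follows because in the proof of that lemma $\lfloor T_e^{\mathrm{ex},i}/2\rfloor$ is independent of the exit vertex, hence of the trajectory determining $V_n$), so that the joint law of $({\bf e}_i,V_n,\overline{\pi}_n)$ truly factorizes and the Skorokhod coupling can be carried out.
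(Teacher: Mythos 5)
Your proposal is correct and follows essentially the same route as the paper: the sandwich from Lemma~\ref{coupl_exp} combined with the identity $c^\omega(e)/\pi^{\omega_e}(x_e)=c_*^\omega(e)/\overline{\pi}^{\omega_e}(x_e)$, the moment bounds from Lemma~\ref{lem_momWn} to kill the $O(n^{\delta-1})$ and $V_n/n$ error terms, and finally Lemma~\ref{cvg_vpi} together with the independence of the exponentials to pass to the limit. The Skorokhod representation argument at the end is just a more explicit version of the step the paper handles in one sentence, and your observation at the end that one must verify independence of the $({\bf e}_i)$ from $(V_n,\overline{\pi}_n)$ is exactly the right subtlety to flag.
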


\begin{proof}
Consider $({\bf e}_i)_i$ a sequence of exponential random variables with mean $1$. If, for some $e\in E(\Z^d)$,  $c^{\omega}_*(e)\geq n$ and $c^{\omega}_*(e')\leq n^{\delta}$ for all $e'\sim e$, then using Lemma~\ref{coupl_exp} there exists a coupling of $({\bf e}_i)_i$ and $(\lfloor T^{(i)}_{e}/2\rfloor)_i$ such that, for all $i$, we have
\[
(1-C(d)n^{\delta-1})\frac{c^\omega(e)}{\pi^{\omega_e}(x_e)}2{\bf e}_i \leq T^{(i)}_{e} \leq 
(1+C(d)n^{\delta-1})\frac{c^\omega(e)}{\pi^{\omega_e}(x_e)}2{\bf e}_i+1,
\]
and where ${\bf e}_i$ are independent of each other and independent of $({X}^{e})$, $e$, ${\pi^{\omega_{e}}(x_{e})}$ and $c_*(e)$. Note that, on the other hand,  $T_e^{(i)}$ depends on these quantities and on ${\bf e}_i$. 

Using that $OLT(\delta,K,n)=\bigcup_{e\in E(\Z^d)} OLT_e(\delta,K,n)$, we have, on $OLT(\delta,K,n)$,
\begin{align}\label{decorwn}
& (1-C(d)n^{\delta-1})\frac{1}{\overline{\pi}^{\omega_{e^{(n)}}}(x_{e^{(n)}})}\sum_{i=1}^{V_n} 2{\bf e}_i  \\ \nonumber
\leq& W_n \leq   (1+C(d)n^{\delta-1})\frac{1}{\overline{\pi}^{\omega_{e^{(n)}}}(x_{e^{(n)}})}\sum_{i=1}^{V_n} (2{\bf e}_i+C(d)n^{\delta-1}),
\end{align}
where $V_n$ and $\pi^{\omega^{e^{(n)}}}(x_{e^{(n)}})$ are defined in \eqref{sandro} and \eqref{traindemerde}.\\

Now recall that by Lemma \ref{cvg_vpi}, under $\PR_n$, $(V_n,\pi^{\omega^{e^{(n)}}}(x_{e^{(n)}}))$ converges in law to  $(V_{\infty},\pi^{\infty})$ and that the exponential random variables are independent of $V_n$ and $\pi^{\omega^{e^{(n)}}}(x_{e^{(n)}})$. Using Markov's inequality and Lemma~\ref{lem_momWn} with $\epsilon < \delta$, it is easy to prove that, under $\PR_n$,
\[
n^{\delta-1}\frac{V_n}{\overline{\pi}^{\omega_{e^{(n)}}}(x_{e^{(n)}})}\1{OLT(\delta,K,n)}\to 0 \text{ in probability,}
\]
and recalling the definition of $W_n$ at~\eqref{def_Wblabla}
\[
n^{\delta-1}\frac{1}{\overline{\pi}^{\omega_{e^{(n)}}}(x_{e^{(n)}})}\sum_{i=1}^{V_n} 2{\bf e}_i \1{OLT(\delta,K,n)}\to 0 \text{ in probability.}
\]
Thus, we have the convergence in law, under $\PR_n$, of $W_n\1{OLT(\delta,K,n)}$ to $W_\infty$ and, as $\PR_n[OLT(\delta,K,n)^c]=o(1)$, the result follows.
\end{proof}

\subsection{Asymptotic independence of the conductance and $W_n$ }


On the probability space $\PR^{\infty}$, we define the random variables $(V_{\infty},\pi^{\infty})$ and independently of this couple a variable $c_*^{\mathrm{max}}$ which has the $\overline{\PR}$-law of the largest conductance met during the first regeneration period, that is
\[
c_*^{\mathrm{max}}=\max\left\{c^\omega_*(e) \text{, with } e\in E(\Z^d), T_e<\tau_1\right\},
\]
where $\overline{\PR}[\ \cdot \ ]=\PR_0^K[\ \cdot\ | D=\infty]$ is defined in \eqref{romeo}.

\begin{remark}\label{tail2}
Note that, on $OLT(\delta,K,n)$, we have $c_*^{\mathrm{max}}=c_*^\omega(e^{(n)})$. Moreover, it is plain to see that that $\{c_*^{\mathrm{max}} \geq n\}=LT(n)$, hence $\overline{\PR}[c_*^{\mathrm{max}} \geq n]=\overline{\PR}[LT(n)]$.
\end{remark}

Define the constant
\begin{equation}\label{def_C1}
C_1=\frac{1}{\PR_0^K[D=\infty]} \sum_{e\in E(\Z^d)} {\PR}_0^K[ T_e<\tau_1^{Y^e},D^{Y^e}=\infty]\in(0,+\infty).
\end{equation}

We obtain the probability of encountering at least one trap in a regeneration period as a direct consequence of Proposition~\ref{big_trap_uncond}. 
\begin{lemma}\label{tailpn}
There exists $K_0<\infty$ such that, for any $K\ge K_0$, there exists $\eta>0$ such that
\[
\overline{\PR}[LT(n)] \times\left(C_1 {\bf P}[c_*\geq n]\right)^{-1}\in(1-n^{-\eta},1+n^{-\eta}).
\]
\end{lemma}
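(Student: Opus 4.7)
The plan is to deduce this statement essentially as a direct corollary of Proposition \ref{big_trap_uncond} by specializing it to a trivial choice of test sets.

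First I would apply Proposition \ref{big_trap_uncond} with $(A,B) \in \mathfrak{F}$ being the full space (i.e.\ $A^e$ and $B^e$ are the whole relevant $\sigma$-fields for every $e$) and $F = [0,\infty)$, so that the indicators $\1{X^{e^{(n)}} \in (A,B)}$ and $\1{c_*(e^{(n)})\in F}$ both equal $1$. This collapses the left-hand side to $\PR_0^K[LT(n), D=\infty]$ and the sum on the right-hand side to $\sum_{e \in E(\Z^d)} \PR_0^K[T_e < \tau_1^{Y^e}, D^{Y^e} = \infty]$, giving
\[
\Bigl| \PR_0^K[LT(n), D=\infty] - {\bf P}[c_* \geq n] \sum_{e} \PR_0^K[T_e < \tau_1^{Y^e}, D^{Y^e} = \infty] \Bigr| \leq g(n),
\]
with $g(n) = o(n^{-\eta} \overline{\PR}[LT(n)])$. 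Dividing by $\PR_0^K[D=\infty]$ and using the definitions of $\overline{\PR}$ and $C_1$ in \eqref{romeo} and \eqref{def_C1}, I obtain
\[
\bigl|\overline{\PR}[LT(n)] - C_1 {\bf P}[c_* \geq n]\bigr| \leq g(n)/\PR_0^K[D=\infty] = o(n^{-\eta}\overline{\PR}[LT(n)]).
\]

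Next, I would rearrange this to the desired form. Writing the above as $\overline{\PR}[LT(n)](1+o(n^{-\eta})) = C_1 {\bf P}[c_*\ge n]$ for all $n$ large enough (which is legitimate since $o(n^{-\eta}) < 1/2$ eventually), we obtain
\[
\frac{\overline{\PR}[LT(n)]}{C_1 {\bf P}[c_* \geq n]} = \frac{1}{1+o(n^{-\eta})} = 1 + O(n^{-\eta}),
\]
which gives the required inclusion in $(1 - n^{-\eta}, 1 + n^{-\eta})$ (possibly shrinking $\eta > 0$ slightly to absorb constants, and for $n$ large enough; the small-$n$ case is handled trivially by adjusting $\eta$).

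Finally, I would make sure that $C_1 \in (0,\infty)$. The finiteness of $C_1$ follows from Lemma \ref{tail_taue1}: summing $\PR_0^K[T_{x_e}^{Y^e} \le \tau_1^{Y^e}] \le C(K)||e||_\infty^{-M}$ over $e \in E(\Z^d)$ with $M$ chosen large enough (say $M > d+1$) gives a finite sum. Positivity is the observation, already used in the proof of Lemma \ref{big_trap_cond}, that restricting to a single well-chosen edge such as $e_0 = [2e_1, 3e_1]$ yields $\PR_0^K[T_{x_{e_0}}^{Y^{e_0}} < \tau_1^{Y^{e_0}}, D^{Y^{e_0}} = \infty] > 0$, since one can explicitly construct (using Remark \ref{rem_pkopen} and the fact that $\PR$-a.s.\ there exist good vertices around which escape has positive probability) a trajectory realizing this event. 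Since this is essentially a direct corollary, no genuinely hard step arises; the only subtlety is to check that the choice of trivial $(A,B)$ and $F$ does satisfy the measurability requirements of $\mathfrak{F}$ as stated in Section \ref{timberlake}, which it does.
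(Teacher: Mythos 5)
Your proposal is correct and matches the paper's (implicit) argument: the paper states Lemma~\ref{tailpn} is a direct consequence of Proposition~\ref{big_trap_uncond}, and your specialization with $(A,B)$ being the whole space and $F=[0,\infty)$ is exactly the intended calculation, including the rearrangement step and the verification that $C_1\in(0,\infty)$ (which the paper also justifies via Lemma~\ref{tail_taue1} and the explicit lower bound for $e_0=[2e_1,3e_1]$).
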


Note that the constant $C_1$ depends on $K$, and so does $\overline{\PR}$.

It is also important to know that it is unlikely to encounter more than two large traps when you know you are encountering one in the regeneration time.
\begin{lemma}\label{equiv_pnqn}
Fix $\delta>0$. There exists $K_0<\infty$ such that, for any $K\ge K_0$, there exists $\eta>0$ such that
\[
\frac{\overline{\PR}[LT(n)] }{\overline{\PR}[OLT(\delta,K,n)]}\in[1,1+n^{-\eta}).
\]
\end{lemma}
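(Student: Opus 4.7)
The lower bound $\overline{\PR}[LT(n)]/\overline{\PR}[OLT(\delta,K,n)] \geq 1$ is immediate since $OLT(\delta,K,n) \subset LT(n)$: the event $OLT(\delta,K,n)$ requires in particular the existence of an edge $e$ hit before $\tau_1$ with $c_*(e) \geq n$, which is exactly what $LT(n)$ asserts.

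For the upper bound, the plan is to bound $\overline{\PR}[LT(n) \setminus OLT(\delta,K,n)]$ in terms of $\overline{\PR}[SLT(\delta,K,n)]$ and then compare with $\overline{\PR}[LT(n)]$ using the already established tail estimates. Specifically, by Remark \ref{elodie} (which gives $LT(n) \cap SLT(\delta,K,n)^c \subset OLT(\delta,K,n)$), any realization in $LT(n) \setminus OLT(\delta,K,n)$ must lie in $SLT(\delta,K,n)$, since the only way to be in $LT(n)$ but not $OLT(\delta,K,n)$ is that some edge $e'$ distinct from the large edge $e$ (either hit before $\tau_1$ or adjacent to $e$) has $c_*(e') \geq n^\delta$; and every such $e'$ lies in $B(2\chi, 2\chi^\alpha)$. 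Therefore
\[
\overline{\PR}[LT(n) \setminus OLT(\delta,K,n)] \leq \overline{\PR}[LT(n) \cap SLT(\delta,K,n)] \leq \overline{\PR}[SLT(\delta,K,n)].
\]

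Next, applying Lemma \ref{not_2_trapsb} (with a small $\epsilon$, chosen for instance equal to $\gamma\delta/4$) for $K$ large enough yields
\[
\overline{\PR}[SLT(\delta,K,n)] \leq C n^{\epsilon} {\bf P}[c_* \geq n^\delta]\, {\bf P}[c_* \geq n] \leq C n^{-\gamma\delta/2} {\bf P}[c_* \geq n],
\]
using the regular variation assumption \eqref{assProba}. On the other hand, Lemma \ref{tailpn} shows that $\overline{\PR}[LT(n)] \geq c(K) {\bf P}[c_* \geq n]$ for $n$ large. Combining these gives the existence of $\eta' > 0$ such that
\[
\frac{\overline{\PR}[LT(n) \setminus OLT(\delta,K,n)]}{\overline{\PR}[LT(n)]} \leq C n^{-\eta'}.
\]

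Finally, writing $\overline{\PR}[OLT(\delta,K,n)] = \overline{\PR}[LT(n)] - \overline{\PR}[LT(n) \setminus OLT(\delta,K,n)] \geq \overline{\PR}[LT(n)](1 - C n^{-\eta'})$ and inverting, we obtain, for $n$ large enough and any $\eta \in (0, \eta')$,
\[
\frac{\overline{\PR}[LT(n)]}{\overline{\PR}[OLT(\delta,K,n)]} \leq \frac{1}{1 - C n^{-\eta'}} \leq 1 + n^{-\eta},
\]
which gives the claim. The only actual input beyond the set-theoretic inclusion is the comparison of the tail of $SLT$ to that of $LT$, which was already the heart of Proposition \ref{not_2_traps}; there is no further obstacle.
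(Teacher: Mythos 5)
Your proof is correct and follows the same logical route as the paper: the paper simply cites \eqref{thomas}, which is itself derived from Remark \ref{elodie}, Lemma \ref{not_2_trapsb}, and Lemma \ref{LB_pn} — exactly the ingredients you unpack explicitly (modulo using Lemma \ref{tailpn} where the simpler Lemma \ref{LB_pn} would suffice for the lower bound on $\overline{\PR}[LT(n)]$).
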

\begin{proof}
Obviously, we have that for any $\delta>0$ and $K<\infty$, we have
\begin{align*}
\overline{\PR}[OLT(\delta,K,n)] &
 \leq \overline{\PR}[LT(n)]\\
& \leq  \overline{\PR}[LT(n)\cap OLT(\delta,K,n)^c] +\overline{\PR}[OLT(\delta,K,n)],
\end{align*}
and the conclusion follows by~\eqref{thomas}.
\end{proof}

Using Proposition \ref{big_trap_uncond} and~\eqref{thomas} a simple computation yields the following statement.
\begin{lemma}\label{tom}
There exists $\eta>0$ such that, for any borelian set $F$,
\[
\PR_0^K[c_*^{\max}\in F,c_*^{\max}\ge n| D=\infty]=\frac{{\bf P}[c_*\in F,c_*\ge n]}{{\bf P}[c_*\ge n]}\overline{\PR}[LT(n)]+n^{-\eta}o(\overline{\PR}[LT(n)]),
\]
which implies, by Remark \ref{tail2},
\[
\PR_0^K[c_*^{\max}\in F|c_*^{\max}\ge n, D=\infty]=\frac{{\bf P}[c_*\in F,c_*\ge n]}{{\bf P}[c_*\ge n]}+o(n^{-\eta}).
\]
\end{lemma}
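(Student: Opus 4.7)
The plan is to specialize Proposition \ref{big_trap_uncond} to trivial choices of the trajectory/environment constraints, and then combine that output with Lemma \ref{tailpn} and the $OLT$-estimate \eqref{thomas}. Concretely, I would first apply Proposition \ref{big_trap_uncond} with $(A,B)\in\mathfrak{F}$ taken so that each $A^e$ is the full trajectory $\sigma$-field and each $B^e$ is the full environment $\sigma$-field. This yields, for every borelian $F$,
\[
\PR_0^K[c_*^\omega(e^{(n)})\in F,\,LT(n),\,D=\infty] = {\bf P}[c_*\ge n,\,c_*\in F]\,\Sigma + o\bigl(n^{-\eta}\overline{\PR}[LT(n)]\bigr),
\]
where $\Sigma:=\sum_{e\in E(\Z^d)}\PR_0^K[T_e<\tau_1^{Y^e},\,D^{Y^e}=\infty]=C_1\,\PR_0^K[D=\infty]$ by the definition \eqref{def_C1}. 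Dividing by $\PR_0^K[D=\infty]$ rewrites this as a statement under $\overline{\PR}$.

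Second, I would use Lemma \ref{tailpn} to exchange $C_1\,{\bf P}[c_*\ge n]$ for $\overline{\PR}[LT(n)]$. Since ${\bf P}[c_*\in F,\,c_*\ge n]\le {\bf P}[c_*\ge n]$, the multiplicative discrepancy $1+O(n^{-\eta})$ contributes an additional term of order $O(n^{-\eta}\overline{\PR}[LT(n)])$, so that
\[
\overline{\PR}\bigl[\{c_*^\omega(e^{(n)})\in F\}\cap LT(n)\bigr] = \frac{{\bf P}[c_*\in F,\,c_*\ge n]}{{\bf P}[c_*\ge n]}\,\overline{\PR}[LT(n)] + o\bigl(n^{-\eta}\overline{\PR}[LT(n)]\bigr).
\]

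Third, I would swap $c_*^\omega(e^{(n)})$ for $c_*^{\max}$. On $OLT(\delta,K,n)$ there is exactly one edge of conductance at least $n$ visited before $\tau_1$, so $c_*^\omega(e^{(n)})=c_*^{\max}$ there, hence the symmetric difference of $\{c_*^\omega(e^{(n)})\in F\}\cap LT(n)$ and $\{c_*^{\max}\in F\}\cap LT(n)$ is contained in $LT(n)\cap OLT(\delta,K,n)^c$. By \eqref{thomas} the $\overline{\PR}$-measure of this set is $o(n^{-\epsilon})\overline{\PR}[LT(n)]$, and choosing $\eta$ smaller than $\epsilon$ absorbs this into the error term. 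Combined with Remark \ref{tail2}, which identifies $LT(n)=\{c_*^{\max}\ge n\}$, this gives the first displayed identity of the lemma, and dividing by $\overline{\PR}[LT(n)]$ yields the second.

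The proof is essentially bookkeeping: the three error sources (Proposition \ref{big_trap_uncond}, Lemma \ref{tailpn}, and the $OLT^c$ estimate) each produce an exponent $\eta_i>0$, and taking $\eta:=\min_i\eta_i$ suffices. There is no real obstacle beyond packaging the estimates, since the model-dependent work has already been carried out in Proposition \ref{big_trap_uncond} (the environment seen from the particle close to a large trap) and in Section \ref{pfff17} (control on atypical configurations such as multiple large conductances).
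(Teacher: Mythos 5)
Your proposal is correct and follows essentially the same route the paper indicates (the paper itself only says ``Using Proposition \ref{big_trap_uncond} and \eqref{thomas} a simple computation yields...''), namely specializing Proposition \ref{big_trap_uncond} to trivial $(A,B)$, then trading $C_1\,{\bf P}[c_*\ge n]$ for $\overline{\PR}[LT(n)]$ via Lemma \ref{tailpn}, and passing from $c_*^\omega(e^{(n)})$ to $c_*^{\max}$ on the good event $OLT$ using \eqref{thomas} and Remark \ref{elodie}. The use of Lemma \ref{tailpn} is consistent with the paper's intent since that lemma is itself the specialization of Proposition \ref{big_trap_uncond} to $F=\R$.
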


The following result is a simple consequence of Lemma~\ref{tom}, Proposition \ref{big_trap_uncond} and Lemma \ref{cvg_vpi}.
\begin{lemma}\label{kevin}
There exists $K_0<\infty$ such that, for any $K\ge K_0$, there exists $\eta>0$ such that for any  $n\in \N$, we have
\begin{align*}
& \max_{A,B,F} \left|\PR^\infty[V_{\infty} \in A, \overline{\pi}^{\infty}\in B,c_*^{\max}\in F|c_*^{\max}\ge n]\right.\\
&\left.-{\PR}_n[c_*^\omega({   e^{(n)} })\in F,V_{n} \in A, \overline{\pi}^{\omega_{e^{(n)}}}(x_{e^{(n)}}) \in B]\right| \\
=& o(n^{-\eta}),
\end{align*}
where the maximum is taken over all borelians. It implies that
\begin{align*}
& \max_{A,B} \left|\PR^\infty[W_\infty c_*^{\max}\1{c_*^{\max}\ge n}\in A, \tilde{W}_\infty c_*^{\max}\1{c_*^{\max}\ge n}\in B]\right.\\
&\left.-\overline{\PR}\left[c^\omega_*(e^{(n)})\frac{\sum_{i=1}^{V_n} 2{\bf e}_i}{\overline{\pi}^{\omega_{e^{(n)}}}(x_{e^{(n)}})}\1{LT(n)}\in A, c^\omega_*(e^{(n)})\frac{2{V_n} }{\overline{\pi}^{\omega_{e^{(n)}}}(x_{e^{(n)}})}\1{LT(n)}\in B\right]\right| \\
=& n^{-\eta}o(\overline{\PR}[LT(n)]).
\end{align*}
\end{lemma}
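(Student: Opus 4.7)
The plan is to establish the first display by combining Proposition~\ref{big_trap_uncond} with Lemmas~\ref{cvg_vpi}, \ref{tom}, and \ref{tailpn}, and then to derive the second display by pushing forward through a measurable map that depends on an independent sequence of exponential random variables.

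For the first bound, I will apply Proposition~\ref{big_trap_uncond} to the pair $(\widetilde{A},\widetilde{B}) \in \mathfrak{F}$ defined by $\widetilde{A}^e = \{V^{x_e} \in A\}$ and $\widetilde{B}^e = \{\overline{\pi}^{\omega_e}(x_e) \in B\}$, which is admissible because $V^{x_e}$ is a trajectory functional on $\Z^d_e$ while $\overline{\pi}^{\omega_e}(x_e)$ depends only on the conductances surrounding $e$. Writing $\PR_n[\cdot] = \PR_0^K[\cdot, LT(n), D=\infty]/\PR_0^K[LT(n), D=\infty]$, Proposition~\ref{big_trap_uncond} rewrites the numerator as ${\bf P}[c_*\ge n, c_*\in F]\sum_e \PR_0^K[Y^e \in (\widetilde{A},\widetilde{B}),\ T_e<\tau_1^{Y^e},\ D^{Y^e}=\infty]$ up to an additive $g(n) = o(n^{-\eta}\overline{\PR}[LT(n)])$, uniformly in $A,B,F$. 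Applying Lemma~\ref{tailpn} to replace $\overline{\PR}[LT(n)]$ by $C_1 {\bf P}[c_*\ge n](1+o(n^{-\eta}))$, invoking the definition \eqref{def_C1} of $C_1$, and using the representation of $\PR^\infty[(V_\infty,\overline{\pi}^\infty) \in \cdot]$ from Lemma~\ref{cvg_vpi}, one obtains $\PR_n[\cdots] = {\bf P}[c_*\in F \mid c_*\ge n]\cdot\PR^\infty[V_\infty \in A,\ \overline{\pi}^\infty\in B] + o(n^{-\eta})$. Since $c_*^{\max}$ is $\PR^\infty$-independent of $(V_\infty,\overline{\pi}^\infty)$, Lemma~\ref{tom} converts ${\bf P}[c_*\in F \mid c_*\ge n]$ into $\PR^\infty[c_*^{\max}\in F \mid c_*^{\max}\ge n]$ at cost $o(n^{-\eta})$, closing the first estimate.

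For the second display, define the measurable map $\phi(v,p,c,(e_i)) = \bigl(c \sum_{i=1}^v 2 e_i / p,\ 2cv/p\bigr)$. By Remark~\ref{tail2} we have $\PR^\infty[c_*^{\max}\ge n] = \overline{\PR}[LT(n)]$; consequently the contributions of $\{c_*^{\max} < n\}$ and $\{LT(n)^c\}$, on which the random variables vanish so that the events degenerate to $\{0\in A\}\cap\{0\in B\}$ and are assigned equal mass, cancel in the difference. On the complementary events, the left- and right-hand pairs are precisely $\phi(V_\infty,\overline{\pi}^\infty,c_*^{\max},({\bf e}_i))$ and $\phi(V_n,\overline{\pi}^{\omega_{e^{(n)}}}(x_{e^{(n)}}),c_*^\omega(e^{(n)}),({\bf e}_i))$, where on both sides the exponentials are i.i.d.~mean-$1$ and independent of the triple (on the $\overline{\PR}$-side this independence is supplied by the coupling of Lemma~\ref{coupl_exp}). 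Conditioning on $({\bf e}_i)$ and integrating reduces the task to bounding the difference of the joint laws of the two triples, tested against general measurable subsets of $\N\times\R^+\times\R^+$.

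The main obstacle is that the first display, as stated, controls the two joint laws only on product sets $A\times B\times F$, whereas at fixed $({\bf e}_i)$ the pre-image $\phi^{-1}(A\times B)$ is a genuine joint Borel set, involving for instance the ratio $c/p$. To circumvent this, I would not treat the first display as a black box, but instead revisit the coupling $\widehat{P}^\omega$ underpinning Lemma~\ref{coupling0} and Proposition~\ref{big_trap_uncond}: this coupling realizes the $X^e$- and $Y^e$-trajectories together with the surrounding conductances on a single probability space, so its error estimate upgrades to a total-variation-type bound on the joint law of $(V_n,\overline{\pi}^{\omega_{e^{(n)}}}(x_{e^{(n)}}),c_*^\omega(e^{(n)}))$ under $\PR_n$ versus that of $(V_\infty,\overline{\pi}^\infty,c_*^{\max})$ under $\PR^\infty[\cdot \mid c_*^{\max}\ge n]$. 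With this stronger bound in hand, the push-forward by $\phi$ is immediate and the second display follows uniformly in $A,B$.
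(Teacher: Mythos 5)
Your proof is correct and follows essentially the same route as the paper's very terse argument. For the first display, you combine Proposition~\ref{big_trap_uncond} (applied to the admissible pair $(\widetilde{A},\widetilde{B})\in\mathfrak{F}$ built from $V^{x_e}$ and $\overline{\pi}^{\omega_e}(x_e)$), Lemma~\ref{cvg_vpi}, Lemma~\ref{tailpn}, and Lemma~\ref{tom}, which is exactly the paper's sequence. For the second display, the paper's proof consists of two observations: the algebraic identity $W_\infty=\frac 1 {\overline{\pi}^{\infty}}\sum_{i=1}^{V_\infty}2{\bf e}_i$ (and similarly for $\tilde W_\infty$), and the cancellation at $c_*^{\max}<n$ coming from Remark~\ref{tail2}; the rest is implicit and deferred to Remark~\ref{one_more_damn_coupling}, which asserts a total-variation interpretation.

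Your observation in the final paragraph is a genuine and worthwhile point that the paper elides. The first display, as literally stated, only controls rectangles $A\times B\times F$, and this alone does not yield a total-variation bound on the joint law; yet the push-forward by $\phi_{({\bf e}_i)}$ (after conditioning on the exponentials and integrating) requires testing the joint laws on non-product Borel sets such as $\{(v,p,c): c\sum_{i\le v}2{\bf e}_i/p\in A,\ 2cv/p\in B\}$. Your fix — observing that the error terms in Proposition~\ref{big_trap_uncond} come from (i) symmetric-difference estimates like $\ES[NLT\,\mathbf 1_{SLT}]$, (ii) additive tail sums over distant $e$, and (iii) the decoupling probability of $\widehat{P}^\omega$, all of which are set-independent and therefore control total variation rather than just rectangle mass — is exactly what makes the argument work, and is what Remark~\ref{one_more_damn_coupling} tacitly invokes. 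Your write-up makes this step explicit where the paper does not, which is a small but real improvement in rigor; no gap remains.

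One small nit: on the $\overline{\PR}$ side the exponentials supplied by Lemma~\ref{coupl_exp} are independent of $c^\omega(e)$, $\pi^{\omega_e}(x_e)$ and $(X^e_n)_n$ only on the good event $OLT_e(\delta,K,n)$, and $e^{(n)}$ is itself random; one should either restrict to $OLT(\delta,K,n)$ (absorbing its complement into the error via \eqref{thomas}) or, as the paper implicitly does, simply adjoin a fresh i.i.d.\ sequence of mean-one exponentials independent of the triple and note that this does not change the law of the displayed pair. Either way this is a bookkeeping detail and does not affect correctness.
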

\begin{proof}
The proof comes easily once we have noticed that 
\begin{eqnarray*}
&&\PR^\infty[V_{\infty} \in A, \overline{\pi}^{\infty}\in B,c_*^{\max}\in F|c_*^{\max}\ge n]\\
&&=\PR_0^K[c_*^{\max}\in F|c_*^{\max}\ge n, D=\infty]\PR^\infty[V_{\infty} \in A, \overline{\pi}^{\infty}\in B],
\end{eqnarray*}
and that ${\PR}_0^K[LT(n)|D=\infty]=\PR_0^K[c_*^{\max}\ge n|D=\infty]$. Besides,  $\frac 1 {\overline{\pi}^{\infty}} \sum_{i=1}^{V_{\infty}}2{\bf e}_i$ has the same law as $W_{\infty}$ and $\frac{2V_{\infty}}{\overline{\pi}^{\infty}}$ has the same law as $\tilde{W}_{\infty}$. 
\end{proof}

\begin{remark}\label{one_more_damn_coupling}
The previous proposition states that the total variation distance between the law of $(c_*^{\max},V_{\infty},\pi^{\infty})$ chosen under $\PR^\infty[\cdot|c_*^{\max}\ge n]$ and $(c_*^\omega({   e^{(n)} }),V_{n},\overline{\pi}^{\omega_{e^{(n)}}}(x_{e^{(n)}}))$ chosen under ${\PR}_n$ is $o(n^{-\eta})$. This allows us to produce a coupling such that those two triplets of random variables do not coincide with probability  at most $o(n^{-\eta})$. In the same way, we can couple the $4$-uplets $(c_*^{\max},V_{\infty},\overline{\pi}^{\infty}, ({\bf e}_i))$ chosen under $\PR^\infty[\cdot|c_*^{\max}\ge n]$ and $(c_*^\omega({   e^{(n)} }),V_{n},\overline{\pi}^{\omega_{e^{(n)}}}(x_{e^{(n)}}) , ({\bf e'}_i))$ chosen under ${\PR}_n$ such that they do not coincide with probability  at most $o(n^{-\eta})$, and where $({\bf e}_i)$ and $({\bf e'}_i)$ are two i.i.d.~sequences of mean $1$ exponential variables which are independent of the three other quantities involved in their respective $4$-uplets.
\end{remark}

The following result will be used to prove the main theorem and gives an estimate of the distance between $c^\omega_*(e^{(n)})W_n$ and $c_*^{\mathrm{max}}W_{\infty}$.

\begin{proposition}\label{final_coupling}
Fix $\delta\in(0,1)$. There exist  $\eta>0$ and a constant $C'(d)$ such that, for any $n>K$, there exists a coupling $P^{n,\infty}$ of $(c^\omega_*(e^{(n)}),W_n,2V_n/\overline{\pi}^{\omega_{e^{(n)}}}(x_{e^{(n)}}))$ under $\overline{\PR}$ and $(c_*^{\mathrm{max}},W_{\infty},\tilde{W}_{\infty})$ under $\PR^\infty$ such that we have
\begin{align*}
& P^{n,\infty}\Bigl[ \left|c^\omega_*(e^{(n)})W_n\1{OLT(\delta,K,n)}- c_*^{\mathrm{max}}W_{\infty}\1{c_*^{\max}\ge n}\right|\\
&\qquad\qquad\qquad\qquad> C'(d) n^{\delta-1}c_*^{\mathrm{max}}(W_{\infty}+\tilde{W}_{\infty})\1{c_*^{\max}\ge n}\Bigr] \\
&\leq  n^{-\eta} o(\overline{\PR}[LT(n)]).
\end{align*}
\end{proposition}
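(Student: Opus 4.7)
The plan is to construct $P^{n,\infty}$ by a Bernoulli split combined with the coupling of Remark \ref{one_more_damn_coupling} and the quenched sandwich bound of Lemma \ref{coupl_exp}. First, using the identity $\overline{\PR}[LT(n)]=\PR^\infty[c_*^{\max}\ge n]$ (which follows from Remark \ref{tail2} and the construction of $c_*^{\max}$ under $\PR^\infty$), I would flip a Bernoulli $B$ of success probability $\overline{\PR}[LT(n)]$. On $\{B=0\}$, sample $\overline{\PR}[\,\cdot\,|LT(n)^c]$ and $\PR^\infty[\,\cdot\,|c_*^{\max}<n]$ independently; both indicators $\1{OLT(\delta,K,n)}$ and $\1{c_*^{\max}\ge n}$ then vanish, so the bad event in the proposition is void. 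On $\{B=1\}$, apply Remark \ref{one_more_damn_coupling} to the $4$-uplets $(c_*^\omega(e^{(n)}),V_n,\overline{\pi}^{\omega_{e^{(n)}}}(x_{e^{(n)}}),({\bf e}'_i))$ under $\PR_n$ and $(c_*^{\max},V_\infty,\overline{\pi}^\infty,({\bf e}_i))$ under $\PR^\infty[\,\cdot\,|c_*^{\max}\ge n]$ so that they disagree with probability at most $o(n^{-\eta})$. The key point is that the $({\bf e}'_i)$ are chosen to be the exponentials supplied by Lemma \ref{coupl_exp} to sandwich the excursion times $T^{(i)}_{e^{(n)}}$: this is consistent since in both constructions the exponentials can be taken i.i.d.~mean-$1$ and independent of the other three coordinates.

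Let $\Gamma_n$ be the intersection of $\{B=1\}$, the $4$-uplet agreement event, and $OLT(\delta,K,n)$. Summing the sandwich bound of Lemma \ref{coupl_exp} over $i\in\{1,\ldots,V_n\}$ and multiplying by $c^\omega_*(e^{(n)})$ yields, on $\{B=1\}\cap OLT(\delta,K,n)$,
\[
\left|c^\omega_*(e^{(n)})W_n-\frac{c^\omega_*(e^{(n)})}{\overline{\pi}^{\omega_{e^{(n)}}}(x_{e^{(n)}})}\sum_{i=1}^{V_n}2{\bf e}'_i\right|\le C(d)n^{\delta-1}\,\frac{c^\omega_*(e^{(n)})}{\overline{\pi}^{\omega_{e^{(n)}}}(x_{e^{(n)}})}\sum_{i=1}^{V_n}2{\bf e}'_i + V_n.
\]
On $OLT$ each $e'\sim e^{(n)}$ satisfies $c^\omega_*(e')<n^\delta$, so by \eqref{def_conduct}-\eqref{traindemerde} one has $\overline{\pi}^{\omega_{e^{(n)}}}(x_{e^{(n)}})\le C(d,\lambda)n^\delta$; combined with $c_*^{\max}\ge n$ and the $4$-uplet agreement, this gives $V_n=\overline{\pi}^\infty\tilde W_\infty/2\le C'(d,\lambda)n^{\delta-1}c_*^{\max}\tilde W_\infty$ on $\Gamma_n$. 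The same agreement identifies the middle sum with $c_*^{\max}W_\infty$, and combining everything yields
\[
\left|c^\omega_*(e^{(n)})W_n-c_*^{\max}W_\infty\right|\le C''(d,\lambda)\,n^{\delta-1}\,c_*^{\max}\bigl(W_\infty+\tilde W_\infty\bigr)\quad\text{on }\Gamma_n,
\]
so the bad event $B_n$ of the proposition is void on $\Gamma_n$.

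Off $\Gamma_n$, $B_n$ can only trigger on $\{B=1\}$; and on $\{B=1\}\setminus\Gamma_n$, either the $4$-uplet coupling failed (probability $o(n^{-\eta})$ by Remark \ref{one_more_damn_coupling}) or $OLT(\delta,K,n)^c$ occurred on the $\PR_n$-side (probability $o(n^{-\eta})$ by \eqref{thomas}). Weighting by $P^{n,\infty}[B=1]=\overline{\PR}[LT(n)]$ yields $P^{n,\infty}[B_n]\le n^{-\eta}o(\overline{\PR}[LT(n)])$, as required. The main obstacle is a bookkeeping one: one needs to verify that the i.i.d.~mean-$1$ exponentials appearing in Lemma \ref{coupl_exp} (on the $\PR_n$-side) and in the definition of $W_\infty$ (on the $\PR^\infty$-side) can both be incorporated as the fourth coordinate of the respective $4$-uplets of Remark \ref{one_more_damn_coupling}, so that a single total-variation coupling aligns them simultaneously; the rest is algebraic manipulation of the sandwich of Lemma \ref{coupl_exp} together with the a priori estimate $\overline{\pi}^{\omega_e}(x_e)\le C(d,\lambda)n^\delta$ valid on $OLT$.
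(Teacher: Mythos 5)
Your proposal is correct and follows essentially the same route as the paper's proof: the sandwich from Lemma \ref{coupl_exp} (i.e.~\eqref{decorwn} multiplied through by $c^\omega_*(e^{(n)})$), the total-variation coupling of Remark \ref{one_more_damn_coupling}, and the smallness of $\PR_n[OLT(\delta,K,n)^c]$ from \eqref{thomas}. Your explicit Bernoulli split and the a priori bound $\overline{\pi}^{\omega_{e^{(n)}}}(x_{e^{(n)}})\le C(d,\lambda)n^\delta$ on $OLT$ spell out details the paper leaves implicit, but the underlying construction and estimates are the same.
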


\begin{proof}
Recall that, by~\eqref{thomas}, there exists $\epsilon>0$ such that $\PR_n[OLT(\delta,K,N)^c]=o(n^{-\epsilon})$. On $OLT(\delta,K,N)$, $c^\omega_*(e^{(n)})=c_*^{\mathrm{max}}$, and recalling~\eqref{decorwn} we can see that there exist exponential random variables ${\bf e'}_i$, independent of ${\pi^{\omega_{e^{(n)}}}(x_{e^{(n)}})}$ and $V_n$, such that
\begin{align}\label{fuck}
& (1-C(d)n^{\delta-1})\frac{1}{\overline{\pi}^{\omega_{e^{(n)}}}(x_{e^{(n)}})}\sum_{i=1}^{V_n} 2{\bf e'}_i  \\ \nonumber
\leq& W_n \leq   (1+C(d)n^{\delta-1})\frac{1}{\overline{\pi}^{\omega_{e^{(n)}}}(x_{e^{(n)}})}\sum_{i=1}^{V_n} (2{\bf e'}_i+C(d)n^{\delta-1}).
\end{align}
Now, by Lemma~\ref{kevin} we know that we can find
\[
(W_\infty c_*^{\max}\1{c_*^{\max}\ge n},\tilde{W}_\infty c_*^{\max}\1{c_*^{\max}\ge n})\]
which is coupled with high probability with
\[
(\frac{\sum_{i=1}^{V_{n}}2{\bf e}_i}{\overline{\pi}^{\omega_{e^{(n)}}}(x_{e^{(n)}})} \1{LT(n)},\frac{2V_n}{\overline{\pi}^{\omega_{e^{(n)}}}(x_{e^{(n)}})}\1{LT(n)}).
\] 
 Thus, we have built an adequate coupling and the conclusion follows easily.
\end{proof}

\subsection{Tail estimate of  the random variable $W_{\infty}c_*^{\mathrm{max}}$}
The goal of this section is to compute the tail of $W_\infty$. For this purpose, we need to give a moment on this variable. To bound the error terms from Proposition \ref{final_coupling}, we will also need to compute the tail the following random variable:

\[
\tilde{W}_{\infty}:=2\frac{V_{\infty}}{\overline{\pi}_{\infty}}.
\]

\begin{lemma}\label{moment_winf}
For any $\epsilon\in[0,1-\gamma)$, we have
\[
\ES^\infty[W_{\infty}^{\gamma+\epsilon}]<\infty,
\]
and
\[
\ES^\infty[\tilde{W}_{\infty}^{\gamma+\epsilon}]<\infty.
\]
\end{lemma}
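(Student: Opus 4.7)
The plan is to deduce the moment bounds on $W_\infty$ and $\tilde{W}_\infty$ from the uniform moment bounds on $W_n\1{OLT(\delta,K,n)}$ and $(V_n/\overline{\pi}^{\omega_{e^{(n)}}}(x_{e^{(n)}}))\1{OLT(\delta,K,n)}$ given by Lemma~\ref{lem_momWn}, using Fatou's lemma together with the weak convergences established in Lemma~\ref{cvg_vpi} and Lemma~\ref{cvg_wn}.

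Fix $\epsilon\in[0,1-\gamma)$ and $\delta>0$, and let $K\geq K_0$ be large enough so that Lemma~\ref{lem_momWn} applies. First I would treat $\tilde{W}_\infty$. By Lemma~\ref{cvg_vpi}, under $\PR_n$ the pair $(V_n,\overline{\pi}^{\omega_{e^{(n)}}}(x_{e^{(n)}}))$ converges in law to $(V_\infty,\overline{\pi}^\infty)$. Since $\overline{\pi}^{\omega_e}(x_e)$ is bounded below by a positive deterministic constant (recall that under $\PR_n$ the walk must reach $e$ from outside, so there is at least one normal edge adjacent to $e$), the map $(v,p)\mapsto 2v/p$ is continuous on the relevant support and the continuous mapping theorem gives $2V_n/\overline{\pi}^{\omega_{e^{(n)}}}(x_{e^{(n)}})\to \tilde{W}_\infty$ in law under $\PR_n$. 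Multiplying by $\1{OLT(\delta,K,n)}$ only changes this on an event of $\PR_n$-probability $o(n^{-\epsilon'})$ by~\eqref{thomas}, so the indicator version converges to $\tilde{W}_\infty$ in law as well. Fatou's lemma for weak convergence (applied via Skorokhod representation to the non-negative continuous function $x\mapsto x^{\gamma+\epsilon}$) then yields
\[
\ES^\infty[\tilde{W}_\infty^{\gamma+\epsilon}]\leq \liminf_{n\to\infty}\ES_n\left[\left(\frac{2V_n}{\overline{\pi}^{\omega_{e^{(n)}}}(x_{e^{(n)}})}\right)^{\gamma+\epsilon}\1{OLT(\delta,K,n)}\right]\leq C(K)<\infty,
\]
by Lemma~\ref{lem_momWn}.

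For $W_\infty$, the argument is identical: by Lemma~\ref{cvg_wn} we have $W_n\to W_\infty$ in law under $\PR_n$, hence $W_n\1{OLT(\delta,K,n)}\to W_\infty$ in law as well (again using~\eqref{thomas}), and Fatou combined with Lemma~\ref{lem_momWn} gives
\[
\ES^\infty[W_\infty^{\gamma+\epsilon}]\leq \liminf_{n\to\infty}\ES_n\left[W_n^{\gamma+\epsilon}\1{OLT(\delta,K,n)}\right]\leq C(K)<\infty.
\]
No step here presents any serious obstacle, as the heavy lifting has already been done in Lemma~\ref{lem_momWn}: the only thing to verify is that the weak limits identified in Lemmas~\ref{cvg_vpi} and~\ref{cvg_wn} are indeed $\tilde{W}_\infty$ and $W_\infty$ (with the same joint law involving the independent exponentials), which is transparent from their definitions. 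The mildly delicate point, if any, is to ensure the restriction to $OLT(\delta,K,n)$ does not break the weak convergence, and for this the bound $\PR_n[OLT(\delta,K,n)^c]=o(n^{-\epsilon'})$ from~\eqref{thomas} is largely overkill.
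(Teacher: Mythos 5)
Your proof is correct and reaches the conclusion, but it takes a genuinely different route from the paper. The paper proceeds via a uniform-integrability argument: starting from the uniform moment bound of Lemma~\ref{lem_momWn} with a slightly higher exponent $\gamma+\epsilon'$, it applies H\"older's and Markov's inequalities to establish that the family $\{(W_n\1{OLT(\delta,K,n)})^{\gamma+\epsilon}\}_n$ is uniformly integrable under $\PR_n$, and then exchanges limits to obtain $\ES^\infty[W_\infty^{\gamma+\epsilon}]=\lim_n\ES_n[(W_n\1{OLT})^{\gamma+\epsilon}]$. You instead apply Fatou's lemma for weakly converging sequences directly, which is cleaner: since only an upper bound (finiteness) is needed, the $\liminf$ inequality suffices and the uniform-integrability machinery is superfluous. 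Both approaches rest on the same two inputs (Lemma~\ref{lem_momWn} and the weak convergences in Lemmas~\ref{cvg_vpi} and~\ref{cvg_wn}), and your shortcut buys economy at the cost of losing the actual limit (which the paper does not use anyway).

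One parenthetical justification you give is wrong and should be corrected. You claim that $\overline{\pi}^{\omega_{e^{(n)}}}(x_{e^{(n)}})$ is bounded below by a positive deterministic constant because "there is at least one normal edge adjacent to $e$." This is not the case: under $\PR_n$, or even on $OLT(\delta,K,n)$, the only constraint on the conductances $c_*(e')$ for $e'\sim e$ is that they be $< n^\delta$; they are not bounded away from zero, and the endpoints $e^\pm$ are closed vertices precisely because $c_*(e)\ge n>K$. However, the argument survives without the claim: the conductances $c_*$ are a.s.\ in $(0,\infty)$, hence $\overline{\pi}^{\omega_e}(x_e)>0$ a.s.\ and the limit $\overline{\pi}^\infty$ is supported in $(0,\infty)$. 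That is already enough for the map $(v,p)\mapsto 2v/p$ to be continuous on the support of the limiting law and for the continuous mapping theorem (or, via Skorokhod representation, a.s.\ convergence followed by Fatou) to go through. You should replace the false boundedness claim with an appeal to the a.s.\ strict positivity of $\overline{\pi}^\infty$.
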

\begin{proof}
Let us emphasize that we do not know yet that $W_{\infty}$ is almost surely finite. We will prove it by a moment estimate.\\
Define $\epsilon'=\epsilon+(1-\gamma-\epsilon)/2$. Using Lemma~\ref{lem_momWn}, for any $n>K$ and for any $R$,
\begin{eqnarray*}
&&{\ES}_n[(W_n\1{OLT(\delta,K,n)})^{\gamma+\epsilon}\1{(W_n\1{OLT(\delta,K,n)})^{\gamma+\epsilon}\ge R}]\\
&\le& {\ES}_n[(W_n\1{OLT(\delta,K,n)})^{\gamma+\epsilon'}]^{ \frac{\gamma+\epsilon}{\gamma+\epsilon'} }{\PR}_n[{(W_n\1{OLT(\delta,K,n)})^{\gamma+\epsilon}\ge R}]^{ \frac{\epsilon'-\epsilon}{\gamma+\epsilon'} }\\
&\le& C(K,\epsilon,\gamma)R^{-r},
\end{eqnarray*}
where $r$ is a constant which does not depend on $n$ and where we used  H\"older's inequality and Markov's inequality. This means that  the quantity
\[
{\ES}_n[(W_n\1{OLT(\delta,K,n)})^{\gamma+\epsilon}\1{(W_n\1{OLT(\delta,K,n)})^{\gamma+\epsilon}\le R}]
\]
converges uniformly in $n$ to ${\ES}_n[(W_n\1{OLT(\delta,K,n)})^{\gamma+\epsilon}]$ as $R$ goes to infinity. Moreover, using Lemma \ref{not_2_trapsb} and Lemma \ref{moment_winf}, it is easy to prove that $W_n\1{OLT(\delta,K,n)}$ under ${\PR}_n$ converges in law to $W_\infty$, hence we have
\begin{eqnarray*}
&&\lim_{n\to\infty}{\ES}_n[(W_n\1{OLT(\delta,K,n)})^{\gamma+\epsilon}\1{(W_n\1{OLT(\delta,K,n)})^{\gamma+\epsilon}\le R}]\\
&&={\ES}^\infty[W_\infty^{\gamma+\epsilon}\1{W_\infty^{\gamma+\epsilon}\le R}].
\end{eqnarray*}
Finally, we conclude, by switching the limits in $n$ and in $R$,
 \begin{eqnarray*}
&&{\ES}^\infty[W_\infty^{\gamma+\epsilon}]=\lim_{n\to\infty}{\ES}_n[(W_n\1{OLT(\delta,K,n)})^{\gamma+\epsilon}]\le C(K).\\
\end{eqnarray*}
A similar result now holds easily for $\tilde{W}_\infty$.
%
\end{proof}

Finally, we are able to estimate de tail of $W_{\infty}c_*^{\mathrm{max}}$.

\begin{lemma}\label{final_tail}
We have
\[
\PR^{\infty}[W_{\infty}c_*^{\mathrm{max}} \geq t]\sim C_1 \ES^\infty[W_{\infty}^{\gamma}]L(t)t^{-\gamma},
\]
and
\[
\PR^{\infty}[\tilde{W}_{\infty}c_*^{\mathrm{max}} \geq t]\sim C_1 \ES^\infty[\tilde{W}_{\infty}^{\gamma}]L(t)t^{-\gamma},
\]
\end{lemma}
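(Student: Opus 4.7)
The plan is to recognize the statement as a straightforward application of Breiman's lemma on products of independent random variables, one of which has a regularly varying tail, once the ingredients are collected. Indeed, by the construction of $\PR^\infty$, the random variable $c_*^{\max}$ is independent of the pair $(V_\infty,\overline\pi^\infty)$, and hence independent of both $W_\infty=\overline\pi_\infty^{-1}\sum_{i=1}^{V_\infty}2{\bf e}_i$ and $\tilde W_\infty=2V_\infty/\overline\pi_\infty$, since the exponentials $({\bf e}_i)$ are also independent of everything else.

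First I would identify the tail of $c_*^{\max}$. By Remark \ref{tail2}, $\PR^\infty[c_*^{\max}\ge n]=\overline{\PR}[LT(n)]$, and by Lemma \ref{tailpn} together with the regular variation assumption \eqref{assProba} we obtain
\[
\PR^\infty[c_*^{\max}\ge t]\sim C_1\,{\bf P}[c_*\ge t]=C_1\,L(t)t^{-\gamma},
\]
so $c_*^{\max}$ has a regularly varying tail of index $-\gamma$. Second, by Lemma \ref{moment_winf} there exists $\epsilon\in(0,1-\gamma)$ such that $\ES^\infty[W_\infty^{\gamma+\epsilon}]<\infty$ and $\ES^\infty[\tilde W_\infty^{\gamma+\epsilon}]<\infty$, which is the moment condition required by Breiman's lemma.

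Breiman's lemma (see, e.g., Breiman 1965) then gives, for any non-negative $Y$ independent of $c_*^{\max}$ with $\ES^\infty[Y^{\gamma+\epsilon}]<\infty$,
\[
\PR^\infty[Y c_*^{\max}\ge t]\sim \ES^\infty[Y^\gamma]\,\PR^\infty[c_*^{\max}\ge t]\sim C_1\,\ES^\infty[Y^\gamma]\,L(t)t^{-\gamma},
\]
applied once with $Y=W_\infty$ and once with $Y=\tilde W_\infty$. This yields both claims. If one prefers a self-contained argument, Breiman's estimate follows by conditioning on $Y$, writing $\PR^\infty[Y c_*^{\max}\ge t\mid Y]=L(t/Y)(t/Y)^{-\gamma}\1{Y>0}$, and using dominated convergence: the pointwise limit of $Y^\gamma L(t/Y)/L(t)$ is $Y^\gamma$, and the Potter bounds provide the integrable majorant $C\,Y^{\gamma+\epsilon}+C\,Y^{\gamma-\epsilon}$, both of which are in $L^1(\PR^\infty)$ by the moment estimate on $Y$ together with $Y^{\gamma-\epsilon}\le 1+Y^{\gamma+\epsilon}$. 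The only subtlety is the behaviour of $Y$ near $0$, which is harmless because the Potter bound is applied on $\{Y\le t^{1/2}\}$ while the contribution of $\{Y>t^{1/2}\}$ is $o(L(t)t^{-\gamma})$ by Markov's inequality and the moment bound; this is where the condition $\epsilon<1-\gamma$ is used. No step is delicate: the only thing to be careful about is verifying the independence of $c_*^{\max}$ and the other factor in $\PR^\infty$, which is built into the definition of the limiting measure.
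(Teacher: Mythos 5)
Your proof is correct and follows the same route as the paper: identify the regularly varying tail of $c_*^{\max}$ via Lemma \ref{tailpn} and Remark \ref{tail2}, invoke the moment bound of Lemma \ref{moment_winf}, and apply Breiman's theorem using the built-in independence of $c_*^{\max}$ from $(V_\infty,\overline\pi^\infty)$ under $\PR^\infty$. The optional self-contained derivation of Breiman's estimate via Potter bounds is a nice supplement but not needed, since the paper cites the result directly.
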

\begin{proof}
Using Lemma~\ref{tailpn} and Remark~\ref{tail2} we see that 
\[
{\PR}^\infty[c_*^{\mathrm{max}} \geq t]=\overline{\PR}[c_*^{\mathrm{max}} \geq t]\sim C_1L(t)t^{-\gamma}.
\]
Hence, by using Breiman's Theorem (which is proved for example in~\cite{CS}, see Corollary 3.6 $(iii)$), we obtain 
\[
\PR^\infty[c_*^{\max}W_{\infty}\geq t] \sim  C_1L(t)t^{-\gamma} \ES^\infty[W_{\infty}^{\gamma}],
\]
since $\ES^\infty[W_{\infty}^{\gamma+\epsilon}]<\infty$ for $\epsilon\in[0,1-\gamma)$, by Lemma~\ref{moment_winf}.\\
The proof for $\tilde{W}_\infty$ is the same.
%
%

\end{proof}

\section{Limit theorems}\label{sect_limth}

We now have all the necessary results to conclude the main results.

Recall that we assumed that ${\bf P}[c_*(e)\ge t ]= L(t) t^{-\gamma}$ for some $\gamma\in(0,1)$ and where $L$ is a slowly varying function. Note that
\[
\tau_n=\tau_1+\sum_{i=1}^{n-1} \left(\tau_{i+1}-\tau_{i}\right),
\]
where, using Remark \ref{rem_regensup} and Theorem \ref{thindep},  the quantities $\left(\tau_{i+1}-\tau_{i}\right)$, $i\ge 1$, under $\PR_0$, are all independent, independent of $\tau_1$ and distributed as $\tau_1$ under $\PR_0^K[\cdot|D=\infty]$.\\

Let us define the generalized inverse of the tail of $c_*$, composed with $t\mapsto 1/t$, that is
\begin{align}\label{defInv}
\mathrm{Inv}(t):=\inf\left\{x:{\bf P}[c_*>x]\le t^{-1}\right\}.
\end{align}
Using that $\mathrm{Inv}(\cdot)$ is nondecreasing and that $t^rL(t)\to+\infty$ for any $r>0$, one can easily prove that
\begin{align} \label{retard}
\frac{n^{\frac{1}{\gamma}-r}}{\mathrm{Inv}(n)}\longrightarrow 0\text{, for any }r>0.
\end{align}

\subsection{Identifying the terms that do not contribute}

We are interested in the scaling limit of $\tau_n/   \mathrm{Inv}(n)$ under $\PR_0$. Recall that the quantities $\tau_{i+1}-\tau_{i}$ for $i\ge 1$ are distributed like $\tau_1$ under $\PR_0^K[\cdot|D=\infty]$, are independent of each other and independent of $\tau_1$.\\
As $\tau_1<\infty$ $\PR$-a.s.~and $\PR_0^K[\cdot|D=\infty]$-a.s., it is equivalent to look for the scaling limit of
\[
\sum_{i=1}^{n}\frac{\tau_1^{(i)}}{   \mathrm{Inv}(n)},
\]
where the $\tau_1^{(i)}$'s are i.i.d.~copies of $\tau_1$ under $\PR_0^K[\cdot|D=\infty]$. We will keep all previous notations adding naturally a superscript or subscript $(i)$ to notify that the quantity is related to the $i$-th copy of the first regeneration block under $\PR_0^K[\cdot|D=\infty]$. We will still denote $\PR_0^K[\cdot|D=\infty]$ their common probability measure.

We are thus interested in the time spent during one regeneration period under $\PR_0^K[\cdot|D=\infty]$. As explained before, the time spent by the walker during one regeneration period is not negligible only if he meets an edge with large conductance and, in this case, he spends essentially all of his time on this edge.

First we prove the following result.
\begin{lemma}\label{gladiator}
For any $\delta>0$, there exists $K_0<\infty$ such that, for any $K>K_0$,
\[
\ES_0^K[\tau_1^{\gamma-\delta}|D=\infty]<\infty.
\]
\end{lemma}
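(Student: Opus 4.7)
The plan is to prove the bound by estimating the tail of $\tau_1$ under $\overline{\mathbb{P}}=\mathbb{P}_0^K[\,\cdot\,|D=\infty]$ and then integrating, using the identity $\mathbb{E}[X^p]=p\int_0^\infty t^{p-1}\PR[X>t]\,dt$ for a nonnegative random variable $X$. The main input is that large traps give the dominant contribution and have polynomial-in-$n^{-\gamma}$ tails, while the remaining time is much more concentrated.

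Fix $\delta_0\in(0,\gamma)$ (the case $\delta_0\ge\gamma$ is trivial from the case $\delta_0<\gamma$). The strategy is to decouple ``time on small edges'' from ``time on large edges'' at a suitable threshold. Choose $\delta\in(0,1)$ close enough to $1$ that $\gamma\delta>\gamma-\delta_0$, which is possible since $\delta_0>0$. Recalling the definitions \eqref{def_tausup}--\eqref{def_tauinf} of $\tau_1^{<t}$ and $\tau_1^{\ge t}$ and that $LT(n^\delta)^c$ is exactly the event that no edge of conductance $\ge n^\delta$ is visited before $\tau_1$, we have the deterministic inclusion
\[
\{\tau_1>n\}\subset \{\tau_1^{<n^\delta}>n\}\cup LT(n^\delta),
\]
since on $LT(n^\delta)^c$ one has $\tau_1=\tau_1^{<n^\delta}$.

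For the first term, Lemma \ref{decaytau} (with $a=1$, valid for $K$ large enough depending on $\delta$) gives
\[
\overline{\PR}[\tau_1^{<n^\delta}>n]\le C(K,\delta)\, n^{-\gamma-\frac{(1-\delta)(1-\gamma)}{2}}.
\]
For the second term, Lemma \ref{tailpn} (which applies for $K\ge K_0$) yields
\[
\overline{\PR}[LT(n^\delta)]\le C\, {\bf P}[c_*\ge n^\delta]\le C L(n^\delta)\, n^{-\gamma\delta}.
\]
Combining, $\overline{\PR}[\tau_1>n]\le C\bigl(n^{-\gamma-\frac{(1-\delta)(1-\gamma)}{2}}+L(n^\delta)n^{-\gamma\delta}\bigr)$.

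It then remains to check integrability:
\[
\overline{\ES}[\tau_1^{\gamma-\delta_0}]
=1+(\gamma-\delta_0)\int_1^\infty t^{\gamma-\delta_0-1}\overline{\PR}[\tau_1>t]\,dt.
\]
The contribution of the first term of the tail bound is controlled by $\int_1^\infty t^{-1-\delta_0-\frac{(1-\delta)(1-\gamma)}{2}}\,dt<\infty$. The contribution of the second is
\[
\int_1^\infty t^{\gamma-\delta_0-1-\gamma\delta}L(t^\delta)\,dt,
\]
whose exponent is $-1-(\gamma\delta-\gamma+\delta_0)<-1$ by our choice of $\delta$; since $L$ is slowly varying, $L(t^\delta)=o(t^\epsilon)$ for any $\epsilon>0$, hence the integral converges by choosing $\epsilon<\gamma\delta-\gamma+\delta_0$. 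This proves the lemma with $K$ chosen large enough (depending on $\delta_0$, hence on $\delta$). The only subtle step is the simultaneous choice of $\delta$ and $K$: one must pick $\delta$ first based on $\delta_0$, and then $K$ large enough so that Lemma \ref{decaytau} and Lemma \ref{tailpn} both apply with this $\delta$; this is where we use that the constants $K_0$ in those lemmas depend only on $\delta$ (and $\gamma$).
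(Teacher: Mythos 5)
Your proof is correct and follows essentially the same route as the paper: you decompose the tail of $\tau_1$ according to whether a large trap is met, bound the small-edge contribution via Lemma \ref{decaytau} and the large-trap probability via Lemma \ref{tailpn}, and check integrability. The paper's version applies the same decomposition directly to $\overline{\PR}[\tau_1^{\gamma-\delta}>k]$ and checks summability rather than writing the moment as an integral, but this is only a cosmetic reparametrization (substitute $t=k^{1/(\gamma-\delta)}$) of the identical estimate.
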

\begin{proof}
Recalling the definitions \eqref{def_tausup}, \eqref{def_tauinf} and \eqref{def_Wblabla}, note that, for any $\epsilon>0$, under $\PR_0^K[\cdot|D=\infty]$,
\begin{align}\nonumber
\tau_1=&\tau_1^{\ge n^\frac{1-\epsilon}{\gamma}}\1{OLT(1/2,K,n^\frac{1-\epsilon}{\gamma})}+\tau_1^{< n^{\frac{1-\epsilon}{2\gamma}}}\1{OLT(1/2,K,n^\frac{1-\epsilon}{\gamma})}\\
\nonumber&+\tau_1\1{LT(n^\frac{1-\epsilon}{\gamma})^c}+\tau_1\1{LT(n^\frac{1-\epsilon}{\gamma})\cap OLT(1/2,K,n^\frac{1-\epsilon}{\gamma})^c}\\
\nonumber=& W_{  n^\frac{1-\epsilon}{\gamma} }c_*^\omega(e^{(n^\frac{1-\epsilon}{\gamma})}) \1{OLT(1/2,K,n^\frac{1-\epsilon}{\gamma})}+\tau_1^{< n^{\frac{1-\epsilon}{2\gamma}}}\1{OLT(1/2,K,n^\frac{1-\epsilon}{\gamma})}\\
\label{furio}&+\tau_1^{< n^\frac{1-\epsilon}{\gamma}   }\1{LT(n^\frac{1-\epsilon}{\gamma})^c}+\tau_1\1{LT(n^\frac{1-\epsilon}{\gamma})\cap OLT(1/2,K,n^\frac{1-\epsilon}{\gamma})^c}.
\end{align}
Now, for any $k\in\N$, we can use Lemma~\ref{decaytau} and Lemma~\ref{tailpn} and prove that
\begin{align*}
\PR_0^K[\tau_1^{\gamma-\delta}>k|D=\infty] \le& \PR_0^K[\tau_1^{< k^{\frac{1}{\gamma-\delta^2}}}>k^{\frac{1}{\gamma-\delta}}|D=\infty] + \PR_0^K[LT(k^{\frac{1}{\gamma-\delta^2}})|D=\infty] \\
\le& Ck^{-1-\frac{\delta}{\gamma-\delta}}+ C k^{-1-\frac{\delta^2}{2(\gamma-\delta^2)}},
\end{align*}
which is summable. This implies the result.
\end{proof}

Now let us prove that the time is overwhelmingly spent on large edges. The quantity $1/2$ in the following result as to be seen as some number strictly less than $1$. We choose this specific value only to avoid useless notation.
\begin{proposition}\label{justin}
For any $\epsilon\in(0,1/6)$, there exists $K_0<\infty$ such that, for any $K\ge K_0$, we have
\[
\frac{\tau_n-\sum_{i=1}^{n}W_{  n^\frac{1-\epsilon}{\gamma} }^{(i)}c_*^\omega(e_{(i)}^{(n^\frac{1-\epsilon}{\gamma})}) \1{OLT^{(i)}(\frac{1}{2},K,n^\frac{1-\epsilon}{\gamma})}}{   \mathrm{Inv}(n)}\xrightarrow{(d)}0.
\]
\end{proposition}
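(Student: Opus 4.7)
My plan is to use the decomposition~\eqref{furio} to reduce the proposition to showing that three explicit error terms, aggregated over the $n$ i.i.d.~regeneration blocks, are each $o_{\PR}(\mathrm{Inv}(n))$. Setting $m:=n^{(1-\epsilon)/\gamma}$, the decomposition~\eqref{furio} can be rewritten, for each i.i.d.~copy indexed by $i$, as
\[
\tau_1^{(i)} - W_m^{(i)} c_*^\omega\bigl(e^{(m)}_{(i)}\bigr)\1{OLT^{(i)}(1/2,K,m)} = A^{(i)} + B^{(i)} + C^{(i)},
\]
where
\[
A^{(i)}:=\tau_1^{<m^{1/2},(i)}\1{OLT^{(i)}(1/2,K,m)},\quad B^{(i)}:=\tau_1^{<m,(i)}\1{LT^{(i)}(m)^c},
\]
\[
C^{(i)}:=\tau_1^{(i)}\1{LT^{(i)}(m)\cap OLT^{(i)}(1/2,K,m)^c}.
\]
Using the i.i.d.~representation recalled at the start of Section~\ref{sect_limth} (the first regeneration time being negligible on the scale $\mathrm{Inv}(n)$), it suffices to prove $\sum_{i=1}^n A^{(i)}/\mathrm{Inv}(n)\to 0$, $\sum_{i=1}^n B^{(i)}/\mathrm{Inv}(n)\to 0$ and $\sum_{i=1}^n C^{(i)}/\mathrm{Inv}(n)\to 0$ in probability separately.

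For $C$ I would use a crude union bound. Remark~\ref{elodie} gives the inclusion $LT(m)\cap OLT(1/2,K,m)^c\subset SLT(1/2,K,m)$, so Lemma~\ref{not_2_trapsb} (applied with an arbitrarily small auxiliary $\epsilon_0>0$) yields
\[
\PR\Bigl[\sum_{i=1}^n C^{(i)}>0\Bigr]\le n\,\overline{\PR}\bigl[SLT(1/2,K,m)\bigr]\le C n^{1+\epsilon_0}L(m)^2 m^{-3\gamma/2},
\]
which, after substituting $m=n^{(1-\epsilon)/\gamma}$, is of order $n^{-1/2+3\epsilon/2+\epsilon_0}$ up to slowly varying factors and vanishes for $\epsilon$ small (it suffices to have $\epsilon<1/3$). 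Since this makes $\{\sum_i C^{(i)}=0\}$ overwhelmingly likely, the contribution of $C$ is negligible, no matter how large an individual $\tau_1^{(i)}$ may be on $LT\cap OLT^c$.

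For $A$ and $B$ the key input is an $\alpha$-moment estimate for $\tau_1^{<s}$ at an exponent $\alpha\in\bigl(\gamma,(1+\gamma)/2\bigr)$. Varying the free parameter $\delta$ of Lemma~\ref{decaytau} (via the substitution $\delta=\log s/\log t$), one gets, for every $t\ge s$,
\[
\overline{\PR}\bigl[\tau_1^{<s}>t\bigr]\le C\, t^{-\gamma-\frac{1-\gamma}{2}\left(1-\frac{\log s}{\log t}\right)},
\]
whose exponent tends to $-(1+\gamma)/2$ as $t/s\to\infty$. Combining this with the elementary tail bound $\overline{\PR}[\tau_1^{<s}>t]\le\overline{\PR}[\tau_1>t]\le Ct^{-\gamma+o(1)}$ in the regime $t\le s$ (which follows from Lemma~\ref{gladiator} via Markov), an integration of the tail yields, modulo slowly varying factors,
\[
\overline{\ES}\bigl[(\tau_1^{<s})^{\alpha}\bigr]\le C\, s^{\alpha-\gamma}\qquad\text{for every }\alpha\in\bigl(\gamma,\tfrac{1+\gamma}{2}\bigr).
\]
Applying this with $s=m^{1/2}$ for $A$ and $s=m$ for $B$, together with the subadditivity $(\sum x_i)^\alpha\le\sum x_i^\alpha$ valid for $\alpha\in(0,1)$, Markov's inequality at level $\alpha$, and $\mathrm{Inv}(n)\asymp n^{1/\gamma}$, an elementary computation of exponents gives
\[
\PR\Bigl[\sum_{i=1}^n A^{(i)}>\eta\mathrm{Inv}(n)\Bigr]\le C_\eta\, n^{-\frac{(1+\epsilon)(\alpha-\gamma)}{2\gamma}+o(1)},\ \PR\Bigl[\sum_{i=1}^n B^{(i)}>\eta\mathrm{Inv}(n)\Bigr]\le C_\eta\, n^{-\frac{\epsilon(\alpha-\gamma)}{\gamma}+o(1)},
\]
both of which vanish as $n\to\infty$ for any fixed $\eta,\epsilon>0$ and any fixed $\alpha\in\bigl(\gamma,(1+\gamma)/2\bigr)$.

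The main technical step is therefore the moment estimate on $\tau_1^{<s}$, which hinges on the two-regime integration sketched above: Lemma~\ref{decaytau} with $\delta$ varying as a function of $t$ for $t\ge s$, and the direct $\gamma$-moment tail for $t\le s$. The numerology $1/2$ inside $OLT(1/2,K,\cdot)$ and the range $\epsilon\in(0,1/6)$ stated in the proposition only serve to make all three estimates hold with comfortable margins; the argument sketched above already works for every $\epsilon<1/3$.
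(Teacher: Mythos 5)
Your decomposition into $A^{(i)}$, $B^{(i)}$, $C^{(i)}$ is exactly the one the paper obtains from~\eqref{furio}, and your treatment of $C$ (union bound plus Remark~\ref{elodie} plus Lemma~\ref{not_2_trapsb}) is correct. For $A$ and $B$, however, you take a genuinely different route from the paper: you aim for a moment bound $\overline{\ES}\bigl[(\tau_1^{<s})^{\alpha}\bigr]\le C\,s^{\alpha-\gamma}$, uniform in $s$, for some $\alpha\in(\gamma,\tfrac{1+\gamma}{2})$, and then conclude via $\alpha$-subadditivity and Markov. The paper instead runs a level-set (binning) argument (Steps~1--3 of its proof), which bounds the number of regeneration blocks whose contribution falls in each dyadic-type window $(n^{i/(M\gamma)},n^{(i+1)/(M\gamma)}]$ and rules out very large contributions separately; it needs only the crude $(\gamma-1/M)$-moment of $\tau_1$ from Lemma~\ref{gladiator} and two applications of Lemma~\ref{decaytau} at \emph{fixed} $\delta$.

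The gap in your argument is precisely the step you flag as the ``main technical step''. You apply Lemma~\ref{decaytau} with $\delta=\log s/\log t$ varying continuously with $t$, to get
$\overline{\PR}[\tau_1^{<s}>t]\le C\,t^{-\gamma-\frac{1-\gamma}{2}(1-\log s/\log t)}$
with a single constant $C$. But in Lemma~\ref{decaytau}, both the constant $C(K,\delta,a)$ \emph{and} the threshold $K_0$ (and, implicitly, the least $n$ from which the bound is valid) depend on $\delta$; nothing in the paper asserts that these can be chosen uniformly in $\delta$, and inspecting the end of the proof of Lemma~\ref{decaytau} shows that the $n$-threshold in fact degenerates as $\delta\to 1$, which is exactly part of the range you integrate over. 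If one replaces your varying-$\delta$ use of Lemma~\ref{decaytau} by a fixed $\delta^*<1-2(\alpha-\gamma)/(1-\gamma)$ (using the trivial $\overline{\PR}[\tau_1>t]\le C t^{-\gamma+\delta'}$ from Lemma~\ref{gladiator} on $t\le s^{1/\delta^*}$), the resulting moment bound is only $\overline{\ES}[(\tau_1^{<s})^\alpha]\le C\,s^{(\alpha-\gamma+\delta')/\delta^*}$, which is strictly weaker. Plugging this into your $B$-term computation shows it still closes, but only if one additionally requires $\alpha-\gamma<\epsilon(1-\gamma)/2$ (so that one can take $\delta^*>1-\epsilon$). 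That contradicts your claim that the estimates hold ``for any fixed $\alpha\in(\gamma,(1+\gamma)/2)$''. So the proposal is fixable, but as written it rests on a uniformity in Lemma~\ref{decaytau} that is neither stated nor obvious, and the paper's binning argument is designed precisely to avoid having to establish it.
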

\begin{proof}
Using that $\tau_1<\infty$ $\PR$-a.s. and $\PR_0^K[\cdot|D=\infty]$-a.s., and using \eqref{retard} and \eqref{furio}, it is enough to prove that, for some $r>0$,
\begin{align}\label{radio}
n^{-\frac{1}{\gamma}+r}\sum_{i=1}^{n}\left[{\tau_1^{(i)}}-W_{  n^\frac{1-\epsilon}{\gamma} }^{(i)}c_*^\omega(e_{(i)}^{(n^\frac{1-\epsilon}{\gamma})}) \1{OLT^{(i)}(1/2,K,n^\frac{1-\epsilon}{\gamma})}\right]\longrightarrow 0,
\end{align}
in probability.\\

For notational simplicity, let us write 
\[
f_{i,n}:= {\tau_1^{(i)}}-W_{  n^\frac{1-\epsilon}{\gamma} }^{(i)}c_*^\omega(e_{(i)}^{(n^\frac{1-\epsilon}{\gamma})}) \1{OLT^{(i)}(1/2,K,n^\frac{1-\epsilon}{\gamma})}
\]
for the $i$-th term of the sum in~\eqref{radio}. Note that by Lemma~\ref{gladiator}, the $f_{i,n}$'s, $1\le i \le n$, are i.i.d., have a moment $\gamma-\delta$ for any $\delta>0$ and their tails can be upper bounded using \eqref{furio}.\\

The rest of the proof is made of three main steps. Before detailing them, let us define constants that will be useful:
\begin{align}\label{poinconneur}
M:= 4\frac{2+\gamma}{\eta_1},\
\eta_1:= (1-\gamma)\eta_2,\
\eta_2:= \frac{1-\gamma}{3+\gamma}\epsilon,
\end{align}
and note that each of these constants depends only on $\gamma$ and $\epsilon$.\\
Moreover, the following inequality will be used several times. For any $a\in\R$,
\begin{align}\nonumber
   \ES_0^K\left[\left.\mathrm{card}\{1\leq j\leq n,f_{j,n}\geq a\}\right|D=\infty\right]
     =& \ES_0^K\left[\left.\sum_{{1\leq j\leq n}}  \1{f_{j,n}\geq a}\right|D=\infty\right] \\ 
 \leq & n \PR_0^K\left[\left.f_{1,n}\geq a\right|D=\infty\right].\label{serge}
\end{align}

\vspace{0.5cm}

{\it Step 1: controlling terms with small or medium conductances }

\vspace{0.5cm}

Using Lemma~\ref{gladiator} and Markov's inequality, we have, for any $i\in\{0,...,M\}$,
\[
\PR_0^K[f_{1,n}\geq n^{i/(M\gamma)}|D=\infty]\leq C(K,M) n^{-(\gamma-1/M)(i/(M\gamma))}.
\]

Using \eqref{serge} and Markov's inequality, we obtain for $0\le i\leq M$
\begin{align*}
&\PR_0^K\left[\left.\text{card}\{1\leq j \leq n,f_{j,n}\geq n^{i/(M\gamma)}\}\geq\frac{n^{\frac{1-\eta_1}{\gamma} +\frac{2+\gamma}{M\gamma}-\frac{i+1}{M\gamma}}}{2M} \right|D=\infty\right] \\
\leq & C(K,M) n n^{-(\gamma-\frac{1}{M})\frac{i}{M\gamma}}n^{-\frac{1-\eta_1}{\gamma} -\frac{2+\gamma}{M\gamma}+\frac{i+1}{M\gamma}} \\
\leq & C(K,M) n^{\frac{\eta_1}{\gamma}-(\frac{1}{\gamma}-1)(1-\frac{i}{M})-\frac{1}{M}}.
\end{align*}

For any $0\le i \le M$, we define the event 
\begin{align*}
&B(n,i, M,\eta_1)\\
:=& \Bigl \{\text{card} \{1\leq j \leq n, f_{j,n}\in (n^{i/(M\gamma)},n^{(i+1)/(M\gamma)}]\}    \geq\frac 1 {2M} n^{\frac{1-\eta_1}{\gamma} +\frac{2+\gamma}{M\gamma}-\frac{i+1}{M\gamma}}\Bigr\}.
\end{align*}
Together with the fact that $i\leq M (1-\eta_1/(1-\gamma))$ is equivalent to $\eta_1/\gamma-(1/\gamma-1)(1-i/M)\leq 0$, this yields that, for any fixed $M$,
\begin{align}
\label{clyde}
\PR_0^K[B(n,i,M,\eta_1)|D=\infty]\leq& C(K,M) n^{-1/M}=o(1),
\end{align}
for any  $i\leq \left\lfloor M \left(1-\frac{\eta_1}{1-\gamma}\right)\right\rfloor=: i_{\max}$.

\vspace{0.5cm}

{\it Step 2: ruling out terms with large conductances }

\vspace{0.5cm}

Let us define the event
\begin{align*}
B'(n,\eta_2) & =\left\{\text{card}\{1\leq j \leq n,f_{j,n} \geq n^{\frac{1-\eta_2}{\gamma}}\}\geq 1\right\}.
\end{align*}

Recalling \eqref{serge} and using \eqref{furio}, we see that
\begin{align*}
 \PR_0^K[B'(n,\eta_2)|D=\infty] \leq &  n \PR_0^K[f_{1,n}\geq n^{\frac{1-\eta_2}{\gamma}}|D=\infty] \\
 \leq & n \left(   \PR_0^K\left[\left.\tau_1^{< n^{ \frac{1-\epsilon}{2\gamma} } } \ge \frac{1}{3}n^{\frac{1-\eta_2}{\gamma}}\right|D=\infty\right]\right.      \\
 &\qquad +\PR_0^K\left[\left.\tau_1^{< n^{ \frac{1-\epsilon}{\gamma} } } \ge \frac{1}{3}n^{\frac{1-\eta_2}{\gamma}}\right|D=\infty\right]\\
 &\left.\qquad +\PR_0^K\left[\left. LT(n^{ \frac{1-\epsilon}{\gamma} })\cap OLT(1/2,K,n^{ \frac{1-\epsilon}{\gamma} })^c\right|D=\infty\right]\right)\\
 \le& Cn\left( 2n^{ \frac{1-\eta_2}{\gamma}\left[ -\gamma-\frac{ \left(1-\frac{1-\epsilon}{1-\eta_2}\right)(1-\gamma)   }{2    }   \right]   } + n^{\frac{\epsilon}{2}+\frac{1-\epsilon}{2}-(1-\epsilon)}\right),
 \end{align*} 
where we used Lemma~\ref{decaytau} twice  and Lemma~\ref{not_2_trapsb}. Finally, using \eqref{poinconneur} and the fact that $\eta_2< \epsilon< 1/6$, we have
\begin{align}\label{bonnie}
& \PR_0^K[B'(n,\eta_2)|D=\infty]\le Cn\times n^{-1-\eta_2}\le Cn^{-\eta_2}=o(1).
\end{align}

\vspace{0.5cm}

{\it Step 3: conclusion}

\vspace{0.5cm}

Recall that $i_{\max}=\left\lfloor M \left(1-\eta_2\right)\right\rfloor$ so that $(i_{\max}+1)/(M\gamma)\ge (1-\eta_2)/\gamma$. Now, define the event
\[
B(n,M)=B'(n,\eta_2)\cup\bigcup_{j=0}^{i_{\max}} B(n,i,M,\eta_1).
\]
Using \eqref{clyde} and \eqref{bonnie}, we have
\[
\PR_0^K[B(n,M)|D=\infty]=o(1).
\]
This implies that 
\[
n^{-\frac{1}{\gamma}+\frac{\eta_1}{4}}\1{B(n,M)}\sum_{j=1}^{n} f_{j,n}\longrightarrow 0
\]
in probability.\\
On the other hand, on $B(n,M)^c$, we can give an upper bound
\begin{align*}
\sum_{j=1}^{n} f_{j,n}\le& \sum_{j=1}^{i_{\max}}\frac{1}{2M} n^{\frac{1-\eta_1}{\gamma}+\frac{2+\gamma}{M\gamma}}\le n^{\frac{1-\eta_1}{\gamma}+\frac{2+\gamma}{M\gamma}}=n^{\frac{1}{\gamma} -\frac{3\eta_1}{4}}.
\end{align*}
This implies that
\[
n^{-\frac{1}{\gamma}+\frac{\eta_1}{4}}\1{B(n,M)^c}\sum_{j=1}^{n} f_{j,n} \longrightarrow 0
\]
in probability. This concludes the proof.
\end{proof}


\subsection{Scaling limits of the asymptotic environment}
In this section, we state the following results about the scaling limits of i.i.d.~copies of the variable $c_{*}^{\mathrm{max}}W_{\infty}$. In Section \ref{jaienviededormir}, we will then prove that $\tau_n$ has the same limit.
 \begin{proposition}\label{jackwhite}
 Under $\PR^\infty$, we have
 \begin{align*}
&\frac{\sum_{i=1}^{n} c_{*,(i)}^{\mathrm{max}}W^{(i)}_{\infty}}{   \mathrm{Inv}(n)}\xrightarrow{(d)} C_\infty S_\gamma\text{ and }\frac{\sum_{i=1}^{n} c_{*,(i)}^{\mathrm{max}}\tilde{W}^{(i)}_{\infty}}{   \mathrm{Inv}(n)}\xrightarrow{(d)} \tilde{C}_\infty S_\gamma,
\end{align*}
where $S_\gamma$ has a completely asymmetric stable law of index $\alpha$ and where the constants are defined by:
\begin{align}\label{agraffes}
C_\infty:=\left(C_1\ES^\infty[W_\infty^\gamma]\right)^{1/\gamma} \text{ and }
\tilde{C}_\infty:=\left(C_1\ES^\infty[\tilde{W}^\gamma_\infty]\right)^{1/\gamma},
\end{align}
with $C_1$ being the constant defined in \eqref{def_C1}.
 \end{proposition}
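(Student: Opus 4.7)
The plan is to view both statements as a direct application of the classical generalized central limit theorem for sums of i.i.d.\ non-negative random variables with regularly varying tails of index $\gamma\in(0,1)$ (see e.g.\ Theorem 3.7.2 of \cite{Durrett} or the corresponding statement in Feller). The two estimates we need as input are already established: by Lemma~\ref{final_tail} the random variables $c_{*,(i)}^{\mathrm{max}}W_\infty^{(i)}$ are i.i.d.\ under $\PR^\infty$ with
\[
\PR^\infty\left[c_*^{\mathrm{max}}W_\infty \geq t\right]\sim C_1 \ES^\infty[W_\infty^\gamma]\, L(t)\, t^{-\gamma},
\]
and analogously for $\tilde{W}_\infty$, so in each case the tail is regularly varying of index $-\gamma$.

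First I would choose the natural normalizing sequence $(a_n)$ for the first sum by requiring
\[
n\,\PR^\infty\left[c_*^{\mathrm{max}}W_\infty \geq a_n\right]\longrightarrow 1,
\]
which, by the tail estimate above, forces $a_n^{\gamma}/L(a_n)\sim n\, C_1 \ES^\infty[W_\infty^\gamma]$. The generalized CLT for triangular arrays of i.i.d.\ non-negative heavy-tailed variables then yields
\[
\frac{1}{a_n}\sum_{i=1}^{n} c_{*,(i)}^{\mathrm{max}}W_\infty^{(i)}\xrightarrow{(d)} S_\gamma,
\]
where $S_\gamma$ is the completely asymmetric stable law of index $\gamma$ (the positivity of the summands guarantees complete asymmetry, since only the positive part of the L\'evy measure is non-zero).

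Next I would compare $a_n$ to $\mathrm{Inv}(n)$. By the very definition \eqref{defInv}, $\mathrm{Inv}(n)$ satisfies $L(\mathrm{Inv}(n))\mathrm{Inv}(n)^{-\gamma}\sim 1/n$, i.e.\ $\mathrm{Inv}(n)^\gamma/L(\mathrm{Inv}(n))\sim n$. Combined with the definition of $a_n$, and using that $L$ is slowly varying (so that ratios of the form $L(a_n)/L(\mathrm{Inv}(n))$ tend to $1$ when $a_n/\mathrm{Inv}(n)$ is bounded away from $0$ and $\infty$), one obtains
\[
\frac{a_n}{\mathrm{Inv}(n)}\longrightarrow \bigl(C_1\ES^\infty[W_\infty^\gamma]\bigr)^{1/\gamma}=C_\infty,
\]
recalling definition \eqref{agraffes}. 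Slutsky's lemma then upgrades convergence with normalization $a_n$ to convergence with normalization $\mathrm{Inv}(n)$, yielding the stated limit $C_\infty S_\gamma$. Exactly the same argument applied to $\tilde{W}_\infty$ (whose $\gamma$-moment is finite by Lemma~\ref{moment_winf}, and whose tail is computed in the second part of Lemma~\ref{final_tail}) gives the companion statement with constant $\tilde{C}_\infty$.

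The argument is essentially a bookkeeping exercise once the tail estimates of Lemma~\ref{final_tail} are in hand; there is no substantive obstacle. The only mildly delicate point is the slowly varying correction, which must be handled by using that $a_n/\mathrm{Inv}(n)$ is a ratio of two sequences whose $\gamma$-th powers divided by the corresponding $L$-values behave linearly in $n$, so that $L(a_n)/L(\mathrm{Inv}(n))\to 1$ and the constants collapse to the clean formula \eqref{agraffes}.
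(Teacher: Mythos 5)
Your proposal follows essentially the same route as the paper: both invoke Lemma \ref{final_tail} to get the regularly varying tail, apply the classical stable limit theorem (Theorem 3.7.2 of \cite{Durrett}) with the natural normalization $a_n$ determined by $n\,\PR^\infty[c_*^{\mathrm{max}}W_\infty\ge a_n]\to 1$ (this is exactly the paper's $\widetilde{\mathrm{Inv}}(n)$), and then use slow variation plus monotonicity to show $a_n/\mathrm{Inv}(n)\to C_\infty$ and transfer the normalization. The complete asymmetry from positivity of the summands and the companion argument for $\tilde{W}_\infty$ are also handled the same way; the one mildly informal point in your write-up, the boundedness of $a_n/\mathrm{Inv}(n)$ needed before the ratio $L(a_n)/L(\mathrm{Inv}(n))$ can be controlled, is addressed in the paper (also without details) by appealing to the monotonicity of both inverse functions.
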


 \begin{proof}
Let us explain it for the first case, the second being similar.
We have to deal with a sum of i.i.d.~random variables whose tails are heavy and are, by Lemma~\ref{final_tail}, such that
\[
\PR^\infty[c_*^{\max}W_\infty\ge t]\sim C_\infty^\gamma L(t)t^{-\gamma}.
\]
This is equivalent to say that there exists a slowly-varying function $\tilde{L}$ such that, for any $t\ge 0$,
\[
\PR^\infty[c_*^{\max}W_\infty\ge t]= \tilde{L}(t)t^{-\gamma}.
\]
Let us denote $\widetilde{\mathrm{Inv}}(\cdot)$ the generalized inverse function of this tail, composed with $t\mapsto1/t$, as in \eqref{defInv}. Using classical results about sums of i.i.d.~heavy-tailed random variables (see Theorem 3.7.2, p.161 of \cite{Durrett}), we have that
\[
\frac{\sum_{i=1}^{n} c_{*,(i)}^{\mathrm{max}}W^{(i)}_{\infty}}{   \widetilde{\mathrm{Inv}}(n)}\xrightarrow{(d)} S_{\gamma}.
\]
Now, using the properties of slowly-varying functions and using the monotonicity of $\mathrm{Inv}$ and $\widetilde{\mathrm{Inv}}$, one can easily show  that, for any $\delta>0$,  $(1-\delta)C_\infty\mathrm{Inv}(n)\le \widetilde{\mathrm{Inv}}(n) \le (1+\delta)C_\infty \mathrm{Inv}(n)$, as soon as $n$ is large enough, hence
\[
\frac{\widetilde{\mathrm{Inv}}(n)}{   \mathrm{Inv}(n) }\to C_\infty.
\]
Finally, as we deal with sums of non-negative random variables, $S_\gamma$ is necessarily completely asymmetric (i.e.~supported by the non-negative real numbers).
 \end{proof}
 
 The last Proposition obviously implies the following lemma.
  \begin{lemma}\label{ggggg}
 Under $\PR^\infty$, we have, for any $\delta>0$,
 \begin{align*}
&n^{ - \delta}\frac{\sum_{i=1}^{n} c_{*,(i)}^{\mathrm{max}}W^{(i)}_{\infty}}{   \mathrm{Inv}(n)}\xrightarrow{(d)} 0\text{ and }n^{ - \delta}\frac{\sum_{i=1}^{n} c_{*,(i)}^{\mathrm{max}}\tilde{W}^{(i)}_{\infty}}{   \mathrm{Inv}(n)}\xrightarrow{(d)} 0.
\end{align*}
 \end{lemma}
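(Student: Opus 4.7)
The plan is to deduce the lemma immediately from Proposition \ref{jackwhite} by applying Slutsky's theorem with a deterministic factor. The key point is that Proposition \ref{jackwhite} identifies a genuine scaling limit, namely $C_\infty S_\gamma$ (resp.\ $\tilde C_\infty S_\gamma$), which is an almost surely finite random variable, and therefore the normalized sums
\[
X_n := \frac{\sum_{i=1}^{n} c_{*,(i)}^{\mathrm{max}} W^{(i)}_{\infty}}{\mathrm{Inv}(n)}, \qquad \tilde X_n := \frac{\sum_{i=1}^{n} c_{*,(i)}^{\mathrm{max}} \tilde W^{(i)}_{\infty}}{\mathrm{Inv}(n)}
\]
are tight sequences under $\PR^\infty$.

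Fix $\delta > 0$. I would write $n^{-\delta} X_n = a_n X_n$ with $a_n := n^{-\delta}$, a deterministic sequence tending to $0$. Since $X_n \xrightarrow{(d)} C_\infty S_\gamma$ and $a_n \to 0$ trivially, Slutsky's theorem (or equivalently the continuous mapping theorem applied to the joint convergence $(a_n, X_n) \xrightarrow{(d)} (0, C_\infty S_\gamma)$) yields $a_n X_n \xrightarrow{(d)} 0 \cdot C_\infty S_\gamma = 0$. Because the limit is a constant, this convergence in distribution is equivalent to convergence in probability, which is exactly the first claimed statement. The identical argument applied to $\tilde X_n$, using the second convergence in Proposition \ref{jackwhite}, yields the second statement.

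There is no substantial obstacle here: the lemma is a routine corollary isolating the fact that the sums in Proposition \ref{jackwhite} are of order $\mathrm{Inv}(n)$ in distribution, hence multiplying by any negative power of $n$ kills them in the limit. The only reason the lemma is recorded separately is that the exponent $n^{-\delta}$ is the convenient form in which this bound will be invoked in Section \ref{jaienviededormir} to control error terms arising from the coupling of Proposition \ref{final_coupling}.
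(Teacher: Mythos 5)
Your proof is correct and matches the paper exactly: the paper states only that ``the last Proposition obviously implies the following lemma,'' and the obvious implication is precisely the Slutsky argument you spell out, since $n^{-\delta}\to 0$ deterministically while the unweighted ratios converge in distribution to a finite limit by Proposition \ref{jackwhite}.
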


The following result shows that only terms associated to large conductances contribute to the limits stated in Proposition \ref{jackwhite}.

\begin{lemma}\label{neglect_small_cond}
For any $\epsilon\in(0,1/6)$, we have 
\[
\frac{\sum_{i=1}^{n} c_{*,(i)}^{\mathrm{max}}W^{(i)}_{\infty}\1{c_{*,(i)}^{\max}\ge n^{\frac{1-\epsilon}{\gamma}}}-\sum_{i=1}^{n} c_{*,(i)}^{\mathrm{max}}W^{(i)}_{\infty}}{   \mathrm{Inv}(n)}\xrightarrow{(d)}0,
\]
and
\[
\frac{\sum_{i=1}^{n} c_{*,(i)}^{\mathrm{max}}\tilde{W}^{(i)}_{\infty}\1{c_{*,(i)}^{\max}\ge n^{\frac{1-\epsilon}{\gamma}}}-\sum_{i=1}^{n} c_{*,(i)}^{\mathrm{max}}\tilde{W}^{(i)}_{\infty}}{   \mathrm{Inv}(n)}\xrightarrow{(d)}0.
\]
 \end{lemma}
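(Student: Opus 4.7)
Both convergences have the same form and are equivalent to convergence in probability, so it suffices to handle the first; the argument for $\tilde W_\infty$ is identical upon invoking the finite $(\gamma+\epsilon)$-moment of $\tilde W_\infty$ from Lemma \ref{moment_winf}. Setting $a_n := n^{(1-\epsilon)/\gamma}$ and $Z_i := c_{*,(i)}^{\mathrm{max}} W_\infty^{(i)} \1{c_{*,(i)}^{\max} < a_n}$, the goal reduces to showing that $\mathrm{Inv}(n)^{-1} \sum_{i=1}^n Z_i \to 0$ in $\PR^\infty$-probability.

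My plan is a moment bound with an exponent $\beta \in (\gamma, 1)$. Pick $\eta > 0$ small enough that $\beta := \gamma + \eta$ satisfies $\beta < 1$ and $\ES^\infty[W_\infty^\beta] < \infty$; both are possible since $\gamma < 1$ and by Lemma \ref{moment_winf}. Since $\beta \le 1$, the map $x \mapsto x^\beta$ is subadditive on $[0,\infty)$, so combining this with the i.i.d.\ structure of the $Z_i$ and Markov's inequality gives, for any $\eta' > 0$,
\[
\PR^\infty\!\left[\mathrm{Inv}(n)^{-1}\sum_{i=1}^n Z_i > \eta'\right] \leq (\eta')^{-\beta}\, n\, \ES^\infty[Z_1^\beta]\, \mathrm{Inv}(n)^{-\beta}.
\]
By the construction of $\PR^\infty$ stated just before Lemma \ref{moment_winf}, $c_*^{\max}$ is independent of $W_\infty$, hence $\ES^\infty[Z_1^\beta] = \ES^\infty[W_\infty^\beta]\, \ES^\infty[(c_*^{\max})^\beta \1{c_*^{\max} < a_n}]$. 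The first factor is a finite constant, and the tail estimate $\PR^\infty[c_*^{\max} \ge t] \sim C_1 L(t) t^{-\gamma}$ (Lemma \ref{tailpn} combined with Remark \ref{tail2}) together with integration by parts and Karamata's theorem applied to the regularly varying integrand $s \mapsto s^{\eta-1} L(s)$ yield $\ES^\infty[(c_*^{\max})^\beta \1{c_*^{\max} < a_n}] \le C\, a_n^{\eta}\, L(a_n)$.

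Finally, the definition \eqref{defInv} of $\mathrm{Inv}$ gives $\mathrm{Inv}(n)^\gamma \sim n\, L(\mathrm{Inv}(n))$, hence $\mathrm{Inv}(n)^{\gamma+\eta} \sim n^{1+\eta/\gamma}\, L(\mathrm{Inv}(n))^{1+\eta/\gamma}$. Substituting produces the bound
\[
\PR^\infty\!\left[\mathrm{Inv}(n)^{-1}\sum_{i=1}^n Z_i > \eta'\right] \leq C(\eta')\, n^{-\epsilon\eta/\gamma}\, \frac{L(n^{(1-\epsilon)/\gamma})}{L(\mathrm{Inv}(n))^{1+\eta/\gamma}},
\]
which tends to $0$ because the polynomial exponent $-\epsilon\eta/\gamma$ is strictly negative and, by Potter-type bounds for slowly varying functions, the ratio of $L$'s is $o(n^{\epsilon\eta/(2\gamma)})$. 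The main delicate point is just the bookkeeping of the slowly varying factors; no deeper obstruction arises, because the two key ingredients (independence of $c_*^{\max}$ and $W_\infty$ under $\PR^\infty$, and the finite $(\gamma+\eta)$-moment of $W_\infty$) are already provided by the preceding lemmas.
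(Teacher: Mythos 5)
Your proof is correct, and it takes a genuinely different and more streamlined route than the paper. The paper recycles the level-decomposition machinery of Proposition~\ref{justin}: split the possible sizes of $c_{*,(i)}^{\max}W^{(i)}_\infty\1{c_{*,(i)}^{\max}<a_n}$ into geometric scales $(n^{i/(M\gamma)},n^{(i+1)/(M\gamma)}]$, bound the number of summands falling into each scale via first-moment estimates, and handle the top scale separately via Breiman's theorem (which also exploits the product structure and the independence of $c_*^{\max}$ and $W_\infty$). Your argument instead goes through a single clean moment bound: choose $\beta=\gamma+\eta\in(\gamma,1)$, apply subadditivity of $x\mapsto x^\beta$ plus Markov, factor $\ES^\infty[Z_1^\beta]$ using the independence of $c_*^{\max}$ and $W_\infty$ under $\PR^\infty$ (which the paper itself constructs and uses, e.g.\ in Lemma~\ref{kevin} and the Breiman step of Lemma~\ref{final_tail}), and finish with Karamata applied to the truncated $\beta$-moment of $c_*^{\max}$ together with the definition of $\mathrm{Inv}$. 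Your route is shorter and more transparent precisely because the quantities here have a clean product form $c_*^{\max}W_\infty$ with the two factors independent and a finite $(\gamma+\eta)$-moment on $W_\infty$ available from Lemma~\ref{moment_winf}; the paper's level-decomposition argument does not exploit this special structure, which makes it reusable in Proposition~\ref{justin} where the increments $\tau_1^{(i)}-W^{(i)}c_*^\omega(e_{(i)})\1{OLT^{(i)}}$ admit no such clean factorization. Two small remarks on bookkeeping: in the last step you may as well invoke \eqref{retard} ($n^{1/\gamma-r}/\mathrm{Inv}(n)\to0$ for all $r>0$) to dispose of $\mathrm{Inv}(n)^{-(\gamma+\eta)}$ more directly, avoiding the explicit $L(\mathrm{Inv}(n))$ term; and the observation that $L(n^{(1-\epsilon)/\gamma})/L(\mathrm{Inv}(n))^{1+\eta/\gamma}$ is $o(n^{\delta})$ for all $\delta>0$ is really because ratios and fixed powers of slowly varying functions (composed with regularly varying arguments) remain slowly varying, rather than being strictly a Potter bound, but the conclusion is the same.
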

\begin{proof}
Using \eqref{retard}, we can conclude if we prove that, for some $r>0$,
  \begin{align}\label{dontbelieve}
n^{-\frac{1}{\gamma}+r}\sum_{i=1}^{n} c_{*,(i)}^{\max} W^{(i)}_{\infty}\1{c_{*,(i)}^{\max}< n^{\frac{1-\epsilon}{\gamma}}}\xrightarrow{(d)}{} 0,
 \end{align}
 and
\[
n^{-\frac{1}{\gamma}+r}\sum_{i=1}^{n} c_{*,(i)}^{\max} \tilde{W}^{(i)}_{\infty}\1{c_{*,(i)}^{\max}<  n^{\frac{1-\epsilon}{\gamma}}}\xrightarrow{(d)}{} 0.
 \]
The proof is very close to the proof of Proposition~\ref{justin}.
Let us do the proof of the first equation, the second being similar.
As in \eqref{poinconneur}, let us define the following constants which depend only on $\gamma$ and $\epsilon$:
\begin{align}\label{poinconneur2}
M:= 4\frac{2+\gamma}{\eta_1},\
\eta_1:= (1-\gamma)\eta_2,\
\eta_2:=  \frac{1-\gamma}{6(1+\gamma)}\epsilon.
\end{align}
Let us also define the following shorthand notation for the $i$-th term of the sum in \eqref{dontbelieve}
\[
f_{i,n}:=c_{*,(i)}^{\max} W^{(i)}_{\infty}\1{c_{*,(i)}^{\max}< n^{\frac{1-\epsilon}{\gamma}}},
\]
so that the $f_{i,n}$'s, $1\le i\le n$, are i.i.d.~and, using Lemma~\ref{final_tail}, we have 
\[
\ES^\infty[(f_{1,n})^{\gamma-\frac{1}{M}}]\leq \ES^\infty[(c_{*,(i)}^{\max} W^{(i)}_{\infty})^{\gamma-\frac{1}{M}}] < C(\gamma,M)<\infty.
\]

\vspace{0.5cm}

{\it Step 1: controlling terms with small or medium conductances }

\vspace{0.5cm}

%

For any $0\le i \le M$, we define the event 
\begin{align*}
&B(n,i, M,\eta_1)\\
:=& \Bigl \{\text{card} \{1\leq j \leq n, f_{j,n}\in (n^{i/(M\gamma)},n^{(i+1)/(M\gamma)}]\}    \geq\frac 1 {2M} n^{\frac{1-\eta_1}{\gamma} +\frac{2+\gamma}{M\gamma}-\frac{i+1}{M\gamma}}\Bigr\}.
\end{align*}
Proceeding as in the Step $1$ of the proof of Proposition~\ref{justin}, we obtain that, for any fixed $M$,
\begin{align}
\label{clyde2}
\PR^\infty[B(n,i,M,\eta_1)]\leq& C(M) n^{-1/M}=o(1),
\end{align}
for any  $i\leq \left\lfloor M \left(1-\frac{\eta_1}{1-\gamma}\right)\right\rfloor=: i_{\max}$.

\vspace{0.5cm}

{\it Step 2: ruling out terms with large conductances }

\vspace{0.5cm}

Denote, for $\eta_2>0$,
\begin{align*}
B'(n,\eta_2) & =\{\text{card}\{1\leq j \leq n, f_{j,n} \geq n^{(1-\eta_2)/\gamma}\}\geq 1\}.
\end{align*}

Recalling \eqref{serge}, the fact that, by Lemma~\ref{final_tail}, $c_{*}^{\max}W_\infty$ has a slowly-varying tail such that $\PR^\infty[c_{*}^{\max}W_\infty  \geq t]\leq t^{-\gamma+\gamma\eta_2}$ asymptotically, and using Breiman's Theorem (see~\cite{CS}, see Corollary 3.6 $(iii)$), we have
\begin{align*}
& \PR^\infty[B'(n,\eta_2)]\\
& \leq   n \PR^\infty[c_{*}^{\max} {W}^{}_{\infty}\geq n^{(1-\eta_2)/\gamma}, c_{*}^{\max}< n^{\frac{1-\epsilon}{\gamma}}] \\
&  \leq   n \PR^\infty[c_{*}^{\max} {W}^{}_{\infty}\1{W_{\infty} \geq n^{\frac{\epsilon-\eta_2}{\gamma}}}\geq n^{(1-\eta_2)/\gamma}] \\
& \leq C n  n^{-(1-\eta_2)^2} \PR^\infty[ W_{\infty} \geq n^{\frac{\epsilon-\eta_2}{\gamma}} ] \\
& \le  C n^{2\eta_2} n^{-(\epsilon-\eta_2)}\\
&\le Cn^{-\epsilon/2},
 \end{align*}
%
%
%
where we used that $\eta_2\le \epsilon/6$. We have thus proved that
\begin{equation}\label{alma}
\PR^\infty[B'(n,\eta_2)]=o(1).
\end{equation}

\vspace{0.5cm}

{\it Step 3: conclusion}

\vspace{0.5cm}
We conclude in the exact same manner as the step $3$ in the proof of Proposition~\ref{justin} that
\begin{align*}
n^{-\frac{1}{\gamma}+\frac{\eta_1}{4}}\sum_{j=1}^{n} c_{*,(j)}^{\max} {W}^{(j)}_{\infty}\1{c_{*}^{\max}< n^{\frac{1-\epsilon}{\gamma}}}&\longrightarrow 0
\end{align*}
in probability.

\end{proof}

\subsection{Coupling and conclusion}\label{jaienviededormir}
We finally are able to state and prove the scaling limit of $\tau_n$.
\begin{proposition}\label{justin2}
We have
\[
\frac{\tau_n}{   \mathrm{Inv}(n)}\xrightarrow{(d)}  C_\infty S_\gamma,
\]
where $S_\gamma$ has a completely asymmetric stable law of index $\alpha$, and where $C_\infty$ is the constant defined in \eqref{agraffes}.
\end{proposition}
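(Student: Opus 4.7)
The plan is to chain together the four ingredients just established: Proposition~\ref{justin}, Proposition~\ref{final_coupling}, Lemma~\ref{neglect_small_cond}, and Proposition~\ref{jackwhite}. Fix $\delta\in(0,1)$ and let $\eta>0$ be the constant given by Proposition~\ref{final_coupling} for this $\delta$; then choose $\epsilon\in(0,1/6)$ small enough that $\epsilon<\eta$, and set $m=m(n):=n^{(1-\epsilon)/\gamma}$. By Proposition~\ref{justin}, one has
\[
\frac{1}{\mathrm{Inv}(n)}\bigl(\tau_n-\Sigma_n\bigr)\xrightarrow{(d)}0,\qquad \Sigma_n:=\sum_{i=1}^{n}W^{(i)}_{m}\,c^\omega_*\bigl(e^{(m)}_{(i)}\bigr)\mathbf{1}_{OLT^{(i)}(1/2,K,m)},
\]
so that it suffices to prove $\Sigma_n/\mathrm{Inv}(n)\xrightarrow{(d)}C_\infty S_\gamma$.

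Second, I would invoke Proposition~\ref{final_coupling} at parameter $m$ independently for each $i\in\{1,\ldots,n\}$, producing on a joint probability space i.i.d.\ triples $(c^{\max}_{*,(i)},W^{(i)}_\infty,\tilde W^{(i)}_\infty)$ under $\PR^\infty$ together with the original regeneration variables, such that the failure event
\[
F^{(i)}_n:=\Bigl\{\bigl|c^\omega_*(e^{(m)}_{(i)})W^{(i)}_m\mathbf{1}_{OLT^{(i)}}-c^{\max}_{*,(i)}W^{(i)}_\infty\mathbf{1}_{c^{\max}_{*,(i)}\geq m}\bigr|>C'(d)m^{\delta-1}c^{\max}_{*,(i)}(W^{(i)}_\infty+\tilde W^{(i)}_\infty)\mathbf{1}_{c^{\max}_{*,(i)}\geq m}\Bigr\}
\]
has probability $n^{-\eta}o(\overline{\PR}[LT(m)])$. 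By Lemma~\ref{tailpn}, $\overline{\PR}[LT(m)]\sim C_1L(m)n^{-(1-\epsilon)}$, so a union bound gives
\[
\PR\Bigl[\bigcup_{i=1}^{n}F^{(i)}_n\Bigr]\leq n\cdot n^{-\eta}o(L(m)n^{-(1-\epsilon)})=o\bigl(n^{\epsilon-\eta}L(m)\bigr)\xrightarrow[n\to\infty]{}0,
\]
since $\epsilon<\eta$ and the slowly varying factor is dominated by any polynomial. Consequently, up to a term vanishing in probability, we may replace $\Sigma_n$ by $\widetilde\Sigma_n:=\sum_{i=1}^{n}c^{\max}_{*,(i)}W^{(i)}_\infty\mathbf{1}_{c^{\max}_{*,(i)}\geq m}$ plus a systematic error bounded by
\[
C'(d)m^{\delta-1}\sum_{i=1}^{n}c^{\max}_{*,(i)}(W^{(i)}_\infty+\tilde W^{(i)}_\infty)\mathbf{1}_{c^{\max}_{*,(i)}\geq m}.
\]
Dividing by $\mathrm{Inv}(n)$, this systematic error is $m^{\delta-1}$ times a quantity which is $O_P(1)$ (indeed $o_P(n^r)$ for every $r>0$) by Lemma~\ref{ggggg} applied to both $W^{(i)}_\infty$ and $\tilde W^{(i)}_\infty$; since $m^{\delta-1}=n^{(1-\epsilon)(\delta-1)/\gamma}$ is a negative power of $n$, the systematic contribution tends to $0$ in probability.

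Third, Lemma~\ref{neglect_small_cond} removes the truncation $\mathbf{1}_{c^{\max}_{*,(i)}\geq m}$, yielding
\[
\frac{\widetilde\Sigma_n-\sum_{i=1}^{n}c^{\max}_{*,(i)}W^{(i)}_\infty}{\mathrm{Inv}(n)}\xrightarrow{(d)}0,
\]
and Proposition~\ref{jackwhite} identifies the scaling limit of $\sum_{i=1}^{n}c^{\max}_{*,(i)}W^{(i)}_\infty/\mathrm{Inv}(n)$ as $C_\infty S_\gamma$. Assembling the four displays via Slutsky's theorem yields the claim. The main delicate point is the parameter bookkeeping: the coupling failure probability per block, $n^{-\eta}o(\overline{\PR}[LT(m)])$, must remain summable after multiplication by $n$, which forces the choice $\epsilon<\eta$; at the same time the truncation parameter $m=n^{(1-\epsilon)/\gamma}$ must be large enough so that the polynomial coupling error $m^{\delta-1}$ beats the normalisation $\mathrm{Inv}(n)\asymp n^{1/\gamma}L^*(n)$, which is automatic once $\delta<1$.
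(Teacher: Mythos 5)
Your proposal is correct and takes essentially the same route as the paper: Lemma~\ref{justin}, Proposition~\ref{final_coupling} applied per-block with a union bound (the paper's event $U_n$ and bound \eqref{ggggg2}), Lemma~\ref{ggggg} to absorb the systematic $m^{\delta-1}$-error, Lemma~\ref{neglect_small_cond} to remove the truncation, and Proposition~\ref{jackwhite} for the limit, chained in the same order. One cosmetic slip worth noting: applying Proposition~\ref{final_coupling} at level $m=n^{(1-\epsilon)/\gamma}$ yields a per-block failure probability $m^{-\eta}\,o(\overline{\PR}[LT(m)])$, not $n^{-\eta}\,o(\overline{\PR}[LT(m)])$, so the parameter constraint should read ``$\epsilon$ small enough relative to $\eta$ and $\gamma$'' rather than simply $\epsilon<\eta$; this is harmless since $\epsilon$ is free to be taken as small as desired, and the paper phrases the condition in exactly this looser way.
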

\begin{proof}

By Lemma~\ref{justin}, Proposition \ref{jackwhite} and Lemma \ref{neglect_small_cond}, we only need to prove that, under some coupling,
\begin{align}\label{bougetoi}
&\left|\frac{\sum_{i=1}^{n}W_{  n^\frac{1-\epsilon}{\gamma} }^{(i)}c_*^\omega(e_{(i)}^{(n^\frac{1-\epsilon}{\gamma})}) \1{OLT^{(i)}(\frac{1}{2},K,n^\frac{1-\epsilon}{\gamma})}}{   \mathrm{Inv}(n)}\right.\\ \nonumber
&\qquad\qquad\qquad\left.-\frac{\sum_{i=1}^{n} c_{*,(i)}^{\mathrm{max}}W^{(i)}_{\infty}\1{c_{*,(i)}^{\max}\ge n^{\frac{1-\epsilon}{\gamma}}}}{   \mathrm{Inv}(n)}\right|\to0,
\end{align}
in probability.\\
By Proposition~\ref{final_coupling}, there exists $\eta>0$ such that, for any $\epsilon$, there exists a coupling $P^{n,\infty}$ such that
\begin{align*}
& P^{n,\infty}\Bigl[ \left|c^\omega_*(e^{(n^{  \frac{1-\epsilon}{\gamma} })})W_n\1{OLT(\frac{1}{2},K,n^{  \frac{1-\epsilon}{\gamma}})}- c_*^{\mathrm{max}}W_{\infty}\1{c_*^{\max}\ge n^{  \frac{1-\epsilon}{\gamma}}}\right|\\
&\qquad\qquad\qquad\qquad> C'(d) n^{ - \frac{1-\epsilon}{2\gamma}}c_*^{\mathrm{max}}(W_{\infty}+\tilde{W}_{\infty})\1{c_*^{\max}\ge n^{  \frac{1-\epsilon}{\gamma}}}\Bigr] \\
&\leq n^{ -\eta \frac{1-\epsilon}{\gamma}} o(\overline{\PR}[LT(n^{  \frac{1-\epsilon}{\gamma}})]).
\end{align*}
By choosing $\epsilon$ small enough compared to $\eta$ and using that $\overline{\PR}[LT(n^{  \frac{1-\epsilon}{\gamma}})]\le Cn^{-1+2\epsilon}$, this implies 
\begin{align}\label{ggggg2}
& P^{n,\infty}\Bigl[ U_n\Bigr] \leq Cn^{ -\frac{\eta}{2\gamma}},
\end{align}
where we define the event
\begin{align*}
U_n&:=\Bigg\{\exists i\in\{1,...,n\}: \left|c^\omega_{*}(e_{(i)}^{(n^{  \frac{1-\epsilon}{\gamma} })})W^{(i)}_n\1{OLT(\frac{1}{2},K,n^{  \frac{1-\epsilon}{\gamma}})}\right.\\
&\left.- c_{*,(i)}^{\mathrm{max}}W^{(i)}_{\infty}\1{c_{*,(i)}^{\max}\ge n^{  \frac{1-\epsilon}{\gamma}}}\right|> C'(d) n^{ - \frac{1-\epsilon}{2\gamma}}c_{*,(i)}^{\mathrm{max}}(W^{(i)}_{\infty}+\tilde{W}^{(i)}_{\infty})\1{c_{*,(i)}^{\max}\ge n^{  \frac{1-\epsilon}{\gamma}}}\Bigg\}.
\end{align*}
Let us denote $G_n$ the left-hand side of \eqref{bougetoi}, we have that
\begin{align*}
G_n\le &G_n\1{U_n}+C'(d) n^{ - \frac{1-\epsilon}{2\gamma}}\frac{\sum_{i=1}^{n} c_{*,(i)}^{\mathrm{max}}W^{(i)}_{\infty}\1{c_{*,(i)}^{\max}\ge n^{\frac{1-\epsilon}{\gamma}}}}{   \mathrm{Inv}(n)}  \\
&+C'(d) n^{ - \frac{1-\epsilon}{2\gamma}}\frac{\sum_{i=1}^{n} c_{*,(i)}^{\mathrm{max}}\tilde{W}^{(i)}_{\infty}\1{c_{*,(i)}^{\max}\ge n^{\frac{1-\epsilon}{\gamma}}}}{   \mathrm{Inv}(n)},                        
\end{align*}
and the quantities on the right-hand side go to $0$ in probability by \eqref{ggggg2} and Lemma \ref{ggggg}.
\end{proof}

%

\section{Process convergence}
We here give functional statements of scaling limit results. The strategy is quite classical and consists of two main ingredients. First, we prove the joint-convergence of the trajectory (or its deviation) and the clock process, see Lemma \ref{onnaitonvitonmeurt} . Second, we use an inversion argument to conclude the main results, see Theorem \ref{seinfeld}.\\
In order to prove Lemma \ref{onnaitonvitonmeurt}, we first need to prove that the position of the walker depends mostly on regeneration blocks without large traps.\\
For this purpose, we denote $OLT_i$ the event $OLT$, defined in \eqref{def_OLT}, associated with the $i$-th regeneration block. Besides, as before, we write $(OLT^{(i)})_i$ a sequence of i.i.d.~events distributed as $OLT$ under $\PR_0^K[\cdot|D=\infty]$.

\begin{lemma}\label{approx_displ}
Fix $\epsilon\in(0,1/12)$. There exists $K_0<\infty$, such that for any $K\ge K_0$, for any $t\ge0$, we have
\begin{align*}
&\lim_{n\to\infty}\PR_0\left[\abs{\abs{X_{\tau_{\lfloor tn \rfloor}} -\sum_{i=0}^{\lfloor tn \rfloor-1} (X_{\tau_{i+1}}-X_{\tau_i})\1{(OLT_{i+1}(1/2,K,(tn)^{ \frac{1-\epsilon}{\gamma} }))^c}}}_{\infty}\geq n^{1/4}\right]\\
&\quad =0.
\end{align*}
\end{lemma}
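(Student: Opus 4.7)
The first move is to rewrite the vector inside the norm by telescoping: since $X_{\tau_0}=0$ and $X_{\tau_{\lfloor tn\rfloor}}=\sum_{i=0}^{\lfloor tn\rfloor-1}(X_{\tau_{i+1}}-X_{\tau_i})$, the quantity to control is
\[
\sum_{i=0}^{\lfloor tn\rfloor-1}(X_{\tau_{i+1}}-X_{\tau_i})\1{OLT_{i+1}(1/2,K,(tn)^{(1-\epsilon)/\gamma})}.
\]
So what we really need to prove is that the displacement contributed by the $OLT$ blocks is $o(n^{1/4})$ in probability. Since $OLT\subset LT$, it suffices to bound $\sum_i \abs{\abs{X_{\tau_{i+1}}-X_{\tau_i}}}_\infty\1{LT_{i+1}(m)}$, where $m:=(tn)^{(1-\epsilon)/\gamma}$. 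By the very definition \eqref{def_chi} of $\chi_{i+1}$, the trajectory during the $(i+1)$-th regeneration block is contained in the tilted box $B(\chi_{i+1},\chi_{i+1}^\alpha)$, hence $\abs{\abs{X_{\tau_{i+1}}-X_{\tau_i}}}_\infty\le 2\chi_{i+1}^\alpha$.

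Now I will control the maximum block-width and the number of large-trap blocks separately. By Lemma \ref{tail_chi_tau}, for any $a>0$ one can choose $K$ large enough so that $\PR_0[\chi_i\ge k]\le C k^{-M}$ with $M>1/a$ (here subsequent blocks are i.i.d.~copies of $\chi_1$ under $\overline{\PR}$ by Theorem \ref{thindep}, and the bound of Lemma \ref{tail_chi_tau} holds uniformly). A union bound yields
\[
\PR_0\Bigl[\max_{1\le i\le\lfloor tn\rfloor}\chi_i\ge n^a\Bigr]\le C\,tn\cdot n^{-aM}\xrightarrow[n\to\infty]{}0.
\]
For the number of large-trap blocks, Lemma \ref{tailpn} gives $\overline{\PR}[LT(m)]\le C L(m)m^{-\gamma}=C(tn)^{-(1-\epsilon)}L(m)$; the same asymptotic bound holds for the first block by summing Lemma \ref{tail_taue} over $e\in E(\Z^d)$ (the sum is convergent thanks to the super-polynomial factor $\abs{\abs{e}}_\infty^{-M}$). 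By Theorem \ref{thindep} the $\1{LT_i(m)}$ for $i\ge 2$ are i.i.d., so Markov's inequality gives
\[
\PR_0\Bigl[\sum_{i=1}^{\lfloor tn\rfloor}\1{LT_i(m)}\ge n^{2\epsilon}\Bigr]\le \frac{C(tn)^{\epsilon}L(m)}{n^{2\epsilon}}\xrightarrow[n\to\infty]{}0.
\]

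On the intersection of these two high-probability events,
\[
\sum_{i=0}^{\lfloor tn\rfloor-1}\abs{\abs{X_{\tau_{i+1}}-X_{\tau_i}}}_\infty\1{OLT_{i+1}}\le 2 n^{a\alpha}\cdot n^{2\epsilon}=2 n^{a\alpha+2\epsilon}.
\]
Since by hypothesis $\epsilon<1/12$, we have $2\epsilon<1/6$, so choosing $a<1/(12\alpha)$ (which only forces a larger choice of $K_0$ in Lemma \ref{tail_chi_tau}) guarantees $a\alpha+2\epsilon<1/4$. The right-hand side is then $o(n^{1/4})$ deterministically on the good event, and the conclusion follows.

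The argument is essentially a bookkeeping exercise built on top of two already-proved facts: the super-polynomial tail of $\chi$ and the $m^{-\gamma+o(1)}$ upper bound on $\overline{\PR}[LT(m)]$. The only real care needed is to make sure that the product of ``each block is not too wide'' and ``few blocks contain a large trap'' produces a power of $n$ strictly below $1/4$; the specific restriction $\epsilon<1/12$ in the hypothesis is exactly calibrated so that enough slack is available.
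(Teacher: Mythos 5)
Your proof is correct and follows essentially the same strategy as the paper's: control separately the number of regeneration blocks containing a large trap (via Markov's inequality and the tail of $\overline{\PR}[LT(\cdot)]$) and the maximal displacement per block, then multiply the two bounds to get $o(n^{1/4})$. The only cosmetic differences are that you bound the per-block displacement via the tail of $\chi$ (Lemma~\ref{tail_chi_tau}), which directly controls $\norm{X_{\tau_{i+1}}-X_{\tau_i}}_\infty$, whereas the paper cites the tail of $X_{\tau_1}$ (Theorems~\ref{tailtau}/\ref{tailtauK}), and that you use slightly different exponent thresholds ($n^{a\alpha}\cdot n^{2\epsilon}$ instead of $n^{\epsilon}\cdot n^{1/4-\epsilon}$) — both calibrations exploit the same slack $\epsilon<1/12$.
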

\begin{proof}
Firstly, we see that  
\begin{align*}
&\PR_0\left[\abs{\abs{X_{\tau_{\lfloor tn \rfloor}} -\sum_{i=0}^{\lfloor tn \rfloor-1} (X_{\tau_{i+1}}-X_{\tau_i})\1{(OLT_{i+1}(1/2,K,(tn)^{ \frac{1-\epsilon}{\gamma} }))^c}}}_{\infty}\geq n^{1/4}\right]\\
\le& \PR_0\left[\abs{\abs{X_{\tau_1}\1{OLT(1/2,K,(tn)^{ \frac{1-\epsilon}{\gamma} })}}}_\infty\ge \frac{n^{1/4}}{2}\right]\\
&+ \PR_0^K\left[\left.\abs{\abs{\sum_{i=1}^{\lfloor tn \rfloor-1} X^{(i)}_{\tau^{(i)}_1}\1{OLT^{(i)}(1/2,K,(tn)^{ \frac{1-\epsilon}{\gamma} })}}}_\infty\ge \frac{n^{1/4}}{2}\right|D=\infty\right],
\end{align*}
where the variables $X^{(i)}_{\tau^{(i)}_1}\1{OLT^{(i)}(1/2,K,(tn)^{ \frac{1-\epsilon}{\gamma} })}$ are i.i.d.~copies of the variable $X_{\tau_1}\1{OLT(1/2,K,(tn)^{ \frac{1-\epsilon}{\gamma} })}$ under $\PR_0^K[\cdot|D=\infty]$.\\
Using Theorem \ref{tailtau}, we have
\[
\PR_0\left[\abs{\abs{X_{\tau_1}\1{OLT(1/2,K,(tn)^{ \frac{1-\epsilon}{\gamma} })}}}_\infty\ge \frac{n^{1/4}}{2}\right]\to0.
\]
In order to take care of the second term, notice that, on the one hand,
\begin{align*}
&\PR_0^K\left[\left.\text{Card}\{1\le i \leq \lfloor tn\rfloor-1, OLT^{(i)}(1/2,K,(tn)^{ \frac{1-\epsilon}{\gamma} })\} \geq n^{\frac{1}{4}-\epsilon}\right|D=\infty\right]  \\
 \leq  &n^{-\frac{1}{4}+\epsilon}\ES_0^K\left[\left.\text{Card}\{ 1\le i \leq \lfloor tn\rfloor-1, OLT^{(i)}(1/2,K,(tn)^{ \frac{1-\epsilon}{\gamma} })\} \right|D=\infty\right] \\
 \leq  &n^{-\frac{1}{4}+\epsilon}\ES_0^K\left[\left.\sum_{i=1}^{\lfloor tn\rfloor -1} \1{OLT^{(i)}(1/2,K,(tn)^{ \frac{1-\epsilon}{\gamma} })} \right|D=\infty\right] \\
 \leq &n^{-\frac{1}{4}+\epsilon} \times tn\PR_0^K[OLT(1/2,K, (tn)^{ \frac{1-\epsilon}{\gamma} })|D=\infty] \le Cn^{-\frac{1}{4}+3\epsilon}=o(1),
\end{align*}
where we used Lemma~\ref{tailpn} and Lemma~\ref{equiv_pnqn} in the last line.

On the other hand, we can see that 
\begin{align*}
& \PR_0^K\left[\left.\max_{\substack{A\subset \{1,\ldots,\lfloor tn\rfloor-1\}, \\ \abs{A} \leq n^{ \frac{1}{4}-\epsilon }}} \sum_{j\in A}  X_{\tau^{(i)}_{1}} \geq n^{1/4}/2\right|D=\infty\right] \\
\leq & \PR_0^K\left[\left.\max_{1\le j\leq \lfloor tn\rfloor -1} X_{\tau^{(i)}_{1}} \geq n^{\epsilon}/2\right|D=\infty\right] \\
\leq & tn \PR_0^K\left[\left.X_{\tau_1} \geq n^{\epsilon}/2\right|D=\infty\right] =o(1),
\end{align*}
where we used Theorem~\ref{tailtauK}, with $K$ large enough compared to $\epsilon$ (which is fixed).

Since on $\left\{\abs{\abs{\sum_{i=1}^{\lfloor tn \rfloor-1} X^{(i)}_{\tau^{(i)}_1}\1{OLT^{(i)}(1/2,K,(tn)^{ \frac{1-\epsilon}{\gamma} })}}}_\infty\ge \frac{n^{1/4}}{2}\right\}$, we have either
\begin{enumerate}
\item $\text{Card}\{1\le i \leq \lfloor tn\rfloor-1, OLT^{(i)}(1/2,K,(tn)^{ \frac{1-\epsilon}{\gamma} })\} \geq n^{\frac{1}{4}-\epsilon}$,
\item or $\max_{\substack{A\subset \{1,\ldots,\lfloor tn\rfloor-1\}, \\ \abs{A} \leq n^{ \frac{1}{4}-\epsilon }}} \sum_{j\in A}  X_{\tau^{(i)}_{1}} \geq n^{1/4}/2$,
\end{enumerate}
the result follows.
\end{proof}

Define 
\[
Y_n(t)=\frac{X_{\tau_{\lfloor tn \rfloor}}}{n},\ Z_n(t)= \frac{X_{\tau_{\lfloor tn \rfloor}}- vnt}{ n^{1/2}} \text{ and } S_n(t)=\frac{\tau_{\lfloor nt\rfloor}}{\mathrm{Inv}(n)},
\]

where we also define the $d$-dimensional vector
\begin{align}\label{olaf}
v=\ES_0^K[X_{\tau_1}|D=\infty].
\end{align}

To obtain our limiting result it will be enough to prove the joint convergence of
\[
\left(Y_n(t),S_n(t)\right)_{0\le t \le T}\text{ and } \left(Z_n(t),S_n(t)\right)_{0\le t \le T}.
\]

Using the basic properties of slowly varying functions and the monotonicity of $\mathrm{Inv}(\cdot)$, one can prove that, for any constant $c>0$,
\begin{align*}
\frac{\mathrm{Inv}(\lfloor cn\rfloor)}{\mathrm{Inv}(n)}\to c^{\frac{1}{\gamma}}.
\end{align*}
By Proposition \ref{justin2}, the law of large numbers and the central limit theorem, we thus have, for any fixed $t\ge 0$,
\begin{align}\nonumber
S_n(t)&\xrightarrow{(d)} t^{1/\gamma}C_\infty S_\gamma,\\ \nonumber
Y_n(t)&\xrightarrow{a.s.} vt,\\ \label{chinois}
Z_n(t)&\xrightarrow{(d)}  \sqrt{\Sigma}B_t,
\end{align}
where $S_\gamma$ has a completely asymmetric stable law of index $\alpha$, $B_\cdot$ is a standard $d$-dimensional Brownian motion and $\sqrt{\Sigma}$ is some nonsingular $d\times d$ matrix such that $\Sigma:=\sqrt{\Sigma}^t \sqrt{\Sigma}$ is the covariance matrix of $X_{\tau_1}$ under $\PR_0^K[\cdot|D=\infty]$. The invertibility of $\sqrt{\Sigma}$ can be proved using an argument similar to \cite{SznPerco}, right after display $(3.40)$.\\

In the following results of process convergence, we use the uniform topology, denoted $U$, and two classical Skorokhod's topologies $J_1$ and $M_1$, see \cite{Whitt} for details on these topologies.\\
Recall that $D$ (resp.~$D^d$) is the space of $\mathbb{R}$-valued (resp.~$\mathbb{R}^d$-valued) c\`adl\`ag functions.

\begin{lemma} \label{onnaitonvitonmeurt} 
Fix some $T\ge0$.
The joint distribution of $(Y_n(t),S_n(t))_{0\leq t \leq T}$ converges to the distribution of $(v t, C_\infty \mathcal{S}_{\gamma}(t))_{0\leq t\leq T}$  in $D^d\times D$ in the $U\times M_1$-topology, where $\mathcal{S}_{\gamma}(\cdot)$ is a stable subordinator of index $\gamma$.

Moreover the joint distribution of $(Z_n(t),S_n(t))_{0\leq t \leq T}$ converges to the distribution of $( \sqrt{\Sigma}B_t, C_\infty \mathcal{S}_{\gamma}(t))_{0\leq t\leq T}$  in $D^d\times D$ in the $J_1\times M_1$-topology, where $B_\cdot$ is a standard Brownian motion independent of $\mathcal{S}_\gamma(\cdot)$.
\end{lemma}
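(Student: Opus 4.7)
\emph{Strategy and marginal convergences.} Theorem~\ref{thindep} shows that under $\PR_0$, the pairs $(\xi_i,\eta_i):=(X_{\tau_i}-X_{\tau_{i-1}},\tau_i-\tau_{i-1})$ for $i\geq 2$ are i.i.d.~with common law that of $(X_{\tau_1},\tau_1)$ under $\PR_0^K[\,\cdot\mid D=\infty]$, and independent of $(\xi_1,\eta_1)$; the first pair is negligible at both relevant rescalings. By Theorem~\ref{tailtauK}, $\xi_i$ has all polynomial moments, so mean $v$ (from~\eqref{olaf}) and finite covariance matrix $\Sigma$. The functional strong law of large numbers applied to the partial sums of $\xi_i/n$ yields $Y_n\to v\,\cdot$ in $(D^d,U)$ in probability, and Donsker's invariance principle in $\R^d$ yields $Z_n\xrightarrow{(d)}\sqrt{\Sigma}B$ in $(D^d,J_1)$. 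For $S_n$, Proposition~\ref{justin2} gives the one-dimensional limit $S_n(1)\xrightarrow{(d)}C_\infty S_\gamma$, and since $\eta_i\geq 0$ has regularly varying tail of index $-\gamma$ with $\gamma\in(0,1)$, the classical functional limit theorem for partial-sum processes of non-negative i.i.d.~heavy-tailed random variables (see~\cite{Whitt}) upgrades this to $S_n\to C_\infty\mathcal{S}_\gamma$ in $(D,M_1)$.

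\emph{Joint convergence.} The first joint statement, $(Y_n,S_n)\xrightarrow{(d)}(v\,\cdot,C_\infty\mathcal{S}_\gamma)$ in $U\times M_1$, follows from the above by Slutsky's theorem, since the limit of $Y_n$ is a deterministic element of $D^d$. For the second joint statement, marginal tightness reduces the problem to the identification of finite-dimensional distributions, and by stationarity and independence of increments of both candidate limits it further reduces, for each fixed $t>0$, to the joint convergence
\[
\Bigl(\frac{1}{\sqrt{n}}\sum_{i=1}^{\lfloor nt\rfloor}(\xi_i-v),\,\frac{1}{\mathrm{Inv}(n)}\sum_{i=1}^{\lfloor nt\rfloor}\eta_i\Bigr)\xrightarrow{(d)}(\sqrt{\Sigma}B_t,\,C_\infty\mathcal{S}_\gamma(t)),
\]
with independent coordinates on the right. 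To prove the independence, I would fix $\epsilon\in(0,1/12)$ and use Lemma~\ref{approx_displ} to replace the first coordinate by $\frac{1}{\sqrt n}\sum_{i\leq nt}(\xi_i-v)\,\1{OLT_i^c}$, with $OLT_i:=OLT_i(1/2,K,(nt)^{(1-\epsilon)/\gamma})$, at the cost of an $o_{\PR}(1)$ error; symmetrically, by Proposition~\ref{justin} the second coordinate equals $\frac{1}{\mathrm{Inv}(n)}\sum_{i\leq nt}\eta_i\,\1{OLT_i}$ up to $o_{\PR}(1)$. The two restricted sums now involve \emph{disjoint} subsets of blocks (the no-trap and the large-trap blocks respectively); conditionally on the i.i.d.~pattern $(\1{OLT_i})_{i\leq nt}$ they are therefore independent, with the conditional law of each sum converging to the corresponding marginal limit (the small $O(\PR[LT])$ shift of the conditional mean of $\xi_i$ given $OLT_i^c$ is negligible after normalization by $\sqrt{n}$). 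Taking conditional characteristic functions and averaging over the pattern yields the product limit.

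\emph{Main obstacle.} The conceptual heart of the argument, already foreshadowed in Section~\ref{sketch}, is this asymptotic independence between displacement and clock, despite $\xi_i$ and $\eta_i$ being strongly correlated within a block. The separation works because the heavy-tailed contribution to $S_n$ is produced by a vanishingly small proportion of large-trap blocks (of order $n^{\epsilon}$ at scale $(nt)^{(1-\epsilon)/\gamma}$), on which the displacement contributes only $O(n^{\epsilon/2})=o(\sqrt{n})$ thanks to the uniformly bounded polynomial moments of $\xi_i$; symmetrically, the overwhelming majority of no-trap blocks, which generate the Gaussian fluctuations of $Z_n$, contribute negligibly to $S_n$. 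Lemma~\ref{approx_displ} and Proposition~\ref{justin} make both halves of this separation quantitative and supply all of the technical input needed beyond standard invariance principles.
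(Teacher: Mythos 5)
Your proposal is correct and follows essentially the same route as the paper: marginal convergences from the classical LLN/CLT/heavy-tail theorems, reduction of the joint finite-dimensional limit to a single time, and then splitting the displacement sum over no-trap blocks and the clock sum over large-trap blocks (via Lemma~\ref{approx_displ} and Proposition~\ref{justin}) and exploiting conditional independence given the pattern of $OLT_i$-indicators. The paper's construction of the auxiliary sequences $(H^1_{n,i})$, $(H^2_{n,i})$ and the binomial $B_n$ is just an explicit realization of your "condition on the pattern, use conditional characteristic functions, then absorb the $O(n^\epsilon)$ removed terms and the $O(\PR[LT])$ mean-shift as $o(\sqrt n)$" argument, so the two proofs are the same in substance.
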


\begin{proof} By Theorem 11.6.6 of \cite{Whitt}, we only have to prove the finite-dimensional convergence and the tightness of the sequences. Let us proceed in three steps.

\vspace{0.5cm} 

{\it Step 1: Joint convergence for one fixed time $t\ge0$}

\vspace{0.5cm} 
If $t=0$, the result is immediate, we then assume $t>0$.
Firstly, as $Y_n(t)$ converges to a constant, the joint convergence in distribution of $(Y_n(t),S_n(t))$ comes at once as soon as $S_n(t)$ converges.

Secondly, for $(Z_n(t),S_n(t))$, notice that, by Lemma \ref{approx_displ} and Proposition \ref{justin}, we only need to consider the joint limit of
\begin{align*}
J_n(t):=
\left(\frac{\sum_{i=1}^{\lfloor nt\rfloor }X_{\tau_{1}}^{(i)}\1{OLT^{(i)}(1/2,K,(tn)^{\frac{1-\epsilon}{\gamma}})^c}-vtn}{\sqrt{n}},\right.\\
\left. \frac{\sum_{i=1}^{\lfloor nt\rfloor }\tau_{1}^{(i)}\1{OLT^{(i)}(1/2,K,(tn)^{\frac{1-\epsilon}{\gamma}})}}{\mathrm{Inv}(n)}\right),
\end{align*}
where the variables $(X_{\tau_1}^{(i)},\tau^{(i)}_1)$ are i.i.d.~copies of $(X_{\tau_1},\tau_1)$ under $\PR_0^K[\cdot|D=\infty]$, and where $\epsilon\in(0,1/12)$ is a fixed constant.

The two sums occurring in the previous display are not independent but we will show that this couple has the same limit as a couple of independent random variables.\\
Let us define, in some probability space $\PR_H$, two independent sequences of i.i.d.~random variables $(H_{n,i}^1)_i$ and $(H_{n,i}^2)_i$ respectively distributed as $X_{\tau_{1}}$ under $\PR_0^K[\cdot|D=\infty, OLT(1/2,K,(tn)^{\frac{1-\epsilon}{\gamma}})^c]$ and ${\tau_{1}}$ under $\PR_0^K[\cdot|D=\infty, OLT(1/2,K,(tn)^{\frac{1-\epsilon}{\gamma}})]$. In the space probability, we independently define a binomial random variable $B_n$ of parameters $\lfloor nt\rfloor$ and $p_n:=\PR_0^K[OLT(1/2,K,(tn)^{\frac{1-\epsilon}{\gamma}})|D=\infty]$.\\
We claim that
\begin{align}\label{blockparty}
J_n(t)\stackrel{(d)}{=} \left(\frac{\sum_{i=1}^{\lfloor nt\rfloor -B_n}H_{n,i}^1-vtn}{\sqrt{n}},\frac{\sum_{i=1}^{B_n}H_{n,i}^2}{\mathrm{Inv}(n)}\right).
\end{align}

In order to prove that, let us define the random vector
\[
I_n:=(\1{OLT^{(i)}(1/2,K,(tn)^{\frac{1-\epsilon}{\gamma}})})_{1\le i \le \lfloor nt \rfloor},
\]
whose coordinates are i.i.d.~Bernoulli random variables with parameter $p_n$, and thus $|I_n|$ is a binomial random variable with parameters $\lfloor nt\rfloor$ and $p_n$. For any $0\le k \le \lfloor nt\rfloor$ and any $\bar{i}_k\subset \{i_1,...,i_k\}$ with $1\le i_1<\dots< i_k\le \lfloor nt \rfloor$, we denote ${\bf 1}_{n,\bar{i}_k}$ the $\lfloor nt\rfloor$-dimensional vector with its $i$-th component being $1$ if $i\in\bar{i}_k$ and $0$ otherwise.\\
For any measurable set $A$, we have
\begin{align*}
&\PR_0^K[J_n(t)\in A|D=\infty]=\sum_{k=0}^{\lfloor nt\rfloor}\sum_{\substack{\bar{i}_k=\{i_1,...,i_k\},\\ 1\le i_1< \dots < i_k\le \lfloor nt\rfloor}} \PR_0^K[I_n={\bf 1}_{n,\bar{i}_k}|D=\infty]\\
&\times \PR_0^K\left[\left.\left(\frac{\sum_{i\in \{1,...,\lfloor nt\rfloor\}\setminus \bar{i}_k }X_{\tau_{1}}^{(i)}-vtn}{\sqrt{n}}, \frac{\sum_{i\in\bar{i}_k }\tau_{1}^{(i)}}{\mathrm{Inv}(n)}\right)\in A\right|D=\infty, I_n={\bf 1}_{n,\bar{i}_k}\right]\\
&=\sum_{k=0}^{\lfloor nt\rfloor}\sum_{\substack{\bar{i}_k=\{i_1,...,i_k\},\\ 1\le i_1< \dots < i_k\le \lfloor nt\rfloor}} p_n^k(1-p_n)^{\lfloor nt\rfloor-k} \PR_H\left[\left(\frac{\sum_{i=1}^{\lfloor nt\rfloor -k}H_{n,i}^1-vtn}{\sqrt{n}},\frac{\sum_{i=1}^{k}H_{n,i}^2}{\mathrm{Inv}(n)}\right)\in A\right]\\
&=\sum_{k=0}^{\lfloor nt\rfloor} \PR_H\left[B_n=k,\left(\frac{\sum_{i=1}^{\lfloor nt\rfloor -k}H_{n,i}^1-vtn}{\sqrt{n}},\frac{\sum_{i=1}^{k}H_{n,i}^2}{\mathrm{Inv}(n)}\right)\in A\right]\\
&= \PR_H\left[\left(\frac{\sum_{i=1}^{\lfloor nt\rfloor -B_n}H_{n,i}^1-vtn}{\sqrt{n}},\frac{\sum_{i=1}^{B_n}H_{n,i}^2}{\mathrm{Inv}(n)}\right)\in A\right],
\end{align*}
which proves the claim. Besides, note that the marginal laws converge.\\
Moreover, it is clear that the following couple of random variables is independent:
\begin{align}\label{classique}
\left(\frac{\sum_{i=1}^{\lfloor nt\rfloor }H_{n,i}^1-vtn}{\sqrt{n}},\frac{\sum_{i=1}^{B_n}H_{n,i}^2}{\mathrm{Inv}(n)}\right).
\end{align}
So, if we prove that the distance between this couple and the right-hand side of \eqref{blockparty} goes to $0$ in probability, we will be allowed to conclude.
Recalling that the $H_{n,i}^1$'s have the law of $X_{\tau_1}$ under $\PR_0^K[\cdot|D=\infty,OLT(1/2,K,(tn)^{\frac{1-\epsilon}{\gamma}})^c]$, we have, for $n$ large enough,
\begin{align*}
\PR_H\left[\abs{\abs{\frac{\sum_{i=\lfloor nt\rfloor -B_n+1}^{\lfloor nt\rfloor}H_{n,i}^1}{\sqrt{n}}         }}_\infty>\epsilon\right]\le& \PR_H\left[\abs{\abs{\frac{\sum_{i=1}^{(1+t)n^{2\epsilon}}H_{n,i}^1}{\sqrt{n}}         }}_\infty>\epsilon\right]\\
&+\PR_H\left[B_n\ge p_n\lfloor nt\rfloor + n^{2\epsilon}\right]\\
\le & Cn^{-1/3}+\lfloor nt\rfloor p_n/n^{-4\epsilon}\le Cn^{-2\epsilon}=o(1),
\end{align*}
recalling that $\epsilon\in(0,1/12)$ is a constant and $t\in[0,T]$ where $T$ is also a fixed constant.\\
Hence, the marginal laws of \eqref{classique} converge in distribution respectively to $\sqrt{\Sigma} B_t$ and $t^{1/\gamma}C_\infty S_\gamma$, and, as the coordinates are independent, the couple converges jointly to a couple of independent random variables. This finally implies that
\[
(Z_n(t),S_n(t))\xrightarrow{(d)} ( \sqrt{\Sigma}B_t,t^{1/\gamma}C_\infty S_\gamma),
\]
where $B_t$ and $S_\gamma$ are independent.

\vspace{0.5cm} 

{\it Step 2: Finite-dimensional convergence}

\vspace{0.5cm}

Fix a positive integer $k$ and $k+1$ times $t_0=0\le t_1<...<t_k\le T$. Consider the vectors
\begin{align}\label{usure}
\left((Y_n(t_{i})-Y_n(t_{i-1}),S_n(t_i)-S_n(t_{i-1})\right)_{1\le i \le t_k}
\end{align}
and
\begin{align}\label{usure2}
\left((Z_n(t_{i})-Z_n(t_{i-1}),S_n(t_i)-S_n(t_{i-1})\right)_{1\le i \le t_k}.
\end{align}
As soon as $n$ is large enough, the variables $(Y_n(t_{i})-Y_n(t_{i-1}),S_n(t_i)-S_n(t_{i-1}))$, ${1\le i \le t_k}$, are independent and have the same limit as $(Y_n(t_{i}-t_{i-1}),S_n(t_i,t_{i-1}))$, ${1\le i \le t_k}$. The same holds for \eqref{usure2} and this implies the finite-dimensional convergence using basic properties on the increments of a stable subordinator.

\vspace{0.5cm} 

{\it Step 3: Tightness}

\vspace{0.5cm} 

Firstly, the process $(Y_n(t))_{t\in[0,T]}$ converges almost surely and uniformly to a constant. Besides, by Donsker's Theorem, the process $(Z_n(t))_{t\in[0,T]}$ converges to a Brownian motion on $D^d$ in the $J_1$-topology, which implies the tightness of the sequence by Prohorov's Theorem.\\

Secondly, we will need a criterion for tightness of probability measures on $D$, the space of $\R$-valued c\`adl\`ag functions. To this
end we define several moduli of continuity,
\begin{equation}
  \begin{split}
    w_f(\delta )&=
    \sup \big\{ \inf_{\alpha \in [0,1]}|f(t)-(\alpha f(t_1)+(1-\alpha )f(t_2))|:
      t_1\le t\le t_2\le T,t_2-t_1\le \delta \big\},
    \\
    v_f(t,\delta )&=
    \sup\big\{|f(t_1)-f(t_2)|:t_1,t_2\in [0,T]\cup (t-\delta ,t+\delta )\big\}.
  \end{split}
\end{equation}
The following result is a restatement of Theorem 12.12.3 of \cite{Whitt}.
\begin{theorem}[Theorem 12.12.3 of \cite{Whitt}]
  \label{t:tight}
  The sequence of probability measures $\{P_n\}$ on $D$ 
  is tight in the $M_1$-topology if 
  \begin{enumerate}
    \item[(i)] For each positive $\varepsilon $ there exist $c$ such that
    \begin{equation}
      P_n [f:\sup_{t\in[0,T]}|f(t)|>c]\le \varepsilon , \qquad n\ge 1.
    \end{equation}
    \item[(ii)] For each $\varepsilon >0$ and $\eta >0$, there exist 
    a $\delta $, $0<\delta <T$, and an integer $n_0$ such that
    \begin{equation}
      \label{e:Jcond}
      P_n[f:w_f (\delta )\ge \eta ]\le \varepsilon , \qquad n\ge n_0,
    \end{equation}
    and
    \begin{equation}\label{flammesdelenfer}
      P_n[f:v_f (0,\delta )\ge \eta ]\le \varepsilon \text{ and }
      P_n[f:v_f (T,\delta )\ge \eta ]\le \varepsilon , \qquad n\ge n_0.
    \end{equation}
  \end{enumerate}

\end{theorem}

Let us check that $S_n(\cdot)$ satisfies the two conditions of this theorem. For condition $(i)$, as $S_n(\cdot)$ is a.s.~nondecreasing, we just have to check the tightness of $S_n(T)$ which is easily obtained by the finite-dimensional convergence. For condition $(ii)$, note first that $w_f(\delta)$ is equal to $0$ when $f$ is nondecreasing. We then have to check the conditions \eqref{flammesdelenfer}. Using again the fact that $S_n(\cdot)$ is nondecreasing, we just need that, for any $\epsilon>0$ and $\eta>0$, there exists $\delta>0$ such that $\PR_0[S_n(\delta)\ge\eta]\le\epsilon$ and $\PR_0[S_n(T)-S_n(T-\delta)\ge\eta]\le\epsilon$. This is easily obtained using the finite-dimensional convergence.

Finally, by Theorem 11.6.7 of \cite{Whitt}, the tightness of $Y_n(\cdot)$ and $S_n(\cdot)$ implies that $(Y_n,S_n)$ and $(Z_n,S_n)$ are tight on the product space $D^d\times D$ in the $U\times M_1$-topology and $J_1 \times M_1$-topology respectively. This concludes the proof.
\end{proof}

Let us now introduce the inverse map on $D_u$ the subset of $D$ of functions $x$ that are unbounded above and such that $x(0)\ge 0$.  For $x\in D_u$, the inverse map of $x$ is defined as
\[
x^{-1}(t)=\inf\{s\ge 0:x(s)>t\},\quad\forall t\ge0.
\]
Besides, we define the subset $D_{u,\uparrow}$ of nondecreasing functions of $D_u$. We also define the subset $D_u^*$ of functions $x\in D_u$ such that $x^{-1}(0)=0$. We denote $D_\uparrow$ the subset of nondecreasing functions of $D$ and $C_{\uparrow\uparrow}$ the set of increasing (strictly) continuous functions.\\
We denote $S^{-1}_n(\cdot)$ the inverse map of $S_n(t)$. Also, recall the definition of the matrix $\sqrt{\Sigma}$ introduced in \eqref{chinois} and define ${v_0}:=v/||v||$ where $v$ is the vector defined in \eqref{olaf}. Finally, denote $I_d$ the $d\times d$ identity matrix, $P_{v_0}$ the projection matrix on $v_0$, that is the matrix such that, for any $x\in\Z^d$, $P_{v_0}x=(x\cdot v_0)v_0$, and let
\begin{equation}\label{water}
M_d:=C_\infty^{-\gamma/2}(I_d-P_{v_0})\sqrt{\Sigma}.
\end{equation}
Since $\sqrt{\Sigma}$ is invertible (see~\eqref{chinois}) and that $(I_d-P_{v_0})$ has rank $d-1$, it is clear $M_d$ has rank $d-1$. Also, note that $P_{v_0}M_d$ is the null matrix.

\begin{theorem}\label{seinfeld}
We have
\begin{align}\label{clavier1}
\Bigl(\frac{X_{\lfloor nt\rfloor}}{n^{\gamma}/L(n)}\Bigr)_{0\leq t \leq T} &\to (vC_\infty^{-\gamma} \mathcal{S}_\gamma^{-1}(t))_{t\in [0,T]},\\ \label{clavier2}
\left(\frac{X_{\lfloor nt\rfloor}-v\frac{n^\gamma}{L(n)} S^{-1}_{n^\gamma/L(n)}\left(\frac{nt}{\mathrm{Inv}(n^\gamma/L(n))}\right)]}{\sqrt{n^{\gamma}/L(n)}}\right)_{0\leq t \leq T} &\to \left(C_\infty^{-\gamma/2}\sqrt{\Sigma}B_{\mathcal{S}_\gamma^{-1}(t)}\right)_{t\in [0,T]},
\end{align}
and
\begin{equation}\label{clavier3}
\left(\frac{X_{\lfloor nt\rfloor}-\left(X_{\lfloor nt\rfloor}\cdot v_0\right)v_0}{\sqrt{n^{\gamma}/L(n)}}\right)_{0\leq t \leq T} \to \left(M_dB_{\mathcal{S}_\gamma^{-1}(t)}\right)_{t\in [0,T]},
\end{equation}
on $D^d$ in the uniform topology for \eqref{clavier1} and in the $J_1$-topology for \eqref{clavier2} and \eqref{clavier3}. The process $B_\cdot$ is a standard Brownian motion and $S^{-1}_{\gamma}(\cdot)$ is the inverse of a stable subordinator with index $\gamma$, independent of $B_\cdot$.
\end{theorem}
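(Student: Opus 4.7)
\emph{Setup.} Let $m=m_n:=n^\gamma/L(n)$; because $L$ is slowly varying, the defining relation $\mathrm{Inv}(m)^\gamma=m\,L(\mathrm{Inv}(m))$ together with $m=n^\gamma/L(n)$ forces $\mathrm{Inv}(m)\sim n$, hence $n/\mathrm{Inv}(m)\to 1$. Let $N_k:=\max\{i\ge 0:\tau_i\le k\}$ count the regeneration epochs completed by time $k$. Substituting $m$ for the scaling parameter in Lemma~\ref{onnaitonvitonmeurt} gives the joint convergence
\[
\bigl(Y_m(t),S_m(t)\bigr)_{t\in[0,T]}\xrightarrow{(d)}\bigl(vt,\,C_\infty\mathcal{S}_\gamma(t)\bigr)_{t\in[0,T]}\text{ in }D^d\times D\text{ under }U\times M_1,
\]
and analogously $(Z_m,S_m)\to(\sqrt{\Sigma}B,C_\infty\mathcal{S}_\gamma)$ under $J_1\times M_1$. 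By Theorem~\ref{tailtau}, $\max_{1\le i\le\lfloor Tm\rfloor+1}\|X_{\tau_i}-X_{\tau_{i-1}}\|_\infty$ has tails lighter than any polynomial, so the overshoot $X_{\lfloor nt\rfloor}-X_{\tau_{N_{\lfloor nt\rfloor}}}$ is $o(\sqrt{m})$ in probability and therefore negligible on all spatial scales at play.

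\emph{Inversion and composition.} To pass from regeneration-time to natural-time indexing, I apply the continuity of the inverse map on nondecreasing c\`adl\`ag functions in $M_1$ (see~\cite{Whitt}) to $S_m$, obtaining joint convergence of $(Y_m,S_m^{-1})$ and $(Z_m,S_m^{-1})$. The limit $(C_\infty\mathcal{S}_\gamma)^{-1}$ is continuous (the $\gamma$-stable subordinator has dense jump set and is strictly increasing a.s.), so $S_m^{-1}$ converges uniformly on compacts and the composition map is continuous at the relevant limit points. Noting that $N_{\lfloor nt\rfloor}/m$ differs from $S_m^{-1}(nt/\mathrm{Inv}(m))$ by at most $1/m$ and that $nt/\mathrm{Inv}(m)\to t$, the continuous mapping theorem yields
\[
\frac{X_{\lfloor nt\rfloor}}{m}=Y_m\!\left(\tfrac{N_{\lfloor nt\rfloor}}{m}\right)+o(1)\xrightarrow{(d)}v\,(C_\infty\mathcal{S}_\gamma)^{-1}(t)
\]
uniformly on $[0,T]$. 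The self-similarity $\mathcal{S}_\gamma(ct)\stackrel{(d)}{=}c^{1/\gamma}\mathcal{S}_\gamma(t)$ translates to $(C_\infty\mathcal{S}_\gamma)^{-1}(t)=\mathcal{S}_\gamma^{-1}(t/C_\infty)\stackrel{(d)}{=}C_\infty^{-\gamma}\mathcal{S}_\gamma^{-1}(t)$, proving (\ref{clavier1}).

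\emph{Gaussian fluctuations and projection.} For (\ref{clavier2}) decompose
\[
X_{\lfloor nt\rfloor}-v\,m\,S_m^{-1}\!\left(\tfrac{nt}{\mathrm{Inv}(m)}\right)=\sqrt{m}\,Z_m\!\left(\tfrac{N_{\lfloor nt\rfloor}}{m}\right)+O(1),
\]
using that $m\,S_m^{-1}(nt/\mathrm{Inv}(m))$ equals $N_{\lfloor nt\rfloor}$ up to an additive $O(1)$ term and absorbing the regeneration-block overshoot into the remainder. Composing $Z_m$ with $S_m^{-1}$ at an argument converging to $t$ gives, by joint convergence and the composition theorem,
\[
Z_m\circ S_m^{-1}\xrightarrow{(d)}\sqrt{\Sigma}\,B\circ(C_\infty\mathcal{S}_\gamma)^{-1}\text{ in }J_1,
\]
and Brownian scaling identifies the limit with $C_\infty^{-\gamma/2}\sqrt{\Sigma}\,B_{\mathcal{S}_\gamma^{-1}(t)}$ in distribution. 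To obtain (\ref{clavier3}) I apply $(I_d-P_{v_0})$: since $v=\|v\|v_0$, the drift $v\,m\,S_m^{-1}(\cdot)$ is killed and the transverse fluctuation inherits the limit $(I_d-P_{v_0})\,C_\infty^{-\gamma/2}\sqrt{\Sigma}\,B_{\mathcal{S}_\gamma^{-1}(t)}=M_d B_{\mathcal{S}_\gamma^{-1}(t)}$, the rank of $M_d$ being $d-1$ by invertibility of $\sqrt{\Sigma}$ (argued as after (3.40) of~\cite{SznPerco}). The main delicate point is justifying the composition convergence $Z_m\circ S_m^{-1}$ in $J_1$: composition is not $J_1$-continuous on all of $D\times D$, and one must exploit the continuity of the limit $\sqrt{\Sigma}B$ and of $\mathcal{S}_\gamma^{-1}$ to land in the continuity set of the composition map; this same obstruction is precisely what prevents (\ref{clavier2}) and (\ref{clavier3}) from being strengthened to the uniform topology, as observed in the remark.
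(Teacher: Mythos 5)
Your proposal is correct and follows essentially the same route as the paper: invoke Lemma~\ref{onnaitonvitonmeurt} for the joint convergence, continuity of the inverse map in $M_1$ on nondecreasing unbounded functions, the composition theorem at continuous limits, negligibility of the in-block overshoot, then self-similarity of $\mathcal{S}_\gamma^{-1}$ and Brownian scaling to pull out $C_\infty^{-\gamma}$ and $C_\infty^{-\gamma/2}$, and finally the projection $(I_d-P_{v_0})$ for~\eqref{clavier3}. One small inaccuracy: to bound the overshoot $X_{\lfloor nt\rfloor}-X_{\tau_{N_{\lfloor nt\rfloor}}}$ you cite Theorem~\ref{tailtau}, which only controls $X_{\tau_1}\cdot\vec\ell$; the deviation \emph{inside} a regeneration block (including transverse wandering) is what Lemma~\ref{tail_chi_tau} controls via $\chi$, and that is the lemma the paper uses and the one you actually need here.
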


\begin{remark}
In \eqref{clavier3}, if we recenter with the projection on any other unit vector than $v_0$, then this quantity will diverge to infinity.
\end{remark}

\begin{proof}
Here, we will use two results from \cite{Whitt}. Firstly, Theorem 13.2.1 of \cite{Whitt} states that if $(x_n,y_n)\to(x,y)$ in $D^d\times D_\uparrow$ with $(x,y)\in C^d\times C_{\uparrow}$, then $x_n\circ y_n\to x\circ y$ in the uniform topology, hence in the Skorokhod's topologies. Secondly, Corollary 13.6.4 of \cite{Whitt} states that the inverse map from $(D_{u,\uparrow\uparrow},M_1)$ to $(C,U)$ is continuous.\\
Note that $(tn/\mathrm{Inv}(n^\gamma/L(n)))_{t\in[0,T]}$ converges uniformly to $(t)_{t\in[0,T]}$. Note also that $\mathcal{S}_\gamma^{-1}(\cdot)$ is a.s.~continuous and strictly increasing (See Lemma III.17 in \cite{BertoinLP}). Using these results and Lemma \ref{onnaitonvitonmeurt}, we have that
\begin{align*}
\left(S^{-1}_{n^\gamma/L(n)}\left(\frac{nt}{\mathrm{Inv}(n^\gamma/L(n))}\right)\right)_{t\in [0,T]}&\to (C_\infty^{-\gamma}\mathcal{S}_\gamma^{-1}(t))_{t\in [0,T]}\\
\left(Y_{n^\gamma/L(n)}\left(S^{-1}_{n^\gamma/L(n)}\left(\frac{nt}{\mathrm{Inv}(n^\gamma/L(n))}\right)\right)\right)_{t\in [0,T]}&\to (vC_\infty^{-\gamma} \mathcal{S}_\gamma^{-1}(t))_{t\in [0,T]}\\
\left(Z_{n^\gamma/L(n)}\left(S^{-1}_{n^\gamma/L(n)}\left(\frac{nt}{\mathrm{Inv}(n^\gamma/L(n))}\right)\right)\right)_{t\in [0,T]}&\to (C_\infty^{-\gamma/2}B_{\mathcal{S}_\gamma^{-1}(t)}\sqrt{\Sigma})_{t\in [0,T]},
\end{align*}
in the uniform topology (and thus $J_1$) for the two first limits, and  in the $J_1$-topology for the last one.

Now, we will be able to conclude if we prove that $X_{\lfloor nt\rfloor}$ is uniformly close to $X_{\tau_{ \left\lfloor \frac{n^\gamma}{L(n)}S^{-1}_{n^\gamma/L(n)}(t n/\mathrm{Inv}(n^\gamma/L(n))\right\rfloor  }}$.\\
It is elementary to verify that  ${\tau_{ \left\lfloor \frac{n^\gamma}{L(n)}S^{-1}_{n^\gamma/L(n)}(tn/\mathrm{Inv}(n^\gamma/L(n))\right\rfloor  }}$ is the smallest $\tau_i$ such that $\tau_i>tn$. It means that $X_{\tau_{ \left\lfloor \frac{n^\gamma}{L(n)}S^{-1}_{n^\gamma/L(n)}(t n/\mathrm{Inv}(n^\gamma/L(n))\right\rfloor  }}-X_{\lfloor nt\rfloor}$ is less than the size of a regeneration block, plus one.\\
Besides, we have that 
\begin{align*}
M_n&:=\max_{t\in[0,T]}\abs{\abs{    X_{\tau_{ \left\lfloor \frac{n^\gamma}{L(n)}S^{-1}_{n^\gamma/L(n)}(t n/\mathrm{Inv}(n^\gamma/L(n))\right\rfloor  }}-X_{\lfloor nt\rfloor}                  }}_\infty\\
&\le C \max_{k=1,...,\lfloor nT\rfloor +1}\{\abs{\abs{     X_{\tau_i}(t)-X_{\tau_{i-1 }}                   }}_\infty\},
\end{align*}
where it should be noticed that $X_{\tau_1}\ge1$ almost surely.
Using Lemma \ref{tail_chi_tau}, we have that
\[
\PR_0\left[\frac{M_n}{n^{\gamma/4}}\ge \delta\right]\le CnT\times n^{-2}=o(1).
\]
This concludes the proof of \eqref{clavier1} and \eqref{clavier2}.\\
For \eqref{clavier3}, notice that we just have to apply the linear combination $(I_d-P_{v_0})$ to \eqref{clavier2}, as $(I_d-P_{v_0})v=0$. All linear combinations are continuous in the $J_1$-topology at continuous functions, see Section 3.3 of \cite{Whitt}. This concludes the proof.

\end{proof}

 \appendix
\section{}

\subsection{Proofs of Theorems \ref{tailtau} and \ref{tailtauK}}
We give here the proof of Theorem \ref{tailtau}, which is very close the the proof of Theorem 4.1 in \cite{Fri11}, and Theorem \ref{tailtauK}. Before giving the core of the proofs, we need some defitions and  some lemmas.\\
Let us define the hitting time of the \emph{level} $n$ by
\[
\Delta_n:=T_{\Hc^+(n)},
\]
and the ladder times
%
\begin{equation}
\label{defW} W_0=0 \quad\mbox{and}\quad W_{k+1}=
\inf\{n \geq0, X_n\cdot\vec{\ell} >X_{W_k} \cdot\vec{\ell}
\}.
\end{equation}

We introduce the event
%
\begin{eqnarray}
\label{defM} M^{(K)}(n)&=&M(n)=\bigl\{\mbox{for $k$ with
$W_k \leq\Delta_n$,}\nonumber\\[-8pt]\\[-8pt]
&&\hspace*{5.6pt}\mbox{we have } X_{\mathcal{M}^{(K)}\circ\theta
_{W_k}+W_k}\cdot\vec{
\ell} -X_{W_k}\cdot\vec{\ell}\leq n^{1/2}\bigr\}.\nonumber
\end{eqnarray}
Moreover,  we introduce
the event
%
\begin{equation}
\label{defSn} S(n)=\bigl\{\mbox{for all $i$ with $S_i
\leq\Delta_n$ and $M_i<\infty$, } M_i-X_{S_i}
\cdot\vec{\ell}\leq n^{1/2}\bigr\}.
\end{equation}
As the definition of $M^{(K)}(n)$ depends on $\mathcal{M}^{(K)}$, we work in the exact same context as in \cite{Fri11} and the following Lemma holds.
\begin{lemma}[Lemma $6.6$ of \cite{Fri11}]
\label{Mn}
For any $M<\infty$, there exists $K_0$ such that, for any $K\geq K_0$
we have
\[
\PR\bigl[M^{(K)}(n)^c\bigr] \leq Cn^{-M}.
\]
\end{lemma}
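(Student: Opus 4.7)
The plan is a union bound over the ladder-points $W_k\le\Delta_n$, combined with a super-polynomial tail estimate for the $\vec\ell$-progress required, starting from each ladder-point, to meet the ``two consecutive $e_1$-steps to a $K$-open endpoint'' configuration defining $\mathcal{M}^{(K)}$. I begin by restricting to the high-probability event, provided by Theorem~\ref{BL}, that the walk remains in the tilted box $B(n,n^\alpha)$ throughout $[0,\Delta_n]$; on this event the number of distinct ladder-point positions is at most $|B(n,n^\alpha)\cap\Hc^-(n+\sqrt d)|\le n^C$ for some $C=C(d,\alpha)$, since the values $X_{W_k}\cdot\vec\ell$ are strictly increasing in $k$.

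The central step is to show that, for every ladder-point $W_k$,
\[
\PR\bigl[\bigl(X_{\mathcal{M}^{(K)}\circ\theta_{W_k}+W_k}-X_{W_k}\bigr)\cdot\vec\ell>n^{1/2}\bigr]\le e^{-cn^{1/2}},
\]
uniformly in $k$ and in the past. The geometric key is that at any ladder-point $X_{W_j}$ (with $W_j\ge W_k$), every vertex with strictly larger $\vec\ell$-projection has not been visited; in particular the conductances of all edges incident to $X_{W_j}+e_1$ and $X_{W_j}+2e_1$ are freshly sampled from $P_*$. I would then introduce at each such ladder-point the ``success'' event
\[
A_j:=\bigl\{X_{W_j+1}=X_{W_j}+e_1,\ X_{W_j+2}=X_{W_j}+2e_1,\ X_{W_j}+2e_1\text{ is }K\text{-open}\bigr\},
\]
on which $\mathcal{M}^{(K)}\circ\theta_{W_j}=2$ and the $\vec\ell$-progress is at most $2$. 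By restricting the fresh forward conductances around $X_{W_j},X_{W_j}+e_1,X_{W_j}+2e_1$ to lie in $[1/K,K]$ and exploiting the bias factor $e^{e_1\cdot\vec\ell}\ge e^{1/\sqrt d}$ in \eqref{deftransition}--\eqref{def_conduct}, one obtains $\PR[A_j\mid \text{past up to }W_j]\ge \kappa(K,d,\vec\ell)>0$ uniformly in the past. Since the $\vec\ell$-gap $X_{W_{j+1}}\cdot\vec\ell-X_{W_j}\cdot\vec\ell$ is always at most $\max_i|e_i\cdot\vec\ell|\le 1$, any $\vec\ell$-progress exceeding $n^{1/2}$ requires at least $\lceil n^{1/2}\rceil$ consecutive ladder-points at which the events $A_j$ fail; the freshness of the forward environment upgrades these Bernoulli trials to be effectively independent, yielding the bound $(1-\kappa)^{\lceil n^{1/2}\rceil}\le e^{-cn^{1/2}}$.

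A union bound over the $\le n^C$ ladder-points together with the $Ce^{-cn}$ bad-box event then gives
\[
\PR[M^{(K)}(n)^c]\le n^C e^{-cn^{1/2}}+Ce^{-cn}\le C(M)n^{-M},
\]
as claimed. The main obstacle is the uniform lower bound $\PR[A_j\mid\text{past}]\ge\kappa>0$ in the absence of uniform ellipticity: a single anomalously large back-conductance at $X_{W_j}$ could in principle make the next $+e_1$ step vanishingly unlikely. The way around this, exactly as in Lemma $6.6$ of \cite{Fri11}, is to impose enough structure on the fresh forward environment (the $K$-openness of $X_{W_j}+e_1$ and $X_{W_j}+2e_1$, which in particular forces the two traversed conductances to lie in $[1/K,K]$) so that the exponential drift weight $e^{e_1\cdot\vec\ell}$ in the transition-probability formula dominates the denominator uniformly in any back-conductances, producing a positive constant $\kappa(K,d,\vec\ell)$ after integrating over the fresh randomness.
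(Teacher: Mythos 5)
Your overall plan — restrict to the high‑probability box event from Theorem~\ref{BL}, enumerate the polynomially many ladder points, and argue that each ladder point gives an independent-enough ``trial'' with a uniformly positive chance of triggering $\mathcal{M}^{(K)}$ — has the right shape, and the counting and union-bound bookkeeping are fine. The gap is in the central claim that $\PR[A_j\mid\text{past up to }W_j]\geq\kappa(K,d,\vec\ell)>0$ uniformly. At time $W_j$ the edges of $X_{W_j}$ pointing into $\Hc^+_{X_{W_j}}$ are indeed freshly sampled, but $\pi^\omega(X_{W_j})$ in the denominator of $p^\omega(X_{W_j},X_{W_j}+e_1)$ also contains the conductances of the \emph{non-forward} edges at $X_{W_j}$, some of which have already been observed (for instance the edge traversed to arrive at $X_{W_j}$, or edges to previously visited vertices). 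These can be arbitrarily large under \eqref{assProba}, and the exponential drift factor $e^{e_1\cdot\ell}$ is a fixed constant bounded by $e^{\lambda}$, so it cannot dominate an arbitrarily large back-conductance. Imposing $K$-openness on $X_{W_j}+e_1$ and $X_{W_j}+2e_1$ controls the \emph{forward} conductances, but says nothing about the denominator; hence no such $\kappa$ exists and the geometric-trials bound $(1-\kappa)^{\lceil n^{1/2}\rceil}$ does not follow.

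This is exactly the obstruction the good/bad decomposition of Section~\ref{sect_badareas} is designed to resolve, and a correct argument has to use it. Roughly: a ladder point sitting in a bad region $\mathrm{BAD}_K(\cdot)$ offers no control, but Lemma~\ref{BLsizeclosedbox} gives exponential tails on $W(\mathrm{BAD}_K(x))$, so the $\vec\ell$-progress spent traversing bad regions between ``good'' ladder points has super-polynomially small probability of exceeding, say, $n^{1/4}$. At a ladder point whose local neighbourhood is $K$-open one does get a uniform lower bound on the chance of completing the $2e_1$-move to a $K$-open vertex (this is where a bound of the type in Remark~\ref{rem_pkopen} is used legitimately), and then the geometric-trial estimate over good ladder points applies. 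Your write-up correctly identifies the danger but then asserts, rather than proves, that the drift weight neutralises it; as written, that step fails and the $K$-openness of the \emph{base} ladder point (or an equivalent control on the bad cluster containing it) is what is actually needed, together with the quantitative bad-region bounds.
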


Recalling the definition of $M$ at \eqref{defM}, $M_k$ at (\ref{defrealM}) an $S(n)$ at \eqref{defSn}, let us quote the following results.

\begin{lemma}[Analog of Lemma $7.1$ of \cite{Fri11}]
\label{M1}
We have
\[
\PR[M\geq n \mid D<\infty]\leq C\exp(-cn).
\]
\end{lemma}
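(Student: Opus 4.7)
The plan is to show the unconditional bound $\PR[M\geq n, D<\infty]\le Ce^{-cn}$ and then divide by $\PR[D<\infty]$, which is bounded below by a positive constant. For the lower bound on $\PR[D<\infty]$, observe that the enhanced transition probabilities satisfy $\tilde p^{\omega}((0,z),(y,0))=p^{\omega}(0,y)-p^{\omega}_K(0,y)$, a non-negative quantity with strictly positive annealed expectation (since $p^{\omega}_K<p^{\omega}$ on a set of positive measure). Hence $\PR[Z_1=0]\ge c>0$, and on $\{Z_1=0\}$ the condition $\mathcal{I}_0=\{1\}$ gives $D\le 1$.

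For the key upper bound, the idea is a case analysis on which of the three defining events in \eqref{defD} triggers $D$, combined with the strong Markov property and Lemma~\ref{thisisboring}. First, note that on $\{M\geq n\}$ the stopping time $\tau:=T_{\Hc^+(n-1)}$ satisfies $\tau\leq D$. Fix $n\ge 2$ and examine which clause of \eqref{defD} is responsible for $D<\infty$:
\begin{itemize}
\item If $X_D\cdot\vec\ell\le 0$, then $X_D\in\Hc^-(0)\subset\Hc^-(1)$ and $T_{\Hc^-(1)}\le D<\infty$.
\item If $X_{D-1}=e_j$ for some $j$, then $X_{D-1}\cdot\vec\ell=e_j\cdot\vec\ell\le 1$ (Cauchy--Schwarz), so again $T_{\Hc^-(1)}\le D-1<\infty$.
\item The case $D\in\mathcal{I}_0$ forces $D\le 1$, which is incompatible with $\tau\le D$ because $X_1\cdot\vec\ell\le 1<n-1$ for $n\ge 3$, and similarly for $n=2$.
\end{itemize}
In every remaining case we conclude that on $\{M\ge n,D<\infty\}$ the walker first hits $\Hc^+(n-1)$ and then subsequently returns to $\Hc^-(1)$, i.e.\ $T_{\Hc^-(1)}\circ\theta_{\tau}<\infty$.

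Applying the strong Markov property at $\tau$ and translation invariance of the i.i.d.~environment under the annealed measure,
\[
\PR[M\ge n,D<\infty]\le \ES\bigl[\1{\tau<\infty}\,\PR_{X_{\tau}}[T_{\Hc^-(1)}<\infty]\bigr]
\le \PR_0\bigl[T_{\Hc^-(-(n-2))}<\infty\bigr],
\]
where in the last step we used $X_{\tau}\cdot\vec\ell\ge n-1$. By Lemma~\ref{thisisboring} this is at most $Ce^{-c(n-2)}\le Ce^{-cn}$, which combined with the lower bound on $\PR[D<\infty]$ concludes the proof.

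The only mild subtlety is the book-keeping for the three defining clauses of $D$; all three reduce, for $n$ large, to a required backtracking from $\Hc^+(n-1)$ to a neighborhood of the origin, after which Lemma~\ref{thisisboring} does the work.
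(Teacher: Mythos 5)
Your overall strategy is sound: the reduction $\{M\ge n,\ D<\infty\}\subset\{\tau\le D,\ T_{\Hc^-(1)}\circ\theta_{\tau}<\infty\}$ with $\tau=T_{\Hc^+(n-1)}$, obtained by a case analysis on the three clauses of \eqref{defD}, is correct for $n\ge 3$, and so is the positive lower bound on $\PR[D<\infty]$ via $\PR[Z_1=0]>0$. The gap is in the final display. When you write
\[
\ES\bigl[\1{\tau<\infty}\,\PR_{X_{\tau}}[T_{\Hc^-(1)}<\infty]\bigr]\le \PR_0\bigl[T_{\Hc^-(-(n-2))}<\infty\bigr],
\]
you silently replace the \emph{quenched} probability $P^{\omega}_{X_{\tau}}[\cdot]$, which is what the strong Markov property actually delivers, by the \emph{annealed} probability $\PR_{X_{\tau}}[\cdot]$, and then invoke translation invariance pointwise. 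This substitution is not justified: the event $\{X_{\tau}=y\}$ and the quenched backtracking probability $P^{\omega}_y[T_{\Hc^-(1)}<\infty]$ both depend on the conductances in the slab $\{x:1\le x\cdot\vec\ell\le n-1\}$, so they are correlated under ${\mathbf P}$ and cannot be factored. If one tries to sidestep this by bounding $P^{\omega}_0[X_{\tau}=y]\le 1$ before taking ${\mathbf E}$, one is left with $\sum_{y}\PR_y[T_{\Hc^-(1)}<\infty]$ over \emph{all} $y\in\Hc^+(n-1)$, an infinite sum, because the transverse coordinate of $X_\tau$ is unbounded.

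This is exactly why the paper flags this Lemma as relying on Theorem~\ref{BL}, which you do not use. The standard repair, carried out explicitly in the proof of Theorem~\ref{tailtauK} in the appendix, is to first insert the event $\{T_{\partial B(m,m^{\alpha})}=T_{\partial^+ B(m,m^{\alpha})}\}$ for $m$ of order $n$ and $\alpha>d+3$; its complement has annealed probability at most $Ce^{-cm}$ by Theorem~\ref{BL}. On that event $X_{T_{\partial B}}$ is confined to the polynomially sized set $\partial^+B(m,m^{\alpha})$, so one can union-bound over the finitely many \emph{deterministic} exit points $z$, bound $\1{X_{T_{\partial B}}=z}\le 1$ inside the annealed expectation, and only then apply translation invariance and Lemma~\ref{thisisboring} to each fixed $z$, yielding $Cm^{c(d)}e^{-cm}\le Ce^{-cn}$. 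Without the box step one cannot pass from the random exit point $X_\tau$ to a deterministic translate of Lemma~\ref{thisisboring}.
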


\begin{lemma}[Analog of Lemma $7.2$ of \cite{Fri11}]
\label{Sn}
We have
\[
\PR\bigl[S(n)^c\bigr] \leq\exp\bigl(-n^{1/2}\bigr).
\]
\end{lemma}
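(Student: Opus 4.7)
The plan is to bound $\PR[S(n)^c]$ by a union bound over the at most $Cn$ potential "attempts" at regeneration that occur before level $n$. More precisely, on $S(n)^c$ there exists an index $i$ with $S_i\le\Delta_n$, $M_i<\infty$ and $M_i-X_{S_i}\cdot\vec\ell>n^{1/2}$; I will control the number of such $i$'s and then apply Lemma~\ref{M1} for each one.

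The first step is to count the admissible $i$'s. By construction $S_{i+1}\ge T_{\mathcal{H}^+(M_i)}>T_{\mathcal{H}^+(X_{S_i}\cdot\vec\ell)}$, so the values $X_{S_i}\cdot\vec\ell$ are strictly increasing in $i$, and by \eqref{rightdir} they are separated by at least $2e_1\cdot\vec\ell\ge 2/\sqrt d$. Consequently, on the event $\{S_i\le\Delta_n\}$, we have $X_{S_i}\cdot\vec\ell\in[0,n+O(1)]$, so the number of such indices is at most $Cn$.

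The second step is the per-attempt estimate. For each fixed $i$, I apply the strong Markov property at $S_i$: conditionally on $\mathcal{F}_{S_i}$, the walk shifted by $X_{S_i}$ behaves like a walk starting from a $K$-open vertex, and the event $\{M_i-X_{S_i}\cdot\vec\ell\ge n^{1/2},\ M_i<\infty\}$ is precisely $\{M\ge n^{1/2},\ D<\infty\}$ shifted by $S_i$. The quantitative input I need is of the form
\[
\sup_{a\in[1/K,K]^{\mathcal{E}_0}}\PR^a\bigl[M\ge n^{1/2}\mid D<\infty\bigr]\le C\exp(-cn^{1/2}),
\]
which is a uniform version of Lemma~\ref{M1}. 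The statement of Lemma~\ref{M1} is under $\PR$, so this is where I expect the main (but minor) obstacle: I need to inspect the proof of Lemma~\ref{M1} (i.e.\ Lemma 7.1 of \cite{Fri11}) to check that the argument does not use the randomness of the conductances at the origin, so that the estimate extends to every deterministic choice $a$ of those conductances. This is standard because Lemma~\ref{M1} ultimately relies on Lemma~\ref{thisisboring}, whose proof is translation invariant and insensitive to the conductances at the starting point, provided the starting vertex is declared $K$-open. Together with Markov's inequality this gives, for each $i$,
\[
\PR\bigl[M_i-X_{S_i}\cdot\vec\ell\ge n^{1/2},\ M_i<\infty\bigr]\le C\exp(-cn^{1/2}).
\]

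Finally, summing over the $Cn$ admissible indices yields
\[
\PR\bigl[S(n)^c\bigr]\le Cn\exp(-cn^{1/2})\le\exp(-n^{1/2})
\]
for all $n$ sufficiently large (and by adjusting the implicit constant in the statement, for all $n$). The argument thus reduces to (i) the combinatorial counting of ladder attempts and (ii) the uniform version of Lemma~\ref{M1}; nothing more delicate is required.
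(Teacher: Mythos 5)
Your overall strategy---union bound over the $\le Cn$ candidate indices $i$ with $S_i\le\Delta_n$ (using \eqref{rightdir} to see they are separated by at least $2/\sqrt d$ in $\vec\ell$-level), then a per-attempt backtracking estimate of order $\exp(-cn^{1/2})$, then summation---is the right one and agrees with the argument the paper defers to \cite{Fri11}. Two points deserve more care. First, ``Markov's inequality'' should read ``Markov's property''; more substantively, the step where the strong Markov property at $S_i$ is combined with your uniform version of Lemma~\ref{M1} hides the real work: one must decompose $\PR[S_i\le\Delta_n,\ X_{S_i}=x,\ M_i-X_{S_i}\cdot\vec\ell> n^{1/2},\ M_i<\infty]$ over the position $x$ (and use that $X_{S_i}$ is a running maximum, so the trajectory up to $S_i$ only depends on conductances in $\mathcal{L}^x$), then condition on the finitely many conductances $a=(c_*(e))_{e\in\mathcal{E}_x}$ to make the past ($\mathcal{L}^x$-measurable) and the future event $\{M\circ\theta_{S_i}>n^{1/2},\ D\circ\theta_{S_i}<\infty\}$ ($\mathcal{R}^x$-measurable, after bounding it by a hitting event) ${\bf P}^a$-independent. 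This is exactly the decomposition used in the proofs of Lemma~\ref{Kn} and Theorem~\ref{thindep}, and it is what forces the supremum over $a\in[1/K,K]^{\mathcal{E}_0}$ in the per-attempt estimate; you gesture at it but should state it explicitly. Second, a small numerical caveat: your union bound yields $Cn\exp(-cn^{1/2})$, and this is only $\le\exp(-n^{1/2})$ for all large $n$ if $c>1$, which is not guaranteed by Lemma~\ref{thisisboring} or Theorem~\ref{BL}. The way Lemma~\ref{Sn} is actually invoked in the proof of Theorem~\ref{tailtau} only requires $\exp(-c'n^{1/2})$ for some $c'>0$, so this is harmless, but as stated the inequality should be read with an implicit constant in the exponent.
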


The proofs of these Lemmas are exactly the same as in \cite{Fri11}: even though the definition of $D$ is not the same, it only uses the fact that if $M_i<\infty$ then $D\circ\theta_{S_i}+S_i <\infty$, that $\PR(D<\infty)>0$, together with Theorem \ref{BL} (tagged $5.1$ in \cite{Fri11}). Thus, we do not rewrite these proofs here.\\

We recall that $N$ was defined at (\ref{deftau1}). The proof of the following lemma is almost the same as Lemma $7.5$ in \cite{Fri11}, but as some non-trivial details change, we provide a complete proof.
%
\begin{lemma}
\label{Kn}
We have
\[
\PR[N\geq n]\leq\exp(-cn).
\]
\end{lemma}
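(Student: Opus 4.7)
The plan is to show that at each trial time $S_k$, conditional on $\{S_k<\infty\}$, the walk has a uniformly positive chance of never backtracking, so that $N$ is stochastically dominated by a geometric variable.

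First, I would record the structural facts. By construction of $\mathcal{M}^{(K)}$ in \eqref{def_Mcal}, on $\{S_k<\infty\}$ the vertex $X_{S_k}$ is $K$-open, and $X_{S_k}$ is a strict maximum of $X_\cdot\cdot\vec\ell$ on $[0,S_k]$; moreover $X_{S_k}=X_{S_k-1}+e_1=X_{S_k-2}+2e_1$ so the two edges incident to $X_{S_k}$ that have been used up to time $S_k$ are $[X_{S_k-2},X_{S_k-1}]$ and $[X_{S_k-1},X_{S_k}]$, both of which lie in $\mathcal{L}^{X_{S_k}}\setminus\mathcal{E}_{X_{S_k}}$. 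In particular, the conductances on $\{e\in E(\Z^d): e\cdot\vec\ell>X_{S_k}\cdot\vec\ell\text{ strictly}\}$ have not been observed up to time $S_k$. Using the strong Markov property at $S_k$,
\[
\PR\bigl[M_k<\infty,\ S_k<\infty\mid\mathcal{F}_{S_k}\bigr]
=P^{\omega}_{X_{S_k}}[D<\infty]\,\1{S_k<\infty}.
\]

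Second, I would prove the key uniform estimate
\begin{equation}\label{eq:keyescape}
\ES\bigl[P^{\omega}_{X_{S_k}}[D=\infty]\,\1{S_k<\infty}\bigr]\ge c(K)\,\PR[S_k<\infty].
\end{equation}
To do so, I would apply Proposition~\ref{prop_idconf}: since $X_{S_k}$ is $K$-open, $P^{\omega}_{X_{S_k}}[D=\infty]$ depends on $\omega$ only through the conductances outside $\mathcal{E}_{X_{S_k}}$. The conductances on the half-space strictly above $X_{S_k}\cdot\vec\ell$ are independent of $\mathcal{F}_{S_k}$ under ${\bf P}$, so averaging against these fresh conductances and using translation invariance yields
\[
\ES\bigl[P^{\omega}_{X_{S_k}}[D=\infty]\,\1{S_k<\infty}\bigr]\ge \PR[S_k<\infty]\,\ES\bigl[P^\omega_0[D=\infty]\cdot\1{0\text{ is }K\text{-good}}\bigr]\Big/{\mathbf P}[0\text{ is good}],
\]
after writing the event $\{0\text{ is good}\}$ in terms of the half-space ahead and using that this event has positive probability (Lemma \ref{BLsizeclosedbox}). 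Then Lemma \ref{posescape} provides the required strictly positive lower bound $c(K)>0$ for the right-hand side.

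Third, I would deduce the geometric decay. Combining the two previous displays,
\[
\PR[S_k<\infty,\ M_k<\infty]=\PR[S_k<\infty]-\ES\bigl[P^\omega_{X_{S_k}}[D=\infty]\1{S_k<\infty}\bigr]\le (1-c(K))\,\PR[S_k<\infty].
\]
Since $\{S_{k+1}<\infty\}\subset\{S_k<\infty,\ M_k<\infty\}$, induction gives $\PR[S_k<\infty]\le(1-c(K))^{k-1}$, and the bound $\PR[N\ge n]\le\PR[S_n<\infty]\le(1-c(K))^{n-1}\le \exp(-cn)$ follows.

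The main obstacle is the second step: carefully implementing the ``environment strictly ahead of $X_{S_k}$ is fresh'' decomposition while simultaneously exploiting Proposition~\ref{prop_idconf} to discard the dependence of $P^\omega_{X_{S_k}}[D=\infty]$ on the (observed) conductances of $\mathcal{E}_{X_{S_k}}$. Everything else is bookkeeping. The variant $\PR_0^K$ version (Theorem~\ref{tailtauK}) is identical once one fixes the conductances of $\mathcal{E}_0$, since these play no role at times $S_k\ge 1$ and Proposition~\ref{prop_idconf} ensures their value is irrelevant to $\{D=\infty\}$.
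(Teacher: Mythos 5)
Your proposal is correct and follows essentially the same route as the paper's proof: strong Markov property at a trial regeneration time, Proposition~\ref{prop_idconf} to discard the observed conductances at $\mathcal{E}_{X_{S_k}}$, independence of the environment in the $\mathcal{L}^x$/$\mathcal{R}^x$ decomposition, translation invariance, Lemma~\ref{posescape}, and then induction in $k$. The only cosmetic differences are that the paper inducts directly on the event $C(n)=\{\,D\circ\theta_{S_k}<\infty\text{ whenever }S_k<\infty,\ k\le n\,\}$ rather than on $\{S_k<\infty\}$ (which sidesteps having to argue that $S_n<\infty$ a.s.\ on $\{N\ge n\}$), and the normalizing probability in your key escape bound should be ${\mathbf P}[0\text{ is open}]$ rather than ${\mathbf P}[0\text{ is good}]$ (since $\PR_0^K[D=\infty]=\PR_0[D=\infty\mid 0\text{ is open}]$ by Remark~\ref{suissenergie}), neither of which affects the conclusion since a positive constant $c(K)$ survives.
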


\begin{proof}
We introduce the event
\[
C(n):=\{\mbox{for all }k \leq n\mbox{ such that }S_k<\infty\mbox{,
we have } D\circ S_k+S_k<\infty\},
\]
which verifies
%
\begin{equation}
\label{fa} \{N\geq n\}\subseteq C(n).
\end{equation}

Because of the way our regeneration times are constructed, we can see
that $C(n)$ is $P^{\omega}$-measurable with respect to $\sigma\{(X_k,Z_k)
\mbox{ with } k \leq S_{n+1}\}$, see the construction in Section \ref{sect_altcons}. Using Markov's property at $S_{n+1}$,
\begin{eqnarray*}
\PR\bigl[C(n+1)\bigr] &\leq& \sum_{x\in\Z^d} {\mathbf E}
\bigl[P^{\omega}\bigl[X_{S_{n+1}}=x,C(n)\bigr] P^{\omega}_x[D<
\infty]\bigr]
\\
&=& \sum_{x\in\Z^d} {\mathbf E}\Bigl[P^{\omega}
\bigl[X_{S_{n+1}}=x, C(n)\bigr] P^{\omega_x^K}_x[D<\infty]\Bigr],
\end{eqnarray*}
where we used Proposition \ref{prop_idconf}, the fact that $X_{S_{n+1}}$ is open, and where $\omega_x^K$ coincides with $\omega$ everywhere except that we fix $c_*(x,x+e_j)=K$ for all $1\le j\le 2d$.
Furthermore:

\begin{itemize}

\item[(1)] $\!\!P^{\omega}[X_{S_{n+1}}\,{=}\,x, C(n)] $ is measurable with
respect to 
$$\sigma\{c_*(e) : e^+\cdot\lv\vee e^-\cdot\lv\le x\cdot\lv\text{ or } x\in\{e^+,e^-\} \};$$
\item[(2)] $P^{\omega_x^K}_x[D<\infty]$ is
measurable with respect to 
$$\sigma\{c_*(e) : e^+\cdot\lv\vee e^-\cdot\lv> x\cdot\lv\text{ and } x\notin\{e^+,e^-\} \}.$$
\end{itemize}
So we have ${\mathbf P}$-independence between the random variables in $(1)$
and in $(2)$. Moreover, the ${\mathbf P}$-expectation of the second random variable does not depend on $x$, by translation invariance. Hence
\begin{align*}
 \PR\bigl[C(n+1)\bigr]
&\leq  \PR\bigl[C(n)\bigr] {\mathbf P} \Bigl[P^{\omega_0^K}_0[D<\infty] \Bigr]
\leq  \PR\bigl[C(n)\bigr](1-c),
\end{align*}
where we used Lemma \ref{posescape} and Remark \ref{suissenergie}. Therefore,
\[
\PR\bigl[C(n+1)\bigr]\leq(1-c) \PR\bigl[C(n)\bigr]\leq\cdots
\leq(1-c)^n,
\]
and we obtain the result by (\ref{fa}).
\end{proof}

The proof of Theorem \ref{tailtau} is now exactly the same as in \cite{Fri11} but we rewrite it here as a self-contained argument in order to make it easier to the reader.

\begin{proof}[Proof of Theorem \ref{tailtau}]
Recalling definitions (\ref{defS}), the definition of $M_k$ at (\ref{defrealM}), the definition \eqref{defW} of $W_k$ and the definition \eqref{defM} of $M(n)$, we may see that
\mbox{$\{T_{\mathcal{H}^+(M_k)},\break k\geq0\} \subset\{W_k, k\geq0\}$.} This
means that on $M(n)$, 
\[
\mbox{for $k$ such that $S_{k} \leq\Delta_n$}\qquad \mbox{we have }
X_{S_{k+1}}\cdot\vec{\ell} -X_{T_{\mathcal{H}^+(M_k)}}\cdot\vec{\ell
}\leq
n^{1/2}.
\]

Recalling the definition of \eqref{defSn} and noticing that $X_{T_{\mathcal{H}^+(M_k)}}\cdot\vec{\ell} \leq M_k+1$,
we may see that, on $S(n)\cap M(n)$
\[
X_{S_{k+1}}\cdot\vec{\ell}-X_{S_k}\cdot\vec{\ell}
\leq2n^{1/2}+1
\]
for any $k$ with $S_{k}<\Delta_n$ and $M_k<\infty$. By induction, this
means that if $k\leq n^{1/2}/3$, $S_{k}<\Delta_n$ and $M_k<\infty$, then
\[
X_{S_{k+1}}\cdot\vec{\ell} \leq k\bigl(2n^{1/2}+1\bigr)<n
\quad\mbox{and}\quad S_{k+1}\leq\Delta_n,
\]
and the second part following from the fact that $X_{S_{k+1}}$ is a new
maximum for the random walk in the direction $\vec{\ell}$. In
particular, if $N\leq n^{1/2}/3$, then we can apply\vspace*{1pt} the previous
equation to $k=N-1$. Recalling (\ref{deftau1}) we see that, if $\{N
\leq n^{1/2}/3\}$ and $M(n)\cap S(n)$, then for $n$ large enough,
\[
X_{\tau_1}\cdot\vec{\ell} \leq\bigl(n^{1/2}/3\bigr)
\bigl(2n^{1/2}+1\bigr) < n.
\]

Thus
\begin{eqnarray*}
\PR[X_{\tau_1}\cdot\vec{\ell} \geq n] &\leq& \PR\bigl[N \geq
n^{1/2}/3\bigr] +\PR\bigl[M(n)^c\bigr] +\PR
\bigl[S(n)^c\bigr]
\\
&\leq& 2\exp\bigl(-cn^{1/2}\bigr) + n^{-M}
\leq3n^{-M}
\end{eqnarray*}
by Lemmas \ref{Kn}, \ref{Mn} and \ref{Sn}. This completes
the proof.
\end{proof}

Finally, we give a proof of Theorem \ref{tailtauK}.

\begin{proof}[Proof of Theorem \ref{tailtauK}]
 Note that, if $X_{\tau_1}\cdot\vec{\ell} \geq n$, then $(X_{\tau_1}\cdot\vec{\ell})\circ \theta_{T_{\Hc^+(n/2)}} \geq n/4$.
 Therefore, we have
 \begin{align*}
& \PR_0^K\left[X_{\tau_1}\cdot\vec{\ell} \geq n\right] \le  \PR_0^K\left[T_{\partial B(n/2,(n/2)^{\alpha})}\neq T_{ \partial^+ B(n/2,(n/2)^{\alpha})}\right]\\
 &+\sum_{z\in \partial^+ B(n/2,(n/2)^{\alpha})}\PR_0^K\left[X_{T_{\partial B(n/2,(n/2)^{\alpha})}}=z, T_{\mathcal{H}^-_{e_1}}\circ \theta_{T_{z}}<\infty\right]\\
 &+\sum_{z\in \partial^+ B(n/2,(n/2)^{\alpha})}\PR_0^K\left[X_{T_{\partial B(n/2,(n/2)^{\alpha})}}=z, T_{\mathcal{H}^-_{e_1}}\circ \theta_{T_{z}}=\infty, (X_{\tau_1}\cdot\vec{\ell})\circ \theta_{T_{z}} \geq n/4\right]
\\
\le & \PR_0^K\left[T_{\partial B(n/2,(n/2)^{\alpha})}\neq T_{ \partial^+ B(n/2,(n/2)^{\alpha})}\right]\\
 &+\sum_{z\in \partial^+ B(n/2,(n/2)^{\alpha})}\PR^{0,K}_z\left[ T_{\mathcal{H}^-_{e_1}}<\infty\right]\\
 &+\sum_{z\in \partial^+ B(n/2,(n/2)^{\alpha})}\PR^{0,K}_z\left[ T_{\mathcal{H}^-_{e_1}}=\infty, X_{\tau_1}\cdot\vec{\ell}\geq n/4\right],
\end{align*}
using Markov's property and where $\PR_z^{0,K}$ is defined in Definition \ref{defP0K}.\\
Now, for any environment $\omega$, $P^\omega_z\left[T_{\mathcal{H}^-_{e_1}}<\infty\right]$ and $P^\omega_z\left[ T_{\mathcal{H}^-_{e_1}}=\infty, X_{\tau_1}\cdot\vec{\ell}\geq n/4\right]$ do not depend on the conductances around $0$. Hence, we obtain
\begin{align*}
&\PR_0^K\left[X_{\tau_1}\cdot\vec{\ell} \geq n\right]\le  \PR_0^K\left[T_{\partial B(n/2,(n/2)^{\alpha})}\neq T_{ \partial^+ B(n/2,(n/2)^{\alpha})}\right]\\
 &+\sum_{z\in \partial^+ B(n/2,(n/2)^{\alpha})}\PR_z\left[ T_{\mathcal{H}^-_{e_1}}<\infty\right]+\sum_{z\in \partial^+ B(n/2,(n/2)^{\alpha})}\PR_z\left[ X_{\tau_1}\cdot\vec{\ell}\geq n/4\right]
\\
\le & Ce^{-cn}+Cn^{c(d)}n^{-M},
 \end{align*}
for any $M<\infty$, as soon as $K$ is large enough. Here we used  Theorem \ref{BLK}, Theorem \ref{thisisboring} and Theorem \ref{tailtau}.
 \end{proof}

\subsection{A result on electrical networks}
\begin{lemma} \label{psyche}
Let $(G,(c(e))_{e\in E})$ be a finite network with set of vertices $V$ and set of edges $E$ and consider a random walk $X$ on this network. Fix some subset of edges $E_0\subset E$ and fix $\delta\in V$. We have, for any $y\in V$, $y\sim \delta$,
\[
E_{y}[T_{\delta,E_0}^+] \leq \frac{2}{c(e_{y\delta})}  \sum_{e\in E_0} c(e),
\]
with $e_{y\delta}$ being any edge linking $y$ and $\delta$, and where $T_{\delta,E_0}^+$ is defined in \eqref{kurt}.
\end{lemma}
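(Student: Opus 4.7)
The proof will go through the standard electrical-network interpretation of the reversible walk $X$ on $(G,(c(e)))$. My plan is to compute the expected number of traversals of each edge $e \in E_0$ before time $T_\delta^+$ exactly, in terms of the voltage function associated with unit current flow from $y$ to $\delta$, and then bound this voltage uniformly by the effective resistance $R_{\mathrm{eff}}(y \leftrightarrow \delta)$, which is in turn bounded via Rayleigh's monotonicity by the resistance of the single edge $e_{y\delta}$.

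More precisely, let $v$ denote the voltage function with unit current flowing from $y$ to $\delta$, normalized so that $v(\delta)=0$. A standard computation (via the Green's function identity $G(y,x):=E_y[\sum_{k=0}^{T_\delta^+-1}\1{X_k=x}]=\pi(x)v(x)$) shows that for any edge $e=[u,w]$, the expected number of traversals of $e$ (in either direction) by $X$ before time $T_\delta^+$ equals
\[
E_y\bigl[|\{k\in[1,T_\delta^+]:[X_{k-1},X_k]=e\}|\bigr]=c(e)\bigl(v(u)+v(w)\bigr).
\]
I would verify this by summing over $k$ and using the Markov property at times $k-1$, together with the fact that $X_{k-1}\neq \delta$ whenever $k\leq T_\delta^+$; the boundary condition $v(\delta)=0$ takes care of the edge case when $u$ or $w$ equals $\delta$.

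Next I would apply the maximum principle for the harmonic function $v$ (harmonic on $V\setminus\{y,\delta\}$, with minimum $0$ at $\delta$ and maximum $v(y)=R_{\mathrm{eff}}(y\leftrightarrow\delta)$ at $y$) to bound $v(u)+v(w)\leq 2R_{\mathrm{eff}}(y\leftrightarrow\delta)$ for every edge $e$. Summing over $e\in E_0$ then yields
\[
E_y[T_{\delta,E_0}^+]\leq 2\,R_{\mathrm{eff}}(y\leftrightarrow \delta)\sum_{e\in E_0}c(e).
\]

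Finally, since $y\sim\delta$ and $e_{y\delta}$ is an edge joining them, Rayleigh's monotonicity principle (shorting all the rest of the network leaves a single edge of resistance $1/c(e_{y\delta})$ between $y$ and $\delta$) gives $R_{\mathrm{eff}}(y\leftrightarrow \delta)\leq 1/c(e_{y\delta})$. Inserting this bound gives the claim. The only mildly delicate step is the Green's-function identity in the first display; everything else is an essentially one-line invocation of standard electrical-network facts (see Chapter~2 of~\cite{LP}).
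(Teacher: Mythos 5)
Your proposal is correct and follows essentially the same route as the paper's proof: both convert the expected edge-crossing count into the voltage function for unit current from $y$ to $\delta$, bound the voltage by $\mathcal{R}^G(y\leftrightarrow\delta)$ via the maximum principle, and then bound that effective resistance by $1/c(e_{y\delta})$ via Rayleigh's monotonicity. The only cosmetic difference is that you compute the undirected traversal count of each edge $e=[u,w]$ directly as $c(e)(v(u)+v(w))$, whereas the paper sums the directed crossing counts $E_y[S^\delta_{\vec{xz}}]=c(x,z)v(x)$ over ordered pairs and bounds each by $\mathcal{R}^G(y\leftrightarrow\delta)c(x,z)$ before summing; these are the same computation.
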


\begin{proof}
Let us denote, for $x,y\in V$, $\mathcal{G}^G_{\delta}(x,z)=E^G_x[\sum_{i=0}^{T_\delta^+} \1{X_i=z}]$ the standard Green function killed at $\delta$. For $x\sim z$, we introduce 
\[
S^\delta_{\vec{xz}}:=\mathrm{Card}\{i < T_\delta,\ X_i=x \text{ and } X_{i+1}=z\}.
\]

%

It is possible to prove (see proof of Propositions 2.1 and 2.2. of~\cite{LP}) that, for $y\neq \delta$,
\[
E_y[S^\delta_{\vec{xz}}]=c(x,z) v_{y\to \delta}(x),
\]
where $v_{y\to \delta}(x)$ is the potential (or voltage) at $x$ when a unit current flows from $y$ to $\delta$ that verifies $v_{y\to \delta}(\delta)=0$. Furthermore, as the voltage function is harmonic on $V\setminus \{\delta,y\}$, using the Maximum Principle (see Section 2.1 of \cite{LP}), we have
\[
v_{y\to \delta}(x)\leq v_{y\to \delta}(y)= \mathcal{R}^G(y\leftrightarrow \delta),
 \]
 where $\mathcal{R}^G(y\leftrightarrow \delta)$ is the effective resistance between $y$ and $\delta$ and where we used Proposition 2.1 together with display $(2.5)$ from \cite{LP}.
 
 By Rayleigh's monotonicity principle, we see that, when $y\sim \delta$, for any edge $e_{y\delta}$ linking $y$ and $\delta$, we have $\mathcal{R}^G(y\leftrightarrow \delta) \leq 1/c(e_{y\delta})$. Using the previous equations this leads to the upper-bound
 \begin{align*}
   E_{y}[T_{\delta,E_0}^+]  
 &=   \sum_{\substack{x,z\in V:\\ [x,z]\in E_0}} E_y[S^\delta_{\vec{xz}}] \leq  \sum_{\substack{x,z\in V:\\ [x,z]\in E_0}} \frac{c(x,z)}{c(e_{y\delta})} \leq   \frac{2}{c(e_{y\delta})} \sum_{e\in E_0} c(e).
 \end{align*}
 \end{proof}


{\bf Acknowledgement}

\vspace{0.5cm}

\noindent This work was done while DK was Postdoc at the Ecole Polytechnique F\'ed\'erale de Lausanne (EPFL). The authors would like to thank the EPFL for allowing DK to travel to Universit\'e de Montr\'eal and AF to travel to the EPFL. We also would like to thank the anonymous Referee who helped to shorten and improve some of the proofs.

\end{document}